%%%%%%%%%%%%%%%%%%%%%%%%%%%%%%%%%%%%%%%
% R. Clouatre and K. Davidson 
% Duality, convexity and peak interpolation in the Drury-Arveson space
% version of February 7, 2016
%%%%%%%%%%%%%%%%%%%%%%%%%%%%%%%%%%%%%%%
\documentclass[12pt]{amsart}
\usepackage{amssymb, amstext, amscd, amsmath}
\usepackage{mathtools, xypic, color}
\usepackage{enumitem}
\usepackage{hyperref}
\usepackage{xcolor}
\topmargin -.35in
\textheight 8.6in
\textwidth 6in
\oddsidemargin .25in
\evensidemargin 0.25in
%
%      Save wierd symbols and free up their codes
% rename Hungarian umlaut
% rename section symbol
%
%     normal style theorems, numbered within the section
\theoremstyle{plain}
\newtheorem{thm}{Theorem}[section]
\newtheorem{prop}[thm]{Proposition}

\newtheorem{cor}[thm]{Corollary}

\newtheorem{theorem}[thm]{Theorem}
\newtheorem{proposition}[thm]{Proposition}
\newtheorem{lemma}[thm]{Lemma}
\newtheorem{corollary}[thm]{Corollary}
\newtheorem{conjecture}[thm]{Conjecture}
%
%      Theorem style with roman text, numbered within section
\theoremstyle{definition}

\newtheorem{defn}[thm]{Definition}

\newtheorem{example}[thm]{Example}

%
%      Proof environment

%
% properly align :=
\mathtoolsset{centercolon}
%
%      Blackboard bold letters
\newcommand{\bA}{{\mathbb{A}}}
\newcommand{\bB}{{\mathbb{B}}}
\newcommand{\bC}{{\mathbb{C}}}
\newcommand{\bD}{{\mathbb{D}}}

\newcommand{\bF}{{\mathbb{F}}}

\newcommand{\bN}{{\mathbb{N}}}

\newcommand{\bS}{{\mathbb{S}}}
\newcommand{\bT}{{\mathbb{T}}}

%      Capital script letters
  \newcommand{\A}{{\mathcal{A}}}
  
  \newcommand{\C}{{\mathcal{C}}}

  \newcommand{\F}{{\mathcal{F}}}
  
\renewcommand{\H}{{\mathcal{H}}}
  \newcommand{\I}{{\mathcal{I}}}
  \newcommand{\J}{{\mathcal{J}}}

  \newcommand{\M}{{\mathcal{M}}}

\renewcommand{\P}{{\mathcal{P}}}

%Fraktur letters
\newcommand{\fA}{{\mathfrak{A}}}
\newcommand{\fB}{{\mathfrak{B}}}

\newcommand{\fK}{{\mathfrak{K}}}
\newcommand{\fL}{{\mathfrak{L}}}

\newcommand{\fN}{{\mathfrak{N}}}

\newcommand{\fS}{{\mathfrak{S}}}

\newcommand{\fW}{{\mathfrak{W}}}

%Roman letters for math

\newcommand{\rC}{\mathrm{C}}

%Greek Letters
\newcommand{\ep}{\varepsilon}
\newcommand{\eps}{\varepsilon}
\renewcommand{\phi}{\varphi}

\newcommand{\upchi}{{\raise.35ex\hbox{$\chi$}}}

%      Lower case bold letters

%      Useful shortforms

\newcommand{\dlim}{\displaystyle\lim\limits}

\newcommand{\lip}{\langle}
\newcommand{\rip}{\rangle}
\newcommand{\ip}[1]{\lip #1 \rip}

\newcommand{\ol}{\overline}

\newcommand{\wot}{\textsc{wot}}

\newcommand{\wslim}{\textrm{w*\!--\!}\lim}

\newcommand{\AB}{{\mathrm{A}(\mathbb{B}_d)}}
\newcommand{\AD}{{\mathrm{A}(\mathbb{D})}}

\newcommand{\CS}{{\mathrm{C}(\mathbb{S}_d)}}
\newcommand{\Hinf}{{H^\infty(\mathbb{D})}}
\newcommand{\HB}{{H^\infty(\mathbb{B}_d)}}

%      Text used in equations
\newcommand{\AND}{\text{ and }}

\newcommand{\FORAL}{\text{ for all }}
\newcommand{\qand}{\quad\text{and}\quad}

\newcommand{\qfor}{\quad\text{for}\quad}
\newcommand{\qforal}{\quad\text{for all}\quad}

%Operators

\newcommand{\Ch}{\operatorname{Ch}}

\newcommand{\Dim}{\operatorname{dim}}

\newcommand{\re}{\operatorname{Re}}
\newcommand{\spn}{\operatorname{span}}
\newcommand{\supp}{\operatorname{supp}}

\newcommand{\TS}{\operatorname{TS}}

\newcommand{\ext}{\operatorname{ext}}
%%%%%%%%%%%%%%%%%%%%%%%%%%%%%%%%%%%%%%%
\begin{document}
\title[Duality, convexity and peak interpolation in $H^2_d$]{Duality, convexity and peak interpolation\\in the Drury-Arveson space}

\author[R. Clou\^atre]{Rapha\"el Clou\^atre}
\address{Department of Mathematics,
University of Manitoba,
Winnipeg, MB\ R3T 2N2,
Canada}
\email{raphael.clouatre@umanitoba.ca}
\thanks{The first author is partially supported by an FQRNT postdoctoral fellowship.}

\author[K.R. Davidson]{Kenneth R. Davidson}
\address{Pure Mathematics Department, 
University of Waterloo,
Waterloo, ON\ N2L 3G1, 
Canada}
\email{krdavids@uwaterloo.ca}
\thanks{The second author is partially supported by an NSERC grant.}

\begin{abstract}
We consider the closed algebra $\A_d$ generated by the polynomial multipliers on the Drury-Arveson space. We identify $\A_d^*$ as a direct sum of the preduals of the full multiplier algebra and of a commutative von Neumann algebra, and establish analogues of many classical results concerning the dual space of the ball algebra.  These developments are deeply intertwined with the problem of peak interpolation for multipliers, and we generalize a theorem of Bishop-Carleson-Rudin to this setting by means of Choquet type integral representations. As a byproduct we shed some light on the nature of the extreme points of the unit ball of $\A^*_d$. 
\end{abstract}

\subjclass[2010]{47L30, 47A13, 46E22}
\keywords{Non-selfadjoint operator algebras, reproducing kernel Hilbert spaces, 
multiplier algebra, Henkin measures, peak interpolation}
\maketitle

%%%%%%%%%%%%%%%%%%%%%%%%%%%%%%%%%%%%%%%
%%%%%%%%%%%%%%%%%%%%%%%%%%%%%%%%%%%%%%%
\section{Introduction} \label{S:intro}

A single contraction acting on Hilbert space is said to be absolutely continuous if the spectral measure of its minimal unitary dilation is absolutely continuous with respect to Lebesgue measure on the circle. 
This notion is closely related to the algebra $H^\infty(\bD)$ of bounded holomorphic functions on the unit disc, by means of the Sz.-Nagy--Foias functional calculus. 
In fact, a contraction is absolutely continuous if and only if the polynomial functional calculus extends in a weak-$*$ continuous fashion to $H^\infty(\bD)$.
At the root of this observation is a complete description of the so-called Henkin measures on the circle, for which the associated integration functional extends to be weak-$*$ continuous.

Moving on to multivariate operator theory and to the study of commuting row contractions $T=(T_1,\ldots,T_d)$, the algebra $H^\infty(\bD)$ is generally replaced by $\M_d$, the multiplier algebra of the Drury-Arveson space. 
The basic motivation underlying this project was to identify the row contractions with the property that the polynomial functional calculus 
\[
p\mapsto p(T_1,\ldots,T_d)
\]
extends weak-$*$ continuously to the whole multiplier algebra $\M_d$. 
Such contractions have been called absolutely continuous previously  \cite{DLP} in analogy with the single operator case. 
We seek to provide the measure theoretic counterpart to this analogy. Ultimately, this paper grew in the direction of function theory and approximation. Nevertheless, multivariate operator theory remains a major motivation for this study.
Applications of this paper to both function theory and multivariate operator theory will appear in \cite{CDabscont} and \cite{CDideals}.

To achieve a description of absolutely continuous row contractions, we need to describe the dual space of the norm closed subalgebra $\A_d$ of $\M_d$ generated by the polynomials. 
Our approach is strongly influenced by the solution of the corresponding problem for
the ball algebra $\AB$, which is the closure of the polynomials in $\HB$ (which is in turn the multiplier algebra of the usual Hardy space on the ball).
This solution builds on the results of several authors and is laid out in \cite[Chapter 9]{Rudin}. 
It is based on three main ingredients: the Valskii decomposition for Henkin measures,  the Glicksberg-K\"onig-Seever decomposition for general measures, and the Henkin-Cole-Range characterization of Henkin measures. 
These culminate in the following concrete description of the dual space
\[
 \AB^* \simeq \HB_* \oplus_1 \TS(\bS_d) ,
\]
where the first summand consists of the weak-$*$ continuous linear functionals on $\HB$ and the second summand is the space of 
totally singular measures on the sphere (that is, measures which are singular with respect to every positive representing measure for evaluation at $0$). 
The analogue of the Glicksberg-K\"onig-Seever theorem in our setting is a decomposition of functionals in $\A_d^*$ similar to those in \cite{DLP} and \cite{KY}. 
We seek to develop appropriate analogues of the other two remaining ingredients. 

A famous byproduct of the aforementioned description of $\AB^*$ is the Bishop-Carleson-Rudin theorem \cite{bishop, Carleson, Rudin56}: a closed subset $K$ of the sphere which is null with respect to every measure in $\HB_*$ has the property that for any $f\in \rC(K)$ there exists $g\in \AB$ such that 
\[
 g|_K=f \qand |g(\zeta)|<\sup_{z\in K}|f(z)| \quad\text{for every  }\ \zeta\in \bS_d\setminus K .
\]
An important driving force behind our work is the adaptation of this result to the setting of $\A_d$ rather than $\AB$.
The plan of the paper is as follows. 

Section \ref{S:prelim} deals with preliminaries.  

In Section \ref{S:valskii}, we obtain a partial analogue of the Valskii decomposition (Theorem \ref{t-genvalskii}). 

The structure theorem for free semigroup algebras  \cite{DKP}  is used in Section \ref{S:dual} to establish one of the central results in the paper, namely the following isometric identifications of the first and second dual space
\[ 
 \A_d^* \simeq \M_{d*} \oplus_1 \fW_* 
\]
\[
 \A_d^{**} \simeq \M_d \oplus_\infty \fW 
\]
where $\fW$ is a commutative von Neumann algebra (Theorem \ref{T:structuredoubledual} and Corollary \ref{C:dual}). As a consequence we get a Lebesgue decomposition for functionals on $\A_d$.
We show that functionals in $\fW_*$ are given by integration against totally singular measures on the sphere (Theorem \ref{T:singfunct}). We also obtain an F. and M. Riesz type theorem for quotients of $\A_d$ 
(Theorem \ref{T:Jperp}) reminiscent of work of Kennedy and Yang \cite{KY}.

In Section \ref{S:functionals}, we examine each direct summand from the decomposition of $\A_d^*$ more closely and relate them to their classical counterparts in $\AB^*$. 
In particular, we show that the intersection of  each summand with the space of measures on the sphere forms a band of measures (Theorem \ref{T:band}). 
We introduce the notion of $\A_d$--totally null sets and give a characterization for them in terms of $\A_d$--Henkin measures (Corollary \ref{C:AdTN}). We show that these sets 
are plentiful since any $\A_d$--totally singular measure is concentrated on one (Proposition \ref{P:AdTS_sets}). 
We also establish an approximation result for continuous functions on closed $\A_d$--totally sets using multipliers from $\A_d$  (Proposition \ref{P:open_ball}). 

Section \ref{S:examples} provides several concrete examples of $\A_d$--totally null sets.

In Section \ref{S:extreme}, we use our description of $\A_d^*$ to study the extreme points and the weak-$*$ exposed points of its closed unit ball. 
In particular, Theorem \ref{T:extMd} and Theorem \ref{T:exposed} ensure the existence of many extreme points in the 
closed unit ball of $\M_{d*}$, thus bringing forth a sharp difference with the more classical situation of the closed unit ball of $\Hinf_*$ \cite{Ando}, which extends to $\HB_*$ \cite{CDext}. 

Section~\ref{S:choquet} presents some technical results on convexity and a Choquet type integral representation (Theorem \ref{T:hustad}). 

This integral representation is then used in the last section, Section \ref{S:approx}, which is concerned with approximation and interpolation using functions from $\A_d$. 
These types of questions are common within the realm of uniform algebras, but several complications appear in our case. 
Motivated by the Bishop-Carleson-Rudin theorem mentioned above, the other central result of this paper offers a major improvement on Corollary \ref{P:open_ball}. 
Indeed, Theorem \ref{t-approxAd} shows that if $K\subset \bS_d$ is a closed $\A_d$--totally null subset and $\ep>0$, then for every $f\in \rC(K)$ there exists $\phi\in \A_d$ such that 
\begin{enumerate}[label=\normalfont{(\roman*)}] 
\item $\phi|_K=f$,

\item $|\phi(\zeta)|<\sup_{z\in K}|f(z)|$ for every $\zeta\in \bS_d \setminus K$,

\item $\|\phi\|_{\A_d} \leq (1+\ep)\sup_{z\in K}|f(z)|$.
\end{enumerate}

%%%%%%%%%%%%%%%%%%%%%%%%%%%%%%%%%%%%%%%
%%%%%%%%%%%%%%%%%%%%%%%%%%%%%%%%%%%%%%%
\section{Preliminaries} \label{S:prelim} 
Let $d\geq 1$ be an integer. Throughout the paper we denote the open unit ball of $\bC^d$ by $\bB_d$. We denote by $\bS_d$ the unit sphere, which is the boundary of $\bB_d$.

%%%%%%%%%%%%%%%%%%%%%%%%%%%%%%%%%%%%%%%
\subsection{The ball algebra and its dual space} \label{SS:BA}
The ball algebra $\AB$ consists of all analytic functions on $\bB_d$ which extend to be continuous functions on the closure $\ol{\bB_d}$. 
The norm of a function $f\in \AB$ is the supremum norm over the ball
\[
\|f\|_{\infty}=\sup_{z\in \bB_d}|f(z)|.
\]
Equivalently, $\AB$ can be defined to be the closure of the polynomials in this norm. 
By virtue of the maximum principle, we may consider $\AB$ as a subalgebra of $\CS$. 
The maximal ideal space of $\AB$ is homeomorphic to the closed ball $\ol{\bB_d}$ via the map which identifies a point $z\in \ol{\bB_d}$ with the functional $\tau_z$ of evaluation at $z$.

A closely related algebra is $\HB$, which consists of all bounded analytic functions on $\bB_d$. 
Functions in $\HB$ have radial limits almost everywhere with respect to the unique rotation invariant Borel probability measure $\sigma$ on the sphere,
and a function can be recovered from its boundary values (see \cite[Chapter 5]{Rudin}). 
In this fashion, we may therefore consider $\HB$ as a weak-$*$ closed subalgebra of $L^\infty(\sigma)$. 
Moreover, we can identify a predual of this subalgebra as
\[
 \HB_* \simeq L^1(\sigma)/\HB_\perp .
\]  
In particular, every weak-$*$ continuous functional on $\HB$ extends to a weak-$*$ continuous functional 
on $L^\infty(\sigma)$, and is given by integration against a measure $h  \,d\sigma$ for some function $h\in L^1(\sigma)$.

For each point $z \in \ol{\bB_d}$, the evaluation functional $\tau_z$ on $\AB$ can be extended to a state of $\CS$. 
Every such extension is given by integrating against some positive Borel measure $\mu_z$ which must be a representing measure for $z$, that is
$$
f(z)=\int_{\bS_d}f \,d\mu_z
$$
for every $f\in \AB$. 
We denote the set of positive representing measures for the origin by $M_0(\bS_d)$.

To describe the dual space of the ball algebra, we need to introduce another property. 
Denote by $M(\bS_d)$ the space of finite regular Borel measures on $\bS_d$, so that 
$\rC (\bS_d)^*=M(\bS_d)$. A measure $\mu\in M(\bS_d)$ is called an \textit{$\AB$--Henkin measure} if 
whenever $\{f_n\}_n\subset \AB$ is a bounded sequence converging pointwise to $0$ on $\bB_d$, we have that
\[ 
 \lim_{n\to\infty} \int_{\bS_d} f_n \,d\mu = 0 .
\]
Bounded sequences of $\AB$ converging to $0$ pointwise on $\bB_d$ are sometimes referred to as Montel sequences.
They are precisely the sequences in $\AB$ that converge to $0$ in the weak-$*$ topology of $\HB$. A basic fact concerning Henkin measures is the following result, usually called the Valskii decomposition  \cite{Valskii}.

%%%%%%%%%%%%%%%%%%%%%%%%%%%%%%%%%%%%%%%
\begin{theorem}[Valskii]\label{t-valskii}
Let $\mu\in M(\bS_d)$ be an $\AB$--Henkin measure. Then, we can write $\mu = \nu + h\,\sigma$ where $\nu\in (\AB)^\perp$ and $h\in L^1(\sigma)$.
\end{theorem}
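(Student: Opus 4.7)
The plan is to reformulate the Henkin hypothesis as sequential weak-$*$ continuity of the functional $\Lambda(f) := \int f\,d\mu$ on $\AB$ regarded as a subalgebra of $\HB$, extend it to a genuine weak-$*$ continuous functional on $\HB$, and then read off the density $h$ from the predual identification $\HB_* \simeq L^1(\sigma)/\HB_\perp$ recalled in Section~\ref{SS:BA}.

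As noted in the text, bounded nets in $\HB$ converge weak-$*$ if and only if they converge pointwise on $\bB_d$, so Montel sequences in $\AB$ are exactly the bounded sequences in $\AB$ that are weak-$*$ null in $\HB$. For any $f \in \HB$, the dilations $f_r(z) = f(rz)$ with $r \nearrow 1$ lie in $\AB$ (after a standard diagonal argument against their polynomial Taylor approximations), are bounded by $\|f\|_\infty$, and converge pointwise to $f$ on $\bB_d$. Any two such approximating sequences differ by a Montel sequence, so the Henkin hypothesis guarantees that
\[
\widetilde{\Lambda}(f) := \lim_n \int_{\bS_d} f_n \, d\mu
\]
is well-defined, linear, and bounded, and extends $\Lambda$ to all of $\HB$.

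The weak-$*$ continuity of $\widetilde{\Lambda}$ then follows from two standard tools. First, because $\rC(\bS_d)$ is separable, the closed unit ball of $L^\infty(\sigma)$, and hence of $\HB$, is metrizable in the weak-$*$ topology; the construction of $\widetilde{\Lambda}$ delivers sequential weak-$*$ continuity on this ball, which under metrizability upgrades to genuine continuity. The Krein-Smulian theorem then promotes weak-$*$ continuity on the unit ball to weak-$*$ continuity on all of $\HB$. Invoking the identification $\HB_* \simeq L^1(\sigma)/\HB_\perp$ produces $h \in L^1(\sigma)$ with $\widetilde{\Lambda}(f) = \int f h \, d\sigma$ for every $f \in \HB$, and setting $\nu := \mu - h\,\sigma$ yields $\int f \, d\nu = \Lambda(f) - \widetilde{\Lambda}(f) = 0$ for all $f \in \AB$, which is the asserted decomposition.

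The central obstacle is exactly this upgrade from sequential weak-$*$ continuity on $\AB$ to honest weak-$*$ continuity on $\HB$, which quietly packages several ingredients: separability of $\rC(\bS_d)$, metrizability of weak-$*$ bounded subsets of a dual of a separable Banach space, and the Krein-Smulian theorem. A subsidiary technical point is producing the approximating sequences $\{f_n\} \subset \AB$ with the correct uniform bound, which requires a careful diagonal argument combining dilations with polynomial truncations rather than simply invoking density of polynomials.
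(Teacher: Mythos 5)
Your argument is correct. Note that the paper itself offers no proof of Theorem \ref{t-valskii}; it is quoted as a classical result from \cite{Valskii} (see \cite{Rudin}), so the relevant internal comparison is with the proof of Theorem \ref{t-genvalskii}, the paper's $\A_d$--analogue of the extension statement. Your route is essentially that proof transplanted to $\AB$: extend the integration functional by dilations, use the Henkin hypothesis to see the extension is sequentially weak-$*$ continuous on the unit ball, upgrade to genuine weak-$*$ continuity via separability (metrizability of the ball) and Krein--Smulian, and then---this is the step with no counterpart in the $\M_d$ setting---invoke the concrete predual identification $\HB_*\simeq L^1(\sigma)/\HB_\perp$ from Section \ref{SS:BA} to produce $h\in L^1(\sigma)$ and set $\nu=\mu-h\,\sigma$. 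This is a different organization from the classical argument, which obtains the decomposition through the norm-approximation of the functional by absolutely continuous ones (the analogue of the second half of Theorem \ref{t-genvalskii}); your functional-analytic route is shorter and directly yields the equivalence ``Henkin if and only if the functional extends to $\HB_*$'' that the paper records as a consequence of Theorem \ref{t-valskii}, with no circularity since the predual identification is independent of Valskii's theorem. The one step you assert rather than prove is that ``the construction delivers sequential weak-$*$ continuity on this ball'': given a bounded sequence $\{f_n\}\subset\HB$ tending to $0$ pointwise on $\bB_d$, you must choose radii $r_n$ with $|\widetilde{\Lambda}(f_n)-\int_{\bS_d}(f_n)_{r_n}\,d\mu|<1/n$ and verify that $\{(f_n)_{r_n}\}$ is again a Montel sequence, which uses that bounded pointwise convergence of holomorphic functions is uniform on compacta so that $f_n(r_nz)\to0$; this is exactly the content of Lemma \ref{l-conv} and the corresponding diagonal step in the proof of Theorem \ref{t-genvalskii}, so the gap is easily filled, but it is the only place where the Henkin hypothesis is applied to sequences not arising from the dilations of a single function and should be spelled out.
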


In particular,  this implies that the $\AB$--Henkin measures  are precisely the measures whose associated integration functional on $\AB$ extends to an element of $\HB_*$.

The next result is due partly to Henkin \cite{Henkin} and partly to Cole and Range \cite{ColeRange}.
It provides a complete description of the $\AB$--Henkin measures. 
We say that a measure $\mu$ is absolutely continuous with respect to $M_0(\bS_d)$ if there is some $\rho_0\in M_0(\bS_d)$ such that $\mu$ is absolutely continuous with respect to $\rho_0$.

%%%%%%%%%%%%%%%%%%%%%%%%%%%%%%%%%%%%%%%
\begin{theorem}[Henkin, Cole-Range]\label{t-colerange}
Let $\mu\in M(\bS_d)$. Then, $\mu$ is $\AB$--Henkin if and only if $\mu$ is absolutely continuous with respect to $M_0(\bS_d)$.
\end{theorem}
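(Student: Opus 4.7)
The plan is to use the characterization, observed right after Valskii's theorem, that $\mu \in M(\bS_d)$ is $\AB$--Henkin precisely when its integration functional on $\AB$ extends to a weak-$*$ continuous functional on $\HB$, and then to treat the two implications separately. For the sufficiency, write $\mu = h\rho_0$ with $\rho_0 \in M_0(\bS_d)$ and $h \in L^1(\rho_0)$. The representing property immediately shows $\rho_0$ itself is Henkin: for $f \in \AB$ one has $\int f \, d\rho_0 = f(0)$, and this extends as evaluation at $0$---visibly weak-$*$ continuous---to all of $\HB$. The general case $\mu = h\rho_0$ then amounts to the band property that the collection of $\AB$--Henkin measures is closed under absolute continuity. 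One path is to approximate $h$ in $L^1(\rho_0)$ by simple functions, reducing the problem to the case $h = \mathbf{1}_E$; a more direct path is to exploit a Poisson-type representation of $\rho_0$ to define boundary values of $\HB$--functions $\rho_0$-almost everywhere, after which the required weak-$*$ continuous extension of $f \mapsto \int f h \, d\rho_0$ is delivered by bounded convergence applied to Montel sequences.

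For the necessity, apply the Valskii decomposition (Theorem~\ref{t-valskii}) to write $\mu = \nu + h\sigma$ with $\nu \in \AB^\perp$ and $h \in L^1(\sigma)$. Since $\sigma \in M_0(\bS_d)$, the summand $h\sigma$ is already absolutely continuous with respect to $M_0(\bS_d)$, so everything reduces to the annihilator $\nu$. Note that here the Henkin hypothesis is in fact \emph{vacuous}: any $\nu \in \AB^\perp$ satisfies $\int f \, d\nu = 0$ for all $f \in \AB$ and is therefore trivially Henkin. The real content is a multivariate F.\ and M.\ Riesz theorem, namely that every $\nu \in \AB^\perp$ is absolutely continuous with respect to some element of $M_0(\bS_d)$. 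Using the Jordan decomposition $\nu = \nu_+ - \nu_-$ with $\nu_\pm(\bS_d) = c$ (from $\nu \perp 1$), one then seeks to produce a dominating representing measure $\rho_1 \in M_0(\bS_d)$ by a Hahn--Banach / convex-combination argument inside the weak-$*$ compact convex set of positive representing measures for the origin.

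The principal obstacle is exactly this F.\ and M.\ Riesz step in the necessity direction. In one variable it is classical and immediate, because $\AD^\perp \subset L^1(\sigma)$; but in several variables the cone $M_0(\bS_d)$ is vastly richer and contains many mutually singular representing measures, so that the desired dominating $\rho_1$ cannot be exhibited concretely. The argument is instead inherently nonconstructive, relying on compactness and extremality within the cone of representing measures, in the spirit of the abstract uniform-algebra arguments used in Rudin's treatment.
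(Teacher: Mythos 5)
First, note that the paper itself does not prove Theorem \ref{t-colerange}: it is imported from the literature (Henkin \cite{Henkin}, Cole--Range \cite{ColeRange}; see also Chapter 9 of \cite{Rudin}), so there is no in-paper argument to compare yours against, and your proposal has to stand on its own. Its skeleton is the standard one, and the easy observations are right: every $\rho_0\in M_0(\bS_d)$ is trivially $\AB$--Henkin (Montel sequences satisfy $\int f_n\,d\rho_0=f_n(0)\to 0$), annihilating measures are trivially Henkin, Valskii reduces the necessity to measures in $\AB^\perp$, and convex combinations of representing measures are again representing. But both substantive implications are left unproved. For sufficiency, everything hinges on Henkin's theorem that the $\AB$--Henkin measures form a band, and neither of your suggested routes delivers it: reducing $h$ to an indicator $\mathbf{1}_E$ gains nothing (the indicator case \emph{is} the band property), and the ``Poisson-type representation plus bounded convergence'' route is circular --- a Montel sequence converges pointwise only on $\bB_d$, and there is no almost-everywhere boundary convergence with respect to an arbitrary representing measure $\rho_0$ available to feed into dominated convergence; producing such a convergence statement is essentially equivalent to what you are trying to prove. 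Henkin's band theorem requires genuine work (kernel estimates in Henkin's original argument, or the machinery of Rudin's Chapter 9).

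For necessity, the reduction to the claim that every $\nu\in\AB^\perp$ is absolutely continuous with respect to some element of $M_0(\bS_d)$ is correct, but that claim is not a stepping stone --- modulo the Glicksberg--K\"onig--Seever theorem it \emph{is} the Cole--Range theorem, being equivalent to the nonexistence of nonzero $\AB$--totally singular annihilating measures (i.e.\ the uniqueness half of Theorem \ref{T:dualAB}). The argument you gesture at would fail as described: weak-$*$ compactness and convexity of $M_0(\bS_d)$ give no purchase on absolute continuity, which is neither a weak-$*$ closed nor a convex-geometric condition; and the Jordan parts $\nu_\pm$ of an annihilating measure are not representing (or quasi-representing) measures, so Hahn--Banach separation or extremality inside the cone of representing measures says nothing about dominating them. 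The genuine proof needs the GKS decomposition together with the characterization of the band generated by $M_0(\bS_d)$ and an exhaustion argument (after which a single dominating $\rho_1$ is obtained as a countable convex combination $\sum_n 2^{-n}\rho_n$), or Cole--Range's original techniques. Since you concede exactly this step as ``the principal obstacle,'' the hard half of the theorem is acknowledged rather than proved; as it stands the proposal is a correct roadmap, not a proof.
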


The last ingredient needed in the description of the dual space of $\AB$ is the so-called Glicksberg-K\"onig-Seever decomposition \cite{Glicksberg,KonigSeever}, which generalizes the classical F. and M. Riesz Theorem. 
A measure $\mu$ is said to be \emph{$\AB$--totally singular} if $\mu$ is singular with respect to $\rho$ for every $\rho\in M_0(\bS_d)$.

%%%%%%%%%%%%%%%%%%%%%%%%%%%%%%%%%%%%%%%
\begin{theorem}[Glicksberg-K\"onig-Seever]\label{t-GKS}
Let $\mu\in M(\bS_d)$. Then, we can write $\mu=\mu_a+\mu_s$ where $\mu_a$ is $\AB$--Henkin and $\mu_s$ is $\AB$--totally singular.
Moreover, this decomposition is unique, and both $\mu_a$ and $\mu_s$ are absolutely continuous with respect to $\mu$.
\end{theorem}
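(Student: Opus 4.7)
The plan is to reduce this to a general Lebesgue-type decomposition theorem for \emph{bands} of measures, where a band is a subset $B \subset M(\bS_d)$ closed under absolute continuity and under countable sums with bounded total variation. A standard maximality argument then produces, for each $\mu \in M(\bS_d)$, a unique splitting $\mu = \mu_B + \mu_{B^\perp}$ with $\mu_B \in B$ and $\mu_{B^\perp}$ singular to every element of $B$. It therefore suffices to verify that the $\AB$--Henkin measures form such a band, and then to apply this template.

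For the band property the main tool is Theorem \ref{t-colerange}: $\mu$ is Henkin if and only if $\mu \ll \rho$ for some $\rho \in M_0(\bS_d)$. Closure under absolute continuity is then automatic. For countable sums, given Henkin $\mu_n$ with $\sum_n \|\mu_n\| < \infty$ and witnesses $\rho_n \in M_0(\bS_d)$, the probability measure $\rho := \sum_n 2^{-n} \rho_n$ again lies in $M_0(\bS_d)$ (this set is convex, as any convex combination of representing measures for $0$ is again a representing measure), and $\sum_n \mu_n \ll \rho$.

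For the decomposition itself, first assume $\mu \geq 0$ and set
$$\alpha := \sup\bigl\{\|\nu\| : \nu \text{ is Henkin and } 0 \leq \nu \leq \mu \bigr\}.$$
Choose a maximising sequence $\nu_n$; replacing each $\nu_n$ by the lattice supremum $\nu_1 \vee \cdots \vee \nu_n$ (still Henkin, since it is dominated by the Henkin measure $\sum_k \nu_k$) we may assume $\nu_n \uparrow \mu_a$, where $\mu_a$ is Henkin, $\mu_a \leq \mu$ and $\|\mu_a\| = \alpha$. Set $\mu_s := \mu - \mu_a$. If $\mu_s$ failed to be totally singular then some $\rho \in M_0(\bS_d)$ would yield a nonzero $\rho$--absolutely continuous part $\eta \leq \mu_s$, and this $\eta$ (being Henkin) would give $\mu_a + \eta \leq \mu$ with norm strictly greater than $\alpha$, a contradiction. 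For complex $\mu$, apply the positive case to $|\mu|$ and reweight by the unimodular Radon--Nikodym derivative $d\mu/d|\mu|$; both summands are then automatically absolutely continuous with respect to $\mu$. Uniqueness is immediate, since the difference of two candidate decompositions would be simultaneously Henkin and totally singular, hence zero.

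The main technical obstacle I anticipate is the complex case, where one must ensure that both summands remain absolutely continuous with respect to $\mu$: decomposing positive and negative parts of $\re\mu$ and $\operatorname{Im}\mu$ independently loses this control, so the construction has to be routed through the polar decomposition of $\mu$ as indicated above.
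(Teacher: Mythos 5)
Your argument is correct, and it is worth noting that the paper itself offers no proof of Theorem \ref{t-GKS}: it is quoted as a classical result of Glicksberg and K\"onig--Seever, with the standard treatment in Chapter 9 of Rudin's book. Measured against that classical treatment, your route is genuinely different. The classical proof is purely abstract band theory: one forms the band of measures generated by $M_0(\bS_d)$ (its members are exactly the measures absolutely continuous with respect to some $\rho\in M_0(\bS_d)$, because $M_0(\bS_d)$ is stable under countable convex combinations) and proves the decomposition $\mu=\mu_a+\mu_s$ by exactly the maximality argument you describe; the identification of the band part with the \emph{Henkin} measures is then a separate, much deeper matter, namely Theorem \ref{t-colerange}. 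You instead invoke Theorem \ref{t-colerange} from the start and use it in both directions (to verify the band properties, to see that the maximal part $\mu_a$ and the measure $\eta$ in the contradiction step are Henkin, and in the uniqueness step to kill a measure that is simultaneously Henkin and totally singular). Within this paper that is perfectly legitimate, since \ref{t-colerange} is stated as an independent prior fact; but be aware that in the literature the logical order is reversed (the Cole--Range half of \ref{t-colerange} is usually \emph{deduced} using the GKS decomposition), so your argument should not be read as a self-contained proof of the classical theorem, only as a reduction of \ref{t-GKS} to \ref{t-colerange} plus standard measure-theoretic band decomposition.

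The details you give are sound: the countable convex combination $\rho=\sum_n 2^{-n}\rho_n$ does lie in $M_0(\bS_d)$ (dominated convergence, since each $f\in\AB$ is bounded), the finite lattice suprema $\nu_1\vee\cdots\vee\nu_n$ are dominated by the \emph{finite} sums $\nu_1+\cdots+\nu_n$ (your phrase ``$\sum_k\nu_k$'' should be read this way, or replaced by a weighted sum, to stay inside finite measures), the increasing limit $\mu_a$ is a measure with $\mu_a\ll\rho$, the norm additivity of positive measures gives the contradiction in the maximality step, and the polar-decomposition reweighting in the complex case correctly preserves total singularity (since $|h\lambda_s|=\lambda_s$) and yields absolute continuity of both summands with respect to $\mu$. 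Uniqueness follows as you say, since Henkin measures and totally singular measures each form a linear space and their intersection is $\{0\}$ by \ref{t-colerange}.
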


We can now proceed to describe $\AB^*$. The following is classical, and is merely a restatement of Theorems \ref{t-valskii}, \ref{t-colerange} and \ref{t-GKS}. 
We denote by $\TS(\bS_d)$ the space of $\AB$--totally singular measures on $\bS_d$.

%%%%%%%%%%%%%%%%%%%%%%%%%%%%%%%%%%%%%%%
\begin{theorem}\label{T:dualAB}
The dual space of $\AB$ can be identified as $\HB_* \oplus_1 \TS(\bS_d)$.
\end{theorem}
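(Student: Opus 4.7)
The plan is to exhibit the natural map
\[
\Phi : \HB_* \oplus_1 \TS(\bS_d) \to \AB^* , \qquad \Phi(L,\mu_s)(f) = L(f) + \int_{\bS_d} f \, d\mu_s ,
\]
and verify that it is a bijective isometry. Since $\AB$ embeds isometrically inside $\CS$ by the maximum principle, Hahn-Banach identifies $\AB^*$ with $M(\bS_d)/\AB^\perp$; the three cited theorems (Valskii, Henkin-Cole-Range, Glicksberg-K\"onig-Seever) will supply precisely the ingredients needed for surjectivity, injectivity, and the isometry bound respectively.

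For surjectivity, given $\lambda \in \AB^*$ I would take a Hahn-Banach extension $\mu \in M(\bS_d)$. Theorem \ref{t-GKS} splits $\mu = \mu_a + \mu_s$ with $\mu_a$ Henkin and $\mu_s$ totally singular, and Theorem \ref{t-valskii} then writes $\mu_a = \nu + h\sigma$ with $\nu \in \AB^\perp$ and $h \in L^1(\sigma)$. The annihilator $\nu$ disappears on $\AB$, so $\lambda$ coincides with the functional associated to $h\sigma + \mu_s$, where the first summand is weak-$*$ continuous on $\HB$ and thus defines some $L \in \HB_*$. For injectivity, suppose $\Phi(L,\mu_s) = 0$ and represent $L$ by some $h \in L^1(\sigma)$. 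Then $h\sigma + \mu_s \in \AB^\perp$, which trivially makes it $\AB$-Henkin. Since $h\sigma$ is Henkin (being absolutely continuous with respect to $\sigma \in M_0(\bS_d)$) and $\mu_s$ is totally singular, the uniqueness in Theorem \ref{t-GKS} applied to $h\sigma + \mu_s$ forces $\mu_s = 0$; then $h \in \HB_\perp$, i.e.\ $L = 0$.

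The main obstacle is establishing that $\Phi$ is an $\ell^1$-isometry and not merely a contraction. The inequality $\|\Phi(L,\mu_s)\| \leq \|L\|_{\HB_*} + \|\mu_s\|$ is a triangle-inequality matter, using the weak-$*$ density of polynomials in $\HB$ to identify $\|L\|_{\HB_*}$ with $\|L|_\AB\|_{\AB^*}$. For the reverse bound, I would begin with a norm-preserving extension $\mu$ of $\lambda$, so $\|\mu\| = \|\lambda\|$, and decompose $\mu = \mu_a + \mu_s$ via Theorem \ref{t-GKS}. Theorem \ref{t-colerange} yields some $\rho_0 \in M_0(\bS_d)$ with $\mu_a \ll \rho_0$, while total singularity gives $\mu_s \perp \rho_0$; combining these shows $\mu_a \perp \mu_s$ and hence $\|\mu\| = \|\mu_a\| + \|\mu_s\|$. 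The Henkin property of $\mu_a$ lets me extend $\mu_a|_\AB$ to a weak-$*$ continuous functional $L \in \HB_*$ by setting $L(g) = \lim_n \int f_n \, d\mu_a$ for any bounded sequence $f_n \in \AB$ converging pointwise to $g$, and this same estimate forces $\|L\|_{\HB_*} \leq \|\mu_a\|$. Adding $\|\mu_s\|$ yields $\|L\|_{\HB_*} + \|\mu_s\| \leq \|\lambda\|$, closing the isometry.
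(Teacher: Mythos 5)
Your proposal is correct and follows exactly the route the paper intends: the paper offers no separate argument, stating that Theorem \ref{T:dualAB} is ``merely a restatement'' of Theorems \ref{t-valskii}, \ref{t-colerange} and \ref{t-GKS}, and your proof assembles precisely those three ingredients (Valskii for the Henkin part, Glicksberg--K\"onig--Seever for existence and uniqueness of the decomposition, Henkin--Cole--Range for the mutual singularity giving the $\ell^1$ norm identity) in the standard way. The only difference is that you write out the details the paper leaves implicit.
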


One last classical result about $\AB$ that we require involves the concept of peak interpolation. 
Let $X$ be a compact Hausdorff space and $K\subset X$ be a closed subset.
Then, $K$ is a \textit{peak interpolation set} for a closed subspace $A\subset \rC(X)$ 
if for every $f\in \rC(K)$, there exists $\phi\in A$ such that 
\[ 
 \phi|_K=f
\]
and 
\[
 |\phi(x)|<\|f\|_K \qforal x\in X\setminus K
\]
where
\[
\|f\|_K=\sup_{z\in K}|f(z)|.
\]
A set $E\subset \bS_d$ is \textit{$\AB$--totally null} if $|\eta|(E)=0$ for every $\AB$--Henkin measure $\eta$. 
A measure concentrated on such a set is necessarily $\AB$--totally singular by Theorem \ref{t-GKS}.
In the case of $d=1$, a subset of the circle $\bT$ is totally null if and only if it has Lebesgue measure zero. Carleson \cite{Carleson} and Rudin \cite{Rudin56} established that
any compact subset of $\bT$ of Lebesgue measure zero is a peak interpolation set for $\AD$.
Bishop \cite{bishop} found a remarkable major generalization of this result, providing simple criteria for being a 
peak interpolation set.
When $X=\bS_d$ and $A=\AB$, Bishop's result reads as follows.

%%%%%%%%%%%%%%%%%%%%%%%%%%%%%%%%%%%%%%%
\begin{theorem}[Bishop]\label{T:bishopAB}
Every closed $\AB$--totally null subset $K\subset \bS_d$ is a peak interpolation set for $\AB$.
\end{theorem}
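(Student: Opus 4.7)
The plan is to normalize $\|f\|_K = 1$ and construct $\phi \in \AB$ with $\phi|_K = f$, $\|\phi\|_\infty \leq 1$, and $|\phi(\zeta)| < 1$ for every $\zeta \in \bS_d \setminus K$. The starting observation is that every $\mu \in \AB^\perp$ satisfies $|\mu|(K) = 0$: by the Valskii decomposition (Theorem \ref{t-valskii}), every annihilating measure is itself an $\AB$--Henkin measure (the zero functional on $\AB$ extends trivially to an element of $\HB_*$), and the $\AB$--totally null hypothesis on $K$ then forces $|\mu|(K) = 0$. Equivalently, the only measure in $M(K)$ annihilating $\AB$, regarded as a measure on $\bS_d$ supported in $K$, is the zero measure.

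A quotient-norm/Hahn--Banach argument built on this vanishing yields an \emph{approximate} interpolation: for every $f \in C(K)$ and every $\ep > 0$ there exists $\phi_\ep \in \AB$ with $\phi_\ep|_K = f$ and $\|\phi_\ep\|_\infty \leq (1 + \ep)\|f\|_K$. Indeed, setting $J := \{g \in \AB : g|_K = 0\}$, the identity $\AB^\perp \cap M(K) = \{0\}$ identifies $\AB/J$ with a norm-dense subspace of $C(K)$ via an isometry on the dual side, so the restriction map is a metric surjection onto $C(K)$.

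Converting this approximate statement into the required peak interpolation is the main obstacle. The classical device is a geometric iteration: decompose $f$ telescopically as $f = \sum_n f_n$ with $\|f_n\|_K$ decaying geometrically, and solve the interpolation problem at each scale to produce $\phi_n \in \AB$ with $\|\phi_n\|_\infty$ also geometrically decaying; the sum $\phi := \sum_n \phi_n$ then converges in $\AB$ with $\phi|_K = f$ and $\|\phi\|_\infty \leq \|f\|_K$. To enforce the strict inequality off $K$, thread a continuous weight $v \in C(\bS_d)$ with $v = 0$ on $K$ and $v > 0$ off $K$ into the iteration, imposing at stage $n$ the additional pointwise constraint $|\phi_n(\zeta)| \leq c_n\,v(\zeta)$ on $\bS_d \setminus K$ for some summable sequence $c_n$. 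This constrained Hahn--Banach step is the technical crux: it is solvable because the totally-null hypothesis leaves enough flexibility in the annihilator to accommodate the weighted bound at every scale. The resulting $\phi$ satisfies $|\phi(\zeta)| \leq C\,v(\zeta) < 1$ for every $\zeta \in \bS_d \setminus K$, completing the peak interpolation.
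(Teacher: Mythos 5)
The first half of your argument is sound, and is essentially the standard reduction: a measure in $\AB^\perp$ induces the zero functional on $\AB$, which trivially extends weak-$*$ continuously, so annihilating measures are $\AB$--Henkin (you do not even need Theorem \ref{t-valskii} for this) and hence give $K$ measure zero; since every $\mu\in M(K)$ is then mutually singular with every annihilating measure, $\dist(\mu,\AB^\perp)=\|\mu\|_{M(K)}$, and duality plus the iteration of Lemma \ref{L:open_ball} makes the restriction map $\AB\to\rC(K)$ a metric surjection. One small caveat: the statement you actually invoke for this step, $\AB^\perp\cap M(K)=\{0\}$, only gives dense range; the isometric dual identity needs the stronger fact you proved first, namely $|\mu|(K)=0$ for \emph{every} $\mu\in\AB^\perp$. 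Note also that the paper does not reprove this theorem; it quotes it (the argument is Theorem 10.3.1 of \cite{Rudin}), and adapts that scheme to $\A_d$ in Theorem \ref{T:abstractBishop} and Theorem \ref{t-approxAd}.

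The genuine gap is in the iteration that is supposed to produce the strict bound off $K$. Your stage-$n$ constraint is infeasible as stated: since $\sigma$ is an $\AB$--Henkin measure, $\sigma(K)=0$, so $K$ has empty interior in $\bS_d$ and every point of $K$ is a limit of points of $\bS_d\setminus K$; a continuous $\phi_n$ satisfying $|\phi_n|\le c_n v$ on $\bS_d\setminus K$ with $v|_K=0$ must therefore vanish identically on $K$, which contradicts $\phi_n|_K=f_n\neq 0$. Beyond that, the sentence ``it is solvable because the totally-null hypothesis leaves enough flexibility in the annihilator'' is not an argument: that constrained interpolation step \emph{is} the content of Bishop's theorem, and it is exactly the part you have not proved. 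The classical device is different: at stage $n$ one uses a \emph{strictly positive} weight which equals a constant comparable to the size of the current correction on a small neighbourhood $V_n$ of $K$ and is small only \emph{off} $V_n$ (compare the weights $t_n$ in the proof of Theorem \ref{T:abstractBishop}); the weighted interpolation step is then legitimate because for strictly positive continuous $u$ the annihilator of $\{g/u:g\in\AB\}$ is $u\cdot\AB^\perp$, whose members still give $K$ measure zero, so the same duality argument applies in the weighted norm. The strict inequality $|\phi(\zeta)|<\|f\|_K$ for $\zeta\notin K$ is not imposed pointwise at each stage but emerges in the limit, because each fixed $\zeta\notin K$ lies outside $V_n$ for all large $n$ and the tail of the series at $\zeta$ is then strictly smaller than the remaining budget. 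As written, your scheme cannot be carried out, and repairing it amounts to reinstating this weight-on-a-shrinking-neighbourhood construction.
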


%%%%%%%%%%%%%%%%%%%%%%%%%%%%%%%%%%%%%%%
\subsection{Drury-Arveson space and Fock space} \label{SS:DA}

The Drury-Arveson space $H^2_d$ is a Hilbert space of analytic functions on $\bB_d$
with reproducing kernel
\[
 k(w,z) = \dfrac1{1-\ip{w,z}} \quad \text{for} \quad z,w\in \bB_d.
\]
For $z\in \bB_d$,  the functional on $H^2_d$ of evaluation at $z$ is given by $f(z) = \ip{f,k_z}$ where $k_z(w) = k(w,z)$. 
An orthogonal basis for $H^2_d$ is given by 
$$
\{ z^\alpha: \alpha=(\alpha_1,\dots,\alpha_d) \in \bN_0^d\}
$$
and those basis elements have norm given by the formula
\[
 \| z^{\alpha}\|_{H^2_d}^2 = \frac{\alpha!}{|\alpha|!} = \frac{\alpha_1! \dots \alpha_d!}{(\alpha_1+\dots+\alpha_d)!} .
\]

The multiplier algebra $\M_d$ of $H^2_d$ consists of all analytic functions $f$ on $\bB_d$ such that $f H^2_d\subset H^2_d$. 
Every multiplier is analytic on $\bB_d$, in fact $\M_d\subset H^2_d$ since $1\in H^2_d$. 
We identify a multiplier $f\in \M_d$ with the associated multiplication operator $M_f$ on $H^2_d$ defined as $M_f g=fg$. 
Then, $\M_d$ is a \wot-closed maximal abelian algebra of bounded operators on $H^2_d$. 
The operator norm of $M_f$ is called the multiplier norm of $f$, and it is given by $\|f\|_{\M_d}=\|M_f\|$. 
It is well-known that
\[
  \|f\|_{\infty} \le \|f\|_{\M_d} \qfor f\in \M_d
\]
and the two norms are not comparable \cite{Arv98, DP98b}.  Moreover, we can identify a predual $\M_{d*}$ for $\M_d$ as
\[
\M_{d*}=B(H^2_d)_*/(\M_d)_\perp.
\]

By analogy with the definition of $\AB$, we denote by $\A_d$ the norm closure of the polynomials in $\M_d$. 
Clearly we have  that $\A_d\subset \AB$, and thus all multipliers in $\A_d$ are continuous on $\ol{\bB_d}$. Note that the converse of this statement is false, as there are continuous multipliers which are not in $\A_d$ as shown in \cite{FX11}. Since the multiplier norm and the supremum norm are not comparable, the image of $\A_d$ inside of $\AB$ is not closed. Using the same identification as for $\AB$, we see that the maximal ideal space of $\A_d$ is (homeomorphic to) $\ol{\bB_d}$.

The Drury-Arveson space can viewed in a different way which will be useful for our purposes.  
Recall that the full Fock space $F^2_d = \ell^2(\bF_d^+)$ is the space of $\ell^2$ functions on the free semigroup $\bF_d^+$ consisting of all words in an alphabet of $d$ letters.  
Given a word $w\in \bF_d^+$ we denote by $\xi_w\in F^2_d$ the unique element such that $\xi_{w}(u)=\delta_{w,u}$ for each $u\in \bF_d^+$.
For each positive integer $k$, consider the subspace spanned by 
\[
\{\xi_w : |w|=k\}
\]
where $|w|$ is the length of $w$. This subspace carries a natural unitary action of the permutation group $S_k$.  
The set of fixed points of this action consists of the symmetric elements of length $k$. 
These may be identified with the homogeneous polynomials of degree $k$ in the Drury-Arveson space, 
thus providing an isometric identification of $H^2_d$ with the so-called symmetric Fock space.

The left regular representation of $\bF_d^+$ on $F_d^2$ is given by the map
\[
w\mapsto L_w
\]
where $L_w\xi_u=\xi_{wu}$ for every $u\in \bF_d^+$. 
In particular,  choosing $w$ to be a single letter we obtain the left multipliers $L_1,\ldots, L_d$. 
The symmetric Fock space is co-invariant for these multipliers.
Moreover, the compression of $L_k$ to the symmetric Fock space coincides with the multiplier $M_{z_k}$
once the space is identified with $H^2_d$. This point of view can be quite fruitful, as illustrated in \cite{Pop99} where a multivariate version of von Neumann's inequality is obtained based on Popescu's non-commutative Poisson transform.

The norm closed algebra generated by $L_1,\ldots, L_d$ is Popescu's non-commutative disc algebra $\fA_d$  \cite{Pop91}; 
and the \wot-closure of this algebra is the non-commutative analytic Toeplitz algebra $\fL_d$,   \cite{Pop95,DP98a}. It was observed in \cite{DP99} that $\fL_d$ has property $\bA_1$ and thus its \wot-topology coincides with its weak-$*$ topology.
We need the following result from \cite{DP98b}, \cite{ariaspopescu} and \cite{Hartz}.

%%%%%%%%%%%%%%%%%%%%%%%%%%%%%%%%%%%%%%%
\begin{theorem}[Davidson-Pitts, Arias-Popescu] \label{T:Ldcommu}
Let $\C$ denote the norm closed commutator ideal of $\fA_d$, and let $\C_w$ denote its \wot-closure in $\fL_d$. 
Then, $\fA_d/\C$ is completely isometrically isometric to $\A_d$ via a map sending $L_k+\C$  to $M_{z_k}$ for each $1\leq k \leq d$. Moreover, $\fL_d/\C_w$ is completely isometrically isometric and weak-$*$ homeomorphic to $\M_d$ via a map sending $L_k+\C_w$  to $M_{z_k}$  for each $1\leq k \leq d$.
\end{theorem}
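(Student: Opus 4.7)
The plan is to define a compression map $\pi: \fL_d \to \M_d$, check it is a weak-$*$ continuous completely contractive homomorphism, build an inverse via the universal polynomial functional calculus for the Drury-Arveson space, and then conclude that both maps are mutually inverse on the generators.

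First I would exploit the realization of $H^2_d$ as the symmetric Fock space inside $F^2_d$. Its orthogonal complement is spanned by the non-symmetric tensors, which is invariant for each $L_k$, so $H^2_d$ is co-invariant for all of $\fL_d$. Consequently the compression
\[
\pi: \fL_d \to B(H^2_d), \qquad \pi(T) = P_{H^2_d} T|_{H^2_d}
\]
is a unital completely contractive homomorphism. Since it is defined via compression by a fixed projection, $\pi$ is WOT-continuous, and a Krein-Smulian argument combined with property $\bA_1$ of $\fL_d$ promotes this to weak-$*$ continuity. A direct computation on the basis $\{\xi_w : w\in\bF_d^+\}$ shows $\pi(L_k)=M_{z_k}$, so $\pi$ carries $\fA_d$ onto a norm-dense subalgebra of $\A_d$ and sends $\fL_d$ into $\M_d$. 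Since $\M_d$ is commutative, every commutator of the $L_k$ lies in $\ker\pi$, so $\C\subseteq\ker\pi$ and $\C_w\subseteq\ker\pi$ by weak-$*$ continuity. We therefore obtain induced completely contractive homomorphisms
\[
\bar\pi_0: \fA_d/\C \to \A_d \qand \bar\pi: \fL_d/\C_w \to \M_d,
\]
with $\bar\pi$ weak-$*$ continuous.

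For the norm version I would construct an inverse $\phi:\A_d\to\fA_d/\C$. Because $(L_1+\C,\ldots,L_d+\C)$ is a commuting row contraction inside the operator algebra $\fA_d/\C$, the universal polynomial functional calculus for the Drury-Arveson space (Drury, M\"uller-Vasilescu, Arveson) provides a completely contractive homomorphism $\phi:\A_d\to\fA_d/\C$ with $\phi(M_{z_k})=L_k+\C$. The compositions $\bar\pi_0\circ\phi$ and $\phi\circ\bar\pi_0$ fix the generators, so by norm continuity they coincide with the identity maps on $\A_d$ and $\fA_d/\C$ respectively. Hence $\bar\pi_0$ and $\phi$ are mutually inverse completely contractive isomorphisms, and both must therefore be complete isometries.

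For the weak-$*$ statement I would promote $\bar\pi$ to a completely isometric weak-$*$ homeomorphism. It is already weak-$*$ continuous and agrees with the complete isometry $\bar\pi_0$ on the weak-$*$-dense subalgebra $\fA_d/\C$. The main obstacle is that norm isometry on a weak-$*$ dense subspace does not automatically extend across the weak-$*$ closure. I would overcome this by passing to preduals: the adjoint $\bar\pi_*:\M_{d*}\to\C_w^{\perp}\subseteq(\fL_d)_*$ is a bounded injective linear map, and combining the norm isometry already established on polynomials with property $\bA_1$ of $\fL_d$ (which allows one to represent functionals through WOT-continuous vector functionals) shows that $\bar\pi_*$ is an isometric isomorphism onto $\C_w^\perp$. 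Dualizing back then yields the desired completely isometric weak-$*$ homeomorphism.
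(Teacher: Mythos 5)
Since the paper does not actually prove this theorem (it imports it from Davidson--Pitts, Arias--Popescu and Hartz), the comparison is with those standard arguments. Your treatment of the norm-closed half is essentially the standard universality argument and is sound in outline: the symmetric Fock space is co-invariant, so compression gives a completely contractive homomorphism $\bar\pi_0:\fA_d/\C\to\A_d$ sending $L_k+\C$ to $M_{z_k}$, and the complete von Neumann inequality for commuting row contractions (M\"uller--Vasilescu/Arveson, or Popescu's Poisson transform, which is exactly what the paper cites in \cite{Pop99}) supplies the completely contractive inverse on generators; composing and using density of polynomials finishes that half.

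The weak-$*$ half, however, has a genuine gap at precisely the point you flag and then wave away. Your predual scheme needs two things: (a) $\bar\pi_*$ is isometric, and (b) $\bar\pi_*$ maps \emph{onto} the predual of $\fL_d/\C_w$. For (a), property $\bA_1$ is not what does the work; what you need is that the closed unit ball of $\A_d$ (indeed of the polynomials) is weak-$*$ dense in the closed unit ball of $\M_d$ (via the dilates $f_r$, with $\|f_r\|_{\M_d}\le\|f\|_{\M_d}$), combined with the isometry already proved on $\fA_d/\C$. The serious issue is (b): surjectivity of $\bar\pi_*$ is equivalent to injectivity of $\bar\pi$, i.e.\ to the assertion that any $A\in\fL_d$ whose compression to $H^2_d$ vanishes already lies in $\C_w$. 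Nothing in your argument addresses this --- an isometry on a weak-$*$ dense subalgebra plus property $\bA_1$ does not exclude a nonzero kernel --- and this is exactly where the hard content of the cited results lives: Davidson--Pitts obtain it from their ideal/invariant-subspace correspondence and the distance formula $\dist(A,\J)=\|P_{H^2_d}A|_{H^2_d}\|$ for \wot-closed ideals $\J$, while Arias--Popescu obtain it via commutant lifting of multipliers. A relatively elementary repair in this particular case: the Ces\`aro means $\Sigma_k(A)$ are polynomials in $L_1,\dots,L_d$ whose Fourier coefficients are those of $A$ scaled by weights depending only on word length; if $\pi(A)=0$ then every symmetrized coefficient sum vanishes, so each $\Sigma_k(A)$ has zero abelianization, hence lies in the commutator ideal $\C$, and since $\Sigma_k(A)\to A$ in the weak-$*$ topology and $\C_w$ is weak-$*$ closed, $A\in\C_w$. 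With injectivity supplied in some such way (or by citing the lifting/distance theorems), your dualization does yield the completely isometric weak-$*$ homeomorphism; as written, it does not.
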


In fact, this holds in greater generality: if $\J$ is any \wot-closed ideal of $\fL_d$, then the quotient $\fL_d/\J$ is completely isometrically isomorphic to the compression of $\fL_d$ to $(\J F_d^2)^\perp$  \cite{DP98b}.
This fact also applies to quotients of $\M_d$  \cite{DRS14}.

%%%%%%%%%%%%%%%%%%%%%%%%%%%%%%%%%%%%%%%
\subsection{The structure of free semigroup algebras and their dual space} \label{SS:sing}

As shown in \cite{DLP}, the double dual $\fA_d^{**}$ is a free semigroup algebra, that is to say $\fA_d^{**}$ is \wot-closed and generated by isometries with pairwise orthogonal ranges. 
These have a very rigid structure  \cite[Theorem 2.6]{DKP}.
We let $\widehat{L}_k$ denote the image of $L_k$ in $\fA_d^{**}$.

%%%%%%%%%%%%%%%%%%%%%%%%%%%%%%%%%%%%%%%
\begin{theorem}[Davidson-Katsoulis-Pitts] \label{T:structureproj}
There is an orthogonal projection $P$ in $\fA_d^{**}$ such that
\begin{enumerate}[label=\normalfont{(\roman*)}]
\item the range of $P$ is coinvariant for $\fA_d^{**}$.
\item $\fA_d^{**} (I-P)$ is completely isometrically isomorphic and weak-$*$ homeomorphic in a canonical way to $\fL_d$ via a map that sends $\widehat{L}_k(I-P)$ to $L_k$ for each $1\leq k\leq d$.

\item $\fA_d^{**} P = \fN P$ is a \wot-closed left ideal in $\fN$, the von Neumann algebra generated by $\fA_d^{**}$.
Moveover,  $\fA_d^{**} P = \bigcap_{n=1}^\infty (\fA_{d,0}^{**})^n$, where $\fA^{**}_{d,0}$ is the codimension $1$ \wot-closed ideal of $\fA^{**}_d$ generated by $\widehat{L}_1,\dots,\widehat{L}_d$. 
\end{enumerate}
\end{theorem}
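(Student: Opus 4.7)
The plan is to verify that $\fA_d^{**}$ falls within the scope of the general structure theory for free semigroup algebras developed in \cite{DKP}, and then to extract the three listed properties from that framework.

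First, I would realize $\fA_d^{**}$ as a concrete \wot-closed subalgebra of $B(H)$ for some Hilbert space $H$. The natural choice is a universal normal representation, obtained by passing through the enveloping von Neumann algebra of $B(F_d^2)$ into which $\fA_d$ embeds. In such a representation the weak-$*$ topology on $\fA_d^{**}$ agrees with \wot, and the generators $\widehat{L}_1,\ldots,\widehat{L}_d$ remain isometries with pairwise orthogonal ranges, since the identities $L_i^* L_j=\delta_{ij}I$ are $\rC^*$-algebraic and therefore preserved by any $*$-representation. This confirms that $\fA_d^{**}$ is a free semigroup algebra in the required sense.

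Next, I would construct the projection $P$ by isolating the ``singular'' part. Let $\fN$ be the von Neumann algebra generated by $\fA_d^{**}$, and consider the \wot-closed two-sided ideal $\I:=\bigcap_{n\geq 1}(\fA_{d,0}^{**})^n$ of $\fA_d^{**}$. Its left $\fN$-span is a \wot-closed left ideal in $\fN$ and hence has the form $\fN P$ for a unique projection $P\in\fN$. The coinvariance claim (i) then follows because $\I$ is two-sided: left multiplication by $\fA_d^{**}$ preserves $\I H$, so $(I-P)H$ is $\fA_d^{**}$-invariant. To obtain (ii), I would analyze the restriction of $\fA_d^{**}$ to the invariant subspace $(I-P)H$; by construction the compressed algebra is a free semigroup algebra whose filtration $\bigcap_n(\text{augmentation})^n$ is trivial. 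This is precisely the hypothesis under which Popescu's noncommutative Poisson transform yields a completely isometric, weak-$*$ homeomorphic isomorphism with $\fL_d$ sending the compressed generator $\widehat{L}_k(I-P)$ to $L_k$, in the same spirit as the second assertion of Theorem \ref{T:Ldcommu}.

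The main obstacle will be the absorption statement in (iii), namely verifying that $P$ actually lies in $\fA_d^{**}$ and that $\fA_d^{**} P=\fN P$. This is the deep technical core of the structure theorem and is not routine. The strategy is to take $X\in\fN P$ and approximate it in \wot by operators $AP$ with $A\in\fA_d^{**}$, exploiting the wandering subspace structure: every vector in $PH$ admits arbitrarily long factorizations $x=\widehat{L}_w y_w$ with $|w|$ unbounded, which permits polynomials in $\widehat{L}_1,\ldots,\widehat{L}_d$ to simulate adjoint actions when restricted to $PH$. Executing this inductive scheme, and leveraging the orthogonality of the ranges of the $\widehat{L}_i$ to keep track of the approximations, is the delicate step that ultimately places $P$ inside $\fA_d^{**}$ and identifies $\fA_d^{**}P=\fN P=\I$.
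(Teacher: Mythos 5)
You should first note the context: the paper does not prove this statement at all --- it is imported directly from \cite[Theorem 2.6]{DKP}, with \cite{DLP} supplying the fact that $\fA_d^{**}$ is a free semigroup algebra. So what you are sketching is the Davidson--Katsoulis--Pitts structure theorem itself, and your outline has to be judged against that proof. Your first step (realize $\fA_d^{**}$ inside the universal representation of the Cuntz--Toeplitz C*-algebra, observe that the relations $L_i^*L_j=\delta_{ij}I$ persist under any $*$-representation, conclude that $\fA_d^{**}$ is a \wot-closed algebra generated by isometries with pairwise orthogonal ranges) is correct and is precisely the observation of \cite{DLP}.

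The genuine gaps are at the two places you yourself flag, and the sketches offered there would not close them. For (iii): the effective route is to show that $\fJ:=\bigcap_{n\geq 1}(\fA_{d,0}^{**})^n$ is \emph{already} a left ideal of $\fN$, using the operator-level factorization that every $A$ in the \wot-closure of $(\fA_{d,0}^{**})^n$ can be written $A=\sum_{|w|=n}\widehat{L}_wA_w$ with $(A_w)_{|w|=n}$ a norm-controlled column over $\fA_d^{**}$; this yields $\widehat{L}_i^*A\in(\fA_{d,0}^{**})^{n-1}$, so $\fJ$ is invariant under left multiplication by the $\widehat{L}_i$ and $\widehat{L}_i^*$, hence by $\fN$. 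Then $\fJ=\fN P$ with $P=IP\in\fJ\subset\fA_d^{**}$, the chain $\fJ=\fJ P\subseteq\fA_d^{**}P\subseteq\fN P=\fJ$ gives $\fA_d^{**}P=\fN P=\fJ$, and coinvariance follows because $PA\in\fJ=\fN P$ forces $PA=PAP$. Your construction instead takes the left ideal of $\fN$ \emph{generated} by $\I=\fJ$, which a priori is strictly larger, so you still owe exactly the absorption $\fN\I\subseteq\I$; the argument you gesture at --- vector-level factorizations $x=\widehat{L}_wy_w$ and ``polynomials simulating adjoint actions'' --- is not a proof of that, since what is needed is a factorization of operators, not of vectors, and without it nothing places $P$ in $\fA_d^{**}$ or identifies $\fA_d^{**}P$ with $\fJ$. (Your coinvariance deduction is also misstated: coinvariance of $PH$ means invariance of $(I-P)H=\bigcap_{B\in\I}\ker B$ under $\fA_d^{**}$, which follows from $\I$ being a right ideal; it does not follow from ``$\I H$ is preserved,'' which concerns a different subspace.)

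For (ii), the gap is more serious. Knowing that the compression of $\fA_d^{**}$ to $(I-P)H$ is a free semigroup algebra whose augmentation filtration has trivial intersection does not, via Popescu's Poisson transform, produce a \emph{completely isometric and weak-$*$ homeomorphic} isomorphism with $\fL_d$. The Poisson transform (equivalently, the adjoint of the inclusion of preduals) only gives a canonical weak-$*$ continuous, completely contractive map; the assertion that this map is a complete isometry and a weak-$*$ homeomorphism on the analytic corner is the ``type L'' half of the structure theorem, proved in \cite{DKP} through the analysis of wandering vectors (after an ampliation argument) together with the special properties of $\fL_d$, and in the bidual setting one must also explain why the left regular representation forces $P\neq I$ and governs that corner. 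As written, this step is asserted rather than proved, so the proposal does not constitute an independent proof; in the context of the present paper the correct justification remains the citation to \cite{DKP} and \cite{DLP}.
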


Following the terminology introduced in \cite{DLP}, a functional $\Phi\in\fA^*_d$ is said to be \textit{absolutely continuous} 
if it is the restriction to $\fA_d$ of a weak-$*$ continuous functional on $\fL_{d}$. 
Such functionals have a very special form as the next result shows \cite[Theorem 2.10]{DP99}. 
Given vectors $\xi,\eta\in F_d^2$, we define the functional $[\xi \eta^*]\in \fA_d^*$ as 
\[
[\xi\eta^*](A) =  \ip{A \xi, \eta}.
\]

%%%%%%%%%%%%%%%%%%%%%%%%%%%%%%%%%%%%%%%
\begin{theorem}[Davidson-Pitts]\label{T:vectorfunct}
Let $\Phi\in \fA_d^*$ be absolutely continuous. 
Then, for every $\eps>0$ there are $\xi,\eta\in F_d^2$ such that $\Phi=[\xi\eta^*]$ and $\|\xi\|_{F^2_d} \|\eta\|_{F^2_d}<\|\Phi\|+\eps$.
\end{theorem}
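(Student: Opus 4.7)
The goal is to establish that $\fL_d$ enjoys Property $\bA_1(1)$: every weak-$*$ continuous functional on $\fL_d$ arises from a single rank-one vector $[\xi\eta^*]$ with $\|\xi\|\,\|\eta\|$ arbitrarily close to the functional norm. Since an absolutely continuous $\Phi\in\fA_d^*$ is by definition the restriction of some $\widetilde\Phi\in\fL_{d*}$, and a Hahn--Banach extension preserves the norm once one identifies the absolutely continuous functionals as an isometric copy of $\fL_{d*}$ inside $\fA_d^*$, the theorem for $\fA_d$ reduces to the statement for $\fL_d$. I would begin by realizing $\widetilde\Phi$ as the restriction of a trace class functional on $B(F^2_d)$: write $\widetilde\Phi=\sum_{i\geq 1}[\xi_i\eta_i^*]|_{\fL_d}$ with $\sum_i\|\xi_i\|\,\|\eta_i\|<\|\Phi\|+\delta$ for arbitrarily small $\delta>0$, and after rescaling each pair assume $\|\xi_i\|=\|\eta_i\|$.

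The central tool is the right regular representation $w\mapsto R_w$ on $F^2_d$, acting by $R_w\xi_u=\xi_{uw}$. The $R_w$ are isometries, commute with $\fL_d$, and satisfy $R_v^*R_w=\delta_{v,w}I$ whenever $|v|=|w|$. Given a finite truncation $\sum_{i=1}^N[\xi_i\eta_i^*]$, select $N$ distinct words $w_1,\ldots,w_N$ of a common length and set
\[
\xi=\sum_{i=1}^N R_{w_i}\xi_i,\qquad \eta=\sum_{i=1}^N R_{w_i}\eta_i.
\]
The commutation $AR_{w_i}=R_{w_i}A$ for $A\in\fL_d$, combined with the orthogonality $R_{w_j}^*R_{w_i}=\delta_{ij}I$, gives $\ip{A\xi,\eta}=\sum_i\ip{A\xi_i,\eta_i}$, while the same orthogonality yields $\|\xi\|^2=\sum_i\|\xi_i\|^2$ and similarly for $\eta$, so that $\|\xi\|\,\|\eta\|=\sum_i\|\xi_i\|\,\|\eta_i\|$. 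In this way any finite sum of rank-one functionals is realized as a single one, without norm inflation.

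To pass from this finite-rank approximation to the exact equality $\Phi=[\xi\eta^*]$, I would iterate: having produced $\xi^{(1)},\eta^{(1)}$ that reproduce $\widetilde\Phi$ up to a residual functional $\Psi_1$ with $\|\Psi_1\|\leq \delta\|\Phi\|$, apply the construction to $\Psi_1$ using words strictly longer than any that appeared before, thereby placing the new vectors in a wandering subspace orthogonal to the earlier one. Geometric decay of $\|\Psi_k\|$ then makes $\xi=\sum_k\xi^{(k)}$ and $\eta=\sum_k\eta^{(k)}$ converge in $F^2_d$ with $\Phi=[\xi\eta^*]$ and $\|\xi\|\,\|\eta\|\leq \|\Phi\|(1+O(\delta))<\|\Phi\|+\eps$ for $\delta$ sufficiently small. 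The main difficulty is coordinating the iteration so as to secure both exact equality and the sharp norm bound simultaneously; this relies essentially on the freedom to enlarge word length at each step, which is available precisely because of the infinite-dimensional wandering subspace structure of the free semigroup $\bF_d^+$.
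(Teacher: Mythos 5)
Your proposal is, in substance, the original Davidson--Pitts argument: the paper does not prove this statement itself but quotes it from \cite{DP99}, and the mechanism there is exactly the one you describe --- represent the weak-$*$ continuous functional by a trace-class sum $\sum_i[\xi_i\eta_i^*]$ with $\sum_i\|\xi_i\|\,\|\eta_i\|<\|\Phi\|+\delta$, normalize $\|\xi_i\|=\|\eta_i\|$, and amalgamate using isometries $R_{w_i}$ from the right regular representation, which commute with $\fL_d$ and have pairwise orthogonal ranges. So the core of your proof is correct and is the intended route.

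Two points need repair. First, your limiting step is both flawed as stated and unnecessary. Choosing the later words merely ``strictly longer than any that appeared before'' does not give orthogonality: $R_v$ and $R_w$ have orthogonal ranges precisely when neither of $v,w$ is a suffix of the other, and this fails for, say, $v=11$, $w=1$. You must impose the suffix-free condition across all stages so that $R_{w_j}^*R_{w_i}=0$ for $i\ne j$; and once you do, no iteration is needed, because you can take a single infinite suffix-free family (for $d\ge2$, e.g.\ $w_i=21^i$) and set $\xi=\sum_i R_{w_i}\xi_i$, $\eta=\sum_i R_{w_i}\eta_i$ directly: the sums converge since the terms are pairwise orthogonal with square-summable norms, the cross terms $\ip{AR_{w_i}\xi_i,R_{w_j}\eta_j}=\ip{R_{w_j}^*R_{w_i}A\xi_i,\eta_j}$ vanish for $i\ne j$, and $\|\xi\|\,\|\eta\|=\sum_i\|\xi_i\|\,\|\eta_i\|<\|\Phi\|+\eps$ in one step. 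Second, the reduction to $\fL_d$ is not a Hahn--Banach matter: a norm-preserving Hahn--Banach extension of $\Phi$ need not be weak-$*$ continuous. What you actually need is that the (unique) weak-$*$ continuous extension $\widetilde\Phi$ satisfies $\|\widetilde\Phi\|_{(\fL_d)_*}=\|\Phi\|_{\fA_d^*}$, and this follows from the weak-$*$ density of the closed unit ball of $\fA_d$ in that of $\fL_d$ (Ces\`aro means of the Fourier expansion), which should be cited or proved rather than asserted. Finally, note that the word combinatorics requires $d\ge2$; the case $d=1$ is the classical fact that $H^\infty$ has property $\bA_1(1)$ and must be handled separately.
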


At the other extreme, a functional $\Phi \in \fA_d^*$ is said to be \textit{singular} if 
\[
 \|\Phi|_{\fA_{d,0}^n}\|_{(\fA_{d,0}^n)^*} = \|\Phi\|_{\fA_d^*} \qforal n\ge0 ,
\]
where $\fA_{d,0}$ is the codimension $1$ norm-closed ideal of $\fA_d$ generated by $L_1,\dots,L_d$. 
The following decomposition was proved in  \cite[Proposition 5.9]{DLP}.

%%%%%%%%%%%%%%%%%%%%%%%%%%%%%%%%%%%%%%%
\begin{theorem}[Davidson-Li-Pitts] \label{T:lebesguedecomp}
Every functional $\Phi\in \fA_d^*$ has a unique decomposition $ \Phi = \Phi_a + \Phi_s$ where $\Phi_a$ is absolutely continuous and $\Phi_s$ is singular. 
\end{theorem}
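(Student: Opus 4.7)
The approach is to construct the decomposition explicitly via the canonical projection $P \in \fA_d^{**}$ from Theorem~\ref{T:structureproj}, and then derive uniqueness from a vanishing lemma. Each $\Phi \in \fA_d^*$ extends uniquely to a weak-$*$ continuous functional $\widehat{\Phi}$ on $\fA_d^{**}$, and I would define
\[
\Phi_a(A) := \widehat{\Phi}(A(I-P)) \qand \Phi_s(A) := \widehat{\Phi}(AP) \qfor A \in \fA_d,
\]
so that $\Phi = \Phi_a + \Phi_s$. Absolute continuity of $\Phi_a$ is immediate: by Theorem~\ref{T:structureproj}(ii), the weak-$*$ homeomorphic isomorphism $\iota : \fA_d^{**}(I-P) \to \fL_d$ sending $\widehat{L}_k(I-P) \mapsto L_k$ produces a weak-$*$ continuous functional $\widetilde{\Phi_a} := \widehat{\Phi} \circ \iota^{-1}$ on $\fL_d$ which restricts to $\Phi_a$ on $\fA_d$.

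For singularity of $\Phi_s$ I invoke Theorem~\ref{T:structureproj}(iii), which gives $\fA_d^{**}P \subseteq (\fA_{d,0}^{**})^n$ for every $n \geq 0$. Right multiplication by $P$ is weak-$*$ continuous on $\fA_d^{**}$, and Goldstine's theorem ensures that the unit ball of $\fA_{d,0}^n$ is weak-$*$ dense in the unit ball of its weak-$*$ closure $(\fA_{d,0}^{**})^n$ in $\fA_d^{**}$. Hence every $Y \in \fA_d^{**}P$ with $\|Y\| \leq 1$ is a weak-$*$ limit of a net $\{B_\alpha\}$ in the unit ball of $\fA_{d,0}^n$, and $B_\alpha P \to YP = Y$ weak-$*$, so $\widehat{\Phi}(B_\alpha P) \to \widehat{\Phi}(Y)$. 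This shows that
\[
\|\Phi_s|_{\fA_{d,0}^n}\| = \sup\bigl\{|\widehat{\Phi}(Y)| : Y \in \fA_d^{**}P,\ \|Y\| \leq 1\bigr\},
\]
a value independent of $n$; taking $n = 0$ forces $\|\Phi_s|_{\fA_{d,0}^n}\| = \|\Phi_s\|$ for every $n$.

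The main obstacle is uniqueness, which rests on the vanishing lemma: \emph{any $\Psi \in \fA_d^*$ that is both absolutely continuous and singular vanishes.} Indeed, absolute continuity gives a weak-$*$ continuous extension $\widetilde{\Psi}$ to $\fL_d$, which by Theorem~\ref{T:vectorfunct} has the form $[\xi\eta^*]$ for some $\xi, \eta \in F_d^2$; for $B \in \fA_{d,0}^n \subset \fL_{d,0}^n$ one has $B\xi \in \ol{\spn}\{\xi_w : |w| \geq n\}$, so $|\Psi(B)| \leq \|B\|\,\|\xi\|\,\|P_{\geq n}\eta\|$ and $\|\Psi|_{\fA_{d,0}^n}\| \to 0$, forcing $\|\Psi\| = 0$ by singularity. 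To derive uniqueness from this lemma, I would upgrade the singularity condition to the linear statement that $\widehat{\Phi}|_{\fA_d^{**}(I-P)} = 0$: one direction is an easy consequence of the Goldstine argument used for singularity above, while the reverse (the delicate point) requires a careful analysis of how the weak-$*$ structure of $\widehat{\Phi}$ interacts with the block-triangular decomposition induced by $P$ together with the descending chain $(\fA_{d,0}^{**})^n$. With this linear characterization in hand, singular functionals form a closed subspace, the difference of two Lebesgue decompositions is therefore both absolutely continuous and singular, and the vanishing lemma completes the proof.
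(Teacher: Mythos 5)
This theorem is not proved in the paper: it is quoted from \cite{DLP} (Proposition 5.9), and the decomposition you construct, $\Phi_a=\widehat{\Phi}(\,\cdot\,(I-P))$ and $\Phi_s=\widehat{\Phi}(\,\cdot\,P)$, is exactly the one the paper records immediately after the statement. Your existence half is essentially sound: absolute continuity of $\Phi_a$ via the canonical identification $\fA_d^{**}(I-P)\simeq\fL_d$ from Theorem \ref{T:structureproj} is fine, and the Goldstine argument for singularity of $\Phi_s$ works once you justify (a one-line iteration of separate weak-$*$ continuity, as in Lemma \ref{L:commideal}) that $(\fA_{d,0}^{**})^n$ coincides with the weak-$*$ closure of $\fA_{d,0}^n$. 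Your vanishing lemma is also correct: every operator in $\fA_{d,0}^n$ has range in $\ol{\spn}\{\xi_w:|w|\ge n\}$, so a vector functional $[\xi\eta^*]$ from Theorem \ref{T:vectorfunct} decays in norm on $\fA_{d,0}^n$, and hence a functional that is both absolutely continuous and singular vanishes.

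The genuine gap is uniqueness. Your plan reduces it to the claim that norm-singularity (the condition $\|\Phi|_{\fA_{d,0}^n}\|=\|\Phi\|$ for all $n$) implies $\widehat{\Phi}|_{\fA_d^{**}(I-P)}=0$, i.e.\ that the canonical absolutely continuous part of a singular functional is zero; but you do not prove this implication, describing it only as requiring a ``careful analysis.'' This is not a loose end one can defer: it is equivalent to the uniqueness assertion itself. If some norm-singular $\Phi$ had $\Phi_a\neq 0$, then $\Phi=0+\Phi$ and $\Phi=\Phi_a+\Phi_s$ would be two distinct admissible decompositions; conversely, granting the implication, singular functionals form a linear space, the maps $\Phi\mapsto\Phi_a$ and $\Phi\mapsto\Phi_s$ are linear, and your vanishing lemma finishes the proof. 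Nor can the implication be extracted by norm bookkeeping from what you have established: since $\|\Phi\|\le\|\Phi_a\|+\|\Phi_s\|$ is strict in general (as the paper notes right after Theorem \ref{T:lebesguedecomp}), the hypothesis of norm-singularity only yields $\|\Phi_a+\Phi_s\|=\|\Phi_s\|$, which does not by itself force $\Phi_a=0$; contrast this with the commutative quotient $\A_d^*\simeq\M_{d*}\oplus_1\fW_*$, where additivity of norms does the work. Supplying this missing implication is precisely the substance of \cite{DLP} (see also the F.\ and M.\ Riesz-type analysis in \cite{KY}), so as written your proposal proves existence but not uniqueness.
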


The decomposition of a functional $\Phi\in \fA_d^*$ is given by
\[
 \Phi_a(A) = \Phi(A(I-P)), \quad  \Phi_s(A) = \Phi(AP) \qforal A\in \fA_d
\]
where $P\in \fA_d^{**}$ is the projection from Theorem \ref{T:structureproj}.
We have the inequality 
\[
\|\Phi\|_{\fA_d^*}\leq \|\Phi_a\|_{\fA_d^*}+\|\Phi_s\|_{\fA_d^*},
\]
which is strict in general. Kennedy and Yang show in  \cite{KY} that this decomposition holds in $\fL_d$ and its quotients by ideals by establishing an F. and M. Riesz type of theorem. 
These results will be adapted to the commutative setting of $\A_d$.

%%%%%%%%%%%%%%%%%%%%%%%%%%%%%%%%%%%%%%%
%%%%%%%%%%%%%%%%%%%%%%%%%%%%%%%%%%%%%%%
\section{Extension of $\A_d$--Henkin functionals }\label{S:valskii}
As was discussed in the preliminaries, we have that
\[
 \|f\|_\infty \le \|f\|_{\M_d}\qforal f\in \M_d .
\]
In particular, we have the inclusions
\[
 \A_d\subset \AB  \qand  \M_d\subset H^\infty(\bB_d).
\]
Any functional on $\AB$ restricts to a functional on $\A_d$ and likewise any functional on $\HB$ restricts to a functional on $\M_d$.

The same kind of inclusion holds for the preduals as well: given $\Phi \in \HB_*$ the restriction $\Psi = \Phi|_{\M_d}$ belongs to $\M_{d*}$. Indeed, by the Krein-Smulyan theorem, to establish this claim it suffices to verify that $\Psi$  is weak-$*$ continuous at $0$ when restricted to the unit ball $\ol{b_1(\M_d)}$. Consider a net $\{f_\alpha\}_{\alpha}\subset \ol{b_1(\M_d)}$ converging weak-$*$ to $0$. 
In particular, we have that the net lies in $\ol{b_1(\HB)}$ and converges to $0$ pointwise on $\bB_d$, from which it follows that it converges to $0$ weak-$*$ in $\HB$.
Thus 
\[
\lim_{\alpha}\Psi(f_\alpha) = \lim_{\alpha}\Phi(f_\alpha)=0
\] 
as desired. In other words, we have
\[
 \HB_*\subset \M_{d*}.
\]

To account for the fact that not all functionals on $\A_d$ are given as restrictions of functionals on $\AB$, we make the following definition which is inspired  by the corresponding definition for measures on the sphere.

%%%%%%%%%%%%%%%%%%%%%%%%%%%%%%%%%%%%%%%
\begin{defn}
We say that $\Phi\in \A_d^*$ is an \emph{$\A_d$--Henkin functional} if 
\[
 \lim_{n\to\infty} \Phi(f_n) = 0
\] 
whenever $\{f_n\}_n\subset \A_d$
converges weak-$*$ to $0$ in $\M_d$.
In the same vein, a measure $\mu\in M(\bS_d)$ is \emph{$\A_d$--Henkin} if
\[
f\mapsto \int_{\bS_d}fd\mu, \quad f\in \A_d
\]
is an $\A_d$--Henkin functional.
\end{defn}

We note that a sequence $\{f_n\}_n\subset \M_d$ converges weak-$*$ to zero in $\M_d$ if and only if 
it is bounded (in the multiplier norm) and converges pointwise to $0$ on $\bB_d$.
Since the multiplier norm dominates the supremum norm, we see that any $\AB$--Henkin measure is necessarily $\A_d$--Henkin.

The main result of this section provides an analogue of the remark that follows Theorem \ref{t-valskii}.
Namely, we show that if $\Phi$ is an $\A_d$--Henkin functional (not necessarily given as integration against some measure on $\bS_d$), then $\Phi$ extends to be weak-$*$ continuous on $\M_d$.

Let us first set up some notation. Given a function $f$ defined on $\bB_d$ and $0 \le r < 1$, define the function $f_r$ as
\[
 f_r(z)=f(rz) \qfor z\in\bB_d .
\]
It is an elementary exercice to show that if $f\in \HB$, then $f_r \in \AB$ and
\[ \wslim_{r\to 1}f_r= f \]
in the weak-$*$ topology of $\HB$. 
Likewise, if $f\in \M_d$ then $f_r \in \A_d$ and 
\[ \wslim_{r\to 1}f_r=f \]
in the weak-$*$ topology of $\M_d$. 
In fact, we have a stronger conclusion. Before stating it, let
\[
\Gamma(z,w)=\frac{1}{(1-\ip{ z,w })^{d}}
\]
which is defined for $z\in \bB_d,w\in \ol{\bB_d}$.
Recall then the Cauchy formula (Equation 3.2.4 in \cite{Rudin})
\[
 f(z)=\int_{\bS_d} f(\zeta)\Gamma(z,\zeta) \,d\sigma(\zeta)
 \qfor f \in \AB \AND z\in\bB_d .
\]

%%%%%%%%%%%%%%%%%%%%%%%%%%%%%%%%%%%%%%%
\begin{lemma}\label{l-conv}
Let $\{f_n\} \subset \AB$ be a bounded sequence of functions. 
Let $\{r_n\}_n$ be a sequence of numbers satisfying $0\leq r_n<1$ and $\lim_{n\to \infty}r_n=1$. 
Put $g_n(z)=f_n(r_n z)$ for every $z\in\bB_d$. 
Then, each $g_n$ belongs to $\A_d$ and the sequence $\{g_n-f_n\}_n$ converges to $0$ in the weak-$*$ topology of $\HB$. 
Moreover, if we assume that $\{f_n\}_n$ is a bounded sequence in $\A_d$, 
then the sequence $\{g_n-f_n\}_n$ converges to $0$ in the weak-$*$ topology of $\M_d$. 
\end{lemma}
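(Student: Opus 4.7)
The plan is to prove the two assertions separately: first that each $g_n \in \A_d$, and then that $g_n - f_n$ tends weak-$*$ to zero in the appropriate algebra.

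For $g_n \in \A_d$, the key observation is that $T_n := (r_n M_{z_1}, \ldots, r_n M_{z_d})$ is a strict commuting row contraction whose joint spectrum lies in $r_n \ol{\bB_d} \subset \bB_d$. Since $f_n \in \AB$ is analytic on $\bB_d$, the holomorphic functional calculus yields a bounded operator $f_n(T_n)$, and a direct computation on polynomials identifies this operator as $M_{g_n}$. The continuity of the functional calculus in the topology of uniform convergence on compact subsets of $\bB_d$ lets one approximate $g_n$ in multiplier norm by polynomials: the Taylor partial sums $P_N$ of $f_n$ satisfy $P_N \to f_n$ uniformly on the compact set $r_n \ol{\bB_d}$, and hence $M_{(P_N)_{r_n}} = P_N(T_n) \to f_n(T_n) = M_{g_n}$ in operator norm. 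Thus $g_n$ is a multiplier-norm limit of polynomials, placing it in $\A_d$.

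For the weak-$*$ convergence, I would first record the standard fact that a norm-bounded sequence in $\HB$ (respectively $\M_d$) converges weak-$*$ to $0$ if and only if it converges pointwise to $0$ on $\bB_d$: one direction uses weak-$*$ continuity of point evaluations, the other follows by an Alaoglu compactness argument, since any weak-$*$ cluster point must agree with the pointwise limit and hence vanish. It therefore suffices to verify norm-boundedness of $\{g_n - f_n\}$ together with pointwise convergence $g_n(z) - f_n(z) \to 0$ on $\bB_d$. Boundedness in $\HB$ is immediate from $\|g_n\|_\infty \le \|f_n\|_\infty$, while the $\M_d$ bound relies on the contractivity $\|f_r\|_{\M_d} \le \|f\|_{\M_d}$ for $0 \le r < 1$, itself obtained by the same functional calculus applied to $(rM_{z_1}, \ldots, rM_{z_d})$ and the identification of its image at $f$ with $M_{f_r}$.

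For the pointwise convergence, I would fix $z \in \bB_d$ and invoke the Cauchy integral formula to write
\[
g_n(z) - f_n(z) = \int_{\bS_d} f_n(\zeta)\bigl[\Gamma(r_n z, \zeta) - \Gamma(z, \zeta)\bigr] \, d\sigma(\zeta) ;
\]
since $\Gamma(\cdot, \zeta)$ is jointly continuous on $\bB_d \times \bS_d$ and $\|f_n\|_\infty$ is uniformly bounded, the integrand tends to zero uniformly in $\zeta$ as $n \to \infty$, giving the claimed limit. The main obstacle I anticipate is the combination of two standard but delicate facts: the contractive radial substitution $\|f_r\|_{\M_d} \le \|f\|_{\M_d}$, and the characterization of bounded weak-$*$ convergence by pointwise convergence on $\bB_d$ in the multiplier setting. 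Both ultimately rely on the functional calculus for strict commuting row contractions together with the weak-$*$ continuity of point evaluations, so once these tools are correctly invoked the rest of the proof reduces to the elementary Cauchy-kernel estimate above.
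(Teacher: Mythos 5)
Your proof is correct and follows essentially the same route as the paper: reduce weak-$*$ convergence to norm-boundedness plus pointwise convergence on $\bB_d$, obtain the pointwise convergence from the Cauchy formula together with the convergence of the kernels $\Gamma(r_n z,\cdot)\to\Gamma(z,\cdot)$, and use the contractivity $\|f_r\|_{\M_d}\le\|f\|_{\M_d}$ for the multiplier bound. The only differences are in auxiliary justifications: the paper gets $g_n\in\A_d$ from a one-line power series expansion instead of the Taylor functional calculus for the strict row contraction, pairs $f_n$ against the kernels in $L^2(\sigma)$ rather than using uniform convergence on $\bS_d$, and simply cites the contractivity of $f\mapsto f_r$ (which indeed comes from the von Neumann-type inequality for the strict row contraction $(rM_{z_1},\ldots,rM_{z_d})$, not from the holomorphic functional calculus by itself).
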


\begin{proof}
Note that each function $g_n$ is holomorphic on a ball of radius strictly larger than $1$. Thus, a power series expansion shows that $g_n\in \A_d$.
Assume first that $\{f_n\}_n$ is a bounded sequence of functions in $\AB$. It is clear that
\[
 \|g_n\|_{\infty}\leq \|f_n\|_{\infty}
\]
for every $n$, whence the sequence $\{g_n\}_n$ is bounded in $\AB$. 
To obtain that the sequence $\{g_n-f_n\}_n$ converges to $0$ in the weak-$*$ topology of $\HB$, 
it remains only to verify that $\lim_{n\to \infty} (g_n-f_n)=0$ pointwise on $\bB_d$. 
Fix $z\in\bB_d$. Then,
$$
g_n(z)-f_n(z)=\langle f_n, \Gamma(\cdot,r_n z)-\Gamma(\cdot,z)\rangle_{L^2(\sigma)}.
$$
Since $\|f_n\|_{L^2(\sigma)}\leq \|f_n\|_{A(\bB_d)}$ and
$$
\lim_{r\to 1}\Gamma(\cdot,r z)=\Gamma(\cdot, z)
$$
in $L^2(\sigma)$ for every $z\in \bB_d$,
the first statement follows.

The second statement follows from the same argument upon replacing each occurrence of the supremum norm 
by the multiplier norm and using the basic fact that
$$
\|f_r\|_{\M_d}\leq \|f\|_{\M_d}
$$
for every $f\in \M_d$ and every $0\leq r\leq 1$ (this easily follow from Equation 3.5.4 in \cite{Shalit} along with the fact that the Poisson kernel is positive).
\end{proof}

The following is the main result of this section. 
In addition to showing that $\A_d$--Henkin functionals  extend to elements of the predual $\M_{d*}$, it shows that 
such functionals may be approximated by integration functionals against measures that are absolutely continuous with respect to $\sigma$.
The proof of this second statement is heavily inspired by that of Theorem 9.2.1 in \cite{Rudin}.

%%%%%%%%%%%%%%%%%%%%%%%%%%%%%%%%%%%%%%%
\begin{theorem}\label{t-genvalskii}
Let $\Phi$ be an $\A_d$--Henkin functional. 
Then, there exists $\Psi\in \M_{d*}$ such that $\Psi|_{\A_d}=\Phi$; and 
\[ \|\Psi\|_{\M_d} = \|\Phi\|_{\A_d} . \]
Moreover, there exists an anti-holomorphic function $\phi$ on $\bB_d$ such that 
if we define $\Psi_r\in \A_d^*$ as integration against the measure $\phi_r  \,d\sigma$, then
\begin{equation}\label{e-lim}
 \lim_{r\to 1} \|\Phi-\Psi_r\|_{\A_d^*}=0.
\end{equation}
\end{theorem}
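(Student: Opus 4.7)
For $f \in \M_d$ and $r \in [0,1)$, the dilate $f_r(z) = f(rz)$ lies in $\A_d$ (power series on a ball of radius $>1$). My plan is to first construct $\Psi \in \M_{d*}$ extending $\Phi$ as the dilation limit
$$\Psi(f) := \lim_{r \to 1^-} \Phi(f_r),$$
invoking the Henkin hypothesis both to ensure this limit exists and to establish weak-$*$ continuity; I will then read off $\phi$ from the action of $\Psi$ on monomials.

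The limit exists because, for any $r_n, s_n \to 1^-$, the sequence $g_n = f_{r_n} - f_{s_n} \in \A_d$ is bounded in $\M_d$ by $2\|f\|_{\M_d}$ and converges pointwise to $0$ on $\bB_d$. Since the weak-$*$ topology on bounded subsets of $\M_d$ coincides with the topology of pointwise convergence on $\bB_d$ (as the span of reproducing kernels is dense in $H^2_d$), we get $g_n \to 0$ weak-$*$ in $\M_d$, and the Henkin property forces $\Phi(g_n) \to 0$. Linearity and $\|\Psi\|_{\M_d^*} \le \|\Phi\|_{\A_d^*}$ are immediate, and $\Psi|_{\A_d} = \Phi$ follows from $\|f - f_r\|_{\M_d} \to 0$ for $f \in \A_d$ (polynomial approximation plus $\M_d$-contractivity of dilation), yielding the norm equality. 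For weak-$*$ continuity of $\Psi$, I would invoke Krein-Smulyan combined with metrizability of the weak-$*$ topology on bounded subsets of $\M_d$ (from separability of $\M_{d*}$) to reduce to sequential continuity on the unit ball. Given bounded $\{f_n\} \subset \M_d$ with $f_n \to 0$ weak-$*$, pick $r_n \in (1-1/n, 1)$ with $|\Psi(f_n) - \Phi((f_n)_{r_n})| < 1/n$. Writing $(f_n)_{r_n}(z) = \langle f_n, k_{r_n z}\rangle_{H^2_d}$ and combining the weak $H^2_d$-convergence $f_n = M_{f_n}\cdot 1 \to 0$ with the norm convergence $\|k_{r_n z} - k_z\|_{H^2_d} \to 0$ yields $(f_n)_{r_n}(z) \to 0$ pointwise on $\bB_d$; boundedness in $\M_d$ then upgrades this to weak-$*$ convergence, so the Henkin property delivers $\Phi((f_n)_{r_n}) \to 0$, hence $\Psi(f_n) \to 0$.

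For the second statement, define
$$\phi(w) := \sum_{\alpha \in \bN_0^d} \frac{\Psi(z^\alpha)}{\|z^\alpha\|_{L^2(\sigma)}^2}\, \overline{w}^\alpha, \qquad w \in \bB_d.$$
Using $|\Psi(z^\alpha)| \le \|\Psi\|_{\M_d^*}\,\|z^\alpha\|_{\M_d}$, the explicit formula $\|z^\alpha\|_{L^2(\sigma)}^2 = (d-1)!\alpha!/(|\alpha|+d-1)!$, and multi-index estimates on $\|z^\alpha\|_{\M_d}$, check that the series converges and defines an anti-holomorphic function on $\bB_d$. For $r \in [0,1)$, $\phi_r \in \rC(\ol{\bB_d})$, and orthogonality of monomials in $L^2(\sigma)$ yields, for any polynomial $p = \sum c_\alpha z^\alpha$,
$$\int_{\bS_d} p\, \phi_r\, d\sigma = \sum_\alpha c_\alpha r^{|\alpha|} \Psi(z^\alpha) = \Psi(p_r).$$
Polynomial density in $\A_d$ and $\M_d$-continuity of $f \mapsto f_r$ on $\A_d$ extend this to $\Psi_r(f) = \Psi(f_r)$ for all $f \in \A_d$, so $(\Phi - \Psi_r)(f) = \Psi(f - f_r)$.

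Finally, \eqref{e-lim} follows by contradiction: if $\|\Phi - \Psi_{r_n}\|_{\A_d^*} \ge \eps$ for some $r_n \to 1^-$, pick $f_n \in \A_d$ with $\|f_n\|_{\A_d} \le 1$ and $|\Psi(f_n - (f_n)_{r_n})| \ge \eps/2$; Lemma \ref{l-conv} then gives $f_n - (f_n)_{r_n} \to 0$ weak-$*$ in $\M_d$, contradicting the weak-$*$ continuity of $\Psi$ established above. The main technical obstacle I anticipate is verifying that the Taylor series defining $\phi$ converges throughout $\bB_d$: naive bounds cover only small polydiscs around the origin, and more refined multi-index estimates on multiplier norms of monomials are needed to cover the full open ball.
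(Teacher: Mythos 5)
Your construction of $\Psi$ --- the dilation limit $\Psi(f)=\lim_{r\to1}\Phi(f_r)$, the existence of that limit via the Henkin hypothesis, the norm bound from contractivity of dilation, the extension property, and weak-$*$ continuity via choosing $r_n$ with $|\Psi(f_n)-\Phi((f_n)_{r_n})|<1/n$ together with Krein--Smulyan and separability of $\M_{d*}$ --- is essentially the paper's proof of the first half, with only cosmetic differences (you check pointwise convergence of $(f_n)_{r_n}$ directly from the reproducing kernel, whereas the paper routes it through Lemma \ref{l-conv}).

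The second half diverges, and this is where your one genuine gap sits --- a gap you flag yourself. The paper defines $\phi(w)=\Phi(\Gamma(\cdot,w))$ with $\Gamma(z,w)=(1-\ip{z,w})^{-d}$; since $\Gamma(\cdot,w)$ is holomorphic on a ball of radius $1/\|w\|>1$ it lies in $\A_d$, so $\phi$ is defined and anti-holomorphic with no estimates at all, and the identity $\Phi(f_r)=\int_{\bS_d} f\,\phi_r\,d\sigma$ comes from interchanging $\Phi$ with the Cauchy integral via Riemann sums. You instead define $\phi$ by the series $\sum_\alpha \Psi(z^\alpha)\|z^\alpha\|_{L^2(\sigma)}^{-2}\bar w^\alpha$ but never establish its convergence on all of $\bB_d$; as written the argument is incomplete at exactly that point. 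The step does close, however. Since $M_{z^\alpha}$ is a weighted shift on the monomial basis, $\|z^\alpha\|_{\M_d}^2=\sup_\beta \frac{(\alpha+\beta)!\,|\beta|!}{(|\alpha|+|\beta|)!\,\beta!}$, and the Vandermonde convolution gives $\prod_i\binom{\alpha_i+\beta_i}{\alpha_i}\le\binom{|\alpha|+|\beta|}{|\alpha|}$, so the supremum is attained at $\beta=0$ and $\|z^\alpha\|_{\M_d}=\sqrt{\alpha!/|\alpha|!}$. Combining this with $\|z^\alpha\|_{L^2(\sigma)}^{-2}=\frac{(|\alpha|+d-1)!}{(d-1)!\,\alpha!}$ and Cauchy--Schwarz over $\{|\alpha|=n\}$ bounds the $n$-th homogeneous block of your dominating series by $\binom{n+d-1}{d-1}^{3/2}\|w\|^n$, giving locally uniform convergence on $\bB_d$ (and justifying your termwise integration against monomials). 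Alternatively, and closer to the paper, simply set $\phi(w)=\Psi(\Gamma(\cdot,w))$ and observe that $w\mapsto\Gamma(\cdot,w)$ is an anti-holomorphic $\A_d$-valued map; your series is then its Taylor expansion and the convergence question evaporates, while your orthogonality-plus-density derivation of $\Psi_r(f)=\Psi(f_r)$ is a clean substitute for the paper's Riemann-sum interchange. Your concluding contradiction via Lemma \ref{l-conv} coincides with the paper's.
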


\begin{proof}
We first show that $\Phi$ can be extended to an element of the predual of $\M_d$. Given $f\in \M_d$, first note that $f_r\in \A_d$ for every $0\leq r<1$. Indeed, this follows from a power series expansion as in Lemma \ref{l-conv}. Now, we claim that  $\lim_{r\to 1} \Phi(f_r)$ exists. Assuming otherwise, there is $\ep>0$ and a sequence $\{r_n\}_n$ increasing to $1$ such that 
\[ |\Phi(f_{r_n} - f_{r_{n+1}})|>\ep \qforal n \ge 1 .\]
On the other hand, 
\[
\|f_{r_n}-f_{r_{n+1}}\|_{\A_d}\leq 2\|f\|_{\A_d} \qforal n\geq 1
\]
and
\[
 \lim_{n\to\infty} (f_{r_n} - f_{r_{n+1}})(z) = f(z)-f(z) = 0 \qforal z \in \bB_d. 
\]
Therefore the sequence $\{f_{r_n}-f_{r_{n+1}}\}_n\subset \A_d$ converges weak-$*$ to $0$ in $\M_d$ and
\[
  \lim_{n\to\infty} \Phi(f_{r_n} - f_{r_{n+1}}) = 0 
\]
since $\Phi$ is assumed to be an $\A_d$--Henkin functional, which is a contradiction.

For every $f\in \M_d$, we may therefore define 
\[
 \Psi(f)=\lim_{r\to 1}\Phi(f_r).
\]
Clearly, $\Psi$ is linear. 
Moreover, since $\|f_r\|_{\M_d}\leq \|f\|_{\M_d}$ for every $f\in \M_d$ we have that $\|\Psi\|_{\M_d^*}\leq \|\Phi\|_{\A_d^*}$. 
Let now $f\in \A_d$. Then $f_r\to f$ in the weak-$*$ topology of $\M_d$ as $r\to 1$ and hence
\[
 \Psi(f)=\lim_{r\to 1}\Phi(f_r)=\Phi(f)
\]
since $\Phi$ is an $\A_d$--Henkin functional. We conclude that $\Psi$ extends $\Phi$. It only remains to verify that $\Psi$ is weak-$*$ continuous.
To this end, suppose that $\{f_n\}_n$ is a sequence in $\M_d$ converging weak-$*$ to 0.
For each $n\geq 1$, choose $0<r_n<1$ sufficiently close to $1$ so that if we put $g_n=f_n(r_n \cdot)\in \A_d$, then
\[
 |\Psi(f_n) - \Phi(g_n)| < 1/n .
\] 
We can assume that the sequence $\{r_n\}_n$ increases to $1$. 
By Lemma~\ref{l-conv}, $\{g_n-f_n\}_n$ converges weak-$*$ to $0$  in $\M_d$, whence $\{g_n\}_n$ converges to $0$ pointwise on $\bB_d$.
Since $\Phi$ is an $\A_d$--Henkin functional, we find
\[ 
 \lim_{n\to\infty}  \Phi(g_n) = 0. 
\]
Therefore
\[
  \lim_{n\to\infty}  \Psi(f_n) = \lim_{n\to\infty}  \Phi(g_n) = 0. 
\]
Now, $\M_{d*}$ is separable so that $\Psi$ is weak-$*$ continuous by virtue of the the Krein-Smulyan Theorem.

Next, we proceed to show the desired approximation property.
Note first that for each fixed $w\in\bB_d$, the function $\Gamma(\cdot,w)$ is holomorphic on the open ball of radius $1/\|w\|_{\bC^d}$.
In particular, we see that the function $\Gamma(\cdot,w)$ belongs to $\A_d$ for every $w\in\bB_d$. 
In addition, given $0<r<1$ we have that
\[
 \sup_{\zeta\in \bS_d}\|\Gamma(\cdot,r\zeta )\|_{\M_d} < \infty.
\]
On the other hand, given $\zeta_0\in \bS_d$, $z\in\bB_d$, $0<r<1$ and a sequence $\{\zeta_n\}_n\subset \bS_d$ 
such that $\zeta_n\to \zeta_0$, it is clear that
\[
 \lim_{n\to \infty}\Gamma(z,r\zeta_n)= \Gamma(z,r\zeta_0)
\]
and thus $\{\Gamma(\cdot,r\zeta_n) \}_n$ converges weak-$*$ to $\Gamma(\cdot,r\zeta_0)$ in $\M_d$.
Since $\Phi$ is an $\A_d$--Henkin functional, we obtain that
\[
\lim_{n\to \infty}\Phi(\Gamma(\cdot,r\zeta_n))= \Phi(\Gamma(\cdot,r\zeta_0))
\]
for every $0<r<1$ and every sequence $\{\zeta_n\}_n\subset \bS_d$ such that $\zeta_n\to \zeta_0$. 
Therefore the function
\[
\zeta\mapsto u(\zeta)\Phi(\Gamma(\cdot,r\zeta))
\]
is continuous for every function $u$ continuous on $\bS_d$. Moreover,  the function 
\[
(z,\zeta)\mapsto u(\zeta)\Gamma(z,r\zeta)
\]
is continuous on $\ol{\bB_d}\times \bS_d$ for every $0<r<1$.
By virtue of the continuity of the functions involved, approximating the integral using Riemann sums yields
\[
 \int_{\bS_d} u(\zeta)\Phi(\Gamma(\cdot,r\zeta)) \,d\sigma (\zeta)
 = \Phi\left(\int_{\bS_d} u(\zeta)\Gamma(\cdot,r\zeta) \,d\sigma (\zeta) \right)
\]
for every continuous function $u$.
If $f\in \AB$, then by the Cauchy formula this becomes 
\begin{align*}
 \Phi(f_r) &= \int_{\bS_d} f(\zeta)\Phi(\Gamma(\cdot,r\zeta)) \,d\sigma (\zeta).
\end{align*}
Define the function $\phi:\bB_d\to \bC$ as 
\[
 \phi(w)=\Phi(\Gamma(\cdot,w))
\]
and note that it is anti-holomorphic. For every $f\in \AB$ and $0<r<1$, we have
\begin{align*}
  \int_{\bS_d}f(\zeta)\phi(r\zeta)  \,d\sigma (\zeta)&=\Phi(f_r).
\end{align*}
In particular,
\[
 \left| \Phi(f)-\int_{\bS_d} f(\zeta)\phi(r\zeta) \,d\sigma(\zeta)\right|=|\Phi(f-f_r)| \qforal f\in\A_d.
\]
Assume that there exists $\ep>0$ and two sequences $\{r_n\}_n$ and $\{f_n\}_n\subset \A_d$ such that 
$\lim_{n\to \infty}r_n=1$, $\|f_n\|_{\A_d}=1$ and
\[
 \left|\Phi(f_n)-\int_{\bS_d}f_n(\zeta) \phi(r_n \zeta)  \,d\sigma(\zeta) \right|\geq \ep
\]
for every $n$.  If we put $g_n(z)=f_n(r_n z)$ for every $z\in\bB_d$, then we have
\[
 |\Phi(f_n-g_n)|\geq\ep
\]
for every $n$. By virtue of Lemma \ref{l-conv} this contradicts the fact that $\Phi$ is an $\A_d$--Henkin functional, so we conclude that
\[
 \lim_{r\to 1}\|\Phi-\Psi_r\|_{\A_d^*}=0. \qedhere
\]
\end{proof}

%%%%%%%%%%%%%%%%%%%%%%%%%%%%%%%%%%%%%%%
%%%%%%%%%%%%%%%%%%%%%%%%%%%%%%%%%%%%%%%
\section{The dual space of $\A_d$}  \label{S:dual}
In this section we exhibit the structure of the first and second dual spaces of $\A_d$. 
The main tools come from the theory of free semigroup algebras, as described in the preliminaries. 
We first need a technical fact.

%%%%%%%%%%%%%%%%%%%%%%%%%%%%%%%%%%%%%%%
\begin{lemma}\label{L:commideal}
Let $\C$ be the norm closure of the commutator ideal of $\fA_d$ and let $P\in \fA_d^{**}$ be the projection from Theorem $\ref{T:structureproj}$. 
Then, the weak-$*$ closure of the commutator ideal of $\fA_d^{**}$, $\fA_d^{**}(I-P)$ and $P\fA_d^{**}P$ is respectively $\C^{\perp \perp}$, $\C^{\perp\perp}(I-P)$ and $P\C^{\perp\perp}P$.
\end{lemma}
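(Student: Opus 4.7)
Throughout, $\C^{\perp\perp}$ denotes the double annihilator of $\C\subset \fA_d$ computed in $\fA_d^{**}$; by the bipolar theorem this is just the weak-$*$ closure of $\C$. The three parts of the lemma share a common strategy: combine separate weak-$*$ continuity of the Arens product on $\fA_d^{**}$ with weak-$*$ density of $\fA_d$ in $\fA_d^{**}$ to push the ideal structure of $\C\subset \fA_d$ into the right corners of $\fA_d^{**}$. A routine double-limit argument using these ingredients and the fact that $\C$ is an ideal of $\fA_d$ shows first that $\C^{\perp\perp}$ is a two-sided ideal of $\fA_d^{**}$, and second that every commutator $[A,B]$ with $A,B\in\fA_d^{**}$ lies in $\C^{\perp\perp}$ (approximate $A$ and $B$ by nets in $\fA_d$, observe that their commutators land in $\C$, and iterate weak-$*$ limits first in one variable and then the other). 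Combined with the trivial reverse inclusion $\C\subset$ commutator ideal of $\fA_d^{**}$, this gives the first claim.

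The second claim follows by applying the same template inside the weak-$*$ closed subalgebra $\fA_d^{**}(I-P)$. The coinvariance relation $PY=PYP$ from Theorem~\ref{T:structureproj}(i) forces $(I-P)Y(I-P)=Y(I-P)$, so multiplication in $\fA_d^{**}(I-P)$ is simply $X(I-P)\cdot Y(I-P)=XY(I-P)$; in particular $\fA_d(I-P)$ is a weak-$*$ dense subalgebra. Moreover the equality $\C^{\perp\perp}(I-P)=\C^{\perp\perp}\cap \fA_d^{**}(I-P)$ (both inclusions immediate from $(I-P)^2=I-P$) identifies $\C^{\perp\perp}(I-P)$ as a weak-$*$ closed two-sided ideal of $\fA_d^{**}(I-P)$. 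The same double-limit argument then produces the second claim.

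The third claim is the most delicate. The coinvariance identity $PX=PXP$ immediately yields $PX(I-P)=0$ and hence
\[
 (PXP)(PYP)=PXPYP=PXYP \qfor X,Y\in\fA_d^{**}.
\]
Thus the compression $\pi:X\mapsto PXP$ is a weak-$*$ continuous algebra homomorphism from $\fA_d^{**}$ onto $P\fA_d^{**}P$, and $\pi(\C^{\perp\perp})=P\C^{\perp\perp}P=\C^{\perp\perp}\cap P\fA_d^{**}P$ is weak-$*$ closed. Since $\fA_d^{**}/\C^{\perp\perp}$ is commutative by the first claim, $\pi$ descends to a surjection onto $P\fA_d^{**}P/P\C^{\perp\perp}P$, making the target commutative, so every commutator of $P\fA_d^{**}P$ lies in the weak-$*$ closed ideal $P\C^{\perp\perp}P$. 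Conversely $\pi(\C)=P\C P$ lies in the commutator ideal of $P\fA_d^{**}P$, and taking weak-$*$ closures gives $P\C^{\perp\perp}P\subset\wsclos{P\C P}$, which is contained in the weak-$*$ closure of the commutator ideal of $P\fA_d^{**}P$.

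The main obstacle, really only present in the third claim, is verifying the multiplicativity of the compression $\pi$; this is the one step that genuinely uses Theorem~\ref{T:structureproj}(i) rather than just the abstract double-dual machinery. Once that identity is in place, each of the three claims collapses to the same double-limit pattern applied in the appropriate ambient algebra.
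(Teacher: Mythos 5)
Your proposal is correct and follows essentially the same route as the paper: the first claim is the identical Goldstine/separate weak-$*$ continuity double-limit argument, and the second claim uses the same coinvariance-of-$\operatorname{Ran}P$ bookkeeping to reduce commutators of $\fA_d^{**}(I-P)$ to $[x,y](I-P)$. For the third claim the paper simply invokes ``a similar argument,'' and your packaging via the multiplicative compression $X\mapsto PXP$ and the commutativity of $\fA_d^{**}/\C^{\perp\perp}$ is just a slightly more structural way of carrying out that same computation, resting on the same key identity $PX(I-P)=0$.
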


\begin{proof}
First, let $\J\subset \fA_d^{**}$ be the weak-$*$ closure of the commutator ideal of $\fA_d^{**}$. 
We claim that $\J=\C^{\perp \perp}$. Since $\fA_d$ is a subalgebra of $\fA_d^{**}$ and $\C^{\perp \perp}$ coincides with the weak-$*$ closure of $\C$ in $\fA_d^{**}$, we have that $\C^{\perp \perp}\subset \J$ by definition. 
To establish the reverse inclusion, it is sufficient to show that any commutator of $\fA_d^{**}$ lies in $\C^{\perp \perp}$.  Let $x,y\in \fA_d^{**}$.
By virtue of Goldstine's theorem we may choose a bounded net $\{a_\alpha\}_{\alpha} \in \fA_d$ converging weak-$*$ to $x$. 
Now,  multiplication is separately weak-$*$ continuous so that 
\[ \wslim_{\alpha}\ [a_\alpha,y] = [x,y] \]
in the weak-$*$ topology of $\fA_d^{**}$.
Therefore, it suffices to verify that $[a,y]\in \C^{\perp \perp}$ for every $a\in \fA_d$. 
As before, choose a bounded net $\{b_\beta\}_{\beta} \in \fA_d$ converging weak-$*$ to $y$.
Then, 
\[ [a,y] = \wslim_{\beta}\ [a,b_\beta] . \]
As $[a,b_\beta] \in \C$, this limit belongs to $\C^{\perp \perp}$.  
This shows that $\C^{\perp \perp}=\J$.

Let $\I$ be the weak-$*$ closure of the commutator ideal of the algebra $\fA_d^{**}(I-P)$. 
We claim that $\I=\C^{\perp \perp}(I-P)$. Indeed, let $\{c_{\alpha}\}_{\alpha}\subset \C^{\perp \perp}$ 
such that $\{c_{\alpha}(1-P)\}_{\alpha}$ converges to $x\in \fA_d^{**}$ in the weak-$*$ topology. 
Since the range of $I-P$ in invariant for $\fA_d^{**}$, we have $\C(I-P)\subset \C$ and we see that $x\in \C^{\perp \perp}$. 
On the other hand, it is clear that $x(I-P)=x$ so that $x\in \C^{\perp \perp}(I-P)$. 
We conclude that $\C^{\perp\perp}(I-P)$ is  a weak-$*$ closed ideal of $\fA_d^{**}(I-P)$. 

Again, because the range of $I-P$ is invariant for $\fA_d^{**}$, we have that all commutators in 
$\fA_d^{**}(I-P)$ belong to $\C^{\perp \perp}(I-P)$ by the first part of the proof. Therefore, $\I$ is contained in $\C^{\perp \perp}(I-P)$. 
Conversely, given $c\in \C^{\perp \perp}$  the first part of the proof implies that we can find nets $\{a_{\alpha}\}_{\alpha}, \{b_{\alpha}\}_{\alpha}\in \fA^{**}_d$ 
with the property that $\{[a_{\alpha}, b_{\alpha}]\}_{\alpha}$ converges to $c$ in the weak-$*$ topology. 
Then, since $[a,b](I-P) \in \I$, 
\[ 
c(I-P) = \wslim_{\alpha} [a_{\alpha}, b_{\alpha}](I-P) , 
\]
and thus $c(I-P)\in \I$. 
This shows that $\C^{\perp \perp}(I-P)\subset \I$ and the claim follows.

A similar argument shows that $P\C^{\perp \perp}P$ is the weak-$*$ closure of the commutator ideal of the algebra $P \fA_d^{**} P$.
\end{proof}

We can now establish one of the central results of the paper.

%%%%%%%%%%%%%%%%%%%%%%%%%%%%%%%%%%%%%%%
\begin{theorem} \label{T:structuredoubledual}
The algebra $\A_d^{**}$ is weak-$*$ homeomorphic and completely isometrically isomorphic to the algebra $\M_d \oplus \fW$,
where $\fW$ is a commutative von Neumann algebra.
\end{theorem}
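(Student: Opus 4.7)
The plan is to propagate the structural decomposition of Theorem \ref{T:structureproj} through the abelianization provided by Theorem \ref{T:Ldcommu}. Since $\A_d \cong \fA_d/\C$ completely isometrically, taking biduals yields a completely isometric weak-$*$ homeomorphism $\A_d^{**} \cong \fA_d^{**}/\C^{\perp\perp}$, so the task reduces to splitting this quotient by means of the coinvariant projection $P$ furnished by Theorem \ref{T:structureproj}.

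I would first show that the class $p := P + \C^{\perp\perp}$ is a central projection in $\A_d^{**}$. Coinvariance of the range of $P$ gives $PA = PAP$ for every $A \in \fA_d^{**}$, so
\[
 [A,P] = AP - PA = AP - PAP = (I-P)AP
\]
is a commutator in $\fA_d^{**}$ and therefore lies in $\C^{\perp\perp}$ by Lemma \ref{L:commideal}. Weak-$*$ density promotes $p$ to a central projection of $\A_d^{**}$, and the same identity shows that each $A \in \fA_d^{**}$ satisfies $A \equiv PAP + (I-P)A(I-P) \pmod{\C^{\perp\perp}}$, so every coset admits a block-diagonal representative on $F^2_d = PF^2_d \oplus (I-P)F^2_d$. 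The $(1-p)$-summand is then identified, via Theorem \ref{T:structureproj}(ii), Lemma \ref{L:commideal}, and Theorem \ref{T:Ldcommu}, as
\[
 (1-p)\A_d^{**} \cong \frac{(I-P)\fA_d^{**}(I-P)}{(I-P)\C^{\perp\perp}(I-P)} \cong \fL_d / \C_w \cong \M_d.
\]
For the $p$-summand, Theorem \ref{T:structureproj}(iii) gives $P\fA_d^{**}P = P\fN P$, which is a corner of the von Neumann algebra $\fN$ and hence itself a von Neumann algebra on $PF^2_d$. Lemma \ref{L:commideal} shows that $P\C^{\perp\perp}P$ is the weak-$*$ closure of the commutator ideal of $P\fN P$, so it is a weak-$*$ closed two-sided ideal, and the quotient
\[
 \fW := P\fN P \big/ P\C^{\perp\perp}P
\]
is a commutative von Neumann algebra with $p\A_d^{**} \cong \fW$.

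The subtle point, and the main obstacle I anticipate, is promoting this algebraic splitting to a completely isometric $\ell^\infty$-direct sum rather than a direct sum with merely equivalent norms. The inequality $\|[A]\| \geq \max(\|p[A]\|,\|(1-p)[A]\|)$ follows from compressing coset representatives against $P$ and $I-P$. For the reverse, given $\ep>0$ I would pick $C_1, C_2 \in \C^{\perp\perp}$ with $\|PAP + PC_1P\|$ and $\|(I-P)A(I-P) + (I-P)C_2(I-P)\|$ each within $\ep$ of the respective corner quotient norms; then $C := PC_1P + (I-P)C_2(I-P)$ still belongs to $\C^{\perp\perp}$, and the representative $PAP + (I-P)A(I-P) + C$ is block-diagonal on $PF^2_d \oplus (I-P)F^2_d$, so its operator norm equals the maximum of its two block norms. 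Carrying out exactly the same construction entrywise at every matrix level upgrades the identification to a complete isometry, while weak-$*$ homeomorphy is automatic since each ingredient (biadjoint of a complete quotient, quotient by a weak-$*$ closed ideal, and compression by a fixed projection from $\fA_d^{**}$) is weak-$*$ continuous.
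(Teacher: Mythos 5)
Your proposal is correct and follows essentially the same route as the paper: both pass to $\fA_d^{**}/\C^{\perp\perp}$, use the coinvariance of the range of $P$ together with Lemma \ref{L:commideal} to see that $(I-P)AP=[A,P]$ lies in $\C^{\perp\perp}$, compute the quotient norm via block-diagonal representatives to get the $\ell^\infty$-splitting, and then identify the two corners by Theorem \ref{T:structureproj}, Lemma \ref{L:commideal} and Theorem \ref{T:Ldcommu}. The paper phrases the splitting as an explicit map $\Theta$ rather than a central projection $p$, but the argument, including the near-optimal perturbations $C_1,C_2$ and the appeal to the standard fact that a weak-$*$ continuous isometric isomorphism of dual spaces is a weak-$*$ homeomorphism, is the same.
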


\begin{proof}

Let $\C$ be the norm closure of the commutator ideal of $\fA_d$, and let $P\in \fA_d^{**}$ be the projection from Theorem \ref{T:structureproj}. By Lemma \ref{L:commideal}, we see that $\C^{\perp \perp}$ is the weak-$*$ closure of the commutator ideal of $\fA_d^{**}$,  $\C^{\perp \perp}(I-P)$ is the weak-$*$ closure of the commutator ideal of $\fA_d^{**}(I-P)$, and $P\C^{\perp \perp}P$ is the weak-$*$ closure of the commutator ideal of $P\fA_d^{**}P$.
Consider the map
\[
 \Theta: \fA_d^{**}/\C^{\perp \perp}\to \fA_d^{**} (I-P)/\C^{\perp \perp}(I-P) \oplus P \fA_d^{**} P/P\C^{\perp \perp}P
\]
defined as
\[
  \Theta(x+\C^{\perp \perp})=(x(I-P)+\C^{\perp \perp}(I-P))\oplus (PxP+P\C^{\perp \perp}P) \qforal x\in \fA_d^{**}.
\]
It is clear that $\Theta$ is linear and surjective. It is routine to verify that it is also weak-$*$ continuous. Furthermore, it is multiplicative since the range of $P$ is co-invariant for $\fA_d^{**}$. 

We claim that $ \Theta$ is completely isometric. 
Let $x\in \fA_d^{**}$ and note that $Px(I-P)=0$ and 
\begin{align*}
 (I-P)xP&=(I-P)xP+PxP-Px(I-P)-PxP\\
 &=xP-Px=[x,P]\in \C^{\perp \perp}.
\end{align*}
Therefore,
\[
 x=(I-P)x(I-P)+PxP+\gamma_x
\]
where $\gamma_x=(I-P)xP\in \C^{\perp \perp}$.
Hence, using that $\C^{\perp \perp}$ is an ideal of $\fA_d^{**}$ and that $P\in \fA_d^{**}$, we can write
\begin{align*}
 &\|x+\C^{**}\|_{\fA_d^{**}/\C^{\perp \perp}}\\
 &=\inf_{\gamma\in \C^{\perp \perp}}\|x+\gamma\|_{\fA_d^{**}}\\
 &=\inf_{\gamma\in \C^{\perp \perp}}\|(I-P)x(I-P)+PxP+\gamma\|_{\fA_d^{**}}\\
 &=\inf_{\gamma\in \C^{\perp \perp}} \max\{ \|(I-P)x(I-P)+(I-P)\gamma(I-P)\|_{\fA_d^{**}}, \|PxP+P\gamma P\|_{\fA_d^{**}}\}\\
 &=\inf_{\gamma\in \C^{\perp \perp}} \max\{ \|x(I-P)+\gamma(I-P)\|_{\fA_d^{**}}, \|PxP+P\gamma P\|_{\fA_d^{**}}\}\\
 &=\max\{\inf_{\gamma\in \C^{\perp \perp}} \|x(I-P)+\gamma(I-P)\|_{\fA_d^{**}}, \inf_{\gamma\in \C^{\perp \perp}} \|PxP+P\gamma P\|_{\fA_d^{**}}\}
\end{align*}
and thus $ \Theta$ is isometric. 
The fact that $ \Theta$ is completely isometric follows in the same manner. 
We conclude that there is a completely isometric isomorphism: 
\[ \fA_d^{**}/\C^{\perp \perp} \simeq \fA_d^{**}(I-P)/\C^{\perp \perp}(I-P) \oplus P \fA_d^{**} P/P\C^{\perp \perp}P  .\]
It is a standard fact that $\Theta$ is then automatically a weak-$*$ homeomorphism \cite[Theorem A.2.5]{blecherlemerdy2004}.

By Theorem \ref{T:structureproj}, we have that $\fA_d^{**}(I-P)$ is completely isometrically isometric 
and weak-$*$ homeomorphic to $\fL_d$. 
By Lemma \ref{L:commideal} and Theorem \ref{T:Ldcommu}, we obtain a completely isometric weak-$*$ homeomorphic isomorphism: 
\[ \fA_d^{**}(I-P)/\C^{\perp \perp}(I-P) \simeq \M_d . \]
Another consequence of Theorem \ref{T:structureproj} is that $P\fA_d^{**}P=P\fN P$,
where $\fN$ is the von Neumann algebra generated by $\fA_d^{**}$. 
The quotient
\[ \fW=P \fA_d^{**} P/P\C^{\perp\perp}P \]
is therefore a commutative von Neumann algebra in light of Lemma \ref{L:commideal} again.
We conclude that there is a completely isometric weak-$*$ homeomorphic isomorphism:  
\[ \fA_d^{**}/\C^{\perp \perp} \simeq \M_d\oplus \fW .\]
Recall now from Theorem \ref{T:Ldcommu} that $\A_d$ is completely isometrically isomorphic to $\fA_d/\C$.
Consequently, $\A_d^{**}$ is completely isometrically isometric and weak-$*$ homeomorphic  to $\fA_d^{**}/\C^{\perp\perp}$ and the proof is complete.
\end{proof}

More precise information about the von Neumann algebra $\fW$ is obtained in \cite[Theorem 3.5]{CDabscont}, where we show that it is generated by a jointly normal commuting $d$-tuple $(u_1,\ldots,u_d)$ whose joint spectrum lies on the sphere. Here $u_k\in \fW$ is the projection of the function $z_k\in \A_d$ onto the second component of $\A_d^{**}$.

The identification of the first dual space of $\A_d$ is now within reach. Before stating the result, we extend the definitions of absolutely continuous and singular functionals 
in a natural way to the commutative setting of $\A_d$. 
Therefore, a functional $\Phi\in\A^*_d$ is said to be \textit{absolutely continuous} if it is the restriction to $\A_d$ 
of a weak-$*$ continuous functional on $\M_{d}$, while the functional $\Phi$ is said to be \textit{singular} if 
\[
 \|\Phi|_{\A_{d,0}^n}\|_{(\A_{d,0}^n)^*} = \|\Phi\|_{\A_d^*} \qforal n\ge0,
\]
where $\A_{d,0}$ is the codimension $1$ norm closed ideal generated by ${z_1},\dots,{z_d}$. 
Here is the analogue of the Lebesgue decomposition for $\A_d^*$.

%%%%%%%%%%%%%%%%%%%%%%%%%%%%%%%%%%%%%%%
\begin{corollary} \label{C:dual}
The dual space $\A_d^*$ is completely isometrically isomorphic to $\M_{d*} \oplus_1 \fW_*$,
where $\fW$ is a commutative von Neumann algebra. 
Given $\Phi\in\A_d^*$, if we write $\Phi=\Phi_a+\Phi_s$ with $\Phi_a\in \M_{d*}$ and $\Phi_s\in \fW_*$, 
then $\Phi_a$ is absolutely continuous and $\Phi_s$ is singular. Moreover
\[ \| \Phi \|_{\A_d^*} = \| \Phi_a \|_{\M_{d*}} + \| \Phi_s \|_{\fW_*} .\]
\end{corollary}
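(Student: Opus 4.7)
The plan is to derive the identification by dualizing Theorem \ref{T:structuredoubledual} and then to verify that the two summands exhibit the claimed absolute continuity and singularity. Since the isomorphism $\Theta : \A_d^{**} \to \M_d \oplus_\infty \fW$ of Theorem \ref{T:structuredoubledual} is a completely isometric weak-$*$ homeomorphism between dual spaces, passing to preduals immediately yields
\[
\A_d^* \simeq \M_{d*} \oplus_1 \fW_*
\]
as a completely isometric isomorphism, together with the norm additivity $\|\Phi\|_{\A_d^*} = \|\Phi_a\|_{\M_{d*}} + \|\Phi_s\|_{\fW_*}$. This is standard: the predual of an $\ell^\infty$-direct sum of dual spaces is the $\ell^1$-direct sum of their preduals, and a weak-$*$ homeomorphism of dual spaces pulls back to an isometric isomorphism of the preduals.

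Next I would read off how the two pieces act on $\A_d$. Inspecting the defining formula of $\Theta$, the canonical embedding $\A_d \hookrightarrow \A_d^{**}$ becomes $f \mapsto (f, u(f))$, where $u: \A_d \to \fW$ is the contractive homomorphism obtained by composing this embedding with the projection onto the second summand; consequently $\Phi(f) = \Phi_a(f) + \Phi_s(u(f))$ for every $f \in \A_d$. Absolute continuity of $\Phi_a$ is then immediate by definition: it is the restriction to $\A_d$ of a functional in $\M_{d*}$, which is weak-$*$ continuous on $\M_d$.

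The substance of the argument is the singularity of $\Phi_s$. Let $P \in \fA_d^{**}$ be the projection of Theorem \ref{T:structureproj} and let $\td P$ denote its image in $\A_d^{**} = \fA_d^{**}/\C^{\perp\perp}$. Substituting $x = P$ into the formula for $\Theta$ in the proof of Theorem \ref{T:structuredoubledual} gives $\Theta(\td P) = (0, 1_\fW)$, since $P(I-P) = 0$ and $PPP = P$ represents the identity of $\fW = P\fA_d^{**}P/P\C^{\perp\perp}P$. Moreover, Theorem \ref{T:structureproj}(iii) places $P$ in $(\fA_{d,0}^{**})^n$ for every $n \geq 1$, which passes through the quotient to give $\td P \in (\A_{d,0}^n)^{\perp\perp}$ in $\A_d^{**}$. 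Because $(\A_{d,0}^n)^{\perp\perp}$ is an ideal of $\A_d^{**}$ containing $(0, 1_\fW)$, it contains the entire second summand $\{(0,w) : w \in \fW\}$.

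The closing step is Goldstine's theorem applied to the subspace $\A_{d,0}^n \subset \A_d$: its closed unit ball is weak-$*$ dense in the closed unit ball of $(\A_{d,0}^n)^{\perp\perp}$. Hence for every $w$ in the closed unit ball of $\fW$ there is a net $(f_\alpha) \subset \A_{d,0}^n$ with $\|f_\alpha\|_{\A_d} \leq 1$ and $u(f_\alpha) \to w$ weak-$*$ in $\fW$. Evaluating $\Phi_s$ along this net and taking the supremum over $w$ produces $\|\Phi_s|_{\A_{d,0}^n}\|_{(\A_{d,0}^n)^*} \geq \|\Phi_s\|_{\fW_*} = \|\Phi_s\|_{\A_d^*}$, and the reverse inequality is trivial. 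The step I expect to be the main obstacle is precisely this singularity verification, because it hinges on two somewhat delicate facts used in combination: the identification $\Theta(\td P) = (0, 1_\fW)$, and the bootstrap from $\td P \in (\A_{d,0}^n)^{\perp\perp}$ to the whole $\fW$-summand via the ideal property --- only once these are in place can Goldstine deliver the required approximation.
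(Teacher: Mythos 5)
Your proposal is correct, and its backbone is the same as the paper's: dualize Theorem \ref{T:structuredoubledual} to get the completely isometric identification $\A_d^*\simeq \M_{d*}\oplus_1\fW_*$ with the norm additivity, and then deduce singularity of the second component from the structure projection $P$ of Theorem \ref{T:structureproj}(iii) together with a weak-$*$ (Goldstine) approximation. The two places where you deviate are worth noting. For absolute continuity, the paper first checks that the first component is an $\A_d$--Henkin functional and then invokes the extension theorem (Theorem \ref{t-genvalskii}); you instead observe, via the naturality $E_1\circ\Theta(\widehat f)=f$ for $f\in\A_d$ (which, strictly, one verifies on polynomials and extends by density), that the first component is literally the restriction to $\A_d$ of an element of $\M_{d*}$, so absolute continuity holds by definition --- a legitimate shortcut that bypasses Theorem \ref{t-genvalskii}. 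For singularity, the paper lifts the functional to $\fA_d^*$ via $\Psi=\Phi_s\circ\pi$ and exploits $\Psi(XP)=\Psi(X)$ together with $\fA_d^{**}P=\bigcap_n(\fA_{d,0}^{**})^n$, so that the approximation happens upstairs in the free setting; you instead push $P$ down to $\td P\in\A_d^{**}$, compute $\Theta(\td P)=(0,1_\fW)$, and use the ideal property of $(\A_{d,0}^n)^{\perp\perp}$ to swallow the whole $\fW$-summand before applying Goldstine downstairs. Your version makes explicit the approximation the paper leaves implicit, at the cost of two routine facts you gloss over (that $(\A_{d,0}^n)^{\perp\perp}$ is an ideal of $\A_d^{**}$, and that the quotient map carries $(\fA_{d,0}^{**})^n$ into $(\A_{d,0}^n)^{\perp\perp}$), both of which follow from separate weak-$*$ continuity of multiplication exactly as in Lemma \ref{L:commideal}, so no genuine gap remains.
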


\begin{proof}

By Theorem \ref{T:structuredoubledual} there is a completely isometric weak-$*$ homeomorphic isomorphism 
\[
\Theta:\A_d^{**}\to \M_d\oplus \fW
\]
where $\fW$ is some commutative von Neumann algebra.

Let $E_1:\M_d\oplus \fW\to \M_d$ and $E_2:\M_d\oplus \fW\to \fW$ be the coordinate projections, which are weak-$*$ continuous \cite{CER}. 
Given $\Phi\in \A_d^*$, let $\Phi_a$ and $\Phi_s$ be defined as follows
\[
\Phi_a=\Phi\circ \Theta^{-1} \circ E_1 \circ \Theta 
\]
\[
\Phi_s=\Phi\circ \Theta^{-1} \circ E_2 \circ \Theta.
\]
Because all the maps involved are defined in a natural way, we have
\[
E_1 \circ \Theta(z_k)=z_k\in \M_d, \quad 1\leq k \leq d.
\]
In particular, if $\{f_n\}_n\subset \A_d$ converges to $0$ in the weak-$*$ topology of $\M_d$, so does $\{E_1 \circ\Theta(f_n)\}_n\subset \M_d$.
Then, since $\Phi$ is weak-$*$ continuous on $\A_d^{**}$ and $\Theta$ is a weak-$*$ homeomorphism, it is easy to verify that $\Phi_a$ 
is an $\A_d$--Henkin functional and thus $\Phi_a$ is absolutely continuous by virtue of Theorem \ref{t-genvalskii}.
Now, define $\Psi\in \fA_d^*$ as $\Psi=\Phi_s\circ \pi$ where $\pi:\fA_d^{**}\to \A_d^{**}$ is the quotient map 
(recall that $\A_d^{**}\simeq \fA_d^{**}/\C^{\perp \perp}$ by Theorem \ref{T:Ldcommu}). 
Then, $\|\Psi\|_{\fA_d^*}=\|\Phi_s\|_{\A_d^{*}}$. Moreover, by definition of $\Theta$ and $\Phi_s$, we see that $\Psi(XP)=\Psi(X)$ for every $X\in \fA_d$ where $P\in \fA_d^{**}$ is the projection from Theorem \ref{T:structureproj}. 
By part (iii) of Theorem \ref{T:structureproj}, we see that $\Phi_s$ is a singular functional on $\A_d$. 
It is clear that $\Phi_a+\Phi_s=\Phi$. Since $\M_{d} \oplus \fW$ is an $\ell^\infty$-direct sum, we obtain that
\[
 \| \Phi \|_{\A_d^*} = \| \Phi_a \|_{\M_{d*}} + \| \Phi_s \|_{\fW_*}.
 \qedhere\]
\end{proof}

Next, we take advantage of the fact that commutative von Neumann algebras are uniform algebras to gain more information about singular functionals on $\A_d$.

%%%%%%%%%%%%%%%%%%%%%%%%%%%%%%%%%%%%%%%
\begin{theorem} \label{T:singfunct}
Let $\Phi \in \fW_*$. Then, there exists a unique $\AB$-totally singular measure
$\mu\in M(\bS_d)$ such that
\[
 \Phi(f)=\int_{\bS_d}f \,d\mu \qforal f\in \A_d.
\]
Moreover, that measure satisfies $\|\mu\|_{M(\bS_d)}=\|\Phi\|_{\A_d^*}$.
\end{theorem}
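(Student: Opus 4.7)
The strategy is to realize $\Phi$ as a pushforward measure on $\ol{\bB_d}$ via the Gelfand representation of $\fW$, and then to use the uniqueness of the Lebesgue decomposition from Corollary \ref{C:dual} together with the Glicksberg-K\"onig-Seever theorem to force this measure to be supported on $\bS_d$ and $\AB$-totally singular. Since $\fW$ is a commutative unital C*-algebra, Gelfand duality yields $\fW \simeq C(Y)$ for $Y$ its maximal ideal space, with $\fW_* \hookrightarrow \fW^* = M(Y)$ isometrically. The canonical embedding $\A_d \hookrightarrow \A_d^{**} = \M_d \oplus \fW$ followed by projection onto the second summand is a unital contractive algebra homomorphism $\pi : \A_d \to \fW$. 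Each character $\chi_y$ of $\fW$ pulls back under $\pi$ to a character of $\A_d$, which equals evaluation at a unique point $\tau(y) \in \ol{\bB_d}$, producing a continuous map $\tau : Y \to \ol{\bB_d}$ with $\pi(f)(y) = f(\tau(y))$. Identifying $\Phi \in \fW_*$ with $\nu \in M(Y)$ and setting $\mu := \tau_*\nu \in M(\ol{\bB_d})$, one obtains
\[
\Phi(f) = \int_Y \pi(f)(y)\,d\nu(y) = \int_{\ol{\bB_d}} f\,d\mu \qforal f \in \A_d.
\]
Combining $\|\Phi\|_{\A_d^*} \leq \|\mu\|_M \leq \|\nu\|_M = \|\Phi\|_{\fW_*}$ with the isometry $\|\Phi\|_{\fW_*} = \|\Phi\|_{\A_d^*}$ from Corollary \ref{C:dual} then forces $\|\mu\|_M = \|\Phi\|_{\A_d^*}$.

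Next, decompose $\mu = \mu_B + \mu_a + \mu_s$, where $\mu_B$ is the restriction of $\mu$ to $\bB_d$ and $\mu|_{\bS_d} = \mu_a + \mu_s$ is the Glicksberg-K\"onig-Seever splitting (Theorem \ref{t-GKS}) into $\AB$-Henkin and $\AB$-totally singular parts. These three pieces are mutually singular, so $\|\mu\|_M = \|\mu_B\|_M + \|\mu_a\|_M + \|\mu_s\|_M$. Integration against $\mu_B$ extends from $\A_d$ to a weak-$*$ continuous functional on $\M_d$ by dominated convergence (weak-$*$-convergent bounded nets in $\M_d$ converge pointwise on $\bB_d$), and so defines an absolutely continuous functional on $\A_d$. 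Integration against $\mu_a$ is $\AB$-Henkin, hence $\A_d$-Henkin, hence absolutely continuous by Theorem \ref{t-genvalskii}. Writing $\Psi_s$ for integration against $\mu_s$ and splitting $\Psi_s = (\Psi_s)_a + (\Psi_s)_s$ via Corollary \ref{C:dual}, uniqueness of the Lebesgue decomposition together with $\Phi \in \fW_*$ yields $\Phi = (\Psi_s)_s$. Consequently
\[
\|\Phi\|_{\A_d^*} = \|(\Psi_s)_s\|_{\fW_*} \leq \|\Psi_s\|_{\A_d^*} \leq \|\mu_s\|_M \leq \|\mu\|_M = \|\Phi\|_{\A_d^*}
\]
collapses to equality throughout, forcing $\|\mu_B\|_M = \|\mu_a\|_M = 0$. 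Hence $\mu = \mu_s \in M(\bS_d)$ is $\AB$-totally singular, with $\|\mu\|_{M(\bS_d)} = \|\Phi\|_{\A_d^*}$.

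For uniqueness, suppose $\mu_1, \mu_2 \in M(\bS_d)$ are both $\AB$-totally singular and represent $\Phi$; then $\mu_1 - \mu_2$ annihilates $\A_d$, hence all polynomials, and by sup-norm density of polynomials in $\AB$ it annihilates all of $\AB$. Thus $\mu_1 - \mu_2 \in \AB^\perp$ is automatically $\AB$-Henkin and so, by Theorem \ref{t-colerange}, absolutely continuous with respect to some $\rho \in M_0(\bS_d)$; being simultaneously $\AB$-totally singular it is also singular with respect to $\rho$, forcing $\mu_1 - \mu_2 = 0$. The main technical obstacle lies in the second step: properly merging the analytic Lebesgue decomposition of $\A_d^*$ with the classical measure-theoretic Glicksberg-K\"onig-Seever decomposition on the sphere, and recognizing that \emph{both} a bulk portion of $\mu$ supported in the open ball \emph{and} any $\AB$-Henkin boundary portion produce absolutely continuous functionals on $\A_d$, so that the purely singular nature of $\Phi$ eliminates them simultaneously through a single collapse-of-norms argument.
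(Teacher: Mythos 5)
Your argument is correct, and it reaches the conclusion by a genuinely different route than the paper. Where you realize $\Phi$ concretely as a measure by passing through the Gelfand representation $\fW\simeq \rC(Y)$ and pushing the representing measure forward along the map $Y\to\ol{\bB_d}$ induced by the characters $\chi_y\circ\pi$, the paper instead shows directly that $\|Ef\|_{\fW}\le\|f\|_\infty$ for $E=0\oplus I$ (again via characters), so that $|\Phi(f)|\le\|\Phi\|_{\A_d^*}\|f\|_\infty$, and then extends $\Phi$ by Hahn--Banach to $\AB\subset\rC(\bS_d)$; this produces a measure supported on the sphere from the start, so the paper never has an interior piece $\mu_B$ to eliminate, whereas your pushforward buys the norm identity $\|\mu\|=\|\Phi\|_{\A_d^*}$ without Hahn--Banach but at the cost of having to kill $\mu_B$ afterwards. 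The more substantial divergence is in proving total singularity: the paper exploits the singularity of $\Phi$ in the ideal sense, choosing norming functions $f_n\in\A_{d,0}^n$ and using Theorem \ref{T:vectorfunct} to see that they tend to $0$ weak-$*$ in $\M_d$, then combining with Theorem \ref{t-GKS} in a norm-collapse; you avoid $\A_{d,0}^n$ and Theorem \ref{T:vectorfunct} entirely, instead observing that both the interior part and the $\AB$--Henkin boundary part of $\mu$ induce absolutely continuous functionals (via Theorem \ref{t-genvalskii}), so that the uniqueness of the splitting $\A_d^*\simeq\M_{d*}\oplus_1\fW_*$ from Corollary \ref{C:dual}, i.e.\ $\M_{d*}\cap\fW_*=\{0\}$, together with the $\ell^1$ norm additivity forces $\mu_B=\mu_a=0$. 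Both are legitimate and non-circular (you only use results preceding this theorem), and your uniqueness argument via Theorem \ref{t-colerange} is the same as the paper's, just made explicit. One small point to tighten: your parenthetical justification that integration against $\mu_B$ is weak-$*$ continuous on $\M_d$ invokes dominated convergence for \emph{nets}, which is not valid as stated; the clean fix is to verify the (sequential) $\A_d$--Henkin condition for this functional by dominated convergence for sequences and then invoke Theorem \ref{t-genvalskii}, or equivalently to use Krein--Smulyan together with the separability of $\M_{d*}$ as the paper does elsewhere.
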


\begin{proof}
By Theorem \ref{T:structuredoubledual}, we have that $\A_d^{**}\simeq \M_d\oplus \fW.$
In particular, we can identify $\A_d$ completely isometrically with a subalgebra of $\M_d\oplus \fW$.
Put $E=0\oplus I\in  \M_d\oplus \fW$.
Let $\phi$ be a character of $\fW$. Then, the formula
\[
 \psi(f)=\phi(Ef), \quad f\in \A_d
\]
defines a character of $\A_d$. Since $\fW$ is a commutative C*-algebra we have that
\begin{align*}
 \|Ef\|_{\fW}&=\sup\{|\phi(Ef)|:\phi \text{ is a character of } \fW \}\\
 &\leq \sup\{|\psi(f)|:\psi \text{ is a character of } \A_d \}
 \end{align*}
and hence
\[
 \|Ef\|_{\fW}\leq \|f\|_{\infty}
 \]
for every $f\in \A_d$. 
Let now $\Phi\in \fW_*\subset \A_d^{*}$. Then $\Phi(f)=\Phi(Ef)$ so that 
\[
|\Phi(f)|\leq \|\Phi\|_{\A_d^*} \|f\|_{\infty}
\]
for every $f\in \A_d$. By the Hahn-Banach theorem, we can extend $\Phi$ to a functional on $\AB$ with norm at most $\|\Phi\|_{\A_d^*}$,
and thus there is a measure $\mu\in M(\bS_d)$ such that $\|\mu\|_{M(\bS_d)}\leq \|\Phi\|_{\A_d^*}$ and
\[
 \Phi(f)=\int_{\bS_d}f \,d\mu\qforal f\in \A_d.
\]
Clearly this forces $\|\mu\|_{M(\bS_d)}=\|\Phi\|_{\A_d^*}$. 

Let us now verify that $\mu$ is $\AB$--totally singular. 
Since $\Phi\in\fW_*$, Corollary \ref{C:dual} implies that $\Phi$ is a singular functional on $\A_d$ and hence 
\[
 \| \Phi|_{\A_{d,0}^n} \|_{(\A_{d,0}^n)^*} = \|\Phi\|_{\A_d^*}
\]
for each $n\geq 1$. We can thus find a sequence $\{f_n\}_n\subset \A_d$ such that 
\[ f_n \in \A_{d,0}^n ,\quad  \|f_n\|_{\A_d} = 1 \qand \lim_{n\to \infty} \Phi(f_n) = \|\Phi\|_{\A_d^*} . \]
We claim that $\{f_n\}_n$ converges to $0$ in the weak-$*$ topology of $\M_d$. Given $\psi\in \M_{d*}$, it is a consequence of Theorem \ref{T:vectorfunct} that $\psi= [\xi \eta^*]$ for some $\xi,\eta\in H^2_d$. 
Now, $f_n\xi \in \A_{d,0}^n H^2_d$ whence the sequence $\{f_n\xi\}_n$ converges weakly to $0$ in $H^2_d$. 
In particular, 
\[ \lim_{n\to \infty}\psi(f_n)=0 .\]
and the claim follows.
By Theorem \ref{t-GKS}, we can write $\mu = \eta +\tau$
where $\eta$ is  $\AB$--Henkin and $\tau$ is $\AB$--totally singular. In particular, 
\[
 \|\Phi\|_{\A_d^*} = \|\mu\|_{M(\bS_d)} = \|\eta\|_{M(\bS_d)} + \|\tau\|_{M(\bS_d)}.
\]
Since $\eta$ is an $\AB$--Henkin measure, the associated integration functional is necessarily $\A_d$--Henkin. 
It follows that 
\[
\lim_{n\to \infty}\int_{\bS_d} f_n d \eta = 0.
\]
Therefore
\begin{align*}
 \|\Phi\|_{\A_d^*} &= \lim_{n\to\infty} \int_{\bS_d} f_n \,d\mu = \lim_{n\to\infty} \int_{\bS_d} f_n \,d\tau \\
 &\le \|\tau\|_{M(\bS_d)} = \|\Phi\|_{\A_d^*} - \|\eta\|_{M(\bS_d)} .
\end{align*}
It follows that $\eta=0$ so that $\mu = \tau$ is $\AB$--totally singular. 

Let us finally establish the uniqueness assertion. 
Assume that there exists another $\AB$--totally singular measure $\mu'$ such that
\[
\int_{\bS_d}fd\mu=\int_{\bS_d}fd\mu' \qforal f\in \A_d.
\]
Then, the $\AB$--totally singular measure $\nu=\mu-\mu'$ annihilates $\A_d$ (and hence $\AB$) and thus is an $\AB$-Henkin measure.
It follows that $\mu = \mu'$, and thus $\mu$ is unique.
\end{proof}

For a closed subset $K\subset \bS_d$ and a function $F$ on $K$, we put
\[
\|F\|_K=\sup_{\zeta\in K}|F(\zeta)|.
\]
One application of the previous results is the following.

%%%%%%%%%%%%%%%%%%%%%%%%%%%%%%%%%%%%%%%
\begin{corollary} \label{C:supnorm}
Let $f\in \A_d$ be a non-constant function such that
\[ \|f\|_\infty = \|f\|_{\A_d}=1 \]
and
\[
K := \{\zeta\in\bS_d : f(\zeta) = 1 \} = \{\zeta\in\bS_d : |f(\zeta)| = 1 \}.
\]
Let $C\subset \A_d$ be the convex hull of the set $\{f^n: n\geq 0\}$. Then
\[
 \inf \{ \|gh\|_{\A_d} : h \in C \} =\|g\|_K
\]
for every $g\in \A_d$.
\end{corollary}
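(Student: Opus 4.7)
The plan is to prove the two inequalities separately. The lower bound $\inf_{h\in C}\|gh\|_{\A_d} \geq \|g\|_K$ is immediate: every $h \in C$ is a finite convex combination $\sum_i \alpha_i f^{n_i}$ with $\sum_i \alpha_i = 1$, and since $f \equiv 1$ on $K$ each power $f^{n_i}$ equals $1$ on $K$, forcing $h \equiv 1$ on $K$. Hence $gh = g$ on $K$, and the bound $\|gh\|_{\A_d} \geq \|gh\|_\infty \geq \|g\|_K$ follows since the multiplier norm dominates the supremum norm.

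For the reverse inequality I argue by contradiction. Suppose $\inf_{h\in C}\|gh\|_{\A_d} > \|g\|_K$, and pick $c$ strictly between these two quantities. The norm closure $D$ of the convex set $gC$ is then a closed convex subset of $\A_d$ disjoint from the open $c$-ball around the origin, so the geometric Hahn-Banach theorem produces a functional $\Phi \in \A_d^*$ with $\|\Phi\|_{\A_d^*} = 1$ satisfying $\mathrm{Re}\,\Phi(d) \geq c$ for every $d\in D$; in particular $\mathrm{Re}\,\Phi(gf^n) \geq c$ for every $n\geq 0$. The remaining task is to compute $\lim_{n\to\infty}\Phi(gf^n)$ and derive a contradiction.

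To compute this limit I apply the Lebesgue decomposition of Corollary \ref{C:dual}, writing $\Phi = \Phi_a + \Phi_s$ with $\|\Phi_a\|_{\M_{d*}} + \|\Phi_s\|_{\fW_*} = 1$. Since $f$ is non-constant with $\|f\|_\infty=1$, the maximum modulus principle gives $|f(z)|<1$ on $\bB_d$, so the sequence $\{gf^n\}_n$ is bounded in $\M_d$ by $\|g\|_{\M_d}$ and tends to $0$ pointwise on $\bB_d$. By the description of weak-$*$ convergence in $\M_d$ recalled at the start of Section \ref{S:valskii}, this gives $gf^n \to 0$ weak-$*$ in $\M_d$, hence $\Phi_a(gf^n) \to 0$. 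On the other hand, Theorem \ref{T:singfunct} represents $\Phi_s$ as integration against an $\AB$-totally singular measure $\mu$ on $\bS_d$ with $\|\mu\|_{M(\bS_d)} = \|\Phi_s\|_{\A_d^*} \leq 1$. The hypothesis $K = \{\zeta\in\bS_d : |f(\zeta)|=1\}$ together with $f\equiv 1$ on $K$ yields $f^n \to \chi_K$ pointwise on $\bS_d$ with uniform bound $1$, so dominated convergence gives $\Phi_s(gf^n) \to \int_K g\,d\mu$. Combining these, $\mathrm{Re}\bigl(\int_K g\,d\mu\bigr) \geq c$, contradicting the trivial estimate $\bigl|\int_K g\,d\mu\bigr| \leq \|g\|_K\cdot|\mu|(K) \leq \|g\|_K < c$.

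The main conceptual step is recognizing that the Lebesgue-type decomposition of Corollary \ref{C:dual} is tailor-made for this argument: it cleanly splits $\Phi(gf^n)$ into an absolutely continuous piece that vanishes through weak-$*$ convergence in $\M_d$, and a singular piece that concentrates on $K$ via a totally singular representing measure on $\bS_d$. Once this decomposition is in place, the Hahn-Banach separation and the two convergence statements are essentially routine.
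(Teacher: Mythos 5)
Your proof is correct and follows essentially the same route as the paper: Hahn--Banach separation, the decomposition $\Phi=\Phi_a+\Phi_s$ from Corollary \ref{C:dual}, weak-$*$ convergence of $\{gf^n\}_n$ to $0$ in $\M_d$ to kill the absolutely continuous part, and Theorem \ref{T:singfunct} to control the singular part. The only (harmless) variation is that you compute $\lim_{n\to\infty}\Phi_s(gf^n)=\int_K g\,d\mu$ exactly by dominated convergence against the representing measure, whereas the paper settles for the estimate $|\Phi_s(gf^n)|\le\|\Phi_s\|_{\A_d^*}\,\|gf^n\|_\infty$ together with $\limsup_{n\to\infty}\|gf^n\|_\infty\le\|g\|_K$.
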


\begin{proof}
First observe that $h|_K = 1$ whenever $h\in C$. Hence
\[
 \inf \{ \|gh\|_{\A_d} : h \in C \} \ge  \inf \{ \|gh\|_\infty : h \in C \}  \geq  \|g\|_K .
\]
Suppose that there is $\ep>0$ such that 
\[
 \inf \{ \|gh\|_{\A_d} : h \in C \} \geq  \|g\|_K+\ep .
\]
The convex set $\{gh:h\in C\}$ is then disjoint from the open ball of radius $\|g\|_K+\ep$ in $\A_d$.
By the Hahn-Banach theorem, there is $\Phi \in \A_d^*$ with $\|\Phi\|_{\A_d^*}=1$ such that 
\[
 \re \Phi(gf^n) \ge \|g\|_K+\ep \qforal n\ge 0.
\]
By virtue of Corollary \ref{C:dual}, we may write $\Phi = \Phi_a + \Phi_s$
where $\Phi_a\in \M_{d*}$ and $\Phi_s\in \fW_*$ and
$\|\Phi\|_{\A_d^*}=\|\Phi_a\|_{\A_d^*}+\|\Phi_s\|_{\A_d^*}$.
Invoking Theorem \ref{T:singfunct}, we see that
\[
 |\Phi_s(gf^n)| \le \|\Phi_s\|_{\A_d^*}  \|gf^n\|_{\infty} .
\]
Since $|f|<1$ outside of $K$, we have
\[
\limsup_{n\to \infty} |\Phi_s(gf^n)| \leq  \|\Phi_s\|_{\A_d^*}\,\|g\|_K .
\]
Recalling that $1=\|\Phi\|_{\A_d^*}=\|\Phi_a\|_{\A_d^*}+\|\Phi_s\|_{\A_d^*}$, we infer
\begin{align*}
 \liminf_{n\to \infty}\re \Phi_a(gf^n) &\ge  \liminf_{n\to \infty}\re \Phi(gf^n)-  \limsup_{n\to \infty}|\Phi_s(gf^n)|\\
 &\geq \|g\|_K+\ep - \|\Phi_s\|_{\A_d^*} \|g\|_K\\
 & = \|\Phi_a\|_{\A_d^*} \|g\|_K + \ep \geq \ep.
\end{align*}
On the other hand, since $f$ is not constant and $\|f\|_{\A_d}=1$, we see that the sequence  $\{gf^n\}_n\subset \A_d$ 
converges to $0$ in the weak-$*$ topology of $\M_d$ so that
\[
\lim_{n\to \infty}\Phi_a(gf^n)=0
\]
which is absurd. We conclude that 
\[
 \inf \{ \|gh\|_{\A_d} : h \in C \} =  \|g\|_K
\]
as desired.
\end{proof}

We now introduce some terminology. 

\begin{defn}
A measure $\mu\in M(\bS_d)$ is said to be \textit{$\A_d$--totally singular} if the functional
\[
f\mapsto \int_{\bS_d}fd\mu ,\quad f\in \A_d
\] 
belongs to $\fW_*$.
\end{defn}

According to Theorem \ref{T:singfunct}, we have that $\A_d$--totally singular measures are $\AB$--totally singular.

We end this section by extending the previous ideas to quotients of $\A_d$.
Let $\J$ be a closed ideal of $\A_d$, and let $\J_w\subset \M_d$ be its weak-$*$ closure. Define 
\[  V(\J) = \{ z \in \ol{\bB_d} : f(z)=0 \FORAL f \in \J \} \]
\[
\J^\perp=\{\Phi\in \A_d^*:\Phi(\J)=0\}
\]
\[
\J_{w\perp}=\{\Psi\in \M_{d*}:\Psi(\J)=0\}.
\]

The following result should be compared with the F. and M. Riesz theorem from \cite{KY}.

%%%%%%%%%%%%%%%%%%%%%%%%%%%%%%%%%%%%%%%
\begin{thm} \label{T:Jperp}
Let $\J$ be a closed ideal of $\A_d$ and let $\Phi\in \J^\perp$. 
Write $\Phi = \Phi_a + \Phi_s$ with $\Phi_a\in \M_{d*}$ and $\Phi_s\in \fW_*$. Then both $\Phi_a$ and $\Phi_s$ belong to $\J^\perp$.
Moreover, $\Phi_a\in \J_{w\perp}$ and there exists an $\A_d$--totally singular  measure $\nu$ supported on $V(\J)\cap \bS_d$ such that
\[
\Phi_s(f)=\int_{V(\J)\cap \bS_d}fd\nu \qforal f\in \A_d.
\]
\end{thm}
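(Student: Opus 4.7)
The plan is to split the proof into two parts: verifying that $\Phi_a, \Phi_s \in \J^\perp$ (with $\Phi_a$ even annihilating $\J_w$), and then locating the support of the measure representing $\Phi_s$. For the first part, I would work in the bidual via the weak-$*$ homeomorphic completely isometric isomorphism $\Theta : \A_d^{**} \to \M_d \oplus \fW$ of Theorem \ref{T:structuredoubledual}. Set $e_a = \Theta^{-1}(I_{\M_d}, 0)$ and $e_s = \Theta^{-1}(0, I_\fW)$: these are orthogonal central idempotents summing to $I$ in $\A_d^{**}$. Viewed as a weak-$*$ continuous functional on $\A_d^{**}$, $\Phi$ vanishes on the weak-$*$ closure $\tilde\J$ of $\J$. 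By Goldstine's theorem, $e_a$ is the weak-$*$ limit of a bounded net $\{a_\alpha\} \subset \A_d$. Since $\J$ is an ideal, $f a_\alpha \in \J$ for every $f \in \J$, and the map $x \mapsto fx$ on $\A_d^{**}$ is weak-$*$ continuous (its preadjoint is right multiplication by $f$ on $\A_d^*$), so $f e_a = \wslim_\alpha f a_\alpha \in \tilde\J$, and similarly $f e_s \in \tilde\J$. Unwinding the Lebesgue decomposition from Corollary \ref{C:dual} shows that $\Phi_a(f) = \Phi(f e_a) = 0$ and $\Phi_s(f) = \Phi(f e_s) = 0$, giving $\Phi_a, \Phi_s \in \J^\perp$. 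The inclusion $\Phi_a \in \J_{w\perp}$ then follows because $\Phi_a \in \M_{d*}$ is weak-$*$ continuous and $\J$ is weak-$*$ dense in $\J_w$.

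For the second part, Theorem \ref{T:singfunct} yields a unique $\AB$-totally singular (and hence $\A_d$-totally singular) measure $\nu \in M(\bS_d)$ representing $\Phi_s$. Fix $f \in \J$; since $\J$ is an ideal, $\int_{\bS_d} g f \, d\nu = \Phi_s(fg) = 0$ for every $g \in \A_d$. Thus the complex measure $f \nu$ annihilates $\A_d$, and by supremum-norm density of polynomials in $\AB$, it annihilates $\AB$ as well. Being absolutely continuous with respect to $\nu$, the measure $f\nu$ is itself $\AB$-totally singular. But the identification $\AB^* \cong \HB_* \oplus_1 \TS(\bS_d)$ from Theorem \ref{T:dualAB} embeds $\TS(\bS_d)$ isometrically as a summand of $\AB^*$, so any $\AB$-totally singular measure annihilating $\AB$ must vanish. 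Therefore $f \nu = 0$, i.e., $f = 0$ $|\nu|$-almost everywhere.

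Using separability of $\A_d$, and hence of $\J$, choose a countable $\A_d$-norm-dense sequence $\{f_n\}_n \subset \J$. Each set $\{f_n \ne 0\}$ is $|\nu|$-null, so $N = \bigcup_n \{f_n \ne 0\}$ is $|\nu|$-null. Any $\zeta \in \bS_d \setminus V(\J)$ lies in $N$: if $f \in \J$ with $f(\zeta) \ne 0$ and $f_n \to f$ in $\A_d$ (hence in supremum norm), then $f_n(\zeta) \ne 0$ for large $n$. Hence $|\nu|(\bS_d \setminus V(\J)) = 0$, and $\nu$ is concentrated on $V(\J) \cap \bS_d$. The main delicacy I anticipate lies in the first part, namely unwinding the abstract Lebesgue decomposition of Corollary \ref{C:dual} to obtain the concrete pointwise formulae $\Phi_a(f) = \Phi(f e_a)$ and $\Phi_s(f) = \Phi(f e_s)$, and ensuring that the Goldstine approximation of the central idempotents propagates through left multiplication by $f \in \J$ to keep the weak-$*$ limits inside $\tilde\J$.
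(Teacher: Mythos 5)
Your proposal is correct. The first half is essentially the paper's argument: the paper also passes to $\A_d^{**}\simeq\M_d\oplus\fW$, notes that the weak-$*$ closure of $\J$ there is an ideal (which is exactly what your Goldstine-plus-separate-weak-$*$-continuity computation proves), and uses $\Phi_a(f)=\Phi(f(I\oplus 0))$ to get $\Phi_a\in\J_{w\perp}$ and hence $\Phi_s=\Phi-\Phi_a\in\J^\perp$. Where you genuinely diverge is the localization of $\nu$. The paper argues by contradiction on the support of $\nu$: it picks $x\in\supp\nu\setminus V(\J)$, an $h\in\J$ with $h(x)\neq 0$, a continuous bump $g$, writes $\nu=\gamma|\nu|$, and uses Bishop's peak-interpolation theorem (Theorem \ref{T:bishopAB}) together with dominated convergence to produce polynomials $p_n$ with $\Phi_s(p_nh)\to\int g\,d|\nu|>0$, contradicting $\Phi_s\in\J^\perp$. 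You instead show directly that for each $f\in\J$ the measure $f\nu$ annihilates $\A_d$, hence $\AB$, and, being absolutely continuous with respect to the $\AB$--totally singular $\nu$, must vanish (this is exactly the uniqueness mechanism of Theorem \ref{t-GKS}/Theorem \ref{T:dualAB} that the paper itself uses in the uniqueness part of Theorem \ref{T:singfunct}); separability of $\J$ then gives $|\nu|(\bS_d\setminus V(\J))=0$, which is equivalent to the paper's support statement since $V(\J)$ is closed. Your route is more elementary: it avoids peak interpolation altogether (in particular it sidesteps the question of whether $\supp\nu$ is $\AB$--totally null, on which the paper's invocation of Theorem \ref{T:bishopAB} implicitly leans), at the small cost of the countable-density argument; the paper's route is more hands-on measure theory and needs no separability. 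One wording slip worth fixing: you say $\nu$ is ``$\AB$-totally singular (and hence $\A_d$-totally singular)''---that implication is precisely Conjecture \ref{Conjecture}(i); the correct justification is simply that $\nu$ represents $\Phi_s\in\fW_*$, so it is $\A_d$--totally singular by definition, and $\AB$--total singularity is what Theorem \ref{T:singfunct} supplies.
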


\begin{proof}
Recall that $\J^{**}$ is the weak-$*$ closure of $\J$ in $\A_d^{**}$, and thus it is an ideal of $\A_d^{**}$ containing $\J$. As usual, we make the identification $\A_d^{**}\simeq \M_d\oplus \fW$ from Theorem \ref{T:structuredoubledual}.
Consequently, we have $h(I\oplus 0)\in \J^{**}$ whenever $h\in \J$. Since $\Phi$ is weak-$*$ continuous on $\A_d^{**}$ and $\Phi\in \J^\perp$, we see that $\Phi(\J^{**})=0$ whence $\Phi(h(I\oplus 0))=0$ for every $h\in \J$. On the other hand, $\Phi(f(I\oplus 0))=\Phi_a(f)$ for every $f\in \A_d$.
It follows that $ \Phi_a \in \J_{w\perp}$
and thus $\Phi_s =\Phi-\Phi_a \in \J^\perp$.

By Theorem \ref{T:singfunct}, there is a unique $\A_d$--totally singular measure $\nu$ such that
\[
\Phi_s(f)=\int_{\bS_d}fd\nu \qforal f\in \A_d.
\]
We will show that the support $K\subset\bS_d$ of $\nu$ is contained in $V(\J)\cap\bS_d$.
Suppose otherwise, so that there is a point $x \in K \setminus V(\J)$. We can then find $h \in \J$ such that $h(x) \ne 0$.
If $r>0$ is small enough, we have $|h(\zeta)|\geq |h(x)|/2 $ for every $\zeta\in \bS_d$ with $|\zeta-x|<r$. Choose now a non-negative function $g\in\rC(\bS_d)$ with $g(x) = 1 = \|g\|_\infty$ such that $g(\zeta)=0$ if $|\zeta-x|\geq r$.
Since $x$ lies in the support of $\nu$, we have
\[
\int_{K} g \,d|\nu|  > 0 .
\]
Write $\nu = \gamma |\nu|$ for some function $\gamma$ satisfying $|\gamma(\zeta)|=1$ for almost every $\zeta\in \bS_d$ with respect to $|\nu|$, and for each $n$ choose a continuous function $k_n \in \rC(K)$ with $\|k_n\|_\infty = 1$ such that the sequence $\{k_n\}_n$ converges to $\gamma$ pointwise on $K$.
By Bishop's Theorem \ref{T:bishopAB}, for every $n$ 
there is a polynomial $p_n$ with $\|p_n\|_{\infty} \leq 2/|h(x)|$ such that 
\[
\|p_n-g\ol{k_n}h^{-1}\|_K\leq 1/n.
\]
Note that $p_n h\in \J$ so that $\Phi_s(p_n h)=0$. On the other hand, by the Lebesgue dominated convergence theorem we have
\begin{align*}
  \lim_{n\to\infty} \Phi_s(p_nh) = \lim_{n\to\infty} \int_K p_nh \,d\nu  = \int_K g \,d|\nu| >0. 
\end{align*}
This contradiction shows that $\supp(\nu)$ is contained in $V(\J)\cap\bS_d$.
\end{proof}

The following consequence is immediate.

%%%%%%%%%%%%%%%%%%%%%%%%%%%%%%%%%%%%%%%
\begin{cor} \label{C:Jperpperp}
Let $\J$ be a closed ideal of $\A_d$. Then
$(\A_d/\J)^{**}$ is isometrically isomorphic to $\M_d/\J_w \oplus \fW(V(\J)\cap \bS_d)$,
where $\fW(V(\J)\cap \bS_d)$ is the dual of the space of  singular functionals 
given by $\A_d$--totally singular measures supported on $V(\J)\cap \bS_d$.
\end{cor}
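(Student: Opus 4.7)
The plan is to dualize Theorem~\ref{T:Jperp}. Recall the standard isometric identification $(\A_d/\J)^* \simeq \J^\perp$, so it suffices to exhibit an isometric decomposition of $\J^\perp$ as an $\ell^1$-direct sum and then take duals. Let $\fW_*(V(\J)\cap \bS_d) \subset \fW_*$ denote the subspace consisting of those singular functionals that are given by $\A_d$--totally singular measures supported on $V(\J)\cap \bS_d$; by definition, its dual is $\fW(V(\J)\cap \bS_d)$.

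The first step is to establish the isometric identification
\[
\J^\perp \simeq \J_{w\perp} \oplus_1 \fW_*(V(\J)\cap \bS_d).
\]
For the inclusion $\supseteq$, note that any $\Psi \in \J_{w\perp} \subset \M_{d*}$ annihilates $\J$ because $\J \subset \J_w$, and any $\A_d$--totally singular measure supported on $V(\J)\cap\bS_d$ annihilates $\J$ because every $f \in \J$ vanishes on $V(\J)$ (and hence on $V(\J)\cap \bS_d$). For the inclusion $\subseteq$, Theorem~\ref{T:Jperp} shows that every $\Phi \in \J^\perp$ splits as $\Phi = \Phi_a + \Phi_s$ with $\Phi_a \in \J_{w\perp}$ and $\Phi_s$ represented by an $\A_d$--totally singular measure supported on $V(\J)\cap \bS_d$, i.e.\ $\Phi_s \in \fW_*(V(\J)\cap \bS_d)$. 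The fact that this is an $\ell^1$-direct sum, rather than merely a sum, is exactly the norm identity
\[
\|\Phi\|_{\A_d^*} = \|\Phi_a\|_{\M_{d*}} + \|\Phi_s\|_{\fW_*}
\]
supplied by Corollary~\ref{C:dual}.

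The second step is to dualize. Since $\ell^1$-direct sums dualize to $\ell^\infty$-direct sums, we obtain
\[
(\A_d/\J)^{**} \simeq (\J^\perp)^* \simeq (\J_{w\perp})^* \oplus_\infty \fW_*(V(\J)\cap \bS_d)^* = (\J_{w\perp})^* \oplus_\infty \fW(V(\J)\cap \bS_d).
\]
The only remaining identification is $(\J_{w\perp})^* \simeq \M_d/\J_w$. Since $\J_w$ is weak-$*$ closed in $\M_d = (\M_{d*})^*$, the standard pre-annihilator duality gives $\J_w = (\J_{w\perp})^\perp$ and hence $(\J_{w\perp})^* \simeq \M_d/\J_w$ isometrically. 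Combining these yields the claim. No step looks delicate: the whole argument is the observation that the general $\ell^1$-splitting of $\A_d^*$ from Corollary~\ref{C:dual} restricts to the same kind of splitting on the subspace $\J^\perp$, thanks to Theorem~\ref{T:Jperp}, and that each summand has a familiar dual.
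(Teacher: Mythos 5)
Your argument is correct and is precisely the intended one: the paper states Corollary \ref{C:Jperpperp} as an immediate consequence of Theorem \ref{T:Jperp} together with the $\ell^1$-decomposition of Corollary \ref{C:dual}, which is exactly the splitting $\J^\perp \simeq \J_{w\perp} \oplus_1 \fW_*(V(\J)\cap\bS_d)$ you establish before dualizing via $(\A_d/\J)^*\simeq \J^\perp$ and $(\J_{w\perp})^*\simeq \M_d/\J_w$. No gaps; this matches the paper's (unwritten but clearly intended) proof.
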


%%%%%%%%%%%%%%%%%%%%%%%%%%%%%%%%%%%%%%%
%%%%%%%%%%%%%%%%%%%%%%%%%%%%%%%%%%%%%%%
\section{Properties of absolutely continuous and singular functionals}\label{S:functionals}

Recall that by Theorem \ref{T:dualAB}, we have
\[
\AB^*\simeq \HB_*\oplus_1 \TS(\bS_d).
\]
On the other hand, Corollary \ref{C:dual} implies that
\[
\A_d^*\simeq \M_{d*}\oplus_1 \fW_*.
\]
These decompositions are close analogues of one another, 
since $\A_d$ and $\M_d$ are natural counterparts of $\AB$ and $\HB$ respectively. 
Moreover, Theorem \ref{T:singfunct} shows that $\fW_*\subset \TS(\bS_d)$. 
We conjecture that equality holds here, but are unable to prove this. 
In addition, recall that 
\[
\HB_*\subset \M_{d*}\cap \AB^*.
\]
We conjecture that equality holds here as well.

%%%%%%%%%%%%%%%%%%%%%%%%%%%%%%%%%%%%%%%
\begin{conjecture}\label{Conjecture}\ 
\begin{enumerate}[label=\normalfont{(\roman*)}]
\item Every $\AB$-totally singular measure is $\A_d$--totally singular, that is $\fW_*=\TS(\bS_d)$.
\item Every $\A_d$--Henkin measure is $\AB$-Henkin, that is $\M_{d*}\cap \AB^*=\HB_*$.
\end{enumerate}
\end{conjecture}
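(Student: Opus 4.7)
The plan is to first reduce the two parts of Conjecture \ref{Conjecture} to a single statement by showing that they are equivalent, and then to attack part (ii) using the Valskii-type extension theorem from Section \ref{S:valskii}.

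\textbf{Equivalence of (i) and (ii).} For (ii) $\Rightarrow$ (i), let $\mu$ be an $\AB$-totally singular measure and decompose its integration functional on $\A_d$ as $\Phi = \Phi_a + \Phi_s$ via Corollary \ref{C:dual}. By Theorem \ref{T:singfunct}, the singular part $\Phi_s$ is integration against some $\AB$-totally singular measure $\nu$, so $\Phi_a$ is represented on $\A_d$ by the $\AB$-totally singular measure $\mu-\nu$. Under hypothesis (ii), $\mu-\nu$ is also $\AB$-Henkin, so Theorem \ref{t-GKS} forces $\mu-\nu = 0$ and hence $\mu$ is $\A_d$-totally singular. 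For (i) $\Rightarrow$ (ii), decompose an $\A_d$-Henkin measure $\mu = \mu_a + \mu_s$ via Theorem \ref{t-GKS}; both summands are $\A_d$-Henkin (the first because the $\AB$-Henkin class lies in the $\A_d$-Henkin class). By (i), $\mu_s$ is also $\A_d$-totally singular, so its integration functional lies in $\M_{d*} \cap \fW_* = \{0\}$. Then $\mu_s$ annihilates polynomials, hence all of $\AB$ by supremum-norm density; since $\AB^\perp \subset M(\bS_d)$ consists of $\AB$-Henkin measures, one more application of Theorem \ref{t-GKS} gives $\mu_s = 0$.

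\textbf{Attacking (ii).} Assume $\mu$ is $\A_d$-Henkin and $\AB$-totally singular; the goal is to show $\mu = 0$. By Theorem \ref{t-genvalskii} there exists an anti-holomorphic $\phi$ on $\bB_d$ such that the functionals $\Psi_r(f) = \int f\,\phi_r\,d\sigma$ converge in $\A_d^*$-norm to integration against $\mu$. Each measure $\phi_r\,d\sigma$ is manifestly $\AB$-Henkin. If one could upgrade the convergence $\phi_r\,d\sigma \to \mu$ modulo $\A_d^\perp$ to convergence modulo $\AB^\perp$, or produce a uniform $L^1(\sigma)$-bound on $\{\phi_r\}$, then a weak-$*$ limit $\psi\,d\sigma \in M(\bS_d)$ would exist representing $\mu$ modulo $\AB^\perp$. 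Since $\AB^\perp$ is contained in the $\AB$-Henkin class, this would make $\mu$ itself $\AB$-Henkin and, combined with total singularity, yield $\mu = 0$.

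\textbf{Main obstacle.} The central difficulty is bridging the incomparability of the multiplier norm and the supremum norm on $\A_d$. Theorem \ref{t-genvalskii} delivers convergence only in $\A_d^*$, which corresponds to distance modulo $\A_d^\perp$ in $M(\bS_d)$; this is strictly weaker than distance modulo $\AB^\perp$ and gives no $L^1(\sigma)$-control on $\phi_r$. A tempting alternative is to leverage the peak interpolation theorem (Theorem \ref{t-approxAd}): if every closed $\AB$-totally null set were $\A_d$-totally null, then $\mu$ would concentrate on a union of peak interpolation sets for $\A_d$, and its $\A_d$-Henkin nature would yield the vanishing. However, the equality of the two classes of null sets is itself equivalent to the conjecture, so this route is circular absent a genuinely new ingredient relating the multiplier and supremum norms on $\A_d$. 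I expect that any successful proof will require either a refinement of Theorem \ref{t-genvalskii} producing quantitative Hardy-class information on $\phi$, or new structural information about the von Neumann algebra $\fW$ from Theorem \ref{T:structuredoubledual}.
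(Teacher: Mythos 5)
You have not proved the statement, but neither does the paper: this is an open conjecture, and the authors say explicitly that they are unable to prove it. The only things the paper establishes about it are the equivalence of (i) and (ii) (Theorem \ref{T:conjequiv}) and the supporting band result (Theorem \ref{T:band}). The one complete piece of your proposal is exactly that equivalence, and your argument is correct and essentially the paper's: for (ii) $\Rightarrow$ (i) you use Corollary \ref{C:dual} and Theorem \ref{T:singfunct} to write $\Phi=\Phi_a+\Phi_s$ and observe that $\mu-\nu$ is $\A_d$--Henkin and $\AB$--totally singular, concluding $\mu-\nu=0$ from uniqueness in Theorem \ref{t-GKS}, where the paper instead runs a norm-additivity computation to force $\|\Phi_a\|_{\A_d^*}=0$; for (i) $\Rightarrow$ (ii) you use, as the paper does, that $\M_{d*}\cap\fW_*=\{0\}$ and that a measure annihilating $\A_d$ annihilates $\AB$ and is therefore $\AB$--Henkin. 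These are cosmetic variations, not a different route.

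Your attack on (ii) is, as you acknowledge, not a proof, and you have correctly located the obstruction: Theorem \ref{t-genvalskii} gives approximation of an $\A_d$--Henkin functional by the functionals $\Psi_r$ only in the norm of $\A_d^*$, i.e.\ control of $\phi_r\,d\sigma-\mu$ modulo $\A_d^\perp$, with no uniform $L^1(\sigma)$ bound on $\{\phi_r\}$ and hence no way to extract a weak-$*$ limit representing $\mu$ modulo $\AB^\perp$; and the route through Theorem \ref{t-approxAd} presupposes that closed $\AB$--totally null sets are $\A_d$--totally null, which is equivalent to (i) itself, so it is circular. So the verdict is: the equivalence portion is correct and matches Theorem \ref{T:conjequiv}, but the conjecture itself remains unproved in your proposal exactly as it does in the paper, and the remainder of your text should be read as an (accurate) diagnosis of why the naive extension of the Valskii-type argument does not close the gap rather than as progress toward closing it.
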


These two conjectures are in fact equivalent.

%%%%%%%%%%%%%%%%%%%%%%%%%%%%%%%%%%%%%%%
{\samepage
\begin{theorem}\label{T:conjequiv}
The following statements are equivalent.
\begin{enumerate}[label=\normalfont{(\roman*)}]
\item  Every $\AB$-totally singular measure is $\A_d$--totally singular.
\item  Every $\A_d$--Henkin measure is $\AB$-Henkin.
\end{enumerate}
\end{theorem}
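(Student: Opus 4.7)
The strategy is to use the two direct sum decompositions $\AB^* \simeq \HB_* \oplus_1 \TS(\bS_d)$ (Theorem \ref{T:dualAB}) and $\A_d^* \simeq \M_{d*} \oplus_1 \fW_*$ (Corollary \ref{C:dual}) in tandem, together with the inclusions already known, namely that every $\AB$--Henkin measure is $\A_d$--Henkin (since the multiplier norm dominates the supremum norm) and $\fW_* \subset \TS(\bS_d)$ (Theorem \ref{T:singfunct}). The key principle underlying both implications is that a measure which is simultaneously ``Henkin'' and ``totally singular'' in a compatible sense is forced to vanish by uniqueness of the Glicksberg--K\"onig--Seever decomposition (Theorem \ref{t-GKS}).

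For the direction (i) $\Rightarrow$ (ii), I would start from an $\A_d$--Henkin measure $\mu$ and apply Theorem \ref{t-GKS} to write $\mu = \mu_a + \mu_s$ with $\mu_a$ being $\AB$--Henkin and $\mu_s$ being $\AB$--totally singular. Since $\mu_a$ is automatically $\A_d$--Henkin, the difference $\mu_s = \mu - \mu_a$ is $\A_d$--Henkin as well. Hypothesis (i) then asserts that $\mu_s$ is also $\A_d$--totally singular, so the integration functional $\Phi_{\mu_s}$ on $\A_d$ lies in the $\M_{d*}$ summand (as an $\A_d$--Henkin functional) and simultaneously in the $\fW_*$ summand, and the direct sum structure of Corollary \ref{C:dual} forces $\Phi_{\mu_s} = 0$. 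Hence $\mu_s$ annihilates every polynomial, and by sup-norm density it annihilates all of $\AB$. In particular $\mu_s$ is trivially an $\AB$--Henkin measure, so combining with its $\AB$--total singularity and using uniqueness in Theorem \ref{t-GKS} gives $\mu_s = 0$; thus $\mu = \mu_a$ is $\AB$--Henkin.

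For the converse (ii) $\Rightarrow$ (i), I would start from an $\AB$--totally singular measure $\mu$, view its integration functional $\Phi_\mu$ as an element of $\A_d^*$, and decompose $\Phi_\mu = \Phi_a + \Phi_s$ with $\Phi_a \in \M_{d*}$ and $\Phi_s \in \fW_*$ via Corollary \ref{C:dual}. By Theorem \ref{T:singfunct}, there is a (unique) $\AB$--totally singular measure $\nu$ representing $\Phi_s$ on $\A_d$. Then $\mu - \nu$ represents $\Phi_a$ on $\A_d$, and since $\Phi_a \in \M_{d*}$ the measure $\mu - \nu$ is $\A_d$--Henkin. By hypothesis (ii) it is $\AB$--Henkin; but as a difference of two $\AB$--totally singular measures it is also $\AB$--totally singular. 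Uniqueness in Theorem \ref{t-GKS} then yields $\mu - \nu = 0$, so $\Phi_\mu = \Phi_s \in \fW_*$ and $\mu$ is $\A_d$--totally singular.

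Neither direction should present a genuine obstacle: once the decompositions of $\AB^*$ and $\A_d^*$ are aligned via Theorem \ref{T:singfunct}, both implications reduce to a short argument using uniqueness of the $\AB$--decomposition. The only mildly subtle point is the observation used in (i) $\Rightarrow$ (ii) that a measure annihilating $\A_d$ automatically annihilates $\AB$ (by polynomial density) and is therefore trivially $\AB$--Henkin; this is what converts the vanishing of the functional on $\A_d$ into the vanishing of the measure as an element of $M(\bS_d)$.
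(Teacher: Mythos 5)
Your argument is correct, and the direction (i) $\Rightarrow$ (ii) is essentially the paper's proof: decompose $\mu$ via Theorem \ref{t-GKS}, note that the singular part is an $\A_d$--Henkin measure whose functional lies in both summands of $\A_d^*\simeq \M_{d*}\oplus_1\fW_*$ and hence vanishes, then pass from annihilating $\A_d$ to annihilating $\AB$ by polynomial density; your additional remark that $\mu_s=0$ by GKS uniqueness is a harmless strengthening (the paper simply concludes that $\mu$ and its Henkin part induce the same functional on $\AB$). In (ii) $\Rightarrow$ (i) you follow the same skeleton as the paper up to the point where $\mu-\nu$ is seen to be $\A_d$--Henkin, but you finish differently: you observe that $\mu-\nu$ is also $\AB$--totally singular (a difference of two such measures, using Theorem \ref{T:singfunct} for $\nu$) and invoke the uniqueness clause of Theorem \ref{t-GKS} to get $\mu-\nu=0$, whereas the paper deduces from Theorem \ref{t-colerange} that the $\AB$--Henkin measure $\mu-\nu$ is mutually singular with $\mu$ and then runs a norm computation in the $\ell^1$-sum to force $\Phi_a=0$. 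Both finishes rest on the same underlying fact -- a measure that is simultaneously $\AB$--Henkin and $\AB$--totally singular must vanish -- but yours is shorter and avoids the norm bookkeeping, at the price of using the vector-space (band) structure of the totally singular measures and the uniqueness statement in GKS, while the paper's version needs only the exact norm additivity $\|\Phi\|_{\A_d^*}=\|\Phi_a\|_{\A_d^*}+\|\Phi_s\|_{\A_d^*}$ from Corollary \ref{C:dual} together with mutual singularity.
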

}

\begin{proof}
Assume that (i) holds and let $\mu\in M(\bS_d)$ be $\A_d$--Henkin.  
Using Theorem \ref{t-GKS}, we can write $\mu=\eta+\tau$ where $\eta$ is $\AB$--Henkin and $\tau$ is $\AB$--totally singular. 
Let $\Phi_1,\Phi_2\in \A_d^*$ be the functionals on $\A_d$ given by integration against $\mu-\eta$ and $\tau$ respectively, 
so that $\Phi_1=\Phi_2$. 
On the other hand, $\Phi_1\in \M_{d*}$ since $\HB_*\subset \M_{d*}$, while $\Phi_2\in \fW_*$ by (i). 
Now, we know that $\M_{d*}\cap \fW_* = \{0\}$ so that $\Phi_1=\Phi_2=0$, and the measure $\mu-\eta$ must then annihilate $\A_d$. 
Since $\A_d$ contains the polynomials, $\mu-\eta$ must also annihilate $\AB$.  
Because $\eta$ was chosen to be $\AB$--Henkin, the same must be true of $\mu$. 

Conversely, assume that (ii) holds and let $\mu\in M(\bS_d)$ be $\AB$--totally singular.
We must show that the functional $\Phi$ defined as
\[
\Phi(f)=\int_{\bS_d}fd\mu \qforal f\in\A_d
\]
belongs to $\fW_*$.  By Corollary \ref{C:dual}, we can write $\Phi=\Phi_a+\Phi_s$,
where $\Phi_a\in\M_{d*}$ and $\Phi_s\in \fW_*$ satisfy $\|\Phi\|_{\A_d^*}=\|\Phi_a\|_{\A_d^*}+\|\Phi_s\|_{\A_d^*}$.
In view of Theorem \ref{T:singfunct}, there is an $\A_d$--totally singular measure $\nu$ such that 
\[
\Phi_s(f)=\int_{\bS_d}fd\nu \qforal f\in\A_d
\]
and $\|\Phi_s\|_{\A_d^*}=\|\nu\|_{M(\bS_d)}$. Consequently,
\[
\Phi_a(f)=\int_{\bS_d}fd(\mu-\nu) \qforal f\in\A_d.
\]
Hence the measure $\eta=\mu-\nu$ is $\A_d$--Henkin and by (ii) it must be $\AB$--Henkin as well. 
In particular, $\eta$ and $\mu$ are mutually singular so that
\[
\|\nu\|_{M(\bS_d)}=\|\mu-\eta\|_{M(\bS_d)}=\|\mu\|_{M(\bS_d)}+\|\eta\|_{M(\bS_d)}.
\]
We find
\begin{align*}
\|\Phi\|_{\A_d^*}&=\|\Phi_a\|_{\A_d^*}+\|\Phi_s\|_{\A_d^*}=\|\Phi_a\|_{\A_d^*}+\|\nu\|_{M(\bS_d)}\\
&=\|\Phi_a\|_{\A_d^*}+\|\mu\|_{M(\bS_d)}+\|\eta\|_{M(\bS_d)}\\
&\geq \|\Phi_a\|_{\A_d^*}+\|\Phi\|_{\A_d^*}+\|\eta\|_{M(\bS_d)}\\
&\geq \|\Phi\|_{\A_d^*}
\end{align*}
which forces $\|\Phi_a\|_{\A_d^*}=0$. Thus, $\Phi\in \fW_*$ and $\mu$ is $\A_d$--totally singular.
\end{proof}

Next, we examine the spaces $\M_{d*}$ and $\fW_*$ a bit further. 
Consider two measures $\mu_1,\mu_2\in M(\bS_d)$ such that $\mu_1$ is absolutely continuous with respect to $\mu_2$.
It is trivial that if $\mu_2$ is $\AB$-totally singular, then so is $\mu_1$. Moreover, if $\mu_2$ is an $\AB$--Henkin measure then so is $\mu_1$ by a result of Henkin  \cite{Henkin}. 
In other words, the spaces $\HB_*$ and $\TS(\bS_d)$ form \emph{bands} of measures.
The same is true of $\M_{d*}\cap A(\bB_d)^*$ and $\fW_*$ as we now show. We first need a preliminary fact.

%%%%%%%%%%%%%%%%%%%%%%%%%%%%%%%%%%%%%%%
\begin{lemma}\label{L:band}
Let $\mu\in M(\bS_d)$ and let $g\in \A_d$. 
\begin{enumerate}[label=\normalfont{(\roman*)}]
\item  If $\mu$ is $\A_d$--Henkin, then so is $g\mu$.
\item  If $\mu$ is $\A_d$--totally singular, then so is $g\mu$.
\end{enumerate}
\end{lemma}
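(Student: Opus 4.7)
\emph{Proof proposal.} Item (i) is essentially immediate. Given a bounded sequence $\{f_n\}_n\subset \A_d$ converging weak-$*$ to $0$ in $\M_d$---equivalently, bounded in multiplier norm and converging to $0$ pointwise on $\bB_d$---submultiplicativity of the multiplier norm combined with $f_n g\to 0$ pointwise shows that $\{f_n g\}_n\subset \A_d$ is also bounded and weak-$*$ null in $\M_d$. The $\A_d$--Henkin hypothesis on $\mu$ then yields
\[
\int_{\bS_d} f_n\, d(g\mu) = \int_{\bS_d} f_n g\, d\mu \longrightarrow 0,
\]
so $g\mu$ is $\A_d$--Henkin.

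For item (ii), my plan is to leverage the identification $\A_d^{**}\simeq \M_d\oplus \fW$ from Theorem~\ref{T:structuredoubledual}. The canonical embedding $\A_d\hookrightarrow \A_d^{**}$ is an algebra homomorphism, so under $\Theta$ every $g\in \A_d$ corresponds to an element of the form $(g, g_{\fW})\in \M_d\oplus \fW$, and the Arens product on $\A_d^{**}$ matches the coordinatewise product on $\M_d\oplus \fW$. From the construction in the proof of Corollary~\ref{C:dual}, a functional $\Psi\in \A_d^*$ lies in $\fW_*$ exactly when its canonical weak-$*$ continuous extension $\hat\Psi$ to $\A_d^{**}$ annihilates the subspace $\Theta^{-1}(\M_d\oplus 0)$ (weak-$*$ density of $\A_d$ in $\M_d$ promotes vanishing on $\Theta^{-1}(\A_d\oplus 0)$ to vanishing on the whole summand).

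Consider now the bidual $M_g^{**}\colon \A_d^{**}\to \A_d^{**}$ of the multiplication operator $M_g\colon \A_d\to \A_d$. It is weak-$*$ continuous and extends $M_g$, and by uniqueness of weak-$*$ continuous extensions from a weak-$*$ dense subspace (combining Goldstine's theorem, the weak-$*$ homeomorphism property of $\Theta$, and separate weak-$*$ continuity of coordinatewise multiplication on $\M_d\oplus \fW$) it must coincide under $\Theta$ with multiplication by $(g, g_{\fW})$. In particular $M_g^{**}$ carries $\Theta^{-1}(\M_d\oplus 0)$ into itself, since $(m,0)\cdot (g, g_{\fW})=(mg,0)$. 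Dualizing, the adjoint $M_g^*\colon \A_d^*\to \A_d^*$ preserves $\fW_*$. Applied to the integration functional $\Phi_\mu\in \fW_*$ of an $\A_d$--totally singular measure $\mu$, this gives $\Phi_{g\mu}(f)=\Phi_\mu(fg)=(M_g^*\Phi_\mu)(f)\in \fW_*$, so $g\mu$ is $\A_d$--totally singular. The main technical point I expect to pin down is precisely this identification of $M_g^{**}$ with coordinatewise multiplication by $(g,g_{\fW})$ under $\Theta$; everything else is a direct consequence.
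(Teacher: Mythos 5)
Your proposal is correct and takes essentially the same route as the paper: part (i) is verbatim the paper's argument, and for part (ii) the paper likewise rests on the identification $\A_d^{**}\simeq \M_d\oplus\fW$, only it works with the projection $E=0\oplus I$ and the explicit weak-$*$ continuous functional $w\mapsto \Phi(wEg)$ on $\fW$ rather than phrasing the same observation as ``$M_g^{**}$ is multiplication by $\widehat{g}$, hence preserves $\M_d\oplus 0$, hence $M_g^{*}$ preserves $\fW_*$''. The technical point you flag does go through exactly as you sketch (Goldstine together with separate weak-$*$ continuity of multiplication on $\M_d\oplus\fW$ identifies $M_g^{**}$ with multiplication by $\widehat{g}$), so there is no gap.
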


\begin{proof}
Assume that $\mu$ is $\A_d$--Henkin. Let $\{f_n\}_n\subset \A_d$ be a sequence converging weak-$*$ to $0$ in $\M_d$. 
It is readily verified that the sequence $\{f_n g\}_n$ is bounded and converges pointwise to $0$ on $\bB_d$, 
and hence converges weak-$*$ to $0$ in $\M_d$. 
Therefore
\[
\lim_{n\to \infty }\int_{\bS_d}f_ngd\mu=0
\]
and we conclude that $g\mu$ is $\A_d$--Henkin.

Assume now that $\mu$ is $\A_d$--totally singular and let $\Phi\in \fW_*$ be the associated integration functional. By Theorem \ref{T:structuredoubledual}, we have that $\A_d^{**}\simeq \M_d\oplus \fW$. If we let $E=0\oplus I \in\M_d\oplus \fW$ then
$\Phi(Ef)=\Phi(f)$ for every $f\in \A_d$. 
Hence
\[
\Phi(Ef)=\int_{\bS_d}fd\mu \qforal f\in \A_d.
\]
In particular, 
\[
\Phi(Ef Eg)=\int_{\bS_d}fgd\mu \qforal f\in \A_d.
\]
Since $\Phi$ is a weak-$*$ continuous functional on $\fW$, so is the functional 
\[
w\mapsto \Phi(wEg), \quad w\in \fW.
\]
This functional thus lies in $\fW_*$ and on $\A_d$ it coincides with
\[
f\mapsto \int_{\bS_d}fgd\mu, \quad  f\in \A_d.
\]
We conclude that $g\mu$ is $\A_d$--totally singular.
\end{proof}

We now establish a result supporting our conjectures.

%%%%%%%%%%%%%%%%%%%%%%%%%%%%%%%%%%%%%%%
\begin{thm} \label{T:band}
Let $\mu_1, \mu_2\in M(\bS_d)$ and for $k=1,2$ define $\Phi_k\in \A_d^*$ as 
\[
\Phi_k(f)=\int_{\bS_d}f \,d\mu_k \qforal f\in \A_d.
\]
Assume that $\mu_1$ is absolutely continuous with respect to $\mu_2$. 
\begin{enumerate}[label=\normalfont{(\roman*)}]
\item If $\Phi_2\in \fW_*$ then $\Phi_1\in \fW_*$.

\item If $\Phi_2\in \M_{d*}$ then $\Phi_1\in \M_{d*}$.
\end{enumerate}
That is, $\M_{d*}\cap \AB^*$ and $\fW_*$ form bands of measures.
\end{thm}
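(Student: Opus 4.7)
The plan is to deduce both statements from a single density result: if $\mu\in M(\bS_d)$ is $\AB$--totally singular, then $\A_d$ is norm dense in $L^1(|\mu|)$. I would prove this by Hahn-Banach. An $\ell \in L^\infty(|\mu|)$ annihilating $\A_d$ produces a measure $\ell\,|\mu|$ lying in $\A_d^\perp\subset M(\bS_d)$, and since polynomials are dense in $\A_d$ (in multiplier norm) and in $\AB$ (in sup norm), the annihilators coincide: $\A_d^\perp = \AB^\perp$ inside $M(\bS_d)$. Any such measure is therefore trivially $\AB$--Henkin; at the same time $\ell\,|\mu|\ll |\mu|$ is $\AB$--totally singular by hypothesis, so the uniqueness clause in Glicksberg-K\"onig-Seever (Theorem \ref{t-GKS}) forces $\ell\,|\mu|=0$, hence $\ell=0$ almost everywhere.

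Given this lemma, the common strategy is to write $\mu_1 = h\mu_2$ with $h\in L^1(|\mu_2|)$, approximate $h$ by $g_n \in \A_d$ in $L^1(|\mu_2|)$ so that $g_n\mu_2 \to \mu_1$ in total variation, and pass the relevant property to the limit using Lemma \ref{L:band} together with the norm closedness of $\fW_*$ and $\M_{d*}$ in $\A_d^*$. For part (i) this applies directly: Theorem \ref{T:singfunct} upgrades the hypothesis on $\mu_2$ from $\A_d$--totally singular to $\AB$--totally singular, which is exactly what the density lemma requires, and Lemma \ref{L:band}(ii) places each $g_n\mu_2$ back into $\fW_*$.

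The main obstacle appears in part (ii), where the density lemma can fail for a general $\A_d$--Henkin $\mu_2$: already for $\mu_2=\sigma$ the annihilator of $\A_d$ inside $L^\infty(\sigma)$ is non-trivial. My remedy will be to first decompose $\mu_2=\mu_{2,a}+\mu_{2,s}$ via Glicksberg-K\"onig-Seever, with $\mu_{2,a}$ $\AB$--Henkin and $\mu_{2,s}$ $\AB$--totally singular; note that $\mu_{2,s}=\mu_2-\mu_{2,a}$ inherits $\A_d$--Henkin-ness as a difference of $\A_d$--Henkin functionals. Since $\mu_{2,a}\perp\mu_{2,s}$, the absolute continuity $\mu_1\ll\mu_2$ splits compatibly into $\mu_1=\mu_{1,a}+\mu_{1,s}$ with $\mu_{1,a}\ll\mu_{2,a}$ and $\mu_{1,s}\ll\mu_{2,s}$. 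The classical fact that the $\AB$--Henkin measures form a band, immediate from Theorem \ref{t-colerange}, handles $\mu_{1,a}$. The density lemma applies to the $\AB$--totally singular measure $\mu_{2,s}$, and combined with Lemma \ref{L:band}(i) it produces $\mu_{1,s}$ as a total-variation limit of $\A_d$--Henkin measures, hence $\A_d$--Henkin. Summing the two pieces gives that $\mu_1$ is $\A_d$--Henkin.
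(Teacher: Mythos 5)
Your proof is correct and, at the top level, follows the same architecture as the paper's: write $\mu_1=h\mu_2$, approximate $h\,d\mu_2$ in total variation by $g\,d\mu_2$ with $g\in\A_d$, feed the approximants through Lemma \ref{L:band}, and finish with the norm-closedness of the summands of $\A_d^*\simeq \M_{d*}\oplus_1\fW_*$; for (ii) you, like the paper, first split $\mu_2$ by Glicksberg--K\"onig--Seever and dispose of the Henkin part via the classical band property. The genuine difference is the engine producing the approximants. The paper gets them from the Bishop--Carleson--Rudin theorem (Theorem \ref{T:bishopAB}): truncate $h$ to a bounded $h_1$, use regularity to concentrate $|\mu_2|$ (resp.\ $|\tau|$) up to $\ep$ on a compact $\AB$--totally null set $K$, peak-interpolate a continuous approximant on $K$ with a sup-norm bound, and control the error off $K$. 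You instead prove once and for all that $\A_d$ is dense in $L^1(|\mu|)$ for every $\AB$--totally singular $\mu$, by Hahn--Banach: an annihilating $\ell\in L^\infty(|\mu|)$ yields $\ell\,d|\mu|\in\A_d^\perp=\AB^\perp$ (polynomial density in both norms, plus $\|\cdot\|_\infty\le\|\cdot\|_{\A_d}$), hence an $\AB$--Henkin measure which is also $\AB$--totally singular because it is absolutely continuous with respect to $|\mu|$, so the uniqueness clause of Theorem \ref{t-GKS} forces it to vanish. This is softer and shorter --- no peak interpolation, no truncation of $h$, no bookkeeping with compact totally null sets --- and it isolates a reusable density statement; what it gives up is the sup-norm control on the approximants that Bishop's theorem supplies, which is not needed here. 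Your remaining adjustments match the paper's: the observation that $\mu_{2,s}=\mu_2-\mu_{2,a}$ is $\A_d$--Henkin, so that Lemma \ref{L:band}(i) applies to it, is exactly the paper's step $p\tau=p\mu_2-p\eta$, carried out before rather than after multiplying by the approximant; and your use of Theorem \ref{T:singfunct} in (i) to pass from $\Phi_2\in\fW_*$ to ``$\mu_2$ is $\AB$--totally singular'' is precisely the paper's own opening move, so no new assumption is introduced there.
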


\begin{proof}
Assume first that $\Phi_2\in \fW_*$; so that $\mu_2$ is $\A_d$--totally singular. 
Hence $\mu_2$ is $\AB$-totally singular by Theorem \ref{T:singfunct}.
Since $\mu_1$ is absolutely continuous with respect to $\mu_2$, there is $h\in L^1(|\mu_2|)$ such that $\mu_1=h\mu_2$. 
Let $\eps>0$. There exists a function $h_1$ bounded on $\bS_d$ such that $\|h-h_1\|_{L^1(|\mu_2|)}<\ep$. 
By regularity of the measure $\mu_2$, we can find a compact $\AB$--totally null set $K\subset \bS_d$ 
such that the restriction $\rho$ of $\mu_2$ to $K$ satisfies
\[
\|\mu_2-\rho\|_{M(\bS_d)}<\eps/\|h_1\|_{\infty}.
\]
Now, choose $F\in\rC(\bS_d)$ such that $\|F-h_1\|_{L^1(|\mu_2|)}<\eps$ and $\|F\|_{\infty}\leq \|h_1\|_{\infty}$.  
By Theorem \ref{T:bishopAB}, there exists a polynomial $p$ such that 
\[
\|p-F\|_{K}<\eps
\]
and $\|p\|_{\infty}\leq \|h_1\|_{\infty}$. In particular
\[
\|p-h_1\|_{L^1(K,|\mu_2|)}<(1+|\mu_2|(K))\eps.
\]
Therefore, if we define $\Lambda\in \A_d^*$ as integration against $p \mu_2$,  then
\begin{align*}
\|\Lambda-\Phi_1\|_{\A_d^*}&\leq\|p \mu_2 -\mu_1\|_{M(\bS_d)}\\
&\leq \|(p-h_1)\mu_2\|_{M(\bS_d)}+\|(h-h_1)\mu_2\|_{M(\bS_d)}\\
&\leq \|(p-h_1)(\mu_2-\rho)\|_{M(\bS_d)}+\|(p-h_1)\rho\|_{M(\bS_d)}+\|h-h_1\|_{L^1(|\mu_2|)}\\
&\leq (4+|\mu_2|(K))\ep.
\end{align*}
Note that $\Lambda\in \fW_*$ by Lemma \ref{L:band}. 
Since $\ep>0$ was arbitrary, we see that $\Phi_1$ belongs to the norm closure of $\fW_*$. 
Because $\fW_*$ is norm closed, this means $\Phi_1\in \fW_*$ and  (i) follows.

We now turn to (ii). Assume that $\Phi_2\in \M_{d*}$. 
By Theorem \ref{t-GKS}, we can write $\mu_2=\eta+\tau$,
where $\eta$ is an $\AB$--Henkin measure and $\tau$ is $\AB$--totally singular. 
Since $\mu_1$ is absolutely continuous with respect to $\mu_2$, 
there is $h\in L^1(|\mu_2|)$ such that $\mu_1=h\eta+h\tau$. 
Now, as mentioned previously the measures in $\HB_*$ form a band by Henkin's theorem, whence $h\eta$ is $\AB$--Henkin. 
In view of the inclusion $(H^\infty(\bB_d))_*\subset \M_{d*}$, we only need to prove that the functional $\Psi$ defined by
\[
\Psi(f)=\int_{\bS_d}f h\,d\tau \qfor f\in \A_d
\]
belongs to $\M_{d*}$.  
Since $\tau$ is $\AB$-totally singular, we may argue as in the first part of the proof to see that for every $\ep>0$,
there is a polynomial $p$ such that $\|\Lambda-\Psi\|_{\A_d^*}<\ep$,
where $\Lambda\in \A_d^*$ is defined as integration against $p \tau$. 
By Lemma \ref{L:band} we see that $p\mu_2$ is $\A_d$--Henkin, while $p\eta$ is $\AB$--Henkin by Henkin's theorem. 
Hence $p\tau=p\mu_2-p\eta$ is $\A_d$--Henkin. 
We conclude  that $\Psi$ belongs to $\M_{d*}$.
\end{proof}

Motivated by this theorem, we generalize another classical measure theoretic notion from the setting of $\AB$ to $\A_d$.

%%%%%%%%%%%%%%%%%%%%%%%%%%%%%%%%%%%%%%%
\begin{defn}
A subset $K\subset \bS_d$ is said to be  \textit{$\A_d$--totally null} if every measure in $M(\bS_d)$ concentrated on $K$ is $\A_d$--totally singular.
\end{defn}

Such measures can be characterized in terms of $\A_d$--Henkin measures, 
thus providing a close analogy with the situation in $\AB$.

%%%%%%%%%%%%%%%%%%%%%%%%%%%%%%%%%%%%%%%
\begin{corollary}\label{C:AdTN}
A subset $K\subset \bS_d$ is $\A_d$--totally null if and only if $|\mu|(K)=0$ for every $\A_d$--Henkin measure.
\end{corollary}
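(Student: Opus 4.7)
The plan is to deduce both implications from the direct sum decomposition $\A_d^* \simeq \M_{d*} \oplus_1 \fW_*$ of Corollary~\ref{C:dual}, together with the band properties of $\M_{d*} \cap \AB^*$ and $\fW_*$ from Theorem~\ref{T:band}. The guiding observation is that if $\mu\in M(\bS_d)$, then the restriction $\mu|_K := \chi_K \mu$ is absolutely continuous with respect to $\mu$, which lets me transport membership in either summand of the decomposition between $\mu$ and $\mu|_K$.

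For the forward implication, suppose $K$ is $\A_d$--totally null and let $\mu$ be an $\A_d$--Henkin measure. I would focus on $\mu|_K$: on the one hand, since it is concentrated on $K$, it is $\A_d$--totally singular by definition, so the associated functional on $\A_d$ lies in $\fW_*$; on the other hand, the band property of Theorem~\ref{T:band}(ii) applied to $\mu|_K \ll \mu$ places that same functional in $\M_{d*}$. Because the decomposition is an internal direct sum, $\M_{d*} \cap \fW_* = \{0\}$, so this functional vanishes. Then $\mu|_K$ annihilates every polynomial, hence every element of $\AB$, and is therefore trivially $\AB$--Henkin. Being also $\AB$--totally singular by Theorem~\ref{T:singfunct}, the Glicksberg--K\"onig--Seever Theorem~\ref{t-GKS} forces $\mu|_K = 0$, i.e., $|\mu|(K) = 0$.

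For the reverse implication, assume every $\A_d$--Henkin measure $\mu$ satisfies $|\mu|(K)=0$, and let $\rho \in M(\bS_d)$ be concentrated on $K$. Let $\Phi(f) = \int_{\bS_d} f\,d\rho$ be the induced functional on $\A_d$ and decompose $\Phi = \Phi_a + \Phi_s$ per Corollary~\ref{C:dual}. By Theorem~\ref{T:singfunct} there is an $\A_d$--totally singular measure $\nu$ representing $\Phi_s$, so that the measure $\omega := \rho - \nu$ represents $\Phi_a \in \M_{d*}$, and is therefore $\A_d$--Henkin. By hypothesis $|\omega|(K) = 0$, i.e., $\omega$ is supported off $K$, while $\rho$ lives on $K$; restricting the identity $\rho = \omega + \nu$ to $K$ then yields $\rho = \nu|_K$. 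Finally, $\nu|_K \ll \nu$, and the band property of $\fW_*$ in Theorem~\ref{T:band}(i) shows that $\nu|_K$ is $\A_d$--totally singular, hence so is $\rho$.

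I expect no serious obstacle here: the argument is essentially bookkeeping around the two structural results. The only subtlety is to remember to use Theorem~\ref{T:singfunct} in the forward direction to upgrade $\A_d$--total singularity to $\AB$--total singularity, which is what brings the classical Glicksberg--K\"onig--Seever theorem into play.
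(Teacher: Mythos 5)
Your proof is correct and follows essentially the same route as the paper: both directions rest on the decomposition of Corollary \ref{C:dual}, the band properties of Theorem \ref{T:band}, and the representation of singular functionals via Theorem \ref{T:singfunct}, with the reverse implication matching the paper's argument almost verbatim. Your use of Glicksberg--K\"onig--Seever uniqueness to pass from the vanishing functional to $\mu|_K=0$ in the forward direction is just an explicit filling-in of a step the paper leaves implicit, not a different method.
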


\begin{proof}
Assume $K$ is $\A_d$--totally null and $\mu$ is an $\A_d$--Henkin measure. 
By Theorem \ref{T:band} we see that the restriction $\mu_K$ of $\mu$ to $K$ is an $\A_d$--Henkin measure concentrated on $K$. Thus $\mu_K$ is also $\A_d$--totally singular, whence $|\mu|(K)=0$. 

Conversely, assume that $K$ is null with respect to every $\A_d$--Henkin measure. 
Let $\mu\in M(\bS_d)$ be a measure concentrated on $K$ and let $\Phi\in \A_d^*$ defined by
\[
\Phi(f)=\int_{K}fd\mu \qforal f\in\A_d.
\]
By Corollary \ref{C:dual}, we have $\Phi=\Phi_a+\Phi_s$ where $\Phi_a\in \M_{d*}$ and $\Phi_s\in \fW_*$. 
Moreover, Theorem  \ref{T:singfunct} implies that there exists an $\A_d$--totally singular measure $\nu\in M(\bS_d)$ such that
\[
\Phi_s(f)=\int_{\bS_d}fd\nu \qforal f\in\A_d.
\]
Hence
\[
\Phi_a(f)=\int_{\bS_d}fd(\mu-\nu) \qforal f\in\A_d
\]
and the measure $\mu-\nu$ is $\A_d$--Henkin. 
Therefore $|\mu-\nu|(K)=0$ which shows that $\mu$ is the restriction of $\nu$ to $K$. 
Finally, this restriction must be $\A_d$--totally singular by Theorem \ref{T:band} and the proof is complete.
\end{proof}

The following result insures that $\A_d$--totally null sets are plentiful.

%%%%%%%%%%%%%%%%%%%%%%%%%%%%%%%%%%%%%%%
\begin{prop} \label{P:AdTS_sets}
Let $\nu\in M(\bS_d)$ be an $\A_d$--totally singular measure.
Then, there is an $F_\sigma$ set which is $\A_d$--totally null and on which $\nu$ is concentrated.
\end{prop}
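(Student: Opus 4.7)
The plan is to construct a single positive $\A_d$--Henkin measure $\omega$ on $\bS_d$ that dominates every positive $\A_d$--Henkin measure in the sense of absolute continuity; once $\omega$ is in hand, measure-theoretic singularity of $\nu$ and $\omega$, combined with inner regularity of $|\nu|$, will yield the desired $F_\sigma$ set. Since $H^2_d$ is separable, the trace-class ideal $B(H^2_d)_*$ is separable, and hence so is its quotient $\M_{d*}$. Consequently both $\M_{d*} \cap M(\bS_d)$ and its positive cone are separable in the norm topology of $M(\bS_d)$. I pick a sequence $\{\mu_n\}$ of positive $\A_d$--Henkin measures dense in this cone and set
\[
\omega = \sum_{n=1}^\infty \frac{2^{-n}}{1 + \|\mu_n\|_{M(\bS_d)}}\, \mu_n.
\]
The series converges absolutely in $M(\bS_d)$, and since $\M_{d*}$ is norm-closed in $\A_d^*$, the measure $\omega$ is a positive $\A_d$--Henkin measure.

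To establish the domination claim, let $\mu$ be any positive $\A_d$--Henkin measure and Lebesgue-decompose $\mu = \mu_{ac} + \mu_s$ relative to $\omega$. By the band property of Theorem \ref{T:band}, both summands are $\A_d$--Henkin. If $\mu_s \ne 0$, approximate it by a subsequence $\mu_{n_k}$ from the dense family. Each $\mu_{n_k}$ is dominated by a scalar multiple of $\omega$, hence $\mu_{n_k} \ll \omega$, while $\mu_s \perp \omega$; so $\mu_{n_k} \perp \mu_s$ and thus $\|\mu_{n_k} - \mu_s\|_{M(\bS_d)} = \|\mu_{n_k}\|_{M(\bS_d)} + \|\mu_s\|_{M(\bS_d)}$, which cannot tend to $0$. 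Therefore $\mu_s = 0$ and $\mu \ll \omega$.

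Next I verify that $\nu \perp \omega$. Lebesgue-decomposing $\nu = \nu_1 + \nu_2$ with $\nu_1 \ll \omega$ and $\nu_2 \perp \omega$, Theorem \ref{T:band} makes $\nu_1$ both $\A_d$--Henkin (via $\nu_1 \ll \omega$) and $\A_d$--totally singular (via $\nu_1 \ll \nu$). The direct sum decomposition $\A_d^* \simeq \M_{d*} \oplus_1 \fW_*$ from Corollary \ref{C:dual} forces the associated functional on $\A_d$ to vanish, so $\nu_1$ annihilates $\A_d$, and then all of $\AB$ by sup-norm density of the polynomials; in particular $\nu_1$ is $\AB$--Henkin. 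On the other hand, Theorem \ref{T:singfunct} ensures that $\nu$, and therefore $\nu_1$, is $\AB$--totally singular. The uniqueness of the Glicksberg-K\"onig-Seever decomposition (Theorem \ref{t-GKS}) then gives $\nu_1 = 0$.

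To conclude, choose a Borel set $E_0$ with $\omega(E_0) = 0$ and $|\nu|(E_0^c) = 0$, and by inner regularity of $|\nu|$ extract a $\sigma$-compact (hence $F_\sigma$) subset $E \subset E_0$ with $|\nu|(E) = |\nu|(E_0)$. Then $|\nu|(E^c) = 0$ while $\omega(E) = 0$. For any $\A_d$--Henkin measure $\tau$, the total variation $|\tau|$ is again $\A_d$--Henkin by the band property, so the domination step gives $|\tau| \ll \omega$, forcing $|\tau|(E) = 0$. By Corollary \ref{C:AdTN}, $E$ is $\A_d$--totally null. The main obstacle in this plan is proving the domination claim: it relies crucially on the separability of $\M_{d*}$ (which traces back to that of $H^2_d$) in tandem with the band property of Theorem \ref{T:band}, without which the Lebesgue-decomposition argument for $\mu_s$ breaks down.
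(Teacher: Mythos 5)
Your construction hinges on the claim that there is a single positive $\A_d$--Henkin measure $\omega$ dominating every positive $\A_d$--Henkin measure, which you derive from the assertion that the positive $\A_d$--Henkin measures form a norm-separable subset of $M(\bS_d)$. Both statements are false for $d\ge 2$. Separability of $\M_{d*}$ in the dual norm of $\A_d^*$ does not transfer to the total variation norm: the map $\mu\mapsto \Phi_\mu|_{\A_d}$ is contractive but nowhere near bounded below (it is not even injective), so a norm-dense sequence of functionals gives no control in $M(\bS_d)$. Concretely, for each $\zeta\in\bS_d$ let $\lambda_\zeta$ be normalized arclength on the circle $\{e^{i\theta}\zeta:\theta\in[0,2\pi)\}$. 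Each $\lambda_\zeta$ is a positive representing measure for the origin, hence $\AB$--Henkin by Theorem \ref{t-colerange}, hence $\A_d$--Henkin. These circles partition $\bS_d$ into uncountably many pairwise disjoint sets, so the measures $\lambda_\zeta$ are mutually singular and pairwise at distance $2$ in total variation; in particular the positive Henkin cone is not separable in $M(\bS_d)$, and no finite measure $\omega$ can satisfy $\lambda_\zeta\ll\omega$ for all $\zeta$ (that would force $\omega$ to give positive mass to uncountably many disjoint circles). So the dominating measure you need does not exist, and the Lebesgue-decomposition step relative to $\omega$, on which the whole argument rests, cannot be carried out. (Your later steps -- the verification that $\nu\perp\omega$ via Theorems \ref{T:band}, \ref{T:singfunct} and \ref{t-GKS}, and the passage to an $F_\sigma$ set via regularity and Corollary \ref{C:AdTN} -- are fine in themselves, but they all presuppose $\omega$.)

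The paper avoids any dominating measure by exploiting the singularity of the functional $\Phi$ given by $\nu$ directly: after reducing to $\nu\ge 0$ of norm one via Theorem \ref{T:band}, one picks $f_n\in\A_{d,0}^n$ with $\|f_n\|_{\A_d}=1$ and $\int f_n\,d\nu\to 1$, sets $F=\{\zeta\in\bS_d:\lim_n f_n(\zeta)=1\}$, and observes that $\nu(F)=1$ while any positive measure $\mu$ concentrated on $F$ satisfies $\int f_n\,d\mu\to\|\mu\|$, which (since the $f_n$ tend to $0$ weak-$*$ in $\M_d$) kills the absolutely continuous part of $\mu$ and shows $F$ is $\A_d$--totally null; regularity then yields the $F_\sigma$ subset. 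If you want to salvage your idea, the nullity for \emph{all} Henkin measures has to come from such an intrinsic construction rather than from a universal dominating measure.
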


\begin{proof}
By Theorem~\ref{T:band}, $|\nu|$ is also $\A_d$--totally singular and thus we may  suppose that $\nu$ is positive. 
In addition, assume that $\|\nu\|_{M(\bS_d)}=1$. Let $\Phi\in \A_d^*$ such that
\[
\Phi(f)=\int_{\bS_d}fd\nu \qforal f\in \A_d.
\]
Then $\Phi$ is a singular functional, so that $1 = \| \Phi |_{\A_{d,0}^n} \|_{(\A_{d,0}^n)^*} $ for every $n \ge 1 .$
For each such $n$, we can therefore find a function $f_n \in \A_{d,0}^n$ with $\|f_n\|_{\A_d} = 1$ such that
\[
  \lim_{n\to\infty} \int_{\bS_d} f_n \,d\nu = 1 . 
\]
%Upon passing to a subsequence we may assume that
%\[
% \Big| \int f_n \,d\nu - 1\Big| < 2^{-n} .
%\]
If we set
\[
 F = \{ \zeta \in \bS_d : \lim_{n\to\infty} f_n(\zeta) = 1 \} ,
\]
it follows that $\nu(F) = 1$. 
We  now verify that $F$ is $\A_d$--totally null. 
By virtue of Theorem \ref{T:band}, it suffices to verify that any positive measure $\mu$ concentrated on $F$ must be $\A_d$--totally singular. 
For such measures, we have that
\[
 \lim_{n\to\infty} \int_{\bS_d} f_n \,d\mu = \|\mu\|_{M(\bS_d)}
\]
which immediately implies that $\mu$ is $\A_d$--totally singular. We conclude that $F$ is $\A_d$--totally null.
Finally, since $\nu$ is regular we may find $F'\subset F$ which is an $F_\sigma$ subset of full measure. 
Clearly, $F'$ must also be $\A_d$--totally null.
\end{proof}

The regularity of Borel measures yields the following immediate consequence.

%%%%%%%%%%%%%%%%%%%%%%%%%%%%%%%%%%%%%%%
\begin{cor} \label{C:AdTS_sets}
Every $\A_d$--totally singular measure is the norm limit of measures supported on compact $\A_d$--totally null sets.
\end{cor}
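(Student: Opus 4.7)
The plan is to deduce this directly from Proposition \ref{P:AdTS_sets} together with the observation that a Borel subset of an $\A_d$--totally null set is itself $\A_d$--totally null (any measure concentrated on the subset is also concentrated on the larger set, so it is $\A_d$--totally singular by definition). The remaining content is essentially a standard inner-regularity argument inside an $F_\sigma$.

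First, given an $\A_d$--totally singular measure $\nu$, I would apply Proposition \ref{P:AdTS_sets} to produce an $F_\sigma$ set $F\subset \bS_d$ which is $\A_d$--totally null and on which $\nu$ is concentrated. Since $\bS_d$ is compact, closed subsets of $\bS_d$ are compact, so I can write $F=\bigcup_{n=1}^\infty K_n$ where each $K_n$ is compact, and by replacing $K_n$ with $K_1\cup\cdots\cup K_n$ I may take the sequence to be increasing. By the observation above, each $K_n\subset F$ is $\A_d$--totally null.

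Next, I would define $\nu_n$ to be the restriction of $\nu$ to $K_n$. Then $\nu_n$ is supported on the compact $\A_d$--totally null set $K_n$. Since $\nu$ is concentrated on $F$, we have $|\nu|(\bS_d\setminus F)=0$, and hence
\[
\|\nu-\nu_n\|_{M(\bS_d)} = |\nu|(\bS_d\setminus K_n) = |\nu|(F\setminus K_n).
\]
Because the sequence $\{K_n\}_n$ increases to $F$ and $|\nu|$ is a finite positive measure, countable additivity gives $|\nu|(F\setminus K_n)\to 0$ as $n\to\infty$, which yields the desired norm approximation.

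There is no real obstacle here: the entire content of the statement has already been isolated in Proposition \ref{P:AdTS_sets}, and the only thing left to check is that the $F_\sigma$ structure allows one to exhaust $\nu$ by its restrictions to compact subsets, which is a standard feature of regular Borel measures on a compact metric space. The only minor point to verify carefully is that each compact piece $K_n$ inherits $\A_d$--total nullity from $F$, but this is immediate from the definition.
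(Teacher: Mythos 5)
Your proof is correct and follows essentially the same route as the paper, which simply notes that the corollary is an immediate consequence of Proposition \ref{P:AdTS_sets} together with the (regularity/compactness) fact that the $F_\sigma$ set can be exhausted by compact subsets, each of which inherits $\A_d$--total nullity. Your write-up just makes these routine details explicit.
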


We present yet another interesting consequence of Theorem \ref{T:singfunct}: an approximation result for continuous functions on a closed $\A_d$--totally null subset of the sphere.
After considerably more work, in Section~\ref{S:approx} we present a significant improvement of this fact which is a closer analogue of Theorem \ref{T:bishopAB}.

We need a standard preliminary fact.

%%%%%%%%%%%%%%%%%%%%%%%%%%%%%%%%%%%%%%%
\begin{lemma} \label{L:open_ball}
Let $X$ be a compact Hausdorff space and $K\subset X$ be a closed subset. Let $A\subset \rC(X)$ be a closed subspace. 
Assume that the restriction of the closed unit ball of $A$ to $K$ is dense in the closed unit ball of $\rC(K)$. 
Then, the restriction of the open unit ball of $A$ to $K$ coincides with the open unit ball of $\rC(K)$.
\end{lemma}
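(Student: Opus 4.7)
The plan is a standard telescoping/iterative approximation argument, exploiting the fact that the hypothesis is scale-invariant (the restriction of the ball of radius $r$ is dense in the ball of radius $r$ for every $r>0$, by rescaling).

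Fix $f\in\rC(K)$ with $\rho := \|f\|_K < 1$, and pick a sequence of positive numbers $\{\alpha_n\}_{n\geq 1}$ with $\sum_{n=1}^\infty \alpha_n < \tfrac12(1-\rho)$. The goal is to construct inductively a sequence $\{g_n\}_{n\geq 1}\subset A$ such that the partial sums on $K$ converge to $f$, while the norms in $\rC(X)$ are summable and small. Set $f_0 = f$, and assuming $f_{n-1}\in\rC(K)$ with $\|f_{n-1}\|_K < \alpha_{n-1}$ (with $\alpha_0 := \rho$ interpreted loosely as the starting bound) has been constructed, use the hypothesis applied to $f_{n-1}/\|f_{n-1}\|_K$ and rescaling to obtain $g_n\in A$ with
\[
\|g_n\|_X \leq \|f_{n-1}\|_K \qand \|g_n|_K - f_{n-1}\|_K < \alpha_n.
\]
Then define $f_n := f_{n-1} - g_n|_K$, which satisfies $\|f_n\|_K < \alpha_n$ as required for the next step.

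The two things to verify are convergence of the series $g := \sum_{n\geq 1} g_n$ in $\rC(X)$ and the strict norm bound. Since $\|g_1\|_X \leq \rho$ and $\|g_n\|_X \leq \alpha_{n-1}$ for $n\geq 2$, the series is absolutely convergent, hence convergent in $\rC(X)$; and since $A$ is closed, $g\in A$. The restriction identity $g|_K = f$ follows from $\sum_{k=1}^n g_k|_K = f - f_n$ together with $\|f_n\|_K\to 0$. The norm estimate gives
\[
\|g\|_X \leq \rho + \sum_{n=1}^\infty \alpha_n < \rho + \tfrac12(1-\rho) = \tfrac{1+\rho}{2} < 1,
\]
so $g$ lies in the open unit ball of $A$ and restricts to $f$ on $K$.

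The reverse inclusion is trivial: if $g\in A$ has $\|g\|_X < 1$, then $\|g|_K\|_K \leq \|g\|_X < 1$, so $g|_K$ lies in the open unit ball of $\rC(K)$. There is no real obstacle in this argument; the one subtle point is the scale-invariance of the hypothesis, which allows the inductive step to produce $g_n$ with $\|g_n\|_X$ controlled by the sup-norm on $K$ of the remainder rather than by a fixed constant — this is what makes the telescoping sum land strictly inside the unit ball of $A$.
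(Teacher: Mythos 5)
Your argument is correct and is essentially the paper's own proof: both build the interpolant as a telescoping series $\sum_n g_n$, using the (rescaled) density hypothesis at each step so that the first term has norm at most $\|f\|_K$ and the later terms have summable norms totalling strictly less than $1-\|f\|_K$. The only cosmetic difference is that the paper fixes the tolerances as $\ep/2^n$ with $\ep = 1-\|f\|_K$, while you use a general summable sequence $\{\alpha_n\}$; the trivial reverse inclusion you note is implicit in the paper.
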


\begin{proof}
Let $f \in \rC(K)$ with $\|f\|_K < 1$. Set $\ep = 1 - \|f\|_K$.
By assumption, we can find a sequence of functions $\{\phi_n\}_n \in A$ such that
\begin{enumerate}[label=\normalfont{(\arabic*)}]
\item $\|\phi_0\|_A \leq 1-\ep$ and $\| \phi_0 - f \|_K < \ep/2$
\item $\|\phi_n\|_A < \ep/2^n$ and $\| (\phi_0 + \dots + \phi_n) - f \|_K< \ep/2^{n+1}$ for every $n\geq 1$.
\end{enumerate}
It is routine to check that $\phi = \sum_{n=0}^\infty \phi_n$ lies in the open unit ball of $A$ and agrees with $f$ on $K$.
\end{proof}

Here is the approximation result.
%%%%%%%%%%%%%%%%%%%%%%%%%%%%%%%%%%%%%%%
\begin{proposition} \label{P:open_ball}
Let $K\subset \bS_d$ be a closed $\A_d$--totally null subset.
Then, the restriction of the open unit ball of $\A_d$ to $K$ coincides with the open unit ball of $\rC(K)$.
\end{proposition}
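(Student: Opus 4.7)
The plan is to invoke Lemma~\ref{L:open_ball} with $X=\bS_d$, $A=\A_d$, and $K$ our given closed $\A_d$--totally null subset. That lemma reduces the statement to the following claim: the restriction of the closed unit ball of $\A_d$ to $K$ is \emph{norm dense} in the closed unit ball of $\rC(K)$. All the real content is in proving this density claim, so that is what I would focus on.

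To prove the density, I would argue by contradiction using Hahn-Banach separation. Let $C$ denote the restriction of the closed unit ball of $\A_d$ to $K$, viewed as a subset of $\rC(K)$. Then $C$ is convex and balanced, and so is its norm closure $\ol{C}$. If density fails, there exists $f\in\rC(K)$ with $\|f\|_K\leq 1$ not in $\ol{C}$; by Hahn-Banach applied to the locally convex space $\rC(K)$ with its norm topology, there is a continuous linear functional $L$ on $\rC(K)$ with $|L(g)|\leq 1$ for all $g\in\ol{C}$ and $|L(f)|>1$. By the Riesz representation theorem, $L$ corresponds to a measure $\mu\in M(K)$ with $\|\mu\|_{M(K)}=\|L\|_{\rC(K)^*}$.

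The crucial step is to bound $\|\mu\|_{M(K)}$ by $1$. Extend $\mu$ to a measure on $\bS_d$ by setting it to be zero off $K$; since $K$ is $\A_d$--totally null, this extended measure $\mu$ is $\A_d$--totally singular by definition. Define the functional $\Phi\in\A_d^*$ by $\Phi(\phi)=\int_{\bS_d}\phi\, d\mu$. Because $\Phi(\phi)=L(\phi|_K)$ and $|L(g)|\leq 1$ for $g\in C$, we have $\|\Phi\|_{\A_d^*}\leq 1$. The point is now that $\mu$ is $\A_d$--totally singular, so $\Phi\in\fW_*$; by Theorem~\ref{T:singfunct}, the \emph{uniqueness} of the representing measure together with the norm identity gives $\|\mu\|_{M(\bS_d)}=\|\Phi\|_{\A_d^*}\leq 1$. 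But then
\[
|L(f)|=\Bigl|\int_K f\, d\mu\Bigr|\leq \|\mu\|_{M(K)}\,\|f\|_K\leq 1,
\]
contradicting $|L(f)|>1$.

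The main obstacle, and the place where all of the accumulated machinery enters, is the norm identity $\|\mu\|_{M(\bS_d)}=\|\Phi\|_{\A_d^*}$ provided by Theorem~\ref{T:singfunct}. Without this identity (in particular, without the uniqueness of the $\AB$--totally singular representing measure for functionals in $\fW_*$), the separating measure $\mu$ could a priori have large total variation even though its associated functional on $\A_d$ has small norm, and the argument would collapse. Once this key ingredient is available, the rest of the argument is a routine Hahn-Banach separation together with the reduction afforded by Lemma~\ref{L:open_ball}.
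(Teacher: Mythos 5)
Your proposal is correct and follows essentially the same route as the paper: reduce to the density claim via Lemma~\ref{L:open_ball}, then note that every measure concentrated on $K$ is $\A_d$--totally singular, so Theorem~\ref{T:singfunct} (via the uniqueness of the representing measure) gives $\sup_{g\in\ol{b_1(\A_d)}}\bigl|\int_K g\,d\mu\bigr|=\|\mu\|_{M(\bS_d)}$, and Hahn--Banach finishes the argument. The only difference is that you spell out explicitly the separation-by-contradiction step that the paper compresses into ``the conclusion now follows from the Hahn-Banach theorem.''
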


\begin{proof}

By Lemma \ref{L:open_ball}, it suffices to show that the restriction of $\ol{b_1(\A_d)}$ to $K$ is dense in $\ol{b_1(\rC(K))}$.
Let $\mu\in M(K)$. Since $K$ is $\A_d$--totally null, the measure $\mu$ is $\A_d$--totally singular. In particular, 
\[
 \sup_{f \in \ol{b_1(\A_d)}} \Big| \int_K f \,d\mu \Big| = \|\mu\|_{M(\bS_d)}
\]
by Theorem \ref{T:singfunct}. The conclusion now follows from the Hahn-Banach theorem.
\end{proof}

We also have a matrix-valued version.

\begin{corollary} \label{C:approxmatrix}
Let $K\subset \bS_d$ be a closed $\A_d$--totally null subset. Let $\ep>0$ and $(f_{ij})\in M_n(\rC(K))$ for some $n\geq 1$. Then, there exists $(\phi_{ij})\in M_n(\A_d)$ such that $\|(\phi_{ij})\|_{M_n(\A_d)}\leq (1+\ep)\|(f_{ij})\|_{M_n(\rC(K))}$ and $(\phi_{ij})|_K=(f_{ij})$.
\end{corollary}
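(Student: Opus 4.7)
The plan is to amplify the proof of Proposition \ref{P:open_ball} to the matrix level. First, I would establish a matrix version of Lemma \ref{L:open_ball}: if the restriction of $\ol{b_1(M_n(\A_d))}$ to $K$ is dense in $\ol{b_1(M_n(\rC(K)))}$, then the restriction of the open unit ball of $M_n(\A_d)$ to $K$ coincides with the open unit ball of $M_n(\rC(K))$. The scalar telescoping-sum argument transcribes verbatim with scalar absolute values replaced by matrix operator norms. The $(1+\ep)$ bound in the statement then follows by rescaling $(f_{ij})$ by $1/\bigl((1+\ep/2)\|(f_{ij})\|_{M_n(\rC(K))}\bigr)$, applying the open-ball statement, and rescaling back.

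Second, the required density reduces, via a Hahn-Banach separation (bipolar) argument, to the norm identity
\[
\|\Psi\|_{M_n(\rC(K))^*} = \|\Psi \circ R_n\|_{M_n(\A_d)^*}
\]
for every continuous linear functional $\Psi$ on $M_n(\rC(K))$, where $R_n \colon M_n(\A_d) \to M_n(\rC(K))$ is the matrix-amplified restriction map and $\Psi$ corresponds to a matrix $(\mu_{ij}) \in M_n(M(K))$ of regular Borel measures. The inequality $\geq$ is automatic because $R_n$ is completely contractive: evaluation at any point of $\ol{\bB_d}$ is a character of the operator algebra $\A_d$ and is therefore completely contractive.

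Third, the reverse inequality would follow from a matrix amplification of the argument used for Theorem \ref{T:singfunct}. Each $\mu_{ij}$ is $\A_d$--totally singular by Corollary \ref{C:AdTN}, so each entry of $\Psi \circ R_n$ lies in $\fW_*$. The key observation is that the second-coordinate projection $E \colon \A_d \to \fW$ associated to $\A_d^{**} \simeq \M_d \oplus \fW$ factors as $\A_d \hookrightarrow \rC(\bS_d) \xrightarrow{\rho} \fW$, where $\rho$ is the $*$-homomorphism obtained by pulling back along the natural embedding of the Gelfand spectrum of $\fW$ into $\bS_d$ (from the remark following Theorem \ref{T:structuredoubledual}). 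Both arrows are completely contractive, and so is $E$. Matrix-amplifying the $\ell^1$ decomposition $\A_d^* = \M_{d*} \oplus_1 \fW_*$ (which arises by duality from the $\ell^\infty$ decomposition of the von Neumann algebra $M_n(\A_d^{**}) = M_n(\M_d) \oplus M_n(\fW)$) yields
\[
|(\Psi \circ R_n)((\phi_{ij}))| \leq \|\Psi \circ R_n\|_{M_n(\A_d)^*} \, \|(\phi_{ij})\|_{M_n(\rC(\bS_d))}
\]
for every $(\phi_{ij}) \in M_n(\A_d)$. Applying Hahn-Banach on the $C^*$-algebra $M_n(\rC(\bS_d))$ then extends $\Psi \circ R_n$ to a functional represented by a matrix of measures $(\nu_{ij})$ with $\|(\nu_{ij})\|_{M_n(\rC(\bS_d))^*} \leq \|\Psi \circ R_n\|_{M_n(\A_d)^*}$. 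The componentwise uniqueness clause of Theorem \ref{T:singfunct} forces each $\nu_{ij} - \mu_{ij}$ to be $\AB$--Henkin; a matrix amplification of the $\ell^1$ Glicksberg-K\"onig-Seever decomposition (Theorem \ref{t-GKS}) then gives $\|(\mu_{ij})\|_{M_n(\rC(\bS_d))^*} \leq \|(\nu_{ij})\|_{M_n(\rC(\bS_d))^*}$, and matrix Tietze-Dugundji identifies the left-hand side with $\|\Psi\|_{M_n(\rC(K))^*}$, completing the required estimate.

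The main obstacle will be the operator-space bookkeeping in the third step: rigorously justifying that both the decomposition $\A_d^* = \M_{d*} \oplus_1 \fW_*$ and the Glicksberg-K\"onig-Seever $\ell^1$ decomposition retain their direct-sum character at the matrix-amplified level, so that the singular-part norm of a matrix functional agrees with its norm on $M_n(\fW)$ and the pruning of $\AB$--Henkin contributions from $(\nu_{ij})$ does not increase the total matrix measure norm.
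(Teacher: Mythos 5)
Your outline is essentially correct, but it takes a genuinely different---and much heavier---route than the paper. The paper deduces the corollary from Proposition \ref{P:open_ball} in two lines of abstract operator algebra theory: that proposition shows the induced map $\widehat{R_K}\colon \A_d/\ker R_K\to \rC(K)$ is a unital surjective isometry, so its inverse is a unital contraction defined on a commutative $C^*$-algebra, hence completely positive and completely contractive, and one then lifts matrices from the quotient $M_n(\A_d/\ker R_K)$ with $\ep$ slack. You instead rerun the scalar duality proof at every matrix level, and the bookkeeping you flag as the main obstacle is real work, though it does go through: for the key inequality $|(\Psi\circ R_n)((\phi_{ij}))|\leq \|\Psi\circ R_n\|\,\|(\phi_{ij})\|_{M_n(\rC(\bS_d))}$, note that the canonical weak-$*$ continuous (isometric) extension of $\Psi\circ R_n$ to $M_n(\A_d)^{**}\cong M_n(\A_d^{**})=M_n(\M_d)\oplus_\infty M_n(\fW)$ kills $M_n(\M_d)\oplus 0$ because each entry functional lies in $\fW_*$, so the induced functional on $M_n(\fW)$ has norm at most $\|\Psi\circ R_n\|$; and the entrywise map $\phi\mapsto E\phi$ is completely contractive for the supremum norm by the character argument already used in the proof of Theorem \ref{T:singfunct} (characters of $\fW$ pull back to evaluations at points of $\ol{\bB_d}$), so you need not invoke the companion paper's description of $\fW$. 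Similarly, no matrix Glicksberg-K\"onig-Seever theorem is needed for the pruning step: each $\nu_{ij}-\mu_{ij}$ annihilates $\A_d$, hence $\AB$, hence is $\AB$-- (so $\A_d$--) Henkin and therefore null on $K$ by Corollary \ref{C:AdTN}; consequently the $(\mu_{ij})$-functional is the cut-down by $K$ of the $(\nu_{ij})$-functional, and cutting a functional on $\rC(\bS_d,M_n)$ by a Borel set does not increase its norm, after which matrix Tietze--Dugundji finishes as you say. Two small inaccuracies: $M_n(\A_d^{**})$ is a dual operator algebra, not a von Neumann algebra, and the Henkin property of $\nu_{ij}-\mu_{ij}$ comes from annihilation of $\AB$, not from the uniqueness clause of Theorem \ref{T:singfunct}. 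What your route buys is a self-contained, measure-theoretic proof of matrix-level density; what the paper's buys is brevity and the complete contractivity statement $\|S_K\|_{cb}=1$ essentially for free.
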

\begin{proof}
Let $R_K: \A_d\to \rC(K)$ be defined as $R_K(\phi)=\phi|_K$ for every $\phi\in \A_d$. Proposition \ref{P:open_ball} shows that the unital map $\widehat{R_K}:\A_d/\ker R_K\to \rC(K)$ is surjective and isometric. Let $S_K=(\widehat{R_K})^{-1}:\rC(K)\to \A_d/\ker R_K$. Then, $S_K$ is unital and contractive, and hence positive. Since the domain of $S_K$ is a commutative $C^*$-algebra, $S_K$ is necessarily completely positive with $\|S_K\|_{cb}=1$, and the desired conclusion follows from this fact.
\end{proof}

We close this section with a characterization of closed subsets $\A_d$--totally null subsets $K\subset \bS_d$.

%%%%%%%%%%%%%%%%%%%%%%%%%%%%%%%%%%%%%%%
\begin{theorem}\label{T:totallynullchar}
Let $K\subset \bS_d$ be a closed subset. Then, the following statements are equivalent.

\begin{enumerate}[label=\normalfont{(\roman*)}]
\item $K$ is $\A_d$--totally null.

\item For every $f\in \rC(K)$ there exists a sequence $\{\phi_n\}_n$ such that $\phi_n\in \A_{d,0}^n$, $\|\phi_n\|_{\A_d}=\|f\|_K$ and  
\[
K\subset \{\zeta\in \bS_d: \lim_{n\to \infty}\phi_n(\zeta)=f(\zeta)\}.
\]
\end{enumerate}
\end{theorem}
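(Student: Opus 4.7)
The plan is to argue both implications by translating the statement about $\phi_n\in \A_{d,0}^n$ with controlled multiplier norm into the language of singular functionals via Corollary \ref{C:dual} and Theorem \ref{T:singfunct}, together with a Hahn--Banach separation.

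For (ii) $\Rightarrow$ (i), I would show that every positive measure $\mu$ concentrated on $K$ is $\A_d$--totally singular. Normalize $\|\mu\|_{M(\bS_d)}=1$ and polar-decompose a general complex such measure as $\mu=g\,|\mu|$ with $|g|=1$ a.e. Given $\eps>0$, Lusin's theorem supplies $F\in \rC(K)$ with $\|F\|_K\le 1$ and $\int_K|F-\ol g|\,d|\mu|<\eps$. Applying (ii) to $F$ yields $\{\phi_n\}_n$ with $\phi_n\in \A_{d,0}^n$, $\|\phi_n\|_{\A_d}=\|F\|_K\le 1$, and $\phi_n\to F$ pointwise on $K$. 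Because $\|\phi_n\|_\infty\le \|\phi_n\|_{\A_d}\le 1$, dominated convergence gives
\[
 \lim_{n\to\infty}\int_{\bS_d}\phi_n\,d\mu=\int_K Fg\,d|\mu|,
\]
whose modulus is at least $1-\eps$. Letting $\Phi\in\A_d^*$ denote integration against $\mu$, for every fixed $N$ the inclusion $\A_{d,0}^n\subset \A_{d,0}^N$ for $n\ge N$ forces $\|\Phi|_{\A_{d,0}^N}\|\ge 1-\eps$, and then $\|\Phi|_{\A_{d,0}^N}\|=\|\Phi\|_{\A_d^*}=\|\mu\|$. Thus $\Phi$ is singular, and the uniqueness of the Lebesgue decomposition in Corollary \ref{C:dual} forces $\Phi\in \fW_*$, i.e.\ $\mu$ is $\A_d$--totally singular. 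By Corollary \ref{C:AdTN}, $K$ is $\A_d$--totally null.

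For (i) $\Rightarrow$ (ii), the main reduction is the density claim: for each $n\ge 1$, the set $\ol{b_1(\A_{d,0}^n)}|_K$ is dense in $\ol{b_1(\rC(K))}$. Granting this, given $f\in \rC(K)$ the case $\|f\|_K=0$ is trivial, so assume $\|f\|_K=1$. Pick $g_n\in \ol{b_1(\A_{d,0}^n)}$ with $\|g_n-f\|_K<1/n$; for $n$ large, $\|g_n\|_{\A_d}\ge \|g_n\|_\infty\ge 1-1/n>0$, so the rescaling $\phi_n=g_n/\|g_n\|_{\A_d}$ is well defined, lies in $\A_{d,0}^n$, and satisfies $\|\phi_n\|_{\A_d}=1=\|f\|_K$. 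Since $\|\phi_n-g_n\|_{\A_d}=|1-\|g_n\|_{\A_d}|\le 1/n$, the triangle inequality gives $\|\phi_n-f\|_K\le 2/n$, so $\phi_n\to f$ uniformly on $K$, hence pointwise.

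The density claim itself is proved by contradiction. If it failed, Hahn--Banach separation (using that $\ol{b_1(\A_{d,0}^n)}|_K$ is convex and balanced in $\rC(K)$) combined with the Riesz representation theorem would produce a measure $\mu\in M(K)$ with $\|\mu\|_{M(K)}=1$ satisfying
\[
 \sup_{h\in \ol{b_1(\A_{d,0}^n)}}\Big|\int_K h\,d\mu\Big|<1.
\]
But $K$ is $\A_d$--totally null, so $\mu$ is $\A_d$--totally singular, and Theorem \ref{T:singfunct} identifies the corresponding integration functional $\Phi$ as a member of $\fW_*$ with $\|\Phi\|_{\A_d^*}=\|\mu\|_{M(\bS_d)}=1$. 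By Corollary \ref{C:dual} the functional $\Phi$ is singular, hence $\|\Phi|_{\A_{d,0}^n}\|=1$, which directly contradicts the strict inequality above. The main obstacle is really this two-way dictionary between singularity defined via the chain $\{\A_{d,0}^n\}_n$ and membership in $\fW_*$; it is supplied in one direction by uniqueness in Corollary \ref{C:dual} and in the other by Theorem \ref{T:singfunct}.
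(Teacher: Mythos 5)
Your argument is correct, and the interesting direction (i)~$\Rightarrow$~(ii) takes a genuinely different route from the paper. The paper constructs the functions $\phi_n$ explicitly: it applies the matrix-valued interpolation result (Corollary \ref{C:approxmatrix}, which rests on Proposition \ref{P:open_ball} and complete positivity) to the column $(\ol{z}^{\alpha}f)_{|\alpha|=n}$, and then multiplies by the contractive row $(z^{\alpha})_{|\alpha|=n}$ to land in $\A_{d,0}^n$ with multiplier norm at most $1$; the row-contraction structure of the coordinate multipliers is what gives membership in the $n$-th power of the ideal together with norm control. You instead prove, by scalar Hahn--Banach duality, that $\ol{b_1(\A_{d,0}^n)}|_K$ is dense in $\ol{b_1(\rC(K))}$: a separating measure $\mu$ on $K$ would be $\A_d$--totally singular, hence (Theorem \ref{T:singfunct} plus Corollary \ref{C:dual}) its integration functional lies in $\fW_*$, is singular, and attains its full norm $\|\mu\|$ on each $\A_{d,0}^n$, contradicting the strict separation. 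This is essentially the argument of Proposition \ref{P:open_ball} pushed to the powers of the ideal, and it buys you a more elementary proof that avoids the matricial machinery entirely, at the cost of being non-constructive; the paper's construction is more explicit and reuses the operator-space interpolation it has already set up. Your (ii)~$\Rightarrow$~(i) is essentially the paper's, except that you treat arbitrary complex measures directly via polar decomposition and Lusin's theorem, where the paper tests only against $f=1$ and then invokes Theorem \ref{T:band} to pass from positive to general measures; both are fine.

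Two small points of precision, neither a real gap. First, your phrase ``uniqueness of the Lebesgue decomposition in Corollary \ref{C:dual} forces $\Phi\in\fW_*$'' is slightly off as stated: the corollary asserts that the $\fW_*$-component is singular, not conversely that a singular functional has vanishing absolutely continuous part. The converse is true and is exactly the argument inside the proof of Theorem \ref{T:singfunct}: unit vectors $f_n\in\A_{d,0}^n$ tend to $0$ weak-$*$ in $\M_d$ by Theorem \ref{T:vectorfunct}, so $\Phi_a(f_n)\to 0$ and singularity forces $\|\Phi_a\|=0$; you should cite that mechanism (the paper's own proof of (ii)~$\Rightarrow$~(i) makes the same silent leap). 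Second, your rescaling $\phi_n=g_n/\|g_n\|_{\A_d}$ is only defined for $n$ large; since statement (ii) demands $\|\phi_n\|_{\A_d}=\|f\|_K$ for every $n$, just replace the finitely many initial terms by, say, $\|f\|_K\,z_1^n$, which does not affect the pointwise limit on $K$.
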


\begin{proof}
Assume that (i) holds. Fix $f\in \rC(K)$ with $\|f\|_K=1$. For each $n\geq 1$ consider the column vector $F=(\ol{z}^{\alpha}f)_{|\alpha|=n}$ with entries in $\rC(K)$. 
Note that $\|F\|_{K}=1$. By Corollary \ref{C:approxmatrix}, we can find a column vector $(\psi_{\alpha})_{|\alpha|=n}$ with entries in $\A_d$ such that
$\| (\psi_{\alpha})_{|\alpha|=n}\|=1$ and $\psi_{\alpha}|_K=(1-1/n)\ol{z}^{\alpha}f$ for each $\alpha$ with $|\alpha|=n$.
Now define
\[
 \phi_n=\sum_{|\alpha|=n}z^{\alpha}\psi_{\alpha}\in \A_{d,0}^n.
\]
Since the row vector $({z_1},\ldots,{z_d})$ is contractive on $H^2_d\oplus \ldots \oplus H^2_d$, the same is true of the row vector  $(z^{\alpha})_{|\alpha|=n}$  and thus $\|\phi_n\|_{\A_d}\leq 1$ for every $n\geq 1$. Moreover, by choice of the functions $\psi_{\alpha}$ we have $\phi_n=(1-1/n)f$ on $K\subset \bS_d$ and  (ii) follows.

Assume conversely that (ii) holds. Choosing $f$ to be the constant function $1$, this assumption immediately implies that  any positive measure concentrated on $K$ gives rise to a singular functional on $\A_d$, and hence such a measure is $\A_d$--totally singular. Using Theorem \ref{T:band}, we see that any measure (positive or not) concentrated on $K$ is $\A_d$--totally singular, and thus $K$ is $\A_d$--totally null.
\end{proof}

%%%%%%%%%%%%%%%%%%%%%%%%%%%%%%%%%%%%%%%
%%%%%%%%%%%%%%%%%%%%%%%%%%%%%%%%%%%%%%%
\section{Examples of $\A_d$--totally null sets} \label{S:examples}

Subsets of the sphere that are $\A_d$--totally null will be of interest to us for the remainder of the paper. Proposition \ref{P:AdTS_sets}, Corollary \ref{C:AdTS_sets} and Theorem \ref{T:totallynullchar} clarified the general nature of these objects. In this section we adopt of a more concrete point of view and exhibit some examples of such sets. Equivalently, we give examples of singular functionals on $\A_d$.

%%%%%%%%%%%%%%%%%%%%%%%%%%%%%%%%%%%%%%%
\begin{proposition}\label{p-ptsing}
If $\zeta\in \bS_d$, then the point evaluation functional $\tau_\zeta \in \A_d^*$ is singular.
\end{proposition}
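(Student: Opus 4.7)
The plan is to verify the definition of singularity directly by exhibiting, for each $n\geq 0$, an explicit witness in the ideal $\A_{d,0}^n$ on which $\tau_\zeta$ attains (or nearly attains) its norm. First I would observe that $\|\tau_\zeta\|_{\A_d^*} = 1$, since $|\tau_\zeta(f)| = |f(\zeta)| \leq \|f\|_\infty \leq \|f\|_{\A_d}$ for all $f \in \A_d$, and $\tau_\zeta(1) = 1$.

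The key witness will be $\phi_n(z) = \ip{z,\zeta}^n$. Since $\zeta \in \bS_d$ and the row $(M_{z_1},\dots,M_{z_d})$ is a row contraction on $H^2_d$, the multiplier
\[
 M_{\ip{z,\zeta}} = \bar\zeta_1 M_{z_1} + \cdots + \bar\zeta_d M_{z_d}
\]
satisfies $\|M_{\ip{z,\zeta}}\|\leq \|\zeta\|_{\bC^d} = 1$, and hence $\|\phi_n\|_{\A_d}\leq 1$. Moreover, $\ip{z,\zeta}$ belongs to the ideal $\A_{d,0}$, so $\phi_n \in \A_{d,0}^n$. Finally, $\phi_n(\zeta) = \ip{\zeta,\zeta}^n = 1$, so
\[
 \|\tau_\zeta|_{\A_{d,0}^n}\|_{(\A_{d,0}^n)^*} \geq |\tau_\zeta(\phi_n)| = 1 = \|\tau_\zeta\|_{\A_d^*}
\]
for every $n\geq 0$. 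The reverse inequality is automatic, giving equality and hence singularity.

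There is essentially no obstacle here once the right witness is chosen; the only thing to check carefully is that $\phi_n$ is genuinely a contractive multiplier (which is where $\zeta \in \bS_d$ enters via the row contraction property) and that $\phi_n \in \A_{d,0}^n$ (immediate from $\ip{z,\zeta} = \sum_k \bar\zeta_k z_k \in \A_{d,0}$ together with the fact that $\A_{d,0}$ is an ideal).
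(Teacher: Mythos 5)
Your proof is correct and is essentially the paper's argument: both rest on the witnesses $\phi_n(z)=\ip{z,\zeta}^n\in\A_{d,0}^n$ of multiplier norm one with $\phi_n(\zeta)=1$. The only difference is in how the norm bound is justified: the paper first treats $\zeta=(1,0,\dots,0)$ with the witness $z_1^n$ and then transfers to general $\zeta$ via a unitarily implemented rotation $\Gamma_U$ (whose effect is precisely to turn $z_1^n$ into $\ip{z,\zeta}^n$), whereas you bound $\|M_{\ip{\cdot,\zeta}}\|\le 1$ directly from the fact that $(M_{z_1},\dots,M_{z_d})$ is a row contraction, which is a slightly more direct route to the same estimate.
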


\begin{proof}
First assume that $\zeta=(1,0,\ldots,0)$. 
For each positive integer $n$, consider the function $f_n(z)=z_1^n\in \A_{d,0}^n$. 
Since $\|f_n\|_{\A_d}=1$ and $\tau_{\zeta}(f_n)=1$ for every $n$, we conclude that $\tau_{\zeta}$ is a singular functional.

The general case follows similarly upon applying a rotation of the sphere. 
More precisely, given $\zeta\in \bS_d$ we can find a unitary operator $U:\bC^d\to \bC^d$ such that $U \zeta=(1,0,\ldots,0)$. 
Let $\Gamma_U:H^2_d\to H^2_d$ be defined as $\Gamma_U(f)(z)=f(U^* z)$. 
By Proposition 1.12 in \cite{Arv98}, we know that $\Gamma_U$ is unitary and if $\phi\in \M_d$ 
then $\Gamma_U^* M_{\phi} \Gamma_U=M_{\Gamma_U^* \phi}\in \M_d$.
Moreover, if $U=(u_{jk})_{j,k=1}^d\in M_d(\bC)$ and $\alpha=(\alpha_1,\ldots,\alpha_d)\in \bN_0^d$, 
then a straightforward calculation yields
\[
\Gamma^*_U z^{\alpha}=\Gamma_{U^*} z^{\alpha}=\prod_{k=1}^d\Big( \sum_{j=1}^d \ol{u_{kj}}z_j\Big)^{\alpha_k}
\]
which shows that $\Gamma^*_U$ leaves $\A_{d,0}^n$ invariant.
Hence $f_n=\Gamma_U^* z_1^n\in \A_{d,0}^n$ has norm $1$ and $\tau_{\zeta}(f_n)=1$ for every $n$, 
whence $\tau_{\zeta}$ is singular.
\end{proof}

We obtain an easy consequence.

%%%%%%%%%%%%%%%%%%%%%%%%%%%%%%%%%%%%%%%
\begin{cor}\label{C:countable}
Countable  subsets of $\bS_d$ are $\A_d$--totally null.
\end{cor}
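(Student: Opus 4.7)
The plan is to leverage Proposition~\ref{p-ptsing} together with the fact that the singular summand $\fW_*$ is norm-closed inside $\A_d^*$. By definition, I need to show that every finite regular Borel measure $\mu$ concentrated on a countable set $K \subset \bS_d$ is $\A_d$--totally singular, i.e., that its associated integration functional on $\A_d$ lies in $\fW_*$.

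First I would write $K = \{\zeta_n\}_{n\geq 1}$ and observe that any such measure takes the form $\mu = \sum_{n\geq 1} c_n \delta_{\zeta_n}$ with $\sum_n |c_n| < \infty$. The corresponding integration functional on $\A_d$ is therefore the series $\Phi = \sum_{n\geq 1} c_n \tau_{\zeta_n}$, which converges absolutely in the norm of $\A_d^*$ since each $\tau_{\zeta_n}$ is a state.

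Next, Proposition~\ref{p-ptsing} tells me that every $\tau_{\zeta_n}$ is singular, so by Corollary~\ref{C:dual} each $\tau_{\zeta_n}$ lies in the summand $\fW_*$. Because the decomposition $\A_d^* \simeq \M_{d*} \oplus_1 \fW_*$ is an $\ell^1$-direct sum, the subspace $\fW_*$ is norm-closed in $\A_d^*$; hence the norm-convergent series $\sum_n c_n \tau_{\zeta_n}$ lies in $\fW_*$. This says precisely that $\Phi \in \fW_*$, i.e., $\mu$ is $\A_d$--totally singular. Since $\mu$ was an arbitrary measure concentrated on $K$, the set $K$ is $\A_d$--totally null.

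The argument is essentially immediate once Proposition~\ref{p-ptsing} is in hand, and I do not anticipate a real obstacle. The only subtlety is the appeal to norm-closedness of $\fW_*$, but this is automatic from the $\ell^1$-direct sum structure in Corollary~\ref{C:dual}. Alternatively, one could argue via the characterization in Corollary~\ref{C:AdTN} by showing that $|\eta|(\{\zeta\}) = 0$ for any $\A_d$--Henkin measure $\eta$ and any $\zeta \in \bS_d$; but this route requires slightly more bookkeeping, so the direct approach through the predual decomposition is cleaner.
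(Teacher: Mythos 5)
Your argument is correct, but it follows a different route from the paper. The paper's proof is a two-line deduction: Proposition~\ref{p-ptsing} shows each singleton is $\A_d$--totally null, and then Corollary~\ref{C:AdTN} (the characterization via $\A_d$--Henkin measures) does the rest, since $|\eta|(K)=\sum_n|\eta|(\{\zeta_n\})=0$ for every $\A_d$--Henkin measure $\eta$ by countable additivity --- this is precisely the ``alternative route'' you set aside as requiring more bookkeeping, though in the paper it is the shorter one. You instead work directly from the definition: decompose an arbitrary measure concentrated on $K$ as an absolutely convergent sum $\sum_n c_n\delta_{\zeta_n}$, so the integration functional is the norm-convergent series $\sum_n c_n\tau_{\zeta_n}$, and conclude using that $\fW_*$ is a norm-closed subspace of $\A_d^*$ (it is the kernel of the contractive projection $\Phi\mapsto\Phi_a$ coming from the $\ell^1$-decomposition). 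This bypasses Corollary~\ref{C:AdTN} and Theorem~\ref{T:band} entirely and is more self-contained; the paper's route, by contrast, works for arbitrary countable unions of totally null sets, not just unions of points. One small imprecision: Corollary~\ref{C:dual} as stated gives that the $\fW_*$-component of a functional is singular, not the converse implication ``singular $\Rightarrow$ belongs to $\fW_*$'' that you need to place each $\tau_{\zeta_n}$ in $\fW_*$. That converse is true and is used implicitly throughout the paper (it asserts $\fS\subset\fW_*$ from Proposition~\ref{p-ptsing} just before Proposition~\ref{p-dualdensity}): if $\Phi$ is singular, pick $f_n\in\A_{d,0}^n$ of norm one with $\Phi(f_n)\to\|\Phi\|$; as in the proof of Theorem~\ref{T:singfunct} such a sequence is weak-$*$ null in $\M_d$, so $\Phi_a(f_n)\to 0$, forcing $\|\Phi_s\|=\|\Phi\|$ and $\Phi_a=0$. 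With that one sentence added, your proof is complete; note also that the paper's own proof of the corollary relies on the same implicit step, so this is a citation issue rather than a gap in substance.
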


\begin{proof}
Proposition~\ref{p-ptsing} shows that a singleton in $\bS_d$ is $\A_d$--totally null.
Combining this with Corollary \ref{C:AdTN} shows that all
countable  subsets of $\bS_d$ are $\A_d$--totally null.
\end{proof}

Next, we provide some density properties of the sets 
\[
  \fS=\{\tau_{\zeta}:\zeta\in \bS_d\}  \qand  \fB=\{\tau_z:z\in \bB_d\}.
\]
It is easy to verify that $\fB\subset \M_{d*}$, while 
Proposition \ref{p-ptsing} implies $\fS\subset \fW_*$. 
%Recall that for $z\in \ol{\bB_d}$, we denote by $\tau_z$  the character on $\A_d$ of evaluation at $z$. 
%The correspondence $z\mapsto \tau_z$ is a homeomorphism between $\ol{\bB_d}$ and the maximal ideal space of $\A_d$.

%%%%%%%%%%%%%%%%%%%%%%%%%%%%%%%%%%%%%%%
\begin{proposition}\label{p-dualdensity} 
The following statements hold.
\begin{enumerate}[label=\normalfont{(\roman*)}]
\item
The linear manifold generated by $\fS$ is weak-$*$ dense in $\A_d^*$.

\item
For any $R>0$, the closed ball of radius $R$ of $\text{span}(\fS)$ is not weak-$*$ dense in the closed unit ball of $\M_{d*}$.

\item
The norm closed subspace generated by $\fB$ coincides with $\M_{d*}$.

\item
The closed unit ball of $\M_{d*}$ is weak-$*$ dense in the closed unit ball of $\A_d^*$.
\end{enumerate}
\end{proposition}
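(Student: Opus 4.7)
Parts (i) and (iii) will follow from the Hahn-Banach theorem applied to the appropriate annihilators. A subspace $Y \subset X^*$ is weak-$*$ dense if and only if its annihilator in $X$ is trivial. For (i), the annihilator of $\spn(\fS)$ in $\A_d$ consists of those $f \in \A_d$ with $f(\zeta) = 0$ for every $\zeta \in \bS_d$; since $\A_d \subset \AB$, the maximum modulus principle forces such $f$ to vanish. For (iii), the norm closed span of $\fB$ equals $\M_{d*}$ if and only if its annihilator in $\M_d$ is zero; but $\tau_z(f) = f(z) = 0$ for every $z \in \bB_d$ immediately gives $f \equiv 0$.

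For (iv), the plan is to use the bipolar theorem on $\A_d^*$ equipped with the weak-$*$ topology induced by $\A_d$. One first observes that $\M_{d*}$ embeds isometrically into $\A_d^*$ by restriction: the inequality $\|\Psi|_{\A_d}\|_{\A_d^*} \leq \|\Psi\|_{\M_{d*}}$ is immediate, while weak-$*$ density of $\A_d$ in $\M_d$ yields the reverse. The polar of $\ol{b_1(\M_{d*})}$ in $\A_d$ is then $\{f \in \A_d : \|f\|_{\M_d} \leq 1\} = \ol{b_1(\A_d)}$, whose own polar in $\A_d^*$ is $\ol{b_1(\A_d^*)}$. The bipolar theorem identifies the weak-$*$ closure of $\ol{b_1(\M_{d*})}$ in $\A_d^*$ with $\ol{b_1(\A_d^*)}$, establishing (iv).

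For (ii), the plan is to argue by contradiction using (iv). Assume $R \cdot \ol{b_1(\spn(\fS))}$ is weak-$*$ dense in $\ol{b_1(\M_{d*})}$ for some $R > 0$; combined with (iv), this set is then weak-$*$ dense in $\ol{b_1(\A_d^*)}$ as well. Define the seminorm
\[
p(f) = \sup\{|\Phi(f)| : \Phi \in \spn(\fS),\ \|\Phi\|_{\A_d^*} \leq 1\} \qforal f \in \A_d
\]
and apply the bipolar theorem to the convex balanced set $R \cdot \ol{b_1(\spn(\fS))}$: its weak-$*$ closure is $\{\Psi \in \A_d^* : |\Psi(f)| \leq R\,p(f) \FORAL f \in \A_d\}$. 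Since this closure contains $\ol{b_1(\A_d^*)}$, taking the supremum over $\Psi$ with $\|\Psi\|_{\A_d^*} \leq 1$ yields $\|f\|_{\A_d} \leq R\,p(f)$ for every $f \in \A_d$.

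The main step is then the identification $p(f) = \|f\|_\infty$, which converts this into $\|f\|_{\A_d} \leq R\,\|f\|_\infty$ for all $f \in \A_d$ and thus contradicts the inequivalence of the multiplier and supremum norms on $\A_d$ noted after Theorem \ref{T:Ldcommu}. By Proposition \ref{p-ptsing}, $\spn(\fS) \subset \fW_*$, so Theorem \ref{T:singfunct} identifies $\|\Phi\|_{\A_d^*}$ for $\Phi = \sum c_i \tau_{\zeta_i} \in \spn(\fS)$ with the total variation $\sum |c_i|$ of the associated discrete measure on $\bS_d$. Consequently $|\Phi(f)| \leq \sum |c_i|\,\|f\|_\infty \leq \|f\|_\infty$ whenever $\|\Phi\|_{\A_d^*} \leq 1$, while a single Dirac $\tau_{\zeta^*}$ at a maximum point of $|f|$ on $\bS_d$ achieves $|f(\zeta^*)| = \|f\|_\infty$ with $\|\tau_{\zeta^*}\|_{\A_d^*} = 1$. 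I expect this last step to be the most delicate part of the argument, as it hinges on the precise computation of singular norms furnished by Theorem \ref{T:singfunct}.
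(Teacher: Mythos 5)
Your proposal is correct, but for parts (ii) and (iv) it takes a genuinely different route from the paper. For (iv), the paper establishes $\sup_{\Psi\in\ol{b_1(\M_{d*})}}|\Psi(f)|=\|f\|_{\A_d}$ by choosing a norming functional, splitting it as $\Phi_a+\Phi_s$ via Corollary \ref{C:dual}, and using Theorem \ref{T:singfunct} to force $\Phi_s=0$, with a separate case $\|f\|_{\A_d}=\|f\|_\infty$ handled by the evaluations $\tau_z$, $z\in\bB_d$; your bipolar computation obtains the same polar identity directly from the isometric duality $(\M_{d*})^*=\M_d$ and the fact that $\A_d$ carries the multiplier norm, which is shorter and avoids both the case split and Theorem \ref{T:singfunct}. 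For (ii), the paper fixes one $f$ with $\|f\|_{\A_d}>(R+1)\|f\|_\infty$ and contradicts the existence of an approximating net for the absolutely continuous part of a norming functional, whereas you bootstrap through (iv) and a second bipolar argument to get the uniform bound $\|f\|_{\A_d}\le R\,\|f\|_\infty$ for all $f$ and then contradict the non-comparability of the two norms --- the same external input the paper uses, packaged globally instead of for a single function. Parts (i) and (iii) are essentially the paper's arguments, with (iii) run through $(\M_{d*})^*=\M_d$ rather than Goldstine's theorem in $\A_d^{**}$. Two small points to tighten: in (iv), mere weak-$*$ density of $\A_d$ in $\M_d$ does not by itself give the reverse inequality for the restriction map --- you need weak-$*$ density of $\ol{b_1(\A_d)}$ in $\ol{b_1(\M_d)}$ (available from $f\mapsto f_r$ as in Lemma \ref{l-conv}), or you can simply quote the isometric identification in Corollary \ref{C:dual}; and in (ii) the identity $\|\sum_i c_i\tau_{\zeta_i}\|_{\A_d^*}=\sum_i|c_i|$ needs the (easy, but unproved here) fact that discrete measures on $\bS_d$ are $\AB$--totally singular before the uniqueness clause of Theorem \ref{T:singfunct} applies; in fact you do not need it at all, since the inequality $|\Phi(f)|\le\|\Phi\|_{\A_d^*}\|f\|_\infty$ for $\Phi\in\fW_*$, immediate from Theorem \ref{T:singfunct}, already yields $p(f)\le\|f\|_\infty$, which is all your contradiction requires.
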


\begin{proof}
Any function $f\in \A_d$ vanishing on $\bS_d$ is identically zero, which implies (i) by the Hahn-Banach theorem.

Fix $R>0$ and let $f\in \A_d$ such that $\|f\|_{\A_d}>(R+1)\|f\|_{\infty}$. Choose $\Phi\in \A_d^*$ with $\|\Phi\|_{\A_d^*}=1$ 
such that $\Phi(f)=\|f\|_{\A_d}$. 
By Corollary \ref{C:dual} we may write $\Phi=\Phi_a+\Phi_{s}$ where $\Phi_a\in \M_{d*},\Phi_s\in \fW_*$ 
and $\|\Phi_a\|_{\A_d^*}+\|\Phi_s\|_{\A_d^*}=1$. 
Assume that there exists a net $\{\Lambda_{\alpha}\}_{\alpha}\subset \text{span}(\fS)$ 
such that $\|\Lambda_\alpha\|_{\A_d^*}\leq R$ and  $\lim_\alpha \Lambda_{\alpha}=\Phi_a$ in the 
weak-$*$ topology of $\A_d^*$. 
By Theorem \ref{T:singfunct} we know that 
$$
|\Lambda_{\alpha}(f)|\leq \|\Lambda_{\alpha}\|_{\A_d^*} \|f\|_{\infty}\leq R\|f\|_{\infty},
$$
\[
|\Phi_s(f)|\leq \|\Phi_s\|_{\A_d^*} \|f\|_{\infty}\leq \|f\|_{\infty}
\]
and thus we find
\begin{align*}
 \|f\|_{\A_d}&=|\Phi(f)|\leq |\Phi_a(f)|+|\Phi_{s}(f)|\\
 &=\lim_{\alpha}|\Lambda_{\alpha}(f)|+|\Phi_{s}(f)|\leq (R+1)\|f\|_{\infty}
\end{align*}
 which is absurd.  This establishes (ii).

Given $f\in \A_d$, we denote by $\widehat{f}$ its canonical image in $\A_d^{**}$. 
Clearly the norm closure of $\spn(\fB)$ belongs to $\M_{d*}$.
Conversely, let $\theta\in \A_d^{**}$ such that $\theta(\tau_{z})=0$ for every $z\in \bB_d$. 
By Goldstine's theorem, there exists a net $\{f_{\alpha}\}_{\alpha}\subset \A_d$ such that 
$\|f_{\alpha}\|_{\A_d}\leq \|\theta\|_{\A_d^{**}}$ and  $\lim_{\alpha}\widehat{f_{\alpha}}=\theta$ in the weak-$*$ topology of $\A_d^{**}$. 
In particular, we see that
$$
0=\theta(\tau_z)=\lim_{\alpha}\widehat{f_{\alpha}}(\tau_z)
$$
for every $z\in\bB_d$, whence $\{f_{\alpha}\}_{\alpha}$ converges pointwise to $0$ on $\bB_d$. 
Since $\{f_{\alpha}\}_{\alpha}$ is also bounded, we see that $\{f_{\alpha}\}_{\alpha}$ converges to $0$ in the weak-$*$ topology of $\M_d$ and we have
$$
\theta(\Phi)=\lim_{\alpha}\widehat{f_{\alpha}}(\Phi)=\lim_{\alpha}\Phi(f_{\alpha})=0
$$
for every $\Phi\in \M_{d*}$.
Therefore the norm closure of $\text{span} (\fB)$ coincides with $\M_{d*}$ by the Hahn-Banach theorem, and (iii) follows.

Finally, let $f\in \A_d$ with $\|f\|_{\A_d}>\|f\|_{\infty}$ and choose $\Phi\in \A_d^*$ with $\|\Phi\|_{\A_d^*}=1$ such that $\Phi(f)=\|f\|_{\A_d}$. 
By Corollary \ref{C:dual} we may write $\Phi=\Phi_a+\Phi_{s}$,
where $\Phi_a\in \M_{d*},\Phi_s\in \fW_*$ and $\|\Phi_a\|_{\A_d^*}+\|\Phi_s\|_{\A_d^*}=1$. 
Using Theorem \ref{T:singfunct}, we observe that
\begin{align*}
 \|f\|_{\A_d} &= |\Phi(f)| \leq \|\Phi_a\|_{\A_d^*} \|f\|_{\A_d}+\|\Phi_s\|_{\A_d^*} \|f\|_{\infty}\\
 &\leq (\|\Phi_a\|_{\A_d^*}+\|\Phi_s\|_{\A_d^*})\|f\|_{\A_d} = \|f\|_{\A_d} .
\end{align*}
Thus we must have $\|\Phi_s\|_{\A_d^*} \|f\|_{\infty}=\|\Phi_s\|_{\A_d^*} \|f\|_{\A_d}$. 
By choice of $f$, this forces $\Phi_s=0$ and $\Phi=\Phi_a$ so that
$$
\sup_{\Psi\in \ol{b_1(\M_{d*})}} |\Psi(f)|=\|f\|_{\A_d}.
$$
In case $\|f\|_{\A_d}=\|f\|_{\infty}$, the same relation holds since 
$$
\|f\|_{\A_d}=\sup_{z\in \bB_d}|\tau_z(f)|\leq \sup_{\Psi\in \ol{b_1(\M_{d*})}}|\Psi(f)|\leq \|f\|_{\A_d}.
$$
Hence, we conclude that 
$$
\sup_{\Psi\in \ol{b_1(\M_{d*})}}|\Psi(f)|=\|f\|_{\A_d}
$$
for every $f\in \A_d$. Another application of the Hahn-Banach theorem yields (iv).
\end{proof}

As was discussed in the preliminaries, we can achieve peak interpolation with functions in $\AB$ 
on closed $\AB$-totally null subsets of the sphere. 
By Proposition \ref{p-ptsing}, we see that points on the sphere satisfy the stronger condition of being $\A_d$--totally null.
It is therefore reasonable to expect some kind of peak interpolation to be possible for these points using multipliers in $\A_d$. 
This is indeed the case, as the next result shows.

%%%%%%%%%%%%%%%%%%%%%%%%%%%%%%%%%%%%%%%
\begin{prop} \label{P:peak_point}
Let $\zeta_0\in \bS_d$. 
Then, there exists $g\in \A_d$ with $\|g\|_{\A_d}=1$ such that $g(\zeta_0)=1$ and $ |\Phi(g)|<1$ for every $\Phi\in \A_d^*$ 
such that $\|\Phi\|_{\A_d^*}=1$ and $\Phi\notin \bC\tau_{\zeta_0}$.
\end{prop}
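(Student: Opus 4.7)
The plan is to use the explicit peaking function $g(z) = \frac{1}{2}(1 + \ip{z, \zeta_0})$. This polynomial lies in $\A_d$, satisfies $g(\zeta_0) = 1$, and by Cauchy--Schwarz $|g(z)| < 1$ for every $z \in \ol{\bB_d} \setminus \{\zeta_0\}$. Row contractivity of $(M_{z_1},\dots,M_{z_d})$ shows that $\sum_i \ol{\zeta_{0,i}} M_{z_i}$ has operator norm at most $1$, so $\|g\|_{\A_d} \leq 1$, which combined with $\|g\|_{\A_d} \geq \|g\|_\infty = 1$ gives $\|g\|_{\A_d} = 1$.

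Given $\Phi \in \A_d^*$ with $\|\Phi\| = 1$ and $|\Phi(g)| = 1$, after multiplying by a unimodular scalar I may assume $\Phi(g) = 1$. Decompose $\Phi = \Phi_a + \Phi_s$ via Corollary \ref{C:dual} with $\|\Phi_a\| + \|\Phi_s\| = 1$. The bounds $|\Phi_a(g)| \leq \|g\|_{\M_d}\|\Phi_a\| = \|\Phi_a\|$ and $|\Phi_s(g)| \leq \|g\|_\infty \|\Phi_s\| = \|\Phi_s\|$ (the second via Theorem \ref{T:singfunct}) must both hold with equality and aligned phases, forcing $\Phi_a(g) = \|\Phi_a\|$ and $\Phi_s(g) = \|\Phi_s\|$.

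The singular piece is straightforward. Theorem \ref{T:singfunct} represents $\Phi_s$ as integration against an $\AB$--totally singular measure $\mu$ with $\|\mu\|_{M(\bS_d)} = \|\Phi_s\|$. Polar-decomposing $\mu = \ol\gamma|\mu|$ with $|\gamma|=1$, the identity $\int g \ol\gamma\, d|\mu| = \int d|\mu|$ combined with the pointwise estimate $\re(g\ol\gamma) \leq 1$ forces $g = \gamma$ almost everywhere with respect to $|\mu|$. Since $|g| < 1$ on $\bS_d \setminus \{\zeta_0\}$, the measure $\mu$ must be concentrated at $\zeta_0$, and hence $\Phi_s \in \bC \tau_{\zeta_0}$.

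The heart of the argument, and the main obstacle, is showing that $\Phi_a = 0$. Suppose toward contradiction that $r := \|\Phi_a\| > 0$. Using the identification $\M_d \simeq \fL_d/\C_w$ from Theorem \ref{T:Ldcommu}, lift $\Phi_a$ to a weak-$*$ continuous functional $\tilde\Phi_a$ on $\fL_d$ of the same norm $r$. Invoking the property $\bA_1(1)$ for $\fL_d$ established in \cite{DP99}, write $\tilde\Phi_a(L) = \ip{L\xi, \eta}$ with $\xi, \eta \in F^2_d$ and $\|\xi\| = \|\eta\| = \sqrt{r}$. Setting $L_h = \sum_i \ol{\zeta_{0,i}} L_i$ and $L_g = \frac{1}{2}(I + L_h)$, one has $q(L_g) = M_g$ so that $\ip{L_g\xi, \eta} = r$; moreover, since $L_1,\dots,L_d$ have pairwise orthogonal ranges, $L_h^*L_h = I$ and $L_h$ is an isometry on $F^2_d$. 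Equality in Cauchy--Schwarz gives $L_g \xi = \eta$ and $\|L_g\xi\| = \sqrt{r}$, and expanding $\|L_g\xi\|^2 = (r + \re\ip{\xi, L_h\xi})/2$ forces $\re\ip{\xi, L_h\xi} = \|\xi\|^2$; a second Cauchy--Schwarz equality then yields $L_h \xi = \xi$. But iterating gives $\xi = L_h^n \xi$, which is supported on words of length at least $n$ in $F^2_d$ for every $n$, so $\xi = 0$, contradicting $\|\xi\| = \sqrt{r} > 0$.
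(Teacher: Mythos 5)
Your choice of peaking function and your treatment of the singular part are fine (and the norm computation $\|g\|_{\A_d}=1$ is correct), but the heart of the argument --- showing $\Phi_a=0$ --- has a genuine gap. You invoke property $\bA_1(1)$ of $\fL_d$ to write the lifted functional as $[\xi\eta^*]$ with $\|\xi\|\,\|\eta\|$ \emph{exactly} equal to $\|\tilde\Phi_a\|$. What \cite{DP99} gives (Theorem \ref{T:vectorfunct} in this paper) is only $\|\xi\|\,\|\eta\|<\|\Phi\|+\eps$ for each $\eps>0$; the paper even remarks, just before Lemma \ref{L:xye0}, that exact attainment is in general impossible, and the one case where it is recovered (Lemma \ref{L:xye0}) requires $\|f\|_\infty<\|f\|_{\A_d}$, which fails for your $g$ since $\|g\|_\infty=\|g\|_{\A_d}=1$. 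The exactness is not a cosmetic issue: with only the $\eps$-version, your two equality-in-Cauchy--Schwarz steps degrade to ``there is a vector $\xi_\eps$ with $\|L_h\xi_\eps-\xi_\eps\|$ small relative to $\|\xi_\eps\|$,'' and this is no contradiction, because $1$ lies in the approximate point spectrum of the shift-like isometry $L_h$ (Ces\`aro-type vectors such as $n^{-1/2}\sum_{k=0}^{n-1}L_h^k\xi_\varnothing$ are nearly fixed). So the chain ``$L_h\xi=\xi$, hence $\xi\in\bigcap_n L_h^nF_d^2=\{0\}$'' is only available under the unproved exact factorization, and the proof of $\Phi_a=0$ collapses.

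This is precisely the difficulty the paper's proof is designed to sidestep: instead of the linear peak function $f=\tfrac12(1+z_1)$ itself, it uses $g=\sum_{n\ge1}2^{-n}f^n$. Then for any $\Phi_a\in\M_{d*}$ of norm one, $|\Phi_a(g)|\le\sum_n 2^{-n}|\Phi_a(f^n)|<1$ strictly, because $f^n\to0$ weak-$*$ in $\M_d$ (being bounded and pointwise null on $\bB_d$), so no norm-attainment or factorization statement is needed at all; the singular part is handled exactly as you do. If you want to keep your $g$, the absolutely continuous part can instead be handled by restricting $\Phi_a$ to the copy of the disc algebra in the variable $\ip{z,\zeta_0}$ (on which the multiplier and supremum norms agree, and on which $\Phi_a$ is still a Henkin functional) and invoking the classical F. and M. Riesz theorem --- but as written, the $\fL_d$-factorization step does not work.
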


\begin{proof}
Arguing as in the proof Proposition \ref{p-ptsing}, upon rotating the sphere it suffices to deal with $\zeta_0 = (1,0,\dots,0)$.

First consider the function $f(z) = \frac12(1+z_1)$.
It is clear that $f(\zeta_0)=1$ and $|f(\zeta)| < 1$ for $\zeta\in\bS_d\setminus\{\zeta_0\}$. 
Moreover, since $f$ depends only on $z_1$ we have $\|f\|_{\A_d}=\|f\|_{\infty}=1$.
Let 
\[
g = \sum_{n=1}^\infty 2^{-n}f^n\in \A_d.
\] 
It is easy to verify that 
\[
1=\|g\|_{\A_d}=\|g\|_{\infty}=g(\zeta_0)
\]
and $|g(\zeta)|<1$ if $\zeta\in \bS_d\setminus \{\zeta_0\}$.
It follows that 
\[
\left|\int_{\bS_d} g(\zeta)\,d\mu(\zeta)\right| < 1
\]
for every measure $\mu\in M(\bS_d)$ with $1 = \|\mu\|_{M(\bS_d)} > |\mu(\{\zeta_0\})|$. In particular, Theorem \ref{T:singfunct} implies that $|\Phi(g)|<1$ for every $\Phi\in \fW_*$ with norm $1$ such that $\Phi\notin \bC\tau_{\zeta_0}$.

Next, we deal with the functionals  in $\M_{d*}$.
Observe that the equalities $\|f\|_{\A_d}=\|f\|_\infty = 1$
imply $\|f^n\|_{\A_d}=\|f^n\|_\infty = 1$ for every $n\geq 1$.
Since $f$ is not constant, the sequence $\{f^n\}_n$ converges pointwise to 0 on $\bB_d$, and thus must converge to $0$ in the  weak-$*$ topology of $\M_d$. 
Hence $\dlim_{n\to\infty} \Phi(f^n) = 0$ for every  $\Phi\in\M_{d*}$. 
Consequently, we have
\[ |\Phi(g)| \leq \sum_{n=1}^\infty 2^{-n} |\Phi(f^n)| < \sum_{n=1}^\infty  2^{-n} = 1 \]
for every  $\Phi\in\M_{d*}$ with norm $1$. 

Finally, the general case follows from these two paragraphs by using Corollary \ref{C:dual} to write $\Phi=\Phi_a+\Phi_s$ for $\Phi_a\in \M_{d*}$ and $\Phi_s\in \fW_*$ with $\|\Phi_a\|_{\A_d^*}+\|\Phi_s\|_{\A_d^*}=1$.
\end{proof}

In fact, this proposition is a basic example of a much more general phenomenon that will be explored in depth in Section \ref{S:approx}. 
In view of Corollary \ref{C:countable}, the reader may wonder whether a similar result holds for closed countable subsets of the sphere. 
Even this seemingly modest extension appears to require the deeper machinery of Section \ref{S:approx}.

We now mention other concrete examples of $\A_d$--totally null sets.

%%%%%%%%%%%%%%%%%%%%%%%%%%%%%%%%%%%%%%%
\begin{example}
Let $\bD\subset \bC$ be the open unit disc and $\bT\subset \bC$ be the unit circle. 
Let $\lambda$ be normalized Lebesgue measure on $\bT$.
Note that a subset $K\subset \bT$ is $\AD$--totally null whenever $\lambda(K)=0$: 
this follows from Theorem \ref{t-colerange} and the classical fact that $\lambda$ is the only positive representing measure for the origin. 
Since $\A_1 = \AD$, we thus have a complete description of $\A_1$--totally null sets.

Now let $M\subset \bC^d$ be a one dimensional subspace of $\bC^d$. 
Let $\lambda$ be the one dimensional normalized Lebesgue measure on $M\cap \bS_d$.  
We claim that every compact subset of $K\subset \bS_d \cap M$ with $\lambda(K)=0$ is $\A_d$--totally null. 

Arguing as in the proof of Proposition \ref{p-ptsing}, upon applying a unitary transformation we may assume that $K$ is a closed subset of the circle $\{z\in\bS_d: |z_1|=1\}$ with zero one dimensional Lebesgue measure. 
By Theorem \ref{T:bishopAB}, there is a function $f \in \AD$ such that $\|f\|_\infty = 1$, $f|_K=1$ and $|f(\zeta)|<1$ for every $\zeta\in \bT\setminus K$.
We may view $f$ as an element of $\A_d$ which depends only on the variable $z_1$. In particular, $\|f\|_{\A_d}=\|f\|_{\AD}=1$. Since $f$ is not constant, the sequence $\{f^n\}_n$ converges to $0$ in the weak-$*$ topology of $\M_d$ and hence
\[
 \lim_{n\to\infty} \int_{\bS_d} f^n \,d\mu = 0
\]
whenever $\mu$ is an $\A_d$--Henkin measure.
On the other hand, by the dominated convergence theorem we have 
\[
 \lim_{n\to\infty} \int_{\bS_d} f^n \,d\mu = \mu(K)
\]
for every positive measure $\mu$. Hence, $\mu(K)=0$ for every positive $\A_d$--Henkin measure $\mu$. 
Invoking Theorem \ref{T:band} along with Corollary \ref{C:AdTN}, we conclude that $K$ is $\A_d$--totally null.
\qed
\end{example}

The next example is more sophisticated and requires some work.

%%%%%%%%%%%%%%%%%%%%%%%%%%%%%%%%%%%%%%%
\begin{example} 
We show that the (non-analytic) circle $T=\{ (z,\bar z) : |z|=1/\sqrt2 \}\subset \bS_2$ is $\A_2$--totally null.
It suffices to construct a \textit{bounded} sequence of functions $\{h_n\}_n \subset \A_d$ which satisfy 
\[
 h_n|_T = 1, \quad  |h_n(\zeta)|<1 \FORAL \zeta\in \bS_2\setminus K, 
 \qand \lim_{n\to\infty} h_n(z) = 0 \FORAL z \in \bB_d .
\]
Then we can easily adapt the argument given at the end of the previous example to achieve the desired conclusion.

Let $h_0(z)=2z_1z_2$. Clearly, we have $h_0|_T=1$,  $\|h_0\|_\infty = 1$ and $|h_0(\zeta)|<1$ if $\zeta\in \bS_2\setminus K$. Although the sequence $\{h_0^n\}_n$ is not bounded in $\A_d$, it is possible to perform a certain averaging procedure to produce a bounded sequence. Indeed, for each $n\geq 1$ we let 
\[
h_n= \frac1n \sum_{k=1}^{n} h_0^k\in \A_d.
\]
Then, for every $n\geq 1$ we have $h_n|_T=1$,  $\|h_n\|_\infty = 1$ and $|h_n(\zeta)|<1$ if $\zeta\in \bS_2\setminus K$. 
Since $\{h_0^n\}_n$ converges pointwise to $0$ on $\bB_d$, so does $\{h_n\}_n$.
We only need to show that $\{h_n\}_n$ is bounded. 

The authors are indebted to Jingbo Xia \cite{jingbo} for generously sharing the following calculation with them.

We claim that
\[
 \sup_{n\ge1} \ \Big\| \frac1n \sum_{k=1}^{n} h_0^k \Big\|_{\A_d} < \infty.
\]
Throughout, we will denote absolute constants by $C_1,C_2,C_3$, etc. 
Fix $n$ and consider the following decomposition of the Hilbert space $H^2_2$:
\[
 H^2_2 = E \oplus \bigoplus_{j=1}^\infty  (F_j \oplus G_j)
\]
where $E, F_j$ and $G_j$ are the closed subspaces
\begin{align*}
 E &= \ol{\spn} \{z_1^m z_2^m: m\geq 0 \} \\
 F_j &=\ol{\spn} \{z_1^{j+m} z_2^m: m\geq 0 \} \\
\intertext{and}
G_j &= \ol{\spn} \{z_1^{m} z_2^{j+m}: m\geq 0 \}.
\end{align*}
These subspaces are reducing for $M_{h_0}$ and $M_{h_n}$. 
Define
\begin{align*}
 e_m(z_1,z_2) &= \left(\frac{(2m)!}{(m!)^2} \right)^{1/2}z_1^m z_2^m \\
 f_{j,m}(z_1,z_2) &= \left(\frac{(2m+j)!}{m!(m+j)!} \right)^{1/2}z_1^{j+m} z_2^m \\
\intertext{and}
 g_{j,m}(z_1,z_2) &= \left(\frac{(2m+j)!}{m!(m+j)!} \right)^{1/2}z_1^m z_2^{j+m}.
\end{align*}
Then, $\{e_m:m\geq 0\}$, $\{f_{j,m}:m\geq 0\}$ and $\{g_{j,m}:m\geq 0\}$ form orthonormal bases for $E$, $F_j$ and $G_j$ respectively. 

For each $1\leq k \leq n$, the operator $M_{h_0^k}$ acts as a weighted shift in these bases. 
In fact, if we put
\begin{align*}
\alpha(k,m) &= \beta(k,m,0) = 2^k\left(\frac{(2m)!}{(m!)^2} \right)^{1/2}\left(\frac{(2m+2k)!}{((m+k)!)^2}\right)^{-1/2} \\
\intertext{and}
\beta(k,m,j) &= 2^k\left(\frac{(2m+j)!}{m!(m+j)!} \right)^{1/2} \left(\frac{(2m+j+2k)!}{m!(m+j+k)!} \right)^{-1/2} \qfor j\ge 1
\end{align*}
then we have
\begin{align*}
M_{h_0^k}e_m &= \alpha(k,m) e_{m+k} \\
M_{h_0^k}f_{j,m} &= \beta(k,m,j) f_{j,m+k} \\
M_{h_0^k}g_{j,m} &= \beta(k,m,j) g_{j,m+k}.
\end{align*}
An elementary computation shows that
\[
\beta(k,m,j)\leq \beta(k,m,j-1) \qfor j \ge 1 
\]
and thus
\[
\beta(k,m,j)\leq \beta(k,m,0)= \alpha(k,m).
\]
Hence, 
\[
 \|M_{h_n} \| = \max_{j\geq 1} \big\{ \| M_{h_n}|_E \|, \ \| M_{h_n}|_{F_j} \|,\ \| M_{h_n}|_{G_j}\|\big\} = \| M_{h_n}|_E \| .
\]

By Stirling's inequality, there is a positive constant $C_1$ such that
\begin{equation}\label{e-stirling}
\alpha(k,m)\leq C_1\left( \frac{m+k+1}{m+1}\right)^{1/4}
\end{equation}
for every $k\geq 1$ and $m\geq 0$. Hence
\[
 \|M_{h_0^k}|_{\spn\{e_m : m > 3n \}} \|\leq  C_1\bigg( \frac{3n+n+1}{3n+1}\bigg)^{1/4}\leq C_1 (5/3)^{1/4}
\]
and therefore
\[
 \bigg\| \frac1n \sum_{k=1}^{n} M_{h_0^k}|_{\spn\{e_m : m > 3n \}} \bigg\| \leq C_1 (5/3)^{1/4} .
\]

Let us now deal with the remaining part of the space $E$. 
Let $x=\sum_{m=0}^{3n} x_m e_m$ for some $x_0,\ldots,x_{3n}\in \bC$. 
For convenience, put $x_m=0$ whenever $m>3n$. Then
\[
 \sum_{k=1}^{n} M_{h_0^k} x = \sum_{k=1}^{n} \sum_{m=0}^{3n} \alpha(k,m)x_m e_{m+k}
 = \sum_{i=1}^{4n} \Big(\sum_{m=0}^{i-1} \alpha(i-m,m)x_m \Big) e_{i}.
\]
On the other hand, for every $i\geq 1$ we find using Equation (\ref{e-stirling}) and the Cauchy-Schwarz inequality that
\begin{align*}
 \bigg|\sum_{m=0}^{i-1} \alpha(i-m,m)x_m \bigg|^2
&\leq C_1^2 \bigg(\sum_{m=0}^{i-1} \Big(\frac{i+1}{m+1}\Big)^{1/4}x_m\bigg)^2 \\
&\leq C_1^2 (i+1)^{1/2}\sum_{m=0}^{i-1} \Big(\frac{1}{m+1}\Big)^{1/2}\sum_{m=0}^{i-1}|x_m|^2 \\
&\leq C_2 (i+1)^{1/2}\sqrt{i}\|x\|_E^2 
  \leq C_2 (i+1)\|x\|_E^2 .
\end{align*}
Hence
\begin{align*}
 \Big\|\frac{1}{n}\sum_{k=1}^{n} M_{h_0^k} x \Big\|_{H^2_d}^2&\leq \frac{C_2\|x\|_E^2}{n^2}\sum_{i=1}^{4n} (i+1)\leq C_3 \|x\|_E^2.
\end{align*}
This shows that 
\[
 \Big\| \frac1n \sum_{k=1}^{n} M_{h_0^k}|_{\spn\{e_m : 0 \le m \le 3n \}} \Big\| \le C_3 .
\]
Along with our previous estimate, we find that
\[
 \Big\| \frac1n \sum_{k=1}^{n} M_{h_0^k} \Big\| =  \Big\| \frac1n \sum_{k=1}^{n} M_{h_0^k}|_E \Big\| \leq (C^2_1(5/3)^{1/2}+C_3^2)^{1/2} .
\]
Hence $T$ is an $\A_2$--totally null set.
\qed
\end{example}

%%%%%%%%%%%%%%%%%%%%%%%%%%%%%%%%%%%%%%%
%%%%%%%%%%%%%%%%%%%%%%%%%%%%%%%%%%%%%%%
\section{Extreme points in $\A_d^*$} \label{S:extreme}
This section is devoted to the study of the extreme points of the closed unit ball of
$\A_d^*\simeq \M_{d*}\oplus_1 \fW_*.$ The information obtained here will be crucial in the next section, but it is of interest in itself.

We begin with some basic observations. 
Given a subset $X$ of a vector space, we let $\ext(X)$ denote the set of its extreme points. 
The following is elementary.

%%%%%%%%%%%%%%%%%%%%%%%%%%%%%%%%%%%%%%%
\begin{proposition}\label{P:extremepoints}
The set of extreme points of the closed unit ball of $\A_d^*$ can be decomposed as a disjoint union
\[
 \ext(\ol{b_1(\A_d^*)}) = \ext(\ol{b_1(\M_{d*})}) \,\dot\cup\, \ext(\ol{b_1(\fW_*)}). 
\]
\end{proposition}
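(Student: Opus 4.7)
The decomposition is purely a general Banach space fact: if $X = Y \oplus_1 Z$ is an $\ell^1$-direct sum, then its closed unit ball has extreme points exactly of the form $(y,0)$ with $y \in \ext(\ol{b_1(Y)})$ or $(0,z)$ with $z \in \ext(\ol{b_1(Z)})$. The plan is to apply this to the identification $\A_d^* \simeq \M_{d*} \oplus_1 \fW_*$ from Corollary \ref{C:dual}.

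First I would show that any extreme point $\Phi = \Phi_a + \Phi_s$ of $\ol{b_1(\A_d^*)}$ (with $\Phi_a \in \M_{d*}$, $\Phi_s \in \fW_*$) must have $\Phi_a = 0$ or $\Phi_s = 0$. Since every extreme point of a unit ball has norm one, Corollary \ref{C:dual} gives $\|\Phi_a\|_{\A_d^*} + \|\Phi_s\|_{\A_d^*} = 1$. If both summands are nonzero, set $\alpha = \|\Phi_a\|_{\A_d^*}$ and $\beta = \|\Phi_s\|_{\A_d^*}$, both positive with $\alpha + \beta = 1$, and write
\[
\Phi = \alpha \cdot (\alpha^{-1}\Phi_a) + \beta \cdot (\beta^{-1}\Phi_s),
\]
a convex combination of two distinct elements of $\ol{b_1(\A_d^*)}$, contradicting extremality.

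Second, I would verify the equivalence between extremality in $\ol{b_1(\A_d^*)}$ and extremality in $\ol{b_1(\M_{d*})}$ for a functional of the form $\Phi_a \in \M_{d*}$, and similarly for $\fW_*$. If $\Phi_a$ is extreme in $\ol{b_1(\M_{d*})}$ and we write $\Phi_a = \tfrac{1}{2}(\Psi_1 + \Psi_2)$ as a convex combination in $\ol{b_1(\A_d^*)}$ with $\Psi_i = \Psi_i^a + \Psi_i^s$, then taking the singular parts yields $\Psi_1^s = -\Psi_2^s$; the norm identity
\[
1 = \|\Phi_a\|_{\A_d^*} \leq \tfrac{1}{2}(\|\Psi_1^a\| + \|\Psi_2^a\|) \leq 1 - \|\Psi_1^s\|_{\A_d^*}
\]
forces $\Psi_1^s = \Psi_2^s = 0$, after which extremality of $\Phi_a$ in $\ol{b_1(\M_{d*})}$ gives $\Psi_1 = \Psi_2$. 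The converse direction is immediate: an extreme point of $\ol{b_1(\A_d^*)}$ that happens to lie in $\M_{d*}$ is a fortiori extreme in the smaller set $\ol{b_1(\M_{d*})}$. The symmetric argument handles $\fW_*$.

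The disjointness of the union is automatic: the two sets sit in $\M_{d*}$ and $\fW_*$ respectively, whose only common element is $0$, which is not extreme. There is really no serious obstacle here; the only point demanding a tiny bit of care is the norm computation in Step 2, where one must invoke the $\ell^1$-additivity of the norm from Corollary \ref{C:dual} to conclude that the singular components vanish.
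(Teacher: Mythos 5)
Your proposal is correct and follows essentially the same route as the paper: both arguments rest on the $\ell^1$-additivity of the norm from Corollary \ref{C:dual}, using it first to rule out extreme points with two nonzero components, and then to show that a convex decomposition of a norm-one element of $\M_{d*}$ (resp.\ $\fW_*$) within $\ol{b_1(\A_d^*)}$ must have vanishing singular (resp.\ absolutely continuous) parts, with disjointness coming from $\M_{d*}\cap\fW_*=\{0\}$ and the fact that $0$ is never extreme. The only cosmetic difference is that you phrase the vanishing of the singular parts via $\Psi_1^s=-\Psi_2^s$ and a norm estimate, whereas the paper deduces $\|\Psi_a^{(k)}\|=1$ directly; the content is identical.
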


\begin{proof}
We first show that
\[
\ext(\ol{b_1(\M_{d*}))}\cup\ext(\ol{b_1(\fW_*)}) \subset  \ext(\ol{b_1(\A_d^*)}) .
\]
Indeed, suppose $\Phi\in \M_{d*}$ has norm $1$ and
\[
 \Phi = \frac{1}{2}\Psi_1+\frac{1}{2}\Psi_2
\]
for some $\Psi_1,\Psi_2\in \ol{b_1(\A_d^*)}$. Write $\Psi_k=\Psi_a^{(k)}+\Psi_s^{(k)}$ where $\Psi_a^{(k)}\in \M_{d*}, \Psi_s^{(k)}\in \fW_*$ and $\|\Psi_a^{(k)}\|_{\A_d^*}+\|\Psi_s^{(k)}\|_{\A_d^*}=\|\Psi_k\|_{\A_d^*}$.
Then, we must have
\[
 \Phi = \frac{1}{2}\Psi^{(1)}_a + \frac{1}{2}\Psi_a^{(2)} .
\]
It follows that $\|\Psi^{(1)}_a \|_{\A_d^*} = 1 = \|\Psi^{(2)}_a \|_{\A_d^*}$ and thus $\Psi_k=\Psi_a^{(k)}\in \M_{d*}$.
Hence, if $\Phi$ is an extreme point of $\ol{b_1(\M_{d*})}$ then $\Psi_1=\Psi_2=\Phi$, which shows that $\Phi$ is also extreme in $\ol{b_1(\A_d^*)}$. Thus, 
\[
 \ext(\ol{b_1(\M_{d*})})\subset  \ext(\ol{b_1(\A_d^*)}) .
 \]
 A similar argument shows that 
 \[
 \ext(\ol{b_1(\fW_*)})\subset \ext(\ol{b_1(\A_d^*)}) .
 \]

Conversely, let $\Phi\in \ol{b_1(\A_d^*)}$ with $\|\Phi\|_{\A_d^*}=1$. 
Write $\Phi=\Phi_a + \Phi_s$ where $\Phi\in \M_{d*}$, $\Phi_s \in \fW_*$ and
\[
 1 = \|\Phi\|_{\A_d^*} = \|\Phi_a\|_{\A_d^*} + \|\Phi_s\|_{\A_d^*} .
\]
The equality
\[
 \Phi = \|\Phi_a\|_{\A_d^*} \left( \frac{\Phi_a}{\|\Phi_a\|_{\A_d^*}}\right) + \|\Phi_s\|_{\A_d^*} \left(\frac{\Phi_s}{\|\Phi_s\|_{\A_d^*}}\right)
\]
along with the fact that $\M_{d*}\cap \fW_*=\{0\}$ implies that 
\[
 \ext(\ol{b_1(\A_d^*)}) \subset \ext(\ol{b_1(\M_{d*})}) \cup \ext(\ol{b_1(\fW_*)}) .
\]
The union is disjoint since $0$ is never an extreme point.
\end{proof}

Next, we identify the extreme points of the closed unit ball of $\fW_*$.

%%%%%%%%%%%%%%%%%%%%%%%%%%%%%%%%%%%%%%%
\begin{proposition}\label{P:extW}
The extreme points of the closed unit ball of $\fW_*$ consist of the functionals $\lambda \tau_{\zeta}$ given by integration against a measure of the form 
$\lambda \delta_{\zeta}$ where $\lambda \in \bC$ with $|\lambda|=1$ and $\zeta\in \bS_d$. 
In other words, we have
\[
\ext(\ol{b_1(\fW_*)})=\{\lambda\tau_{\zeta}:\lambda\in \bT, \zeta\in \bS_d \}.
\]
\end{proposition}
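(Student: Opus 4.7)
The plan is to use Theorem \ref{T:singfunct} to translate the problem into a statement about $\AB$--totally singular measures: under that isometric identification the closed unit ball of $\fW_*$ corresponds to the $\AB$--totally singular measures of total variation at most $1$, and extremality can be analyzed at the level of measures thanks to the uniqueness clause of Theorem \ref{T:singfunct}.

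First I would verify that $\lambda\tau_\zeta$ is extreme for $\lambda\in\bT$ and $\zeta\in\bS_d$. Proposition \ref{p-ptsing} together with Corollary \ref{C:dual} shows $\tau_\zeta\in\fW_*$, and clearly $\|\tau_\zeta\|_{\A_d^*}=1$. If $\lambda\tau_\zeta=\tfrac12(\Psi_1+\Psi_2)$ with $\Psi_j\in\ol{b_1(\fW_*)}$, then Theorem \ref{T:singfunct} provides $\AB$--totally singular measures $\mu_j$ with $\|\mu_j\|\le 1$ representing $\Psi_j$, and the uniqueness clause forces $\lambda\delta_\zeta=\tfrac12(\mu_1+\mu_2)$. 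Decomposing $\mu_j=\alpha_j\delta_\zeta+\nu_j$ with $\nu_j(\{\zeta\})=0$ gives $\alpha_1+\alpha_2=2\lambda$ and $\nu_1+\nu_2=0$, while $|\alpha_j|+\|\nu_j\|=\|\mu_j\|\le 1$. The triangle inequality $|\alpha_1|+|\alpha_2|\ge 2$ then saturates, forcing $\alpha_1=\alpha_2=\lambda$ and $\nu_j=0$, hence $\Psi_1=\Psi_2=\lambda\tau_\zeta$.

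For the converse, let $\Phi$ be an arbitrary extreme point of $\ol{b_1(\fW_*)}$; then necessarily $\|\Phi\|_{\A_d^*}=1$, and Theorem \ref{T:singfunct} produces a unique $\AB$--totally singular representing measure $\mu$ with $\|\mu\|_{M(\bS_d)}=1$. Suppose, for contradiction, that $|\mu|$ is not a point mass. Then by the regularity of $|\mu|$ there is a Borel set $E\subset\bS_d$ with $0<a:=|\mu|(E)<1$; set $b=1-a$ and let $\mu_E=\chi_E\mu$, $\mu_{E^c}=\chi_{E^c}\mu$, so that $\|\mu_E\|=a$, $\|\mu_{E^c}\|=b$, and $\mu=\mu_E+\mu_{E^c}$. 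Since $|\mu_E|\le|\mu|$ gives $\mu_E\ll\mu$ (and similarly for $\mu_{E^c}$), Theorem \ref{T:band}(i) ensures that both restrictions define functionals in $\fW_*$, say $a\Psi_1$ and $b\Psi_2$ with $\|\Psi_j\|_{\A_d^*}=1$.

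It remains only to note that $\Psi_1\neq\Psi_2$: their representing $\AB$--totally singular measures are $\mu_E/a$ and $\mu_{E^c}/b$, which have disjoint supports and are both nonzero, so the uniqueness in Theorem \ref{T:singfunct} rules out equality of the functionals. Then $\Phi=a\Psi_1+b\Psi_2$ is a nontrivial convex combination within $\ol{b_1(\fW_*)}$, contradicting extremality. Therefore $|\mu|=\delta_\zeta$ for some $\zeta\in\bS_d$, and writing $\mu=\gamma|\mu|$ with $|\gamma|=1$ $|\mu|$--a.e.\ yields $\mu=\lambda\delta_\zeta$ with $\lambda\in\bT$, i.e., $\Phi=\lambda\tau_\zeta$. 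The principal obstacle is precisely the step that promotes a decomposition at the level of measures to a decomposition at the level of functionals, which hinges on the uniqueness statement in Theorem \ref{T:singfunct}; once that is in hand, the remainder of the argument is a routine measure-theoretic manipulation together with the band property of Theorem \ref{T:band}.
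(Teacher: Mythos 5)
Your proof is correct and takes essentially the same route as the paper: both directions rest on the isometric, uniqueness-backed identification of functionals in $\fW_*$ with $\AB$--totally singular measures from Theorem \ref{T:singfunct} together with the band property of Theorem \ref{T:band}, and you simply spell out the ``standard measure theoretic argument'' (splitting $\mu$ over a set of intermediate $|\mu|$-measure) and the extremality of point masses that the paper cites as well known. No changes are needed.
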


\begin{proof}
Let $\Phi\in \fW_*$ be an extreme point of the closed unit ball of $\fW_*$. 
By Theorem \ref{T:singfunct}, there exists an $\A_d$--totally singular measure $\mu\in M(\bS_d)$ with $\|\mu\|_{M(\bS_d)}=\|\Phi\|_{\A_d^*}$ such that $\Phi$ is given by integration against $\mu$. 
If $\mu$ is not a point mass, then a standard measure theoretic argument along with Theorem \ref{T:band} shows that $\mu$ can be expressed as a proper convex combination of $\A_d$--totally singular measures. By virtue of Theorem \ref{T:singfunct}, this would imply that $\Phi$ is not an extreme point of $\ol{b_1(\fW_*)}$, which is absurd.

Conversely, let $\Phi=\lambda\tau_{\zeta}$ for some $\lambda\in \bC$ with $|\lambda|=1$ and $\zeta\in \bS_d$. 
Then the functional $\Phi\in \A_d^*$ is singular by Proposition \ref{p-ptsing}. It is well-known that $\mu=\lambda \delta_{\zeta}$ is an extreme point of the closed unit ball of $M(\bS_d)$,  so that Theorem \ref{T:singfunct} implies that $\Phi$ is an extreme point of the closed unit ball of $\fW_*$.
\end{proof}

We are now left with the task of identifying the extreme points of the closed unit ball of $\M_{d*}$. 
One might suspect that these are rare because the closed unit ball of $\HB_*$ has no extreme points (see \cite{Ando} for the case of the disc, and \cite{CDext} for the general case).
However, since there are elements of $\A_d$ whose multiplier norm strictly dominates the supremum norm, it follows from Choquet's theorem that
there must be enough extreme points of $\ol{b_1(\M_{d*})}$ to capture the norm of such elements.
The next result seems to indicate that these extreme points rarely can be given by integration against a measure on $\bS_d$.

\begin{proposition}\label{P:noextrememeasure}
Let $\mu\in M(\bS_d)$ be an $\A_d$--Henkin measure and let $\Psi\in \AB^*$ be the associated integration functional. Let $\Phi=\Psi|_{\A_d}$ and assume that $\|\Phi\|_{\A_d^*} = \|\Psi\|_{\AB^*} = 1$. 
Then, $\Phi$ is not an extreme point of the closed unit ball of $\M_{d*}$.
\end{proposition}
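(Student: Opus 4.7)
The plan is to exploit the classical decomposition $\AB^* \simeq \HB_* \oplus_1 \TS(\bS_d)$ from Theorem \ref{T:dualAB} in order to write $\Psi = \Psi_a + \Psi_s$, corresponding to the Glicksberg--K\"onig--Seever splitting $\mu = \mu_a + \mu_s$ into its $\AB$--Henkin and $\AB$--totally singular parts. Since $\mu$ is $\A_d$--Henkin by hypothesis and $\mu_a$ is automatically so ($\AB$--Henkin measures are $\A_d$--Henkin), the difference $\mu_s$ is also $\A_d$--Henkin. Both pieces then extend to elements $\hat\Psi_a, \hat\Psi_s \in \M_{d*}$, the first via the inclusion $\HB_* \subset \M_{d*}$ and the second via Theorem \ref{t-genvalskii}. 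A triangle-inequality comparison using $\|\Phi\|_{\M_{d*}} = 1 = \|\Psi_a\|_{\HB_*} + \|\Psi_s\|_{\AB^*}$ forces $\|\hat\Psi_a\|_{\M_{d*}} = \|\Psi_a\|_{\HB_*}$ and $\|\hat\Psi_s\|_{\M_{d*}} = \|\Psi_s\|_{\AB^*}$, with $\Phi = \hat\Psi_a + \hat\Psi_s$ in $\M_{d*}$.

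The analysis then splits on whether $\Psi_a$ vanishes. If $\Psi_a \neq 0$, I would invoke the result of Ando \cite{Ando}, extended to the ball in \cite{CDext}, that $\ol{b_1(\HB_*)}$ has no extreme points, writing $\Psi_a = \tfrac{1}{2}(G_1 + G_2)$ with distinct $G_1, G_2$ of norm at most $\|\Psi_a\|_{\HB_*}$ in $\HB_*$. Because $\A_d$ is weak-$*$ dense in $\HB$ (the radial dilates $f_r$ lie in $\A_d$ and converge weak-$*$ to $f$ for any $f \in \HB$), two weak-$*$ continuous functionals on $\HB$ agreeing on $\A_d$ must coincide, so the restrictions $G_k|_{\M_d} \in \M_{d*}$ remain distinct. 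Adding $\hat\Psi_s$ yields $\Phi = \tfrac{1}{2}\big((G_1|_{\M_d} + \hat\Psi_s) + (G_2|_{\M_d} + \hat\Psi_s)\big)$ as a proper convex combination of distinct elements in $\ol{b_1(\M_{d*})}$, so $\Phi$ is not extreme.

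The more delicate case is $\Psi_a = 0$. After subtracting the $\AB$--annihilating portion of $\mu_a$ (which does not affect the $\A_d$--functional), this reduces to $\mu$ itself being $\AB$--totally singular, $\A_d$--Henkin, and of norm one. Here $\mu$ cannot be a point mass, since a scalar multiple of $\tau_{\zeta_0}$ is singular by Proposition \ref{p-ptsing} and hence not $\A_d$--Henkin. I would therefore split $\mu = \mu_1 + \mu_2$ by restricting to disjoint Borel sets with both pieces nonzero; by Theorem \ref{T:band} both remain $\A_d$--Henkin, and they are obviously $\AB$--totally singular. The associated functionals $\Phi_1, \Phi_2 \in \M_{d*}$ satisfy $\|\Phi_k\|_{\M_{d*}} = \|\mu_k\|$ after the usual norm matching, and they cannot be scalar multiples of one another: if $\|\Phi_2\|\mu_1 - \|\Phi_1\|\mu_2$ annihilated $\A_d$, it would annihilate $\AB$ by polynomial density and so lie in $(\AB)^\perp \cap \TS(\bS_d)$, which is $\{0\}$ by Theorem \ref{t-GKS}; the disjointly supported nonzero measures $\mu_1, \mu_2$ could then not be proportional. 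Thus $\Phi = \|\mu_1\|(\Phi_1/\|\mu_1\|) + \|\mu_2\|(\Phi_2/\|\mu_2\|)$ exhibits $\Phi$ as a proper convex combination. The main obstacle is precisely this second case: without invoking Conjecture \ref{Conjecture}, one cannot a priori rule out $\AB$--totally singular measures whose $\A_d$--functional is absolutely continuous, and the rigidity of $(\AB)^\perp$ inside the totally singular measures is what prevents the decomposition from collapsing.
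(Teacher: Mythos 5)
Your argument is correct, but it takes a genuinely different route from the paper's. The paper disposes of the point-mass case via Proposition \ref{p-ptsing} (then $\Phi\in\fW_*$), and otherwise invokes Corollary 3.3 of \cite{CDext} directly: that result says $\Psi$ is not extreme in $\ol{b_1(\AB^*)}$, so one writes $\Psi=\tfrac12\Psi_1+\tfrac12\Psi_2$ with distinct $\Psi_k$ in the ball, restricts to $\A_d$ (the restriction map $\AB^*\to\A_d^*$ is injective by polynomial density, so the pieces stay distinct), and then runs the face argument from Proposition \ref{P:extremepoints} to conclude that $\Phi_1,\Phi_2$ actually lie in $\M_{d*}$. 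You instead split $\mu$ via Theorem \ref{t-GKS}, treat the $\HB_*$ part using only the headline no-extreme-points theorem for $\ol{b_1(\HB_*)}$ of \cite{Ando,CDext}, and handle the $\AB$--totally singular (yet $\A_d$--Henkin) residue by hand: cutting the measure into two disjointly concentrated nonzero pieces, keeping them in $\M_{d*}$ via Theorem \ref{T:band}, matching norms through the $\ell^1$ identity, and getting distinctness from GKS uniqueness together with mutual singularity; your exclusion of the point mass plays the role of the paper's first case. The trade-off: the paper's proof is shorter but leans on the more specific Corollary 3.3 of \cite{CDext} plus the face argument to push the decomposition from $\A_d^*$ into $\M_{d*}$, whereas yours constructs the decomposition inside $\M_{d*}$ from the start and needs only the weaker external input, at the price of the extra case analysis for the totally singular residue --- precisely the case that Conjecture \ref{Conjecture} would render vacuous, as you observe. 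All the steps you leave implicit (isometry of the extension in Theorem \ref{t-genvalskii}, injectivity of restriction from $\HB_*$ to $\M_{d*}$ via weak-$*$ density of $\A_d$ in $\HB$, and $\AB^\perp\cap\TS(\bS_d)=\{0\}$) are justified by results quoted in the paper, so the proposal stands as a complete alternative proof.
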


\begin{proof}
Suppose first that $\mu$ is supported at a single point $\zeta_0\in \bS_d$. 
Then $\Phi$ is a multiple of $\tau_{\zeta_0}$ and thus $\Phi\in \fW_*$ by Proposition \ref{p-ptsing}.
Therefore $\Phi$ certainly cannot be an extreme point of the unit ball of $\M_{d*}$ in this case.

Assume therefore that $\mu$ is supported on a subset of $\bS_d$ which does not consist of a single point. By \cite[Corollary 3.3]{CDext}, we see that $\Psi$ is not an extreme point of  $\ol{b_1(\AB^*)}$, so that there are $\Psi_1,\Psi_2\in \ol{b_1(\AB^*)}$ both distinct from $\Psi$ such that
\[
\Psi=\frac{1}{2}\Psi_1+\frac{1}{2}\Psi_2.
\]
For each $k=1,2$, let  $\Phi_k=\Psi_k|_{\A_d}$. Then, 
\[
\|\Phi_k\|_{\A_d^*}\leq \|\Psi_k\|_{\AB^*}\leq 1
\]
and 
\[
\Phi=\frac{1}{2}\Phi_1+\frac{1}{2}\Phi_2.
\]
Arguing as in the proof of Proposition \ref{P:extremepoints}, we see that $\Phi_1,\Phi_2\in \M_{d*}$. Since $\Psi_1$ and $\Psi_2$ are distinct from $\Psi$, we must have that $\Phi_1$ and $\Phi_2$ are distinct from $\Phi$ and we conclude that $\Phi$ is not an extreme point of $\ol{b_1(\M_{d*})}$.
\end{proof}

Consider a special case of the previous proposition. 
Let $\lambda$ be the one dimensional normalized Lebesgue measure on the unit circle $\bT$. 
Let $\phi\in L^1(\lambda)$ be a positive function such that $\|\phi\|_{L^1(\lambda)}=1$. 
Define $\Phi\in \A_d^*$ as
\[
\Phi(f)=\int_{\bT}f(\zeta,0,\ldots,0)\phi(\zeta) \,d\lambda(\zeta) \qforal f\in \A_d
\]
and observe that $\|\Phi\|_{\A_d^*}=1=\|\mu\|_{M(\bS_d)}.$
We mentioned in Section \ref{S:functionals} that the space of $\AB$--Henkin measures forms a band by a theorem of Henkin \cite{Henkin}. 
Hence, we see that the measure $\phi \,d\lambda$ on the circle $\{\zeta\in \bS_d:|\zeta_1|=1\}$ is $\AB$--Henkin. 
In particular, $\Phi\in \M_{d*}$. 
Now, Proposition \ref{P:noextrememeasure} implies that $\Phi$ is not an extreme point of the closed unit ball of $\M_{d*}$. 
It is interesting to note that this fact can also be recovered from a classical theorem of Ando \cite{Ando} 
saying that the closed unit ball of $H^\infty(\bD)_* \simeq L^1(\bT)/H^1_0$ has no extreme points.
The details of the proof are similar to those provided in the proof of the previous proposition.

Going back to the problem of identifying the extreme points of the closed unit ball of $\M_{d*}$, 
we now present some positive partial results.
First note that it follows from Theorem \ref{T:vectorfunct} that every $\Psi\in \M_{d*}$ has the form 
$\Psi=[\xi \eta^*]$  for some vectors $\xi,\eta \in H^2_d$. 
Moreover, given $\ep>0$ we can choose $\xi$ and $\eta$ such that $ \|\xi\|_{H^2_d}\,\|\eta\|_{H^2_d} < \|\Psi\|_{\A_d^*}+\ep$. 
In general, it is not possible to achieve the equality
\[
\|\xi\|_{H^2_d}\,\|\eta\|_{H^2_d}=\|\Psi\|_{\A_d^*},
\]
but this can be done in a special case as the following result shows.

%%%%%%%%%%%%%%%%%%%%%%%%%%%%%%%%%%%%%%%
\begin{lemma} \label{L:xye0}
Let $f\in\A_d$ such that $\|f\|_\infty < \|f\|_{\A_d}=1$. 
Then, there exists a non-zero finite dimensional subspace $N\subset H^2_d$ given by
\[ 
N = \big\{ \xi\in H^2_d : \|f\xi\|_{H^2_d}=\|\xi\|_{H^2_d} \big\} .
\] 
A functional $\Psi\in \A_d^*$ satisfies $\Psi(f)=1 = \|\Psi\|_{\A_d^*}$ 
if and only if there exists a unit vector $\xi\in N$ such that $\Psi = [\xi (f\xi)^*]$.
\end{lemma}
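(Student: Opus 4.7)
The plan is to first recognize $N$ as the kernel of the positive operator $I-M_f^*M_f$: since $\|M_f\|=\|f\|_{\A_d}=1$ gives $I-M_f^*M_f\ge 0$, the equation $\|f\xi\|=\|\xi\|$ is equivalent to $\ip{(I-M_f^*M_f)\xi,\xi}=0$ and hence to $(I-M_f^*M_f)\xi=0$, making $N$ a closed subspace. To show $N$ is nonzero and finite dimensional, I would invoke Arveson's identification of $C^*(M_{z_1},\ldots,M_{z_d})/\K(H^2_d)$ with $C(\bS_d)$ via the symbol map $M_g\mapsto g|_{\bS_d}$ for $g\in\A_d$, which gives $\|M_f\|_{\mathrm{ess}}=\|f|_{\bS_d}\|_\infty=\|f\|_\infty$. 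Since $\|f\|_\infty<1=\|M_f\|$, the spectral theorem together with Weyl's theorem on the essential spectrum forces $1$ to be an isolated eigenvalue of $M_f^*M_f$ of finite multiplicity, so $N$ is nonzero and finite dimensional. Moreover, the same analysis produces $\mu<1$ such that $\|M_f(I-P)\xi\|\le \mu\|(I-P)\xi\|$ for every $\xi\in H^2_d$, where $P$ denotes the orthogonal projection onto $N$.

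The reverse implication of the equivalence will be immediate: for a unit vector $\xi\in N$, the functional $\Psi=[\xi(f\xi)^*]$ satisfies $\|\Psi\|_{\A_d^*}\le \|\xi\|\,\|f\xi\|=1$ and $\Psi(f)=\ip{f\xi,f\xi}=\|f\xi\|^2=1$.

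For the forward implication, my first step would be to verify that $\Psi$ is absolutely continuous. Writing $\Psi=\Psi_a+\Psi_s$ by Corollary~\ref{C:dual}, Theorem~\ref{T:singfunct} gives $|\Psi_s(f)|\le \|\Psi_s\|_{\A_d^*}\|f\|_\infty$, and combining with $|\Psi_a(f)|\le \|\Psi_a\|_{\A_d^*}$ and $\|\Psi_a\|_{\A_d^*}+\|\Psi_s\|_{\A_d^*}=1$ forces $\|\Psi_s\|_{\A_d^*}(1-\|f\|_\infty)\le 0$, hence $\Psi_s=0$. Thus $\Psi$ extends weak-$*$ continuously to $\M_d$ with unit norm, and pulling back through Theorem~\ref{T:Ldcommu} and applying Theorem~\ref{T:vectorfunct} will yield vectors $\xi_n,\eta_n\in H^2_d$ with $\Psi=[\xi_n\eta_n^*]$ and $\|\xi_n\|\,\|\eta_n\|\to 1$. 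After rescaling and a unimodular adjustment, I can assume $\|\xi_n\|=\|\eta_n\|\to 1$ and $\Psi(f)=1$.

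The final step is a spectral-gap and Cauchy-Schwarz saturation argument. The orthogonality $\ip{fP\xi_n,f(I-P)\xi_n}=\ip{M_f^*M_fP\xi_n,(I-P)\xi_n}=\ip{P\xi_n,(I-P)\xi_n}=0$ yields
\[
\|f\xi_n\|^2 \;=\; \|P\xi_n\|^2+\|f(I-P)\xi_n\|^2 \;\le\; \|P\xi_n\|^2+\mu^2\|(I-P)\xi_n\|^2.
\]
The bound $1=\Psi(f)\le \|f\xi_n\|\,\|\eta_n\|$ combined with $\|\eta_n\|\to 1$ forces $\|f\xi_n\|\to 1$, and comparison with $\|\xi_n\|^2\to 1$ then yields $(1-\mu^2)\|(I-P)\xi_n\|^2\to 0$, so $(I-P)\xi_n\to 0$. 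Since $\{P\xi_n\}$ is bounded in the finite-dimensional space $N$, a subsequence converges to some unit vector $\xi\in N$, whence $\xi_n\to \xi$ and $f\xi_n\to f\xi$ in norm. The saturation $\|f\xi_n-\eta_n\|^2=\|f\xi_n\|^2+\|\eta_n\|^2-2\re\ip{f\xi_n,\eta_n}\to 0$ then forces $\eta_n\to f\xi$, and passing to the limit in $\Psi(g)=\ip{g\xi_n,\eta_n}$ for $g\in\A_d$ gives $\Psi=[\xi(f\xi)^*]$. The main obstacle I anticipate is setting up the essential norm identity and the exact vector representation on $\M_{d*}$ cleanly; once those are in hand, the remainder reduces to this nearly automatic limit argument.
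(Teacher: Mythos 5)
Your proposal is correct and takes essentially the same route as the paper: the essential norm identity $\|M_f\|_e=\|f\|_\infty$ producing a spectral gap and a finite dimensional $N=\ker(I-M_f^*M_f)$, the elimination of the singular part via $\|f\|_\infty<1$, the approximate rank-one representation $[\xi_n\eta_n^*]$ from Theorem~\ref{T:vectorfunct}, and compactness in $N$ to extract the limit vector. The only (harmless) difference is the endgame, where you recover $\eta_n\to f\xi$ by norm saturation of $\|f\xi_n-\eta_n\|$, whereas the paper treats $\xi_n$ and $\eta_n$ symmetrically using the polar decomposition $M_f=V|M_f|$ and the projection onto $VN$, concluding $\eta=M_f\xi$ from the equality case of Cauchy--Schwarz.
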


\begin{proof}
By results of  \cite{Arv98}, we know that the essential norm of $M_f$ satisfies $\|M_f\|_e = \|f\|_\infty$. 
Hence, our assumption on $f$ implies that $ \|M_f\|_e < \|M_f\| .$
In particular, this means that 
\[
1\in\sigma(M_f^*M_f)\setminus \sigma_e(M_f^* M_f) . 
\]
Since $I - M_f^*M_f$ is Fredholm, the space $N=\ker (I - M_f^*M_f)$ is finite dimensional and $(I - M_f^*M_f)|_{N^\perp}$ is bounded below. 
In particular, $1$ is an isolated point in $\sigma(M_f^*M_f)$ and 
\[
 \|M_f|_{N^\perp}\| = \|M_f^* M_f|_{N^\perp}\|^{1/2} = r
\]
for some $0<r<1$.
Consider the polar decomposition $M_f = V|M_f|$. 
If $\xi\in N$ we have $\xi=M_f^*M_f \xi$ and $\xi=|M_f|\xi$. 
This shows that $|M_f|=I$ on $N$. 
Consequently, $M_f|_N = V|_N$ is isometric on $N$ and  $V N^\perp \subset (VN)^\perp.$

Let $\Psi\in \A_d^*$ such that $\|\Psi\|_{\A_d^*}=1 = \Psi(f)$.  
Write $\Psi = \Psi_a + \Psi_s$ with $\Psi_a\in \M_{d*}$ and $\Psi_s\in \fW_*$. Then
\begin{align*}
 1 &= \Psi(f) = \Psi_a(f) + \Psi_s(f) \\
 &\le \|\Psi_a\|_{\A_d^*} \,\|f\|_{\A_d} + \|\Psi_s\|_{\A_d^*}\,\|f\|_\infty \\
 &\le \|\Psi_a\|_{\A_d^*} + \|\Psi_s\|_{\A_d^*} = \|\Psi\|_{\A_d^*} = 1 .
\end{align*}
Since $\|f\|_{\infty}<1$, it follows that $\Psi_s = 0$ whence $\Psi \in \M_{d*}$.
By the discussion preceding the lemma, we see that for each positive integer $n$ there are 
vectors $\xi_n,\eta_n\in H^2_d$ with $\|\xi_n\|_{H^2_d}=1$ and $\|\eta_n\|_{H^2_d}< 1 + \frac1n$ such that $\Psi=[\xi_n \eta_n^*]$.
Then
\begin{align*}
 1 &= \Psi(f) = \ip{M_f \xi_n,\eta_n} \\
 & = \ip{M_f P_{N}\xi_n, P_{VN}\eta_n} +  \ip{M_f(I- P_{N}) \xi_n, (I- P_{VN}) \eta_n} \\
 &\le  \|P_{N}\xi_n\|_{H^2_d} \, \|P_{VN}\eta_n\|_{H^2_d} +  r \|(I- P_{N}) \xi_n\|_{H^2_d} \, \|(I- P_{VN}) \eta_n\|_{H^2_d} \\
 &\le \big( \|P_{N}\xi_n\|_{H^2_d}^2 +  r \|(I- P_{N}) \xi_n\|_{H^2_d}^2 \big)^{1/2} \big( \|P_{VN}\eta_n\|_{H^2_d}^2 +  r\|(I- P_{VN}) \eta_n\|_{H^2_d}^2 \big)^{1/2}
\end{align*}
where the last inequality follows from the Cauchy-Schwarz inequality. 
Since $r<1$, this forces
\[
 \lim_{n\to \infty} \|P_{N} \xi_n\|_{H^2_d} = \lim_{n\to \infty} \|P_{VN}\eta_n\| _{H^2_d}= 1
\]
and
\[
 \lim_{n\to\infty} \|(I- P_{N}) \xi_n\|_{H^2_d} =  \lim_{n\to\infty} \|(I- P_{VN}) \eta_n\|_{H^2_d} = 0 .
\]
Since $N$ and $VN$ are finite dimensional we may suppose, upon passing to a subsequence, 
that there are unit vectors $\xi\in N$ and $\eta\in VN$ such that
\[
 \lim_{n\to \infty} \xi_n = \xi  \qand \lim_{n\to \infty} \eta_n=\eta
\]
in norm. 
Then $\Psi = [\xi\eta^*]$.
Since $1 = \Psi(f) = \ip{ M_f \xi,\eta}$, we find that $\eta = M_f \xi$ by the Cauchy-Schwarz inequality. 

Conversely, it is obvious that all such functionals $\Psi$ satisfy $\Psi(f)=1$, and the proof is complete.
\end{proof}

We can now exhibit some extreme points of the closed unit ball of $\M_{d*}$.

%%%%%%%%%%%%%%%%%%%%%%%%%%%%%%%%%%%%%%%
\begin{theorem}\label{T:extMd}
Let $f\in \A_d$ with $\|f\|_{\infty}<\|f\|_{\A_d}=1$. The set
\[
\F=\{\Psi\in \M_{d*}: \|\Psi\|_{\A_d^*}=1=\Psi(f)\}
\]
has extreme points, which are also extreme point of the closed unit ball of $\M_{d*}$.
\end{theorem}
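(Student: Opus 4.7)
The plan is to use Lemma \ref{L:xye0} to parametrize $\F$ concretely, deduce that $\F$ is a nonempty norm-compact convex set, and then read off the conclusion from Krein--Milman together with a short direct argument.

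First I would verify that $\F$ is nonempty, convex and norm-compact. Nonemptiness is immediate from Lemma \ref{L:xye0}: since $\|f\|_{\A_d}=1$, picking any unit vector $\xi\in N$ gives $\Psi=[\xi(f\xi)^*]\in\F$. For convexity, if $\Psi_1,\Psi_2\in\F$ and $t\in[0,1]$, then $\Phi:=t\Psi_1+(1-t)\Psi_2$ satisfies $\Phi(f)=1$ and $\|\Phi\|_{\A_d^*}\le 1$; on the other hand $\|\Phi\|_{\A_d^*}\ge |\Phi(f)|/\|f\|_{\A_d}=1$, and $\Phi\in\M_{d*}$ since $\M_{d*}$ is a linear subspace, so $\Phi\in\F$. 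For compactness, Lemma \ref{L:xye0} identifies $\F$ with the image of the unit sphere $S(N)$ under the map
\[
 T:S(N)\to\M_{d*},\qquad T(\xi)=[\xi(f\xi)^*].
\]
Since $N$ is finite-dimensional, $S(N)$ is norm-compact, and $T$ is norm-continuous (as $\|T(\xi)-T(\xi')\|_{\A_d^*}\le \|\xi-\xi'\|_{H^2_d}\,\|f\xi\|_{H^2_d}+\|\xi'\|_{H^2_d}\,\|f\xi-f\xi'\|_{H^2_d}$), so $\F$ is norm-compact.

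Having established that $\F$ is a nonempty norm-compact convex subset of the Banach space $\M_{d*}$, the Krein--Milman theorem yields $\ext(\F)\neq\varnothing$.

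It remains to show that every $\Psi\in\ext(\F)$ is actually extreme in $\ol{b_1(\M_{d*})}$. Suppose $\Psi=\tfrac12\Psi_1+\tfrac12\Psi_2$ with $\Psi_1,\Psi_2\in\ol{b_1(\M_{d*})}$. Since $|\Psi_k(f)|\le \|\Psi_k\|_{\A_d^*}\,\|f\|_{\A_d}\le 1$ and $\Psi_1(f)+\Psi_2(f)=2\Psi(f)=2$, we must have $\Psi_1(f)=\Psi_2(f)=1$. This in turn forces $\|\Psi_k\|_{\A_d^*}\ge |\Psi_k(f)|/\|f\|_{\A_d}=1$, hence $\|\Psi_k\|_{\A_d^*}=1$. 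Therefore $\Psi_1,\Psi_2\in\F$, and extremality of $\Psi$ in $\F$ forces $\Psi_1=\Psi_2=\Psi$, as required.

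I expect no major obstacle; the only slightly delicate point is verifying compactness of $\F$, but this is handled cleanly by the finite-dimensional parametrization provided by Lemma \ref{L:xye0}. All other steps are short Hahn--Banach / norm-arithmetic arguments.
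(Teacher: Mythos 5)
Your proof is correct and follows essentially the same route as the paper: Lemma \ref{L:xye0} gives that $\F$ is a (nonempty) compact convex set, Krein--Milman supplies extreme points, and the same face argument (forcing $\Psi_1(f)=\Psi_2(f)=1$, hence $\Psi_1,\Psi_2\in\F$) transfers extremality to $\ol{b_1(\M_{d*})}$. The only cosmetic difference is that you obtain compactness via the continuous parametrization of $\F$ by the unit sphere of $N$, whereas the paper notes directly that $\F$ is a closed bounded subset of a finite-dimensional space.
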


\begin{proof}
By Lemma \ref{L:xye0}, we see that $\F$ is a convex, bounded, norm closed subset of a finite dimensional space, and hence it is compact. 
By the Krein-Milman theorem, we conclude that $\F$ is the closed convex hull of its extreme points, 
and in particular $\F$ has extreme points.
Let $\Psi\in \F$ and assume that
\[
\Psi=\frac{1}{2}\Psi_1+\frac{1}{2}\Psi_2
\]
where $\Psi_1,\Psi_2\in \ol{b_1(\M_{d*})}$. The equality
\[
1=\Psi(f)= \frac{1}{2}\Psi_1(f)+\frac{1}{2}\Psi_2(f)
\]
forces $\Psi_1(f)=\Psi_2(f)=1$ and thus $\Psi_1,\Psi_2\in \F$. 
We conclude that $\F$ is a face of $\ol{b_1(\M_{d*})}$ 
and therefore extreme points of $\F$ are also extreme points of $\ol{b_1(\M_{d*})}$.
\end{proof}

%%%%%%%%%%%%%%%%%%%%%%%%%%%%%%%%%%%%%%%

As in Lemma \ref{L:xye0}, let $N=\ker (I-M_{f}^*M_f)$. When $\Dim N = 1$, the set $\F$ above consists of a single point which must necessarily be extreme. 
However, if $\F$ contains more than one point, not all of them are extreme.
We do not know of any examples where $\Dim  N \ge 2$ and wonder whether this is possible.

%%%%%%%%%%%%%%%%%%%%%%%%%%%%%%%%%%%%%%%
\begin{example}
The following example illustrates the importance of the assumption in Theorem \ref{T:extMd} that $\|f\|_{\A_d}>\|f\|_{\infty}$. 
Consider the function $f(z)=z_1\in \A_d$. Then, $\|f\|_{\A_d}=\|f\|_{\infty}=1$. 
Let $\xi =z_1\in H^2_d$ which also has norm $1$ and put $\Psi = [1\xi^*]$. Then, $\|\Psi\|_{\A_d^*}=1=\Psi(f)$. 
On the other hand, $\Psi$ is not an extreme point of the closed unit ball of $\M_{d*}$.
Indeed, it is easily verified that given $g\in \A_d$ we have
\begin{align*}
 \Psi(g) =  \frac1{2\pi} \int_0^{2\pi} g(e^{it},0,\ldots, 0) e^{-it} \,dt .
\end{align*}
Let $\mu=e^{-it} \lambda$ where $\lambda$ is the normalized Lebesgue measure concentrated on the one-dimensional circle 
$\{ \zeta\in \bS_d: |\zeta_1|=1\}$.
Then $\|\mu\|_{M(\bS_d)}=1=\|\Psi\|_{\A_d^*}$.
Consequently, we may apply Proposition \ref{P:noextrememeasure}  to conclude that $\Psi$ is not extreme.  
\qed
\end{example}

The extreme points from Theorem \ref{T:extMd} are clearly sufficient to determine the norm of any element 
$f\in \A_d$ such that $\|f\|_{\A_d}>\|f\|_{\infty}$.
However the question of whether these are all of the extreme points of $\ol{b_1(\M_{d*})}$ remains unanswered. 
The next development sheds some light on this issue.

A \textit{weak-$*$ exposed point} $\Phi$ of $\ol{b_1(\A_d^*)}$ is a functional  for which 
there is a function $f\in \A_d$ such that $\re \Phi(f)=1$ but $\re \Psi(f)<1$ for every $\Psi\in\ol{b_1(\A_d^*)}$ with $\Psi\neq \Phi$. 
It is easy to verify that weak-$*$ exposed points are necessarily extreme.

%%%%%%%%%%%%%%%%%%%%%%%%%%%%%%%%%%%%%%%
\begin{thm}\label{T:exposed}
The following statements hold.
\begin{enumerate}[label=\normalfont{(\roman*)}]
\item The set of weak-$*$ exposed points of $\ol{b_1(\A_d^*)}$ that lie in $\fW_*$ is $\{\lambda \tau_\zeta: \lambda\in\bT, \zeta\in\bS_d\}$. This set is weak-$*$ compact and it coincides with the extreme points of $\ol{b_1(\fW_*)}$.

\item Let $\Phi\in \ol{b_1(\M_{d*})}$ be a weak-$*$ exposed point of $\ol{b_1(\A_d^*)}$, and let $f \in \ol{b_1(\A_d)}$ such that 
\[
\re \Psi(f)<1=\re \Phi(f) \qforal \Psi\in \ol{b_1(\A_d^*)}, \Psi\neq \Phi.
\]
Then, $ 1=\|f\|_{\A_d} > \|f\|_\infty$.

\item If $1=\|f\|_{\A_d} > \|f\|_\infty$ and $N = \{ \xi\in H^2_d : \|f\xi\|_{H^2_d}=\|\xi\|_{H^2_d}\}$ is one dimensional, then the functional $[\xi(f\xi)^*]$ is a weak-$*$ exposed point of $\ol{b_1(\A^*_d)}$.

\item The extreme points of $\ol{b_1(\M_{d*})}$ are contained in the weak-$*$ closure of the set 
\[ \big\{ [\xi(f\xi)^*] : 1=\|\xi\|_{H^2_d}=\|f\xi\|_{H^2_d}= \|f\|_{\A_d}>\|f\|_\infty \big\} .\]
\end{enumerate}
\end{thm}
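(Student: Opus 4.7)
For part (i), my plan relies on Proposition~\ref{P:peak_point}: given $\zeta\in\bS_d$, it supplies $g\in\A_d$ with $\|g\|_{\A_d}=g(\zeta)=1$ such that $|\Psi(g)|<1$ for every norm-one $\Psi\notin\bC\tau_\zeta$. Setting $f=\bar\lambda g$ then exposes $\lambda\tau_\zeta$ inside $\ol{b_1(\A_d^*)}$: any competitor $\Psi\in\ol{b_1(\A_d^*)}$ with $\re\Psi(f)=1$ must have $|\Psi(f)|=1=\|\Psi\|_{\A_d^*}$, which forces $\Psi\in\bC\tau_\zeta$, and then $\Psi(f)=1$ pins down the scalar. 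Conversely, any weak-$*$ exposed point is extreme, so Propositions~\ref{P:extremepoints} and~\ref{P:extW} identify the exposed points of $\ol{b_1(\A_d^*)}$ lying in $\fW_*$ as exactly the $\lambda\tau_\zeta$. Weak-$*$ compactness follows from the weak-$*$ continuity of $(\lambda,\zeta)\mapsto \lambda\tau_\zeta$ (since functions in $\A_d$ are continuous on $\ol{\bB_d}$) combined with compactness of $\bT\times\bS_d$.

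For part (ii), suppose $\Phi\in\ol{b_1(\M_{d*})}$ is exposed by $f\in\ol{b_1(\A_d)}$. The chain $1=\re\Phi(f)\leq|\Phi(f)|\leq\|\Phi\|_{\A_d^*}\|f\|_{\A_d}\leq 1$ forces $\Phi(f)=\|\Phi\|_{\A_d^*}=\|f\|_{\A_d}=1$. If we had $\|f\|_\infty=1$, the maximum modulus principle would yield $\zeta_0\in\bS_d$ with $|f(\zeta_0)|=1$, and setting $\lambda=\overline{f(\zeta_0)}$ would produce a competitor $\lambda\tau_{\zeta_0}\in\ol{b_1(\A_d^*)}$ with $(\lambda\tau_{\zeta_0})(f)=1$; the exposing property would then force $\Phi=\lambda\tau_{\zeta_0}\in\fW_*$, contradicting $\Phi\in\M_{d*}\setminus\{0\}$ together with $\M_{d*}\cap\fW_*=\{0\}$. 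For part (iii), write $N=\bC\xi$ with $\|\xi\|_{H^2_d}=1$; then $\Phi=[\xi(f\xi)^*]$ satisfies $\Phi(f)=\|f\xi\|_{H^2_d}^2=1$ and $\|\Phi\|_{\A_d^*}\leq\|\xi\|_{H^2_d}\|f\xi\|_{H^2_d}=1$. If $\Psi\in\ol{b_1(\A_d^*)}$ has $\re\Psi(f)=1$, then $\Psi(f)=1=\|\Psi\|_{\A_d^*}$, so Lemma~\ref{L:xye0} yields $\Psi=[\eta(f\eta)^*]$ for some unit $\eta\in N=\bC\xi$; writing $\eta=\mu\xi$ with $|\mu|=1$ collapses $\Psi$ to $\Phi$.

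Part (iv) is the deepest, and this is where I anticipate the main obstacle. I plan to apply Milman's converse of Krein-Milman to the weak-$*$ compact set
\[
S = \wsclos{F}\cup\{\lambda\tau_\zeta : \lambda\in\bT,\ \zeta\in\bS_d\}\subset \ol{b_1(\A_d^*)},
\]
where $F$ denotes the set in the statement. The crux is to verify that $\ol{b_1(\A_d^*)}=\wsclos{\conv(S)}$. If this failed, Hahn-Banach separation would produce $f\in\A_d$ with $\|f\|_{\A_d}=1$ and $\alpha<1$ such that $\re\Psi(f)\leq\alpha$ for every $\Psi\in S$. Since $\sup_{\lambda,\zeta}\re(\lambda\tau_\zeta)(f)=\|f\|_\infty$, this would force $\|f\|_\infty<1$; but then Lemma~\ref{L:xye0} supplies a unit vector $\xi\in N=\ker(I-M_f^*M_f)$ giving $[\xi(f\xi)^*]\in F\subset S$ with $[\xi(f\xi)^*](f)=1$, a contradiction. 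Milman then places $\ext(\ol{b_1(\A_d^*)})$ inside $S$; combining this with Proposition~\ref{P:extremepoints} and $\M_{d*}\cap\fW_*=\{0\}$ yields $\ext(\ol{b_1(\M_{d*})})\subset\wsclos{F}$. The main obstacle is exactly this matching of the two regimes $\|f\|_\infty=\|f\|_{\A_d}$ and $\|f\|_\infty<\|f\|_{\A_d}$ to the two summands of $S$ in the separation step; the indispensable ingredient is Lemma~\ref{L:xye0}, which produces an explicit norming functional of the form $[\xi(f\xi)^*]$ in the second regime.
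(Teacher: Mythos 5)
Your proposal is correct, and parts (i)--(iii) are essentially the paper's own argument: the twist $f=\bar\lambda g$ of Proposition \ref{P:peak_point} is exactly the ``straightforward modification'' the paper invokes, and your (ii) and (iii) run the same norm chain and uniqueness argument through Lemma \ref{L:xye0}. Part (iv) is where you genuinely diverge. The paper deduces (iv) from Klee's theorem \cite{Klee} (in the dual of a separable Banach space, the extreme points of a weak-$*$ compact convex set lie in the weak-$*$ closure of its weak-$*$ exposed points), combined with the containment of the weak-$*$ exposed points in $\{\lambda\tau_\zeta\}\,\dot\cup\,\big\{[\xi(f\xi)^*]\big\}$ coming from (i)--(iii) and Lemma \ref{L:xye0}, and then uses that $\{\lambda\tau_\zeta\}$ is weak-$*$ closed and disjoint from $\M_{d*}$. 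You instead prove directly, by Hahn--Banach separation in $(\A_d^*,\mathrm{w}^*)$, that $\ol{b_1(\A_d^*)}$ is the weak-$*$ closed convex hull of $S=\wsclos{F}\cup\{\lambda\tau_\zeta:\lambda\in\bT,\zeta\in\bS_d\}$ --- the two regimes $\|f\|_\infty=\|f\|_{\A_d}$ and $\|f\|_\infty<\|f\|_{\A_d}$ being handled by the point evaluations and by the norming functional $[\xi(f\xi)^*]$ from Lemma \ref{L:xye0}, respectively --- and then apply Milman's converse to the Krein--Milman theorem; the final step (Proposition \ref{P:extremepoints} together with $\M_{d*}\cap\fW_*=\{0\}$) is the same as the paper's. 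Your route is more self-contained: it avoids Klee's less standard result and its separability hypothesis, and it does not use (ii) or (iii) at all; what it gives up is the intermediate information in the paper's proof that the extreme points of $\ol{b_1(\M_{d*})}$ are weak-$*$ limits of weak-$*$ exposed points. Both arguments hinge on Lemma \ref{L:xye0} and on the weak-$*$ compactness of $\{\lambda\tau_\zeta\}$ established in (i).
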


\begin{proof}
For (i), note that a straightforward modification of Proposition~\ref{P:peak_point} shows that $\lambda \tau_\zeta$ is a weak-$*$ exposed point of $\ol{b_1(\A_d^*)}$ for every $\lambda\in \bT, \zeta\in \bS_d$. Since weak-$*$ exposed points are extreme, we see that the set of weak-$*$ exposed points of $\ol{b_1(\A^*_d)}$ lying in $\fW_*$ is  $\{\lambda \tau_\zeta: \lambda\in\bT, \zeta\in\bS_d\}$, which  coincides with the set of extreme points of $\ol{b_1(\fW_*)}$ by Proposition \ref{P:extW}.
Endowed with the weak-$*$ topology, this set is easily seen to be homeomorphic to $\bT \times \bS_d$, which is compact.

Turning to part (ii), it is clear that $\|f\|_{\A_d}=1$. If $\|f\|_\infty=1$  then there exists $\zeta_0\in \bS_d$ such that $f(\zeta_0) = \lambda$ for some $\lambda\in \bT$.
Then $(\ol{\lambda}\tau_{\zeta_0})(f)=1$ while  $\Phi\neq \ol{\lambda}\tau_{\zeta_0}$ since $\Phi\in \M_{d^*}$  and $\ol{\lambda}\tau_{\zeta_0}\in \ol{b_1(\fW_*)}$ by Proposition \ref{p-ptsing}. 
Thus $\Phi$ is not weak-$*$ exposed in $\ol{b_1(\A_d^*)}$.

For part (iii), we invoke Lemma~\ref{L:xye0} to show that functionals $\Psi\in \A_d^{*}$ 
such that $\Psi(f)=1=\|\Psi\|_{\A_d^*}$ must be of the form $[\xi(f\xi)^*]$ for some unit vector $\xi \in N$. 
Since $\dim N=1$, this functional $\Psi$ is unique and therefore it is a weak-$*$ exposed point of $\ol{b_1(\A_d^*)}$.

Finally, for (iv) we use a result stated by Klee  (see Theorem 4.5 in \cite{Klee} and the following remark at the top of page 97).
It says that if $C$ is a weak-$*$ compact convex subset of the dual of a separable Banach space,
then the extreme points of $C$ are contained in the weak-$*$ closure of the set of weak-$*$ exposed points.
A short proof modelled on Klee's argument can be found in \cite{CDext}.

We apply this fact to our context by taking $C = \ol{b_1(\A_d^*)}$.
Using Proposition \ref{P:extremepoints}, Proposition \ref{P:extW} and Lemma \ref{L:xye0} along with (ii) shows that the set of weak-$*$ exposed points of $\ol{b_1(\A_d^*)}$ is contained in the disjoint union
\[ 
 \big\{ \lambda\tau_\zeta : \lambda\in\bT,\ \zeta\in\bS_d \big\} \ \dot\cup \ 
 \big\{ [\xi(f\xi)^*] : 1=\|\xi\|_{H^2_d}=\|f\xi\|_{H^2_d}= \|f\|_{\A_d}>\|f\|_\infty \big\}  .
\]
By Klee's result, every extreme point in $\ol{b_1(\M_{d*})}$ is contained in the weak-$*$ closure of this set.
However, by (i) the set on the left is weak-$*$ closed and disjoint from $\M_{d*}$.
Hence all of the extreme points in $\ol{b_1(\M_{d*})}$ belong to the weak-$*$ closure of
\[ \big\{ [\xi(f\xi)^*] : 1=\|\xi\|_{H^2_d}=\|f\xi\|_{H^2_d}= \|f\|_{\A_d}>\|f\|_\infty \big\}  . \qedhere \]
\end{proof}

One feature of this theorem is that it completely determines the set of weak-$*$ exposed points of the closed unit ball of $\A_d^*$ if we assume that every multiplier $f\in\A_d$ with $\|f\|_{\A_d}>\|f\|_\infty$ attains its norm on a one dimensional subspace of $H^2_d$. As mentioned after the proof of Theorem \ref{T:extMd}, we know of no multiplier in $\A_d$ that does not have this property.

We close this section with a few general comments regarding extreme points of the closed unit ball of $\AB^*$, as it offers an interesting contrast with the situation in $\A^*_d$. Recall from Theorem \ref{T:dualAB} that
\[
A(\bB_d)^*=\HB_*\oplus_1 \TS(\bS_d).
\]
The extreme points of the closed unit ball must split according to this decomposition.
Those in the closed unit ball of $\TS(\bS_d)$ are easily identified as the point masses (compare with Proposition \ref{P:extW}). 
Moreover, the closed unit ball of $\HB_*$ has no extreme points by \cite{CDext}. 
Interestingly, we have shown in this section that the closed unit ball of $\M_{d*}$, the analogue of $\HB_*$, has many extreme points.

%%%%%%%%%%%%%%%%%%%%%%%%%%%%%%%%%%%%%%%
%%%%%%%%%%%%%%%%%%%%%%%%%%%%%%%%%%%%%%%
\section{Choquet integral representations of functionals}\label{S:choquet}

This section is somewhat technical. 
The aim is to develop a Choquet type theorem, Theorem \ref{T:hustad}, for linear functionals on subspaces of $\rC(X)$ 
which do not necessarily contain the constant functions.

Let $X=\ol{b_1(\A_d^*)}$ be equipped with the weak-$*$ topology, so that $X$ is compact. 
Moreover, $X$ is metrizable since $\A_d$ is separable. 
Given $\phi\in \A_d$, let $\widehat{\phi}$ be the canonical image of $\phi$ in $\A_d^{**}$.
Let $t\in \rC(X)$ such that $t(x)>0$ for every $x\in X$ and assume that $t$ is concave, that is
\[
t\Big( \frac{x_1+x_2}{2}\Big)\geq \frac{t(x_1)}{2}+\frac{t(x_2)}{2} \qforal x_1,x_2\in X.
\]
Define 
\[
B_t=\{(c+\widehat{\phi})/t:c\in \bC, \phi\in \A_d\}
\]
which is a separable subspace of $\rC(X)$. We claim that $B_t$ is closed. 
Indeed, assume that we are given sequences $\{c_{n}\}_{n}\subset \bC$ and $\{\phi_{n}\}_{n}\subset \A_d$ such that
\[
\lim_{n\to \infty} (c_{n}+\widehat{\phi_{n}})/t=f
\]
for some $f\in \rC(X)$. Then,
\[
\lim_{n\to \infty} c_{n}/t(0)= f(0)
\]
and thus
\[
\lim_{n\to \infty} \widehat{\phi_{n}}/t= f-f(0)t(0)/t.
\]
Since the limit exists in the norm of $\rC(X)$, we infer that 
\[
\lim_{n\to \infty}\widehat{\phi_n}=ft-f(0)t(0)
\]
is linear and weak-$*$ continuous on $\A^*_d$, whence $ft-f(0)t(0)=\widehat{\phi}$ for some $\phi\in \A_d$. Rearranging yields
\[
f=(\widehat{\phi}+f(0)t(0))/t\in B_t.
\]
Therefore $B_t$ is closed.

Note that in general $B_t$ does not contain the constant functions on $X$ unless $t$ is identically $1$. 
This prevents a direct application of the main result of \cite{hustad}. 
The purpose of this section is to circumvent this difficulty and to develop a variant of the results in that paper. 
The method of proof is very similar.

Let $S=\ol{b_1(B_t^*)}$ be equipped with the weak-$*$ topology, which is compact. 
Also note that $S$ is metrizable as $B_t$ is separable. 
This last fact will be important in the proof of Theorem \ref{T:hustad}.

%%%%%%%%%%%%%%%%%%%%%%%%%%%%%%%%%%%%%%%
\begin{lemma}\label{L:V}
The maps
\[
e:X\to S, \quad (e(\xi))(b)=b(\xi)
\]
and 
\[
E:\bT\times X\to S, \quad E(w,\xi)=we(\xi).
\]
are both homeomorphisms onto their images. 
\end{lemma}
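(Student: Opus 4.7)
\medskip

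\noindent\textbf{Proof proposal.} Since $X$ and $\bT\times X$ are both compact (the latter being a product of compact sets) and $S$ is Hausdorff as a subset of a dual space with its weak-$*$ topology, it suffices to show that $e$ and $E$ are continuous injections; the homeomorphism conclusion is then automatic. The plan is to check well-definedness, continuity, and injectivity in turn, exploiting the fact that $B_t$ contains both $1/t$ and the functions $\widehat{\phi}/t$ for $\phi\in\A_d$, which is enough to recover both $t(\xi)$ and the action of $\xi$ on $\A_d$.

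For well-definedness, fix $\xi\in X$. The map $b\mapsto b(\xi)$ is clearly linear on $B_t$, and the bound $|b(\xi)|\le \|b\|_{\rC(X)}$ shows that $e(\xi)$ lies in the closed unit ball of $B_t^*$, so $e(\xi)\in S$. For $E$, since $|w|=1$, the functional $we(\xi)$ has the same norm as $e(\xi)$ and hence also lies in $S$.

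For continuity of $e$, suppose $\xi_\alpha\to\xi$ in the weak-$*$ topology on $X\subset \A_d^*$. For each fixed $b=(c+\widehat{\phi})/t\in B_t$, we have
\[
(e(\xi_\alpha))(b) = \frac{c+\xi_\alpha(\phi)}{t(\xi_\alpha)} \longrightarrow \frac{c+\xi(\phi)}{t(\xi)} = (e(\xi))(b),
\]
since $\xi_\alpha(\phi)\to\xi(\phi)$ by definition of weak-$*$ convergence and $t$ is continuous on $X$ (with $t(\xi)>0$ throughout). Continuity of $E$ then follows by separately noting that $w\mapsto wv$ is continuous on $S$ for each fixed $v\in S$, and that $E$ factors through the continuous maps $(w,\xi)\mapsto (w,e(\xi))$ and $(w,v)\mapsto wv$.

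For injectivity of $e$, suppose $e(\xi_1)=e(\xi_2)$. Evaluating at $b=1/t\in B_t$ (taking $c=1,\phi=0$) yields $1/t(\xi_1)=1/t(\xi_2)$, so $t(\xi_1)=t(\xi_2)$. Evaluating at $b=\widehat{\phi}/t$ for arbitrary $\phi\in \A_d$ then gives $\xi_1(\phi)=\xi_2(\phi)$, so $\xi_1=\xi_2$. For $E$, if $w_1e(\xi_1)=w_2e(\xi_2)$, then applying both sides to $b=1/t$ gives $w_1/t(\xi_1)=w_2/t(\xi_2)$; taking moduli and using $|w_i|=1$ with $t>0$ gives $t(\xi_1)=t(\xi_2)$, hence $w_1=w_2$, and then $e(\xi_1)=e(\xi_2)$ forces $\xi_1=\xi_2$ by injectivity of $e$. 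There is no serious obstacle here; the only slightly subtle point is recognizing that the presence of the single function $1/t$ in $B_t$ is what allows one to decouple the scalar $w$ and the point $\xi$ in the second map.
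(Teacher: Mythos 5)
Your proof is correct, and it differs from the paper's in one genuine respect: for the ``homeomorphism onto the image'' step you invoke the general fact that a continuous injection from a compact space into a Hausdorff space is automatically a homeomorphism onto its image, using that $X$ (by Banach--Alaoglu) and $\bT\times X$ are compact and that $S$ is Hausdorff in its weak-$*$ topology. The paper instead verifies continuity of the inverses directly: given a net with $e(\xi_\alpha)\to e(\xi)$ (resp.\ $w_\alpha e(\xi_\alpha)\to we(\xi)$) in $S$, it evaluates at $1/t$ to recover convergence of $t(\xi_\alpha)$ (and of $w_\alpha$), and then at the functions $\widehat{\phi}/t$ to deduce $\xi_\alpha\to\xi$ in $X$. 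The injectivity arguments are essentially identical in the two proofs --- evaluation at $1/t$ decouples the scalar and the point, exactly as you note. What your route buys is brevity and the elimination of the net computations; what the paper's route buys is an explicit, self-contained description of how convergence in $S$ pulls back to convergence in $X$, which does not rely on compactness of $X$. Two small points worth tightening in your write-up: continuity of $e$ is immediate from the fact that every $b\in B_t$ is a continuous function on $X$ (so $b(\xi_\alpha)\to b(\xi)$ without writing out the representation $(c+\widehat{\phi})/t$, whose coefficients need not even be addressed), and the joint continuity of $(w,v)\mapsto wv$ on $\bT\times S$ deserves the one-line justification that $|w_\alpha v_\alpha(b)-wv(b)|\le |w_\alpha-w|\,\|b\|_{\rC(X)}+|v_\alpha(b)-v(b)|$, which uses that $S$ is the unit ball; neither affects correctness.
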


\begin{proof}
It is clear that both $e$ and $E$ are continuous.

Assume that $e(\xi_1)=e(\xi_2)$. 
Evaluating at the function $1/t\in B_t$, we infer that $t(\xi_1)=t(\xi_2)$. 
Hence
\[
\xi_1(\phi)=t(\xi_1)(e(\xi_1))(\widehat{\phi}/t)=t(\xi_2)(e(\xi_2))(\widehat{\phi}/t)=\xi_2(\phi)
\]
for every $\phi\in \A_d$, which implies that $\xi_1=\xi_2$. We conclude that $e$ is injective. 

Assume now that $w_1 e(\xi_1)=w_2 e(\xi_2)$. 
Evaluating at the function $1/t$ again we find $w_1 /t(\xi_1)= w_2/t(\xi_2)$. 
Taking absolute values yields $t(\xi_1)=t(\xi_2)$ and thus $w_1=w_2$. 
In turn, this implies $e(\xi_1)=e(\xi_2)$. 
Since $e$ is injective, $E$ must be injective as well.

Let $\{\xi_{\alpha}\}_{\alpha}\subset X$ such that $
\lim_{\alpha}e(\xi_{\alpha})= e(\xi)$ in $S$ for some $\xi\in X$.  
Evaluating at the function $1/t$, we find that $\lim_{\alpha}t(\xi_{\alpha})= t(\xi)$.  Thus
\[
\lim_{\alpha}\xi_\alpha(\phi)=\lim_{\alpha}t(\xi_\alpha)(e(\xi_\alpha))(\widehat{\phi}/t)= t(\xi)(e(\xi))(\widehat{\phi}/t)=\xi(\phi)
\]
for every $\phi\in \A_d$. This shows that $\lim_{\alpha}\xi_\alpha= \xi$ in $X$. 
Therefore, $e$ is a homeomorphism onto its image.

Finally, let $\{w_{\alpha}\}_{\alpha}\subset \bT, \{\xi_{\alpha}\}_{\alpha}\subset X$ such that 
$\lim_{\alpha}w_{\alpha}e(\xi_{\alpha})= w e(\xi)$ in $S$ for some $w\in \bT, \xi\in X$. 
We have
\[
\lim_{\alpha}w_{\alpha}/t(\xi_{\alpha})=\lim_{\alpha}(w_{\alpha}e(\xi_{\alpha}))(1/t)=(we(\xi))(1/t) =w/t(\xi)
\]
\[
\lim_{\alpha}1/t(\xi_{\alpha})=\lim_{\alpha}\left|(w_{\alpha}e(\xi_{\alpha}))(1/t) \right|= \left|(we(\xi))(1/t) \right|=1/t(\xi)
\]
whence $\lim_{\alpha}w_{\alpha}= w$. 
It follows that $\lim_{\alpha}e(\xi_{\alpha})= e(\xi)$ in $S$ and thus $\lim_{\alpha}\xi_{\alpha}= \xi$ in $X$ by the previous paragraph. 
Hence, $E$ is a homeomorphism onto its image.
\end{proof}

The \emph{Choquet boundary} of $B_t$ is denoted by $\Ch B_t$ and is defined as the set of points $\xi\in X$ 
with the property that if $\mu$ is a positive measure on $X$ with $\|\mu\|_{M(X)}\leq 1$ such that
\[
\int_X b \,d\mu=b(\xi) \qforal b\in B_t ,
\]
then $\mu=\delta_\xi$.
An easy adaptation of  \cite[Lemma 4.3]{BishopdeLeeuw} yields the following.

%%%%%%%%%%%%%%%%%%%%%%%%%%%%%%%%%%%%%%%
\begin{lemma}\label{L:extCh}
Let $\xi\in X$ such that $e(\xi)\in \ext (S)$. Then $\xi\in \Ch B_t$.
\end{lemma}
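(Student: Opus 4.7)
The argument adapts the proof of \cite[Lemma 4.3]{BishopdeLeeuw} to the present setting in which $B_t$ need not contain the constants. Fix a positive measure $\mu$ on $X$ with $\|\mu\|_{M(X)}\le 1$ and $\int_X b\,d\mu = b(\xi)$ for every $b\in B_t$; the goal is to show $\mu=\delta_\xi$.

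The first move is to transfer the problem to $S$ via push-forward. Set $\nu:=e_*\mu\in M^+(S)$. For each $b\in B_t$, the evaluation map $\widehat b:s\mapsto s(b)$ is weak-$*$ continuous on $B_t^*$, and
\[
 \int_S \widehat b\, d\nu \;=\; \int_X b\, d\mu \;=\; b(\xi) \;=\; \widehat b(e(\xi)),
\]
so $e(\xi)$ is the barycenter of $\nu$ in $B_t^*$. Every extreme point of the closed unit ball of a normed space has norm $1$ (as $\|v\|<1$ would allow the non-trivial convex combination $v=(1-\|v\|)\cdot 0+\|v\|\cdot(v/\|v\|)$ in the unit ball). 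Hence the chain
\[
 1 \;=\; \|e(\xi)\|_{B_t^*} \;\le\; \int_S \|s\|_{B_t^*}\,d\nu(s) \;\le\; \|\nu\|_{M(S)} \;=\; \|\mu\|_{M(X)} \;\le\; 1
\]
is forced to be a chain of equalities, and in particular $\mu$ is already a probability measure.

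Suppose for contradiction that $\mu\ne\delta_\xi$, and fix $\xi_0\in\supp\mu\setminus\{\xi\}$. Note that $B_t$ separates points of $X$: when $t(\xi_1)\ne t(\xi_2)$ the function $1/t\in B_t$ distinguishes them, and otherwise one uses $\widehat\phi/t$ for a suitable $\phi\in\A_d$, since $\widehat{\A_d}$ separates points of $\A_d^*\supset X$. Select $b_0\in B_t$ with $b_0(\xi_0)\ne b_0(\xi)$. By continuity of $b_0$ and inner regularity of $\mu$, one can choose an open neighborhood $U$ of $\xi_0$ with $\xi\notin\ol U$, $0<\mu(U)<1$, and $|b_0(\zeta)-b_0(\xi_0)|<\tfrac{1}{3}|b_0(\xi_0)-b_0(\xi)|$ for every $\zeta\in\ol U$. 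Urysohn's lemma supplies a continuous function $f:X\to[0,1]$ with $\supp f\subset \ol U$ and $f(\xi_0)=1$, so that $f(\xi)=0$ and $\lambda:=\int f\,d\mu\in(0,1)$. Decompose
\[
 \mu \;=\; \lambda\mu_1 + (1-\lambda)\mu_2, \qquad \mu_1:=\tfrac{1}{\lambda}f\mu, \quad \mu_2:=\tfrac{1}{1-\lambda}(1-f)\mu,
\]
both probability measures on $X$. Reapplying the computation of the first step to each $\mu_i$ yields functionals $s_i\in B_t^*$ with $s_i(b):=\int b\,d\mu_i$ and $\|s_i\|_{B_t^*}\le 1$, so $s_1,s_2\in S$, and by construction $e(\xi) = \lambda s_1 + (1-\lambda) s_2$.

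The main obstacle is to convert this into a contradiction. Extremality of $e(\xi)$ in $S$ forces $s_1 = s_2 = e(\xi)$, which on unpacking amounts to the identity $\int bf\,d\mu = \lambda b(\xi)$ for every $b\in B_t$. Applied to $b_0$ and combined with the tight control of $b_0$ on $\ol U\supset \supp f$, this yields
\[
 \lambda|b_0(\xi) - b_0(\xi_0)| \;=\; \Bigl|\int b_0 f\,d\mu - \lambda b_0(\xi_0)\Bigr| \;\le\; \int |b_0-b_0(\xi_0)|f\,d\mu \;\le\; \tfrac{1}{3}|b_0(\xi_0)-b_0(\xi)|\lambda,
\]
which is absurd since $b_0(\xi_0)\ne b_0(\xi)$ and $\lambda>0$. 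The delicate point in this last stretch is arranging $U$ and $f$ quantitatively so that the splitting $\mu=\lambda\mu_1+(1-\lambda)\mu_2$ transfers into a genuinely non-trivial decomposition of $e(\xi)$ in $S$; this rests on the point-separation property of $B_t$ together with the continuity of a chosen separating function $b_0$.
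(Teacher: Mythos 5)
Your argument is essentially the paper's: both are adaptations of \cite[Lemma 4.3]{BishopdeLeeuw}, in which the representing measure is written as a convex combination giving a decomposition of $e(\xi)$ inside $S$, extremality of $e(\xi)$ is invoked, and the fact that $B_t$ separates points of $X$ (via $1/t$ and $\widehat{\phi}/t$) finishes the job. The differences are cosmetic: the paper splits $\mu$ along an arbitrary Borel set and concludes that every $b\in B_t$ is $\mu$-a.e.\ constant, handling the point-mass case at the end, whereas you first normalize $\mu(X)=1$ (extreme points of the unit ball have norm one), split with a Urysohn bump around a point $\xi_0\in\supp\mu\setminus\{\xi\}$, and reach a quantitative contradiction with a single separating function $b_0$.

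One loose end: the assertion that you can choose $U\ni\xi_0$ with $\xi\notin\ol{U}$ and $0<\mu(U)<1$ --- and, what you actually need for $\lambda=\int f\,d\mu<1$, smallness of $\mu(\ol{U})$, since $f$ is supported in $\ol{U}$ --- is not available when every neighbourhood of $\xi_0$ has full measure, i.e.\ when $\mu=\delta_{\xi_0}$; "continuity of $b_0$ and inner regularity" do not rule this out. That degenerate case must be treated separately, though it is immediate: $b_0(\xi_0)=\int_X b_0\,d\mu=b_0(\xi)$ contradicts the choice of $b_0$. With that one-sentence patch (and requiring the oscillation bound and $\mu(\ol{U})<1$ on the closure $\ol{U}$ rather than on $U$, which costs nothing since $\mu(\ol{B(\xi_0,r)})\downarrow\mu(\{\xi_0\})$ as $r\downarrow 0$), your proof is complete and matches the paper's in substance.
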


\begin{proof}
Let $\mu$ be a positive measure on $X$  with $\|\mu\|_{M(X)}=\mu(X)\leq 1$ such that
\[
b(\xi)=\int_X b \,d\mu \qforal b\in B_t.
\]
Assume that $\mu$ is not a point mass. Let $A\subset X$ be a Borel set with $0<\mu(A)<\mu(X)$. Define
\[
\Phi_1(b)=\frac{\mu(X)}{\mu(A)}\int_A b \,d\mu \qand 
\Phi_2(b)=\frac{\mu(X)}{\mu(X\setminus A)}\int_{X\setminus A} b \,d\mu
\qfor b\in B_t .
\]
Then, $\Phi_1$ and $\Phi_2$ both belong to $S$ and
\[
e(\xi)=\frac{\mu(A)}{\mu(X)}\Phi_1+\frac{\mu(X\setminus A)}{\mu(X)}\Phi_2.
\]
Since $e(\xi)$ is assumed to be extreme, we conclude that
\[
\frac{1}{\mu(A)}\int_A b \,d\mu=\frac{1}{\mu(X)}\int_X b \,d\mu=\frac{1}{\mu(X)}b(\xi)
\]
for every Borel subset $A\subset X$ with $0<\mu(A)<\mu(X)$ and every $b\in B_t$. 
Therefore each $b$ is equal to $b(\xi)/\mu(X)$ almost everywhere with respect to $\mu$. 
It is easy to see that $B_t$ separates points of $X$,
and thus $\mu$ is supported on a single point $\xi_0\in X$, contrary to our assumption. 

Hence, $\mu= c\delta_{\xi_0}$ for some $0<c\leq 1$. 
By choice of $\mu$, we infer that $\xi=\xi_0$ and $c=1$, so that $\mu=\delta_\xi$. 
Consequently,  $x\in \Ch B_t$.
\end{proof}

In the setting studied in \cite{BishopdeLeeuw}, the converse of that lemma holds as well. 
We do not know if this is the case here. 
The obstacle to carrying out the same proof as in the aforementioned paper is that a contractive functional 
on $B_t$ is not necessarily given by a positive measure because the space $B_t$ does not contain the constant functions.

We now prove a slight variant of part of Lemma 4.1 from \cite{BishopdeLeeuw}.

%%%%%%%%%%%%%%%%%%%%%%%%%%%%%%%%%%%%%%%
\begin{lemma}\label{L:ChBt}
If $\xi\in \Ch B_t$, then $\xi\in \ext(X)$.
\end{lemma}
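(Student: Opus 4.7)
The plan is a Bishop--de Leeuw-type argument: I will show the contrapositive in the form that any convex decomposition $\xi = \tfrac{1}{2}(\xi_1+\xi_2)$ with $\xi_1,\xi_2 \in X$ must be trivial. The obstacle is that $B_t$ does not contain the constants, so the naive representing measure $\tfrac{1}{2}(\delta_{\xi_1}+\delta_{\xi_2})$ does not reproduce $b(\xi)$ for $b \in B_t$: while the numerator $c+\widehat{\phi}$ of any element of $B_t$ is affine on $X$, division by the merely concave function $t$ destroys affinity.

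The key idea is to compensate by $t$-weighting the point masses. Namely, I would introduce
\[
\mu \;=\; \frac{t(\xi_1)}{2\,t(\xi)}\,\delta_{\xi_1} \;+\; \frac{t(\xi_2)}{2\,t(\xi)}\,\delta_{\xi_2}.
\]
A direct computation using that $\widehat{\phi}$ is affine on $X$ gives, for every $b=(c+\widehat{\phi})/t \in B_t$,
\[
\int_X b\,d\mu \;=\; \frac{c + \tfrac{1}{2}(\widehat{\phi}(\xi_1)+\widehat{\phi}(\xi_2))}{t(\xi)} \;=\; \frac{c+\widehat{\phi}(\xi)}{t(\xi)} \;=\; b(\xi).
\]
Moreover, concavity of $t$ yields
\[
\|\mu\|_{M(X)} \;=\; \frac{t(\xi_1)+t(\xi_2)}{2\,t(\xi)} \;\le\; 1,
\]
and positivity is automatic since $t>0$ on $X$. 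Thus $\mu$ is an admissible measure representing $\xi$ against $B_t$.

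Because $\xi \in \Ch B_t$, this forces $\mu = \delta_\xi$. Since $t > 0$ on $X$, both weights in the formula for $\mu$ are strictly positive, so the identity $\mu = \delta_\xi$ is compatible only with $\xi_1 = \xi_2$; combined with $\xi = \tfrac{1}{2}(\xi_1+\xi_2)$ this gives $\xi_1 = \xi_2 = \xi$, i.e.\ $\xi \in \ext(X)$. The only delicate step is spotting the correct weighting $t(\xi_i)/(2t(\xi))$; afterwards, concavity of $t$ plays exactly the role of keeping the total mass at most $1$, and I do not anticipate any serious technical obstruction.
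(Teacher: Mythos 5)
Your proof is correct and is essentially the paper's own argument (stated contrapositively there): the same $t$-weighted measure $\mu=\frac{1}{t(\xi)}\bigl(\frac{t(\xi_1)}{2}\delta_{\xi_1}+\frac{t(\xi_2)}{2}\delta_{\xi_2}\bigr)$, with concavity of $t$ giving $\|\mu\|_{M(X)}\le 1$ and affinity of $c+\widehat{\phi}$ giving the reproducing property. No gap.
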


\begin{proof}
Assume that $\xi\in X$ is not extreme. Then, we can write 
\[
\xi=\frac{1}{2}\xi_1+\frac{1}{2}\xi_2
\]
with $\xi_1,\xi_2\in X$ both different than $\xi$. Define a positive measure $\mu$ on $X$ as
\[
\mu=\frac{1}{t(\xi)}\left(\frac{t(\xi_1)}{2}\delta_{\xi_1}+\frac{t(\xi_2)}{2}\delta_{\xi_2}\right).
\]
Since $t$ is concave, we have that
\[
\frac{t(\xi_1)+t(\xi_2)}{2}\leq t(\xi)
\]
so that $\|\mu\|_{M(X)}\leq 1$. Now
\[
\int_X b \,d\mu=b(\xi) \qforal b\in B_t
\]
while $\mu\neq \delta_\xi$, so we conclude that $\xi\notin \Ch B_t$.
\end{proof}

The following is the main technical tool we need.

%%%%%%%%%%%%%%%%%%%%%%%%%%%%%%%%%%%%%%%
\begin{theorem}\label{T:hustad}
Let $t\in \rC(X)$ be strictly positive and concave and let 
\[
B_t=\{(c+\widehat{\phi})/t:c\in \bC, \phi\in \A_d\}.
\]
Then, given $\Phi\in B_t^*$, there exists a measure $\mu$ concentrated on the extreme points of $X$ such that $\|\mu\|_{M(X)}=\|\Phi\|_{B_t^*}$ and 
\[
\Phi(b)=\int_{\ext X} b  \,d\mu \qforal b\in B_t.
\]
\end{theorem}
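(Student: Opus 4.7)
The strategy is to apply the Choquet--Bishop--de Leeuw theorem in the compact convex metrizable set $S = \ol{b_1(B_t^*)}$ and then descend via the embedding $E: \bT \times X \to S$ of Lemma~\ref{L:V} to a measure on $\ext X$. If $\Phi = 0$ take $\mu = 0$; otherwise, after scaling, assume $\|\Phi\|_{B_t^*} = 1$ so that $\Phi \in S$. Since $B_t$ is separable, $S$ is metrizable, and Choquet--Bishop--de Leeuw produces a Borel probability measure $\nu$ on $S$, supported on $\ext S$, with
\[
\Phi(b) = \int_{\ext S} \psi(b)\,d\nu(\psi) \qquad \text{for all } b \in B_t.
\]

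The crucial step is to show $\ext S \subset E(\bT \times \ext X)$. First I would check $S = \ol{\conv}(E(\bT \times X))$: given $\Psi \in S$ of norm $1$, Hahn--Banach and the Riesz representation theorem yield $\rho \in M(X)$ with $\|\rho\|_{M(X)} = 1$ representing $\Psi$ on $B_t$; decomposing $\rho = h|\rho|$ with $|h| = 1$ almost everywhere, the pushforward of $|\rho|$ along $\xi \mapsto E(h(\xi),\xi)$ is a Borel probability measure on $E(\bT \times X) \subset S$ whose barycenter is $\Psi$, placing $\Psi$ in $\ol{\conv}(E(\bT \times X))$. An element of $S$ of norm $< 1$ is a convex combination of $0$ and a norm-$1$ element, and $0 = \tfrac{1}{2} e(\xi) + \tfrac{1}{2}(-e(\xi))$ belongs to $\conv(E(\bT \times X))$. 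Milman's theorem therefore gives $\ext S \subset E(\bT \times X)$. Finally, if $\psi = E(w,\xi) \in \ext S$, then $e(\xi) = \bar w\,\psi \in \ext S$ because multiplication by $\bar w$ is a linear weak-$*$ homeomorphism of $S$; Lemma~\ref{L:extCh} then forces $\xi \in \Ch B_t$, and Lemma~\ref{L:ChBt} gives $\Ch B_t \subset \ext X$.

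Pull $\nu$ back through $E^{-1}$ (a homeomorphism on $E(\bT \times X)$) to obtain a Borel probability measure $\td\nu$ on $\bT \times X$ supported in $\bT \times \ext X$, noting that $\ext X$ is a Borel ($G_\delta$) subset of $X$ since $X$ is a compact metrizable convex set. Disintegrate $\td\nu = \int_{\ext X} \rho_\xi\,d\lambda(\xi)$ with $\lambda$ the marginal on $\ext X$ and each $\rho_\xi$ a probability measure on $\bT$. Define $g(\xi) = \int_\bT w\,d\rho_\xi(w)$, a Borel function of modulus at most $1$, and set $\mu = g\lambda$ extended by zero to $X \setminus \ext X$. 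Using $E(w,\xi)(b) = wb(\xi)$ and the disintegration,
\[
\Phi(b) = \int_{\bT \times \ext X} w b(\xi)\,d\td\nu(w,\xi) = \int_{\ext X} g(\xi)b(\xi)\,d\lambda(\xi) = \int b\,d\mu
\]
for every $b \in B_t$. Since $|g| \le 1$ we have $\|\mu\|_{M(X)} \le \lambda(\ext X) = 1 = \|\Phi\|_{B_t^*}$, while the reverse inequality $\|\Phi\|_{B_t^*} \le \|\mu\|_{M(X)}$ follows at once from the integral representation. The principal obstacle is the localization $\ext S \subset E(\bT \times \ext X)$, which combines Milman's theorem (applied to $S = \ol{\conv}(E(\bT \times X))$) with the Choquet-boundary content of Lemma~\ref{L:extCh}; once this is secured, the disintegration step and the construction of $\mu$ are routine.
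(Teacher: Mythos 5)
Your proposal is correct and follows essentially the same route as the paper: Choquet's theorem on $S=\ol{b_1(B_t^*)}$, the localization $\ext S\subset E(\bT\times X)$, then Lemmas \ref{L:extCh} and \ref{L:ChBt} to place the relevant points in $\Ch B_t\subset \ext X$, and finally pushing the representing measure down to $X$ with the unimodular factor absorbed. The only differences are cosmetic: you reprove the localization via Hahn--Banach plus Milman's theorem where the paper cites Dunford--Schwartz, and you absorb the circle variable by disintegration where the paper uses the adjoint of the isometry $(Lf)(w,\xi)=wf(\xi)$; both substitutions are valid and yield the same measure.
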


\begin{proof}
We may assume that $\|\Phi\|_{B_t^*}=1$. By the classical Choquet theorem (recall here that $S$ is a compact metrizable space), we can find a probability measure $P$ on $\ext(S)$ such that
\[
\Phi(b)=\int_{\ext(S)}\widehat{b}(u) \,dP(u) \qforal b\in B_t.
\]
Note that
\[
\ext(S)\subset E(\bT\times X)
\]
by \cite[Lemma 6, p.441]{DunSch}. 
In turn, Lemma \ref{L:extCh} implies that $E^{-1}(\ext(S))\subset \bT\times \Ch B_t$.
Let $\nu$ be the probability measure on $E^{-1}(\ext S)$ defined as
\[
\int_{\bT\times \Ch B_t}f(w,\xi) \,d\nu=\int_{\ext(S)}(f\circ E^{-1})(we(\xi)) \,dP
\]
for all $f\in \rC(\bT\times \Ch B_t).$ 

Now consider the isometric map $L:\rC(\Ch B_t)\to \rC(\bT\times \Ch B_t)$ defined as
\[
(Lf)(w,\xi)=wf(\xi).
\]
The adjoint map $L^*$ is contractive, so that the measure $\mu=L^*\nu$ on $\Ch B_t$ has norm at most $1$ and satisfies
\[
\int_{\Ch B_t} f \,d \mu=\int_{\bT\times \Ch B_t}wf(\xi) \,d\nu(w,\xi) \qforal f\in \rC(\Ch B_t).
\]
We see that $\mu$ is concentrated on $\ext(X)$ by Lemma \ref{L:ChBt}.

Finally, note that 
\[
( \widehat{b} \circ E)(w,\xi)=\widehat{b}(we(\xi))=wb(\xi)
\]
and thus
\begin{align*}
 \int_{\ext(X)}b  \,d\mu&=\int_{\bT\times \Ch B_t}w b(\xi) \,d\nu 
 = \int_{\bT\times \Ch B_t}(\widehat{b}\circ E)  \,d\nu\\
 &= \int_{\ext(S)}\widehat{b} \,dP 
 = \Phi(b)
\end{align*}
for every $b\in B_t$. This equality also guarantees that $\|\mu\|_{M(X)}=1$.
\end{proof}

%%%%%%%%%%%%%%%%%%%%%%%%%%%%%%%%%%%%%%%
%%%%%%%%%%%%%%%%%%%%%%%%%%%%%%%%%%%%%%%
\section{Peak interpolation in $\A_d$} \label{S:approx}

Recall that if $K\subset \bS_d$ is a closed $\AB$--totally null subset, then it is a peak interpolation set for $\AB$ by Theorem \ref{T:bishopAB}. 
The purpose of this section is to establish a version of peak interpolation adapted to multipliers in $\A_d$ on closed $\A_d$--totally null sets. 
We will see that on such a subset $K\subset \bS_d$, for every continuous function $f\in \rC(K)$  and every $\ep>0$,
there is a multiplier $\phi\in \A_d$ such that $\phi|_K=f$, $|\phi(\zeta)|<\|f\|_{K}$ for every 
$\zeta\in \bS_d\setminus K$ and $\|\phi\|_{\A_d}\leq (1+\ep)\|f\|_K$. 
The upper bound on the multiplier norm is the key difference with peak interpolation in $\AB$, and is the most difficult property to achieve. 
Of course, this difficulty is a reflection of the fact that $\A_d$ is not a uniform algebra, 
in the sense that $\|\phi\|_{\A_d}$ and $\|\phi\|_{\infty}$ are not comparable.

Our strategy is based on a general result of Bishop \cite{bishop}. 
The version we present here isn't the standard statement, 
but it can be extracted from the proof of Theorem 10.3.1 in \cite{Rudin}. 
We provide the details for the reader's convenience.  
First, we need to establish the existence of certain concave functions so that we may apply Theorem \ref{T:hustad}. 
The following is the first step in that direction. 

Let us first set some notation that will be used throughout this section. 
We let $X = \ol{b_1(\A_d^*)}$ be equipped with the weak-$*$ topology. 
Given $\phi\in\A_d$, we let $\widehat{\phi}$ be the canonical image  in $\A_d^{**}$. 
This provides an isometric identification of $\A_d$ with a closed subspace of $C(X)$. 
The unit sphere $\bS_d$ sits homeomorphically inside of $X$ as
\[
  \fS=\{\tau_{\zeta}:\zeta\in \bS_d\} \subset X.
\]
Let $\P_n$ denote the set of polynomials in $\bC[z_1,\dots,z_d]$ of degree at most $n$. 
Given $\ep>0$ and $n\geq 1$, define
\begin{gather*}
 E_{\ep,n} = \{ \Phi \in \ol{b_1(\M_{d*})} : \Phi(q) = \|q\|_{\A_d} = 1  \text{ for some }q\in\P_n \text{ with } \|q\|_\infty \le 1 - \ep \} \\
 E_\ep = \{ \Phi \in \ol{b_1(\M_{d*})} : \Phi(f) = \|f\|_{\A_d} = 1  \text{ for some }f\in\A_d \text{ with } \|f\|_\infty \le 1 - \ep \} \\
\intertext{and}
 Z=\bigcup_{\ep>0,n\geq 1} E_{\ep,n}\cup \fS.
\end{gather*}

%%%%%%%%%%%%%%%%%%%%%%%%%%%%%%%%%%%%%%%
\begin{lemma}\label{L:En}
The set $E_{\ep,n}$ is norm compact for every $\ep>0$ and $n\geq 1$. 
Moreover, $E_\ep$ lies in the norm closure of $\bigcup_{n\geq 1} E_{\ep',n}$ for any $\ep'<\ep$. In particular
\[
 \sup_{z\in Z} |\widehat{\phi}(z)| = \|\phi\|_{\A_d} \qforal \phi\in \A_d. 
\]
\end{lemma}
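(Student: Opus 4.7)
The overall strategy will be to apply Lemma~\ref{L:xye0}---which represents every norming functional of a multiplier $f$ with $\|f\|_\infty<\|f\|_{\A_d}$ as $[\xi(f\xi)^*]$ for a unit vector $\xi$ in the finite-dimensional subspace $N_f:=\ker(I-M_f^*M_f)\subset H^2_d$---together with operator-norm continuity of $f\mapsto M_f^*M_f$ on $\A_d$ and standard spectral perturbation at the isolated eigenvalue $1$.

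For part~(i), I would take a sequence $\{\Phi_k\}\subset E_{\ep,n}$ and use Lemma~\ref{L:xye0} to write $\Phi_k=[\xi_k(q_k\xi_k)^*]$ with $q_k\in\P_n$ satisfying $\|q_k\|_{\A_d}=1$ and $\|q_k\|_\infty\leq 1-\ep$, and $\xi_k\in N_{q_k}$ a unit vector. Since $\P_n$ is finite-dimensional, I pass to a subsequence with $q_k\to q$ in $\A_d$-norm; then $M_{q_k}^*M_{q_k}\to M_q^*M_q$ in operator norm, so $(I-M_q^*M_q)\xi_k=(M_{q_k}^*M_{q_k}-M_q^*M_q)\xi_k\to 0$. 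Because $(I-M_q^*M_q)$ is bounded below on $N_q^\perp$ (again by Lemma~\ref{L:xye0}), the $N_q^\perp$-component of $\xi_k$ vanishes in the limit, while the $N_q$-component lies in a fixed finite-dimensional space and subconverges to some $\xi\in N_q$ with $\|\xi\|=1$. Hence $\Phi_k\to[\xi(q\xi)^*]\in E_{\ep,n}$ in the norm of $\M_{d*}$.

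For part~(ii), given $\Phi=[\xi(f\xi)^*]\in E_\ep$, I would take polynomials $p_n\to f$ in $\A_d$-norm and normalize $\tilde p_n:=p_n/\|p_n\|_{\A_d}$, so that $\|\tilde p_n\|_{\A_d}=1$ and $\|\tilde p_n\|_\infty\leq 1-\ep'$ for $n$ large. With $A_n:=M_{\tilde p_n}^*M_{\tilde p_n}\to A:=M_f^*M_f$ in operator norm and $1$ isolated in $\sigma(A)$, the Riesz spectral projection $P_n$ of $A_n$ for a small circle around $1$ converges in operator norm to the projection onto $N_f$; then $\xi_n:=P_n\xi/\|P_n\xi\|\to\xi$, and $\Phi_n:=[\xi_n(\tilde p_n\xi_n)^*]\in E_{\ep',\deg \tilde p_n}$ should approximate $\Phi$ in the norm of $\M_{d*}$. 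The main obstacle here is that if the eigenvalue $1$ of $A$ has multiplicity $\dim N_f>1$, it may split under perturbation into a cluster of eigenvalues of $A_n$ lying strictly below $1$, so $P_n$ projects onto a strict superspace of $N_{\tilde p_n}=\ker(I-A_n)$ and the vector $\xi_n$ may escape $N_{\tilde p_n}$; in that case one needs an extra perturbation of $\tilde p_n$ within the cluster's invariant subspace to force the top eigenspace of $A_n$ to contain a unit vector close to $\xi$.

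Part~(iii) will then follow immediately: if $\|\phi\|_\infty=\|\phi\|_{\A_d}$, the equality $\sup_{z\in Z}|\widehat{\phi}(z)|=\|\phi\|_{\A_d}$ is immediate from $\fS\subset Z$. Otherwise, after normalizing $\|\phi\|_{\A_d}=1$, Lemma~\ref{L:xye0} produces a norming functional $\Phi\in E_\ep$, which by (ii) is the norm limit of elements of $\bigcup_n E_{\ep',n}\subset Z$, giving the desired equality. (Note that (iii) can also be obtained directly, bypassing (ii): applying Lemma~\ref{L:xye0} to $\tilde p_n$ produces a norming functional $\Phi_n\in E_{\ep',\deg \tilde p_n}\subset Z$ with $\Phi_n(\tilde p_n)=1$, whence $|\Phi_n(\phi)|\geq 1-\|\phi-\tilde p_n\|_{\A_d}\to 1=\|\phi\|_{\A_d}$.)
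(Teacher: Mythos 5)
Your argument for the norm compactness of $E_{\ep,n}$ is correct and is essentially the paper's: both pass to a limit $q$ of the polynomials $q_k$ inside the finite-dimensional space $\P_n$ and then use the spectral gap of $M_q^*M_q$ below the isolated eigenvalue $1$ to force the norming vectors $\xi_k$ into $N_q$; you phrase this via bounded-belowness of $I-M_q^*M_q$ on $N_q^\perp$, while the paper reruns the Cauchy--Schwarz estimate from the proof of Lemma~\ref{L:xye0}, and the two are interchangeable. Your direct argument for the final statement (apply Lemma~\ref{L:xye0} to the normalized approximating polynomials $\tilde p_n$ to obtain norming functionals $\Phi_n\in E_{\ep',\deg p_n}\subset Z$ with $|\Phi_n(\phi)|\geq 1-\|\phi-\tilde p_n\|_{\A_d}\to 1$) is also valid, and it bypasses the second assertion of the lemma entirely, whereas the paper deduces that statement from the second assertion.

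The gap is in the second assertion itself. As you acknowledge, your Riesz-projection argument only yields that the spectral projection $P_n$ of $A_n=M_{\tilde p_n}^*M_{\tilde p_n}$ for the cluster of spectrum near $1$ converges to the projection onto $N_f$; the eigenspace $N_{\tilde p_n}=\ker(I-A_n)$ may a priori be a proper subspace of $\ran P_n$ that nearly misses $\xi$, and the proposed ``extra perturbation of $\tilde p_n$ within the cluster's invariant subspace'' is not carried out (nor is it clear it can be, since you are only allowed to perturb by polynomials, not by arbitrary operators). So the inclusion of $E_\ep$ in the norm closure of $\bigcup_{n\geq 1}E_{\ep',n}$ is not established by your proposal. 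For comparison, the paper does not use Riesz projections or any corrective perturbation: it asserts that $\|P_{N_{q_n}}\xi\|_{H^2_d}\to 1$ by repeating the computation of Lemma~\ref{L:xye0} with $q_n$ in place of $f$, starting from $\ip{M_{q_n}\xi,f\xi}\to 1$. Note that this computation requires the quantities $r_n=\|M_{q_n}|_{N_{q_n}^\perp}\|$ to stay bounded away from $1$, which is precisely the uniformity your splitting scenario questions (it is automatic when $\dim N_f=1$, the only case known to occur, as remarked after Theorem~\ref{T:extMd}). In short, your concern pinpoints the one step the paper treats tersely, but your proposal leaves that step open rather than closing it, so the middle claim of the lemma remains unproved as written.
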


\begin{proof}
%Let $\{\Phi_{\alpha}\}_{\alpha}$ be a net in $E_{\ep,n}$ converging weak-$*$ to $\Phi\in \ol{b_1(\A_d^*)}$.
%Accordingly, we have a net $\{q_\alpha\}_{\alpha}$ of polynomials of degree at most $n$ such that 
%\[
% \Phi_\alpha(q_\alpha) = \|q_\alpha\|_{\A_d} = 1 \qand  \|q_\alpha\|_\infty \le 1 - \ep.
%\]
%Now, $\P_n$ is a finite dimensional subspace of $\A_d$, and thus upon passing to a subnet,
%we may assume that $\{q_\alpha\}_{\alpha}$ converges to a polynomial $q\in \P_n$ in the norm of $\A_d$. 
%We necessarily have that $\|q\|_{\A_d}=1$ and $\|q\|_{\infty}\leq 1-\ep$. 
%Finally,
%\[
% \Phi(q) = \lim_\alpha \Phi_\alpha(q) = \lim_\alpha (\Phi_\alpha(q_\alpha) + \Phi_\alpha(q-q_\alpha)) = 1.
%\]
%It follows that $\Phi$ belongs to $E_{\ep,n}$ and thus $E_{\ep,n}$ is weak-$*$ closed.

First we establish norm compactness. 
Let $\{\Phi_{\alpha}\}_{\alpha}\subset E_{\ep,n}$. 
Correspondingly, we have a net $\{q_{\alpha}\}_{\alpha}\subset \P_n$ of polynomials of degree at most $n$ such that 
\[
 \Phi_\alpha(q_\alpha) = \|q_\alpha\|_{\A_d} = 1 \qand  \|q_\alpha\|_\infty \le 1 - \ep.
\]

Now, $\P_n$ is a finite dimensional subspace of $\A_d$, and thus upon passing to a subnet, we may assume that $\{q_{\alpha}\}_{\alpha}$ converges to $q\in\P_n$ in the norm of $\A_d$. 
Therefore, $\|q\|_{\A_d}=1$ and $\|q\|_{\infty}\leq 1-\ep$.
In light of Lemma~\ref{L:xye0}, for every $\alpha$ there is a unit vector $\xi_\alpha \in H^2_d$ so that 
\[
 \|q_\alpha \xi _\alpha\|_{H^2_d} = 1 \qand \Phi_\alpha = [\xi_\alpha (q_\alpha \xi_\alpha)^*] .
\]
Following the notation from that lemma,  $N = \ker(I-M_q^*M_q)$ is finite dimensional and the 
partial isometry $V$ from the polar decomposition of $M_q$ is isometric on $N$.
A similar argument as the one used in the proof of Lemma~\ref{L:xye0} establishes that 
\[
 \lim_\alpha \| P_N \xi_\alpha \|_{H^2_d} = \lim_\alpha \| P_{VN} q_\alpha \xi_\alpha \|_{H^2_d} = 1 .
\]
Upon passing to a further subnet we may thus suppose that $\{\xi_\alpha\}_{\alpha}$
converges to a unit vector $\xi\in N$ in the norm of $H^2_d$ (since $N$ is finite dimensional).
We deduce that $\{\Phi_\alpha\}_{\alpha}$ converges to $\Phi = [\xi(q\xi)^*]$ in norm, whence $E_{\ep,n}$ is norm compact.

Suppose now that $\Phi\in E_\ep$ and fix $f\in\A_d$ such that
\[  \Phi(f) = \|f\|_{\A_d} = 1 \qand   \|f\|_\infty \le 1 - \ep . \]
By Lemma~\ref{L:xye0}, there is a unit vector $\xi \in N = \ker (I-M_f^*M_f)$ so that $\Phi=[\xi\eta^*]$ where $\eta = f\xi$.
Given $\ep'<\ep$, the density of polynomials in $\A_d$ guarantees the existence of a sequence of polynomials $\{q_n\}_n$ converging to $f$ in $\A_d$
with $\|q_n\|_{\A_d}=1$ and $\|q_n\|_\infty < 1 - \ep'$.
Let $N_n = \ker(I-M_{q_n}^*M_{q_n})$ and let $V_n$ be the partial isometry in the polar decomposition of $M_{q_n}$. 
Again, the proof of Lemma~\ref{L:xye0} shows that 
\[
 \lim_{n\to\infty} \| P_{N_n}\xi \|_{H^2_d} = 1 = \lim_{n\to\infty} \| P_{V_nN_n} \eta \|_{H^2_d} .
\]
Hence, the sequence  $\{P_{N_n}\xi\}_n$  converges to $\xi$ in the norm of $H^2_d$. Upon renormalization, we can find unit vectors $\xi_n\in N_n$ such that the sequence $\{\xi_n\}_n$  converges to $\xi$ in the norm of $H^2_d$. It follows that the sequence $\{q_n\xi_n\}_n$ converge to $\eta$ in the norm of $H^2_d$.
Then, $\Phi_n = [\xi_n (q_n \xi_n)^*]$ belong to $E_{\ep',\deg q_n}$ and the sequence $\{\Phi_n\}_n$ converges to $\Phi$ in norm. We conclude that $E_\ep$ lies in the norm closure of $\bigcup_{n\geq 1} E_{\ep',n}$.

Finally consider the last statement. The fact that $Z\subset X$ trivially implies that
\[
\sup_{z\in Z}|\widehat{\phi}(z)|\leq\|\phi\|_{\A_d}\qforal \phi\in \A_d.
\]
Conversely, suppose that $\phi\in \A_d$ with $\|\phi\|_{\A_d}=1$. 
If $\|\phi\|_{\infty}=1$, then
\[
\|\phi\|_{\A_d}=\sup_{\zeta\in \bS_d} |\widehat{\phi}(\tau_{\zeta})|\leq \sup_{z\in Z}|\widehat{\phi}(z)|.
\]
If $\|\phi\|_\infty\leq 1-\ep$ for some $\ep>0$, then by Lemma~\ref{L:xye0} there is $\Phi\in E_\ep$ with $\Phi(f)=1$.
Hence, by the previous paragraph we have that $\sup_{z\in Z}|\widehat{\phi}(z)|\geq \|\phi\|_{\A_d}$ and the proof is complete.
\end{proof}

We remark here that the weak-$*$ closure of $E_\ep$ is quite large.
Indeed, by \cite{Arv98,DP99}, the polynomial
\[ p_n = \frac{(z_1z_2)^n}{\|(z_1z_2)^n\|_{\A_d}} 
\]
satisfies $\|p_n\|_\infty \approx (\pi n)^{-1/4}$ for $n\ge1.$
Hence, for any $\ep<1$ there is $\Phi_n\in E_{\ep, 2n}$ such that $\Phi_n(p_n)=1$ provided that $n$ is chosen sufficiently large.
In fact, it is easy to see that there is a unique functional $\Psi\in \ol{b_1(\A_d^*)}$ with $\Psi(p_n)=1$, namely $\Phi_n=[1\, p_n^*]$.
Since the sequence $\{\Phi_n\}_n$ converges weak-$*$ to $0$, it follows that $0$ belongs to the weak-$*$ closure of  $E_\ep$.
Moreover, since the conformal automorphisms of the ball are unitarily implemented \cite[Theorem~9.2]{DRS11} on $\A_d$, the corresponding adjoint map
takes $E_{\ep}$ onto itself. It is also clear that $E_{\ep}$ is invariant under multiplication by scalars of modulus one.
Therefore the set $\{\lambda\delta_z: z\in \bB_d, \lambda\in \bT\}$ lies in the weak-$*$ closure of $E_{\ep}$.

We can now guarantee the existence of concave functions that will be needed in the proof of Theorem \ref{T:abstractBishop}.
We actually obtain more than we will require.

%%%%%%%%%%%%%%%%%%%%%%%%%%%%%%%%%%%%%%%
\begin{lemma}\label{L:concave}
Let $K\subset \bS_d$ be a closed subset and  $L=\{\lambda \tau_{\zeta}:\lambda\in \bT, \zeta\in K\}.$
Let $\delta>0$ and $S\subset \fS$ be a compact subset that is disjoint from $L$.
Then, for every $\ep>0$ and $n\geq 1$ there exists a strictly positive concave function $t\in \rC(X)$ with $\|t\|_X=1$  such that $t=1$ on $L$  and $t<2\delta$ on 
$E_{\ep,n}\cup S$.
\end{lemma}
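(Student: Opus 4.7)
The plan is to produce $t$ in the form $t = 1 - ch$ for some $c\in(0,1)$ and a continuous convex function $h$ on $X$ that satisfies $0\le h\le 1$, vanishes on the weak-$*$ closed convex hull $\ol{\conv(L)}$, and exceeds $1-\delta$ on $M:=E_{\ep,n}\cup S$.

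The starting point is to identify $\ol{\conv(L)}$ concretely with the set of integration functionals on $\A_d$ arising from signed regular Borel measures on $K$ of total variation at most $1$. The direct inclusion is clear and the reverse follows by approximating such measures weak-$*$ by convex combinations of unimodular point masses on $K$, using $0 = \tfrac{1}{2}\tau_\zeta + \tfrac{1}{2}(-\tau_\zeta)\in\conv(L)$ to accommodate measures of norm strictly less than $1$. With this description in hand, the disjointness $\ol{\conv(L)}\cap M = \varnothing$ drops out: for $\Phi\in E_{\ep,n}$ with witness polynomial $q$, any $\nu\in\ol{\conv(L)}$ satisfies $|\nu(q)|\le\|q\|_\infty\le 1-\ep < 1 = \Phi(q)$; and for $\tau_\zeta\in S$, Proposition~\ref{P:peak_point} furnishes $g\in\A_d$ with $g(\zeta) = \|g\|_{\A_d}=1$ and $|g(\zeta')|<1$ elsewhere, so compactness of $K\subset\bS_d\setminus\{\zeta\}$ forces $\|g\|_K<1$, whence $\nu(g^n)\to 0$ by dominated convergence while $\tau_\zeta(g^n)=1$.

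Next, for each $\Phi_0\in M$ I would construct a weak-$*$ continuous affine function $a_{\Phi_0}:X\to\bR$ with $a_{\Phi_0}(\Phi_0) = 1$, $\sup_X a_{\Phi_0}\le 1$, and $a_{\Phi_0}\le 0$ on $\ol{\conv(L)}$. The recipe is
\[
a_{\Phi_0}(\Psi) = \frac{\re\Psi(\psi) - \|\psi\|_K}{1 - \|\psi\|_K},
\]
for a suitable $\psi\in\A_d$ satisfying $\|\psi\|_{\A_d} = 1$, $\Phi_0(\psi) = 1$, and $\|\psi\|_K<1$. When $\Phi_0\in E_{\ep,n}$ I take $\psi = q$, the witness polynomial (for which $\|\psi\|_K\le\|\psi\|_\infty\le 1-\ep$); when $\Phi_0 = \tau_\zeta\in S$ I take $\psi = g^n$ for $n$ large enough that $\|g^n\|_K\le\|g\|_K^n$ is small, noting $\|g^n\|_{\A_d}\le 1$ since $\A_d$ is a Banach algebra. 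A direct computation then confirms all three required properties, with the bound $\sup_X a_{\Phi_0}\le 1$ coming from $\sup_\Psi\re\Psi(\psi)\le\|\psi\|_{\A_d}=1$.

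Finally, using that $M$ is weak-$*$ compact (Lemma~\ref{L:En} gives norm-compactness of $E_{\ep,n}$, hence weak-$*$ compactness inside $X$, and $S$ is weak-$*$ compact by hypothesis) together with the weak-$*$ continuity of each $a_{\Phi_0}$, I would extract a finite subcover of $M$ by open sets $\{a_{\Phi_i}>1-\delta\}$. The function $h := \max\{0, a_{\Phi_1},\ldots,a_{\Phi_N}\}$ is then continuous, convex, takes values in $[0,1]$ on $X$, vanishes on $\ol{\conv(L)}$, and exceeds $1-\delta$ on $M$; setting $t := 1 - ch$ for any $c$ in the nonempty interval $\bigl((1-2\delta)/(1-\delta),\,1\bigr)$ yields a continuous concave $t$ that is strictly positive (since $ch\le c<1$), equals $1$ on $L\subset\ol{\conv(L)}$ with $\|t\|_X=1$, and satisfies $t < 1 - c(1-\delta) < 2\delta$ on $M$. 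The principal technical obstacle is the normalization in the previous step: the witness $\psi$ has to be rescaled so that both the supremum over $X$ and the supremum over $\ol{\conv(L)}$ sit in the right relative position, and this is what allows the combined function $1 - ch$ to be squeezed into the prescribed interval $(0,2\delta)$ on $M$.
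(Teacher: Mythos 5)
Your proof is correct and is essentially the paper's argument in dual form: for each point of $E_{\ep,n}\cup S$ you separate it from $L$ by a weak-$*$ continuous affine function built from a witness in $\A_d$ (the norm-attaining polynomial $q$, respectively a function peaking at $\zeta$), use compactness to extract finitely many, and combine them into a single concave function equal to $1$ on $L$ and less than $2\delta$ on the bad set, exactly as the paper does with $\min\{\ep^{-1}(1-\re \widehat{q}),1\}$ and $\min\{c(1-\re\widehat{\phi}),1\}$ for $\phi(z)=\ip{z,\zeta}$, followed by $t=\delta+(1-\delta)\min_j(\cdot)$. The only minor variations are that you bound the witnesses on $L$ via the identification of $\ol{\conv}(L)$ with measures on $K$ (the paper instead invokes Theorem \ref{T:singfunct} on $\ol{b_1(\fW_*)}$), and you use Proposition \ref{P:peak_point} where the paper simply takes the coordinate functional $\ip{z,\zeta}$ for points of $S$.
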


\begin{proof}
Let $\Phi\in E_{\ep,n}$ and let $q\in \P_n$ such that $\Phi(q)=1=\|q\|_{\A_d}$ and $\|q\|_{\infty}\leq 1-\ep$. Define
\[
F=1-\re \widehat{q}
\]
which is a continuous non-negative real affine function on $X$. If $\xi\in \ol{b_1(\fW_*)}$, then by Theorem \ref{T:singfunct}, we find
\[
|\xi(q)|\leq \|q\|_{\infty}\leq 1-\ep.
\]
Thus
\[
F(\xi)\geq \ep \qforal \xi\in \ol{b_1(\fW_*)} .
\]
Since $L\subset \ol{b_1(\fW_*)}$, we have $F\geq \ep$ on $L$.
Moreover $F(\Phi)=0.$
The function 
\[
F_\Phi =\min\{\ep^{-1} F, 1\}
\]
is concave and continuous on $X$, with the property that $0\leq F_{\Phi}\leq 1$, $F_{\Phi}|_L=1$ and $F_{\Phi}(\Phi)=0$. 
Since $E_{\ep,n}$ is compact by Lemma \ref{L:En}, 
there are finitely many $\Phi_j\in E_{\ep,n}$ with the property that the sets 
\[
U_j = \{\xi \in X : F_{\Phi_j}(\xi) < \delta \}
\]
form an open cover of $E_{\ep,n}$.
Let 
\[
 t_1 = \delta + (1-\delta)\min_{j}F_{\Phi_j} ,
\]
which is continuous, concave, strictly positive, and satisfies
\[
 \|t\|_X=1,\quad t|_{E_{\ep,n}} \le 2\delta \qand t|_L = 1 .
\]

We proceed similarly for the set $S$. Let $\tau_{\zeta} \in S$ and put $\phi(z)=\langle z,\zeta\rangle_{\bC^d}$. 
Then, $\phi\in \A_d$ and  $\|\phi\|_{\A_d}=1$. 
The function 
\[
G=1-\re \widehat{\phi}
\]
is a continuous non-negative real affine function on $X$ with the property that $G(\tau_{\zeta})=0$ and 
$G(\lambda \tau_{\zeta'})>0$ for every $\zeta'\in \bS_d,\zeta'\neq \zeta$ and $\lambda \in \bT$. 
Since $L$ is compact and disjoint from $S$ there exists $c>0$ such that  $G|_L \ge 1/c$. 
The function 
\[
G_{\zeta} =\min\{cG, 1\}
\]
is concave and continuous on $X$ with the property that $0\leq G_{\zeta}\leq 1$,
$G_{\zeta}(\tau_{\zeta})=0$ and $G_{\zeta}|L=1$. 
Using the compactness of $S$ and  arguing as in the first paragraph, we find a finite set $\tau_{\zeta_1}, \ldots, \tau_{\zeta_N}\in S$ such that
$t_2 = \delta + (1-\delta)\min_j G_{\zeta_j}$ is continuous, concave, strictly positive, and satisfies
\[ \|t_2\|_X=1, \quad t_2|_{S} \le 2\delta \qand t_2|_L = 1 .\]
Setting $t=\min\{t_1, t_2\}$ finishes the proof.
\end{proof}

Here is our modification of Bishop's theorem. 
The proof is closely modelled on that of Theorem 10.3.1 in \cite{Rudin}.

%%%%%%%%%%%%%%%%%%%%%%%%%%%%%%%%%%%%%%%
\begin{theorem}\label{T:abstractBishop}
Let $X=\ol{b_1(\A_d^*)}$ equipped with the weak-$*$ topology. 
Let $K\subset \bS_d$ be a closed subset and put $\fK=\{\tau_{\zeta}:\zeta\in K\}\subset X$. 
Assume that the restriction to $\fK$ of  the closed unit ball of
\[
 \{\widehat{\phi}/t:\phi\in \A_d\}
\]
is dense in the closed unit ball of $\rC(\fK)$
whenever $t\in \rC(X)$ is a strictly positive concave function that is constant on the set
\[
 \fL=\{\lambda\tau_{\zeta}:\zeta\in K, \lambda\in \bT\}.
\]
Then, for every $f\in \rC(K)$ and every $\ep>0$ there exists $\phi\in \A_d$ such that 
\begin{enumerate}[label=\normalfont{(\roman*)}]
\item  $\phi|_K=f$, 
\item $|\phi(\zeta)|< \|f\|_{K}$ for every $\zeta \in \bS_d\setminus K$, and
\item $\|\phi\|_{\A_d}\leq (1+\ep)\|f\|_K$.
\end{enumerate}
\end{theorem}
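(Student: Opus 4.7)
The plan is to adapt the Bishop peak-interpolation iteration from the proof of Rudin's Theorem 10.3.1 to our setting, combining the density hypothesis with the concave-function machinery of Lemma \ref{L:concave}. We may assume $\|f\|_K = 1$ by scaling. The workhorse is a one-step approximation lemma: for $g \in \rC(K)$ with $\|g\|_K \leq c$, a compact $Q \subset \bS_d \setminus K$, and tolerances $\delta, \eta > 0$, one can find $\phi \in \A_d$ with $\|\phi\|_{\A_d} \leq c$, $\|g - \phi|_K\|_K < \delta$, and $|\phi(\zeta)| \leq \eta c$ for every $\zeta \in Q$. Indeed, Lemma \ref{L:concave} produces a strictly positive concave $t \in \rC(X)$ with $t = c$ on $\fL$, $\|t\|_X = c$, and $t \leq \eta c$ on the compact set $\{\tau_\zeta : \zeta \in Q\} \subset \fS \setminus \fL$; applying the density hypothesis to $c^{-1} g$ then supplies the required $\phi$, since the pointwise constraint $|\widehat{\phi}| \leq t$ on $X$ translates directly into the desired bounds on both the $\A_d$-norm and on $Q$.

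Fixing a summable sequence $\delta_n > 0$ with $\sum_n \delta_n < \ep/4$, a compact exhaustion $Q_0 \subset Q_1 \subset \cdots$ of $\bS_d \setminus K$, and $\delta_{-1} := 1$, we iteratively apply the one-step lemma to the residual $g_n = f - (\phi_0 + \cdots + \phi_{n-1})|_K$ (so $\|g_n\|_K \leq \delta_{n-1}$), with parameters $Q = Q_n$, $\delta = \delta_n$, and $\eta = 2^{-n}$. By summability, $\phi = \sum_{n \geq 0} \phi_n$ converges in $\A_d$ with $\|\phi\|_{\A_d} \leq 1 + \sum_{n \geq 1}\delta_{n-1} \leq 1 + \ep$, yielding (iii); telescoping on $K$ yields $\phi|_K = f$, giving (i).

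The strict inequality (ii) is the main obstacle, since the naive bound $|\phi(\zeta)| \leq |\phi_0(\zeta)| + \sum_{n\geq 1}\|\phi_n\|_\infty$ is only $\leq 1 + \ep$. The resolution is to modify the iteration in the Bishop style: first construct, via an analogous iteration using Lemma \ref{L:concave}, a peak function $h \in \A_d$ for $K$ with $h|_K = 1$, $|h(\zeta)| < 1$ for $\zeta \in \bS_d \setminus K$, and $\|h\|_{\A_d} = 1$; then replace the iterate by
\[
 \phi = \phi_0 + \sum_{n \geq 1} h^{a_n}(\phi_n - \phi_{n-1})
\]
with integers $a_n \to \infty$ chosen so the series converges in $\A_d$ with norm at most $1 + \ep$. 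Since $h^{a_n}|_K = 1$, interpolation is preserved, while $|h(\zeta)|^{a_n} < 1$ for $\zeta \in \bS_d \setminus K$ combined with sufficiently fast decay of the coefficient norms enforces (ii). The subtlest technical point is producing $h$ itself with both $\|h\|_{\A_d} = 1$ and strict inequality off $K$; this reduces to solving the same peak-interpolation problem for the constant function $1$ on $K$, and requires careful exploitation of Lemma \ref{L:concave} to construct concave functions equal to $1$ on $\fL$ yet strictly less than $1$ on arbitrary prescribed compact subsets of $\fS \setminus \fL$, along which the iteration can be threaded.
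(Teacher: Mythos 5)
Your route to (i) and (iii) is essentially sound: the one-step approximation lemma does follow from Lemma \ref{L:concave} together with the density hypothesis, and the telescoping series then gives exact interpolation on $K$ with norm at most $1+\ep$ (the paper proceeds slightly differently, using Lemma \ref{L:open_ball} to get \emph{exact} interpolation $\psi_n|_K=(1+\ep)r_nf$ at each step, but your approximate-step-plus-telescoping variant is fine for those two conclusions). The genuine gap is in (ii), and it is twofold. First, the peak multiplier $h$ with $h|_K=1$, $|h(\zeta)|<1$ for $\zeta\in\bS_d\setminus K$ \emph{and} $\|h\|_{\A_d}=1$ is asserted rather than constructed. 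Producing such an $h$ with multiplier norm exactly $1$ (not $1+\ep$) under the theorem's hypotheses is essentially as hard as the theorem itself: Proposition \ref{P:peak_point} does it for a single point, but nothing in the paper supplies it for a general $K$, and the paper's proof deliberately avoids needing any such function. Your proposed ``analogous iteration'' for building $h$ runs into exactly the difficulty it is meant to resolve, namely keeping the modulus strictly below $1$ off $K$ while keeping the multiplier norm from exceeding $1$.

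Second, even granting $h$, the damping scheme does not give $|\phi(\zeta)|<\|f\|_K$ at every fixed $\zeta\in\bS_d\setminus K$. Your one-step lemma only yields $\|\phi_0\|_{\A_d}\le\|f\|_K$, so $|\phi_0(\zeta)|$ may already equal $\|f\|_K$ at points off $K$; and the exponents $a_n$ are chosen once and for all, while the damping factors $|h(\zeta)|^{a_n}$ are not uniformly small as $\zeta$ approaches $K$, so near $K$ the corrections can push the modulus up to roughly $\|f\|_K+\sum_n\delta_n$ (the only a priori bound being $\|\phi\|_\infty\le\|\phi\|_{\A_d}\le(1+\ep)\|f\|_K$). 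What is needed, and what the paper's proof supplies, is a quantitative invariant carried through the induction: with $r_n=((1+\ep)2^n)^{-1}$, one maintains $|\widehat{\phi_n}|<1-2r_n$ on all of $\fS$, by splitting the sphere into a shrinking neighborhood $V_n$ of $\fL$ — where the previous partial sum has slack $\le 1-2(1+\ep)r_n$ because it is linear and interpolates $(1-2(1+\ep)r_n)f$ on $K$ — and its complement, where Lemma \ref{L:concave} forces the new correction to be smaller than $r_n$. The strict inequality at a fixed $\zeta\notin K$ then follows since $\tau_\zeta$ eventually lies outside $V_n$ and the tail of the corrections at that point sums to at most $2r_N$. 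Without some such two-zone bookkeeping your argument establishes only (i) and (iii).
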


\begin{proof}
Fix $f\in \rC(K)$ with $\|f\|_K=1$. Define $F\in \rC(\fK)$ as $F(\tau_{\zeta})=f(\zeta)$.
For each $n\geq 1$, put
\[
r_n=\frac{1}{(1+\ep)2^n}.
\]
We will construct a sequence of functions $\{\phi_n\}_n\subset \A_d$ such that for every $n\geq 2$, 
\begin{alignat}{2}
 \phi_n & =(1-2(1+\ep)r_n)f &\qquad& \text{ on } K, \label{e-bishK} \\
 |\widehat{\phi_n}| &< 1-2 r_n  &\qquad&\text{ on } \fS, \label{e-bishS}\\
 \|\phi_n-\phi_{n-1}\|_{\A_d} &<(1+5\ep/4) r_n . && \label{e-bishdiffX}
\end{alignat}

Put $\phi_1=0$ which trivially satisfies (\ref{e-bishK}) and (\ref{e-bishS}) . 
Let $n\geq 1$ and assume that the function $\phi_n\in \A_d$ has been constructed and satisfies (\ref{e-bishK}) and (\ref{e-bishS}). 
Since $\widehat{\phi_n}$ is linear, we see that
\[
 |\widehat{\phi_n}|\leq 1-2(1+\ep)r_n \qquad\text{ on } \fL
\]
as a consequence of (\ref{e-bishK}). 
Now note that since $\A_d$ is separable, the space $X$ is metrizable. 
Let $\rho$ be a metric on $X$ which induces the weak-$*$ topology.
Since $\fL$ is compact, there exists $0<\delta_n<1/n$ such that 
\begin{equation} \label{e-bishV}
 |\widehat{\phi_n}|<1-2(1+3\ep/4)r_n \qquad\text{ on } V_n ,
\end{equation}
where $V_n=\{\xi\in X: \rho(\xi, \fL)< \delta_n\}.$
By virtue of Lemma \ref{L:concave}, there exists a strictly positive concave function $t_n\in \rC(X)$ such that 
\begin{alignat*}{2}
 \|t_n\|_{X} &\leq (1+5\ep/4)r_n ,&&\\
 t_n &= (1+5\ep/4) r_n &\qquad&\text{ on } \fL ,\\
\intertext{and}
 |t_n| &< r_n &\qquad& \text{ on }\fS\setminus V_n.
\end{alignat*}

Consider  the function 
\[
 F_n=(1+\ep)r_n F\in \rC(\fK)
\]
which satisfies $\|F_n/t_n\|_{\fK}<1$.
By the density assumption along with Lemma \ref{L:open_ball}, we see that there exists $\psi_n\in \A_d$ such that 
\[
 \|\widehat{\psi_n}/t_n\|_{X}< 1
\]
and $\widehat{\psi_n}|\fK=(1+\ep) r_n F$, which is equivalent to
\begin{align}
 \psi_n|K &= (1+\ep)r_n f. \label{e-psiK} \\
\intertext{We infer that}
 \|\psi_n\|_{\A_d} &< (1+5\ep/4) r_n \label{e-psiX} \\
\intertext{and}
 |\widehat{\psi}_n| &< r_n \qquad  \text{on }  \fS \setminus V_n. \label{e-psiV}
\end{align}
Define $\phi_{n+1}=\phi_n+\psi_n\in \A_d$.
Using (\ref{e-bishK}) and (\ref{e-psiK}), we obtain that
\[
 \phi_{n+1} = ( 1-(1+\ep)r_n)f = (1-2(1+\ep)r_{n+1})f \qquad\text{ on } K .
\]
Hence (\ref{e-bishK}) holds with $n$ replaced by $n+1$. 
If $\xi\in V_n$, we have
\begin{align*}
 |\widehat{\phi_{n+1}}(\xi)| &<1-2(1+3\ep/4)r_n+(1+5\ep/4)r_n \\&
 < 1- r_n=1-2r_{n+1} 
\end{align*}
by virtue of (\ref{e-bishV}) and (\ref{e-psiX}). If $\xi\in \fS \setminus V_n$ , then we have
\[
 |\widehat{\phi_{n+1}}(\xi)|< 1-2r_n+r_n=1-r_n=1-2r_{n+1}
\]
by virtue of (\ref{e-bishS}) and (\ref{e-psiV}). 
Taken together, these two inequalities show that (\ref{e-bishS}) holds with $n$ replaced by $n+1$.
It is trivial that (\ref{e-bishdiffX}) holds with $n$ replaced by $n+1$ as this is simply a restatement of (\ref{e-psiX}). 
The existence of the sequence $\{\phi_n\}_n$ follows by recursion.

Let $\phi=\lim_n\phi_n$ which belongs to $\A_d$ and satisfies $\|\phi\|_{\A_d}\leq (1+5\ep/4)$ by (\ref{e-bishdiffX}).  
Furthermore, by (\ref{e-bishK}), we have $\phi|_K=f$.  Finally, given $\xi\in \fS \setminus \fL$ 
there exists a positive integer $N$ such that $\xi\in \fS\setminus V_n$ for every $n\geq N$. 
Note now that $\phi_n=\sum_{k=1}^{n-1}\psi_k$ for every $n\geq 2$ and thus
\begin{align*}
 |\widehat{\phi}(\xi)|
 &\leq |\widehat{\phi_{N}}(\xi)| + \sum_{k=N}^\infty |\widehat{\psi_k}(\xi)|
 < 1-2r_N+\sum_{k=N}^\infty r_n=1
\end{align*}
by virtue of (\ref{e-bishS}) and (\ref{e-psiV}). 
In particular, $|\phi(\zeta)|<1$ whenever $\zeta\in\bS_d\setminus K$.
\end{proof}

The main task now is to verify the density assumption in the previous theorem.  
The next result is the key tool. Since $X$ is compact and metrizable, the set $\ext(X)$ is a $G_\delta$ by a classical result of Choquet (see also \cite{BishopdeLeeuw}). By virtue of Propositions \ref{P:extremepoints} and \ref{P:extW}, we conclude that both $\ext(\ol{b_1(\M_{d*})})$  and $\ext(\ol{b_1(\fW_*)})$ are  Borel sets.

%%%%%%%%%%%%%%%%%%%%%%%%%%%%%%%%%%%%%%%
\begin{lemma} \label{l:annihilating}
Let $K\subset\bS_d$ be a closed $\A_d$--totally null subset and let $\nu$ be a regular Borel measure on 
$Y = \ext\big(\ol{b_1(\M_{d*})}\big) \cup \fS \subset X$ with the property that $\int_{Y} \widehat{\phi} \,d\nu=0$ for every $\phi\in \A_d$. 
Then $|\nu|(\fK)=0$ where  $\fK=\{\tau_{\zeta}: \zeta\in K\}.$
\end{lemma}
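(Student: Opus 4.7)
The plan is to split $\nu$ according to the decomposition $Y = \ext(\ol{b_1(\M_{d*})}) \,\dot\cup\, \fS$, and then treat the two pieces via $\M_{d*}$ and via measure theory on the sphere respectively. First, I would verify that this union is indeed disjoint: by Proposition~\ref{P:extW}, $\fS\subset \ext(\ol{b_1(\fW_*)})$, and by Proposition~\ref{P:extremepoints}, the two sets $\ext(\ol{b_1(\M_{d*})})$ and $\ext(\ol{b_1(\fW_*)})$ are disjoint. Moreover, both pieces are Borel subsets of $X$ (using Choquet's theorem that $\ext(X)$ is $G_\delta$ together with the splitting from Proposition~\ref{P:extremepoints}), so we can write $\nu = \nu_M + \nu_S$ where $\nu_M$ is the restriction of $\nu$ to $\ext(\ol{b_1(\M_{d*})})$ and $\nu_S$ is the restriction of $\nu$ to $\fS$. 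Since $\fS$ is homeomorphic to $\bS_d$ via $\zeta\mapsto \tau_{\zeta}$, the measure $\nu_S$ corresponds canonically to a regular Borel measure $\mu$ on $\bS_d$ with
\[ \int_{\fS} \widehat\phi\, d\nu_S = \int_{\bS_d} \phi\, d\mu \qforal \phi\in \A_d. \]

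Next, I would define the functionals $\Lambda_M,\Lambda_S\in \A_d^*$ by integration of $\widehat\phi$ against $\nu_M$ and $\nu_S$ respectively, so that the hypothesis becomes $\Lambda_M + \Lambda_S = 0$. The key step is to show that $\Lambda_M$ is $\A_d$--Henkin. Let $\{\phi_n\}_n\subset \A_d$ converge weak-$*$ to $0$ in $\M_d$, so the sequence is bounded in multiplier norm by some $C$. For each $\Phi\in \ext(\ol{b_1(\M_{d*})}) \subset \M_{d*}$, the weak-$*$ convergence gives $\widehat{\phi_n}(\Phi) = \Phi(\phi_n) \to 0$, while $|\widehat{\phi_n}(\Phi)|\leq C\|\Phi\|_{\M_{d*}}\leq C$ uniformly. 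Dominated convergence then yields $\Lambda_M(\phi_n) \to 0$, confirming that $\Lambda_M$ is $\A_d$--Henkin.

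Consequently $\Lambda_S = -\Lambda_M$ is also an $\A_d$--Henkin functional on $\A_d$, which, since $\Lambda_S$ is integration against $\mu$, means precisely that $\mu$ is an $\A_d$--Henkin measure on $\bS_d$. Now Corollary~\ref{C:AdTN}, together with the assumption that $K$ is $\A_d$--totally null, gives $|\mu|(K)=0$, and transporting back through the homeomorphism $\bS_d \simeq \fS$ yields $|\nu_S|(\fK)=0$. Since $\fK \subset \fS$ and $\nu_M$ is supported on the disjoint set $\ext(\ol{b_1(\M_{d*})})$, we also have $|\nu_M|(\fK)=0$, and therefore $|\nu|(\fK)=|\nu_M|(\fK)+|\nu_S|(\fK)=0$.

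The only subtle step is the $\A_d$--Henkin property of $\Lambda_M$; the remaining work is essentially bookkeeping, relying on the structural results of Section~\ref{S:dual} (that $\fS$ lies in the singular part $\fW_*$ of the dual), the Henkin extension theorem (Theorem~\ref{t-genvalskii}), and the characterization of $\A_d$--totally null sets (Corollary~\ref{C:AdTN}).
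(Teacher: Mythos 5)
Your argument is correct and follows essentially the same route as the paper: restrict $\nu$ to $\ext\big(\ol{b_1(\M_{d*})}\big)$ and to $\fS$, use boundedness plus pointwise convergence of $\widehat{\phi_n}$ on $\ol{b_1(\M_{d*})}$ and dominated convergence to see that the first piece gives an $\A_d$--Henkin functional, conclude that the pulled-back measure on $\bS_d$ is $\A_d$--Henkin, and finish with Corollary~\ref{C:AdTN}. The only cosmetic difference is that the paper additionally invokes Theorem~\ref{t-genvalskii} to place the first functional in $\M_{d*}$, whereas you correctly observe that the Henkin property alone suffices.
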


\begin{proof}
We may decompose $\nu$ as a sum $\nu = \nu_a + \nu_s$, 
where $\nu_a$ is the restriction of $\nu$ to $\ext(\ol{b_1(\M_{d*})})$
while $\nu_s$ is the restriction of $\nu$ to $\fS$. 
Define $\Phi_a\in \A_d^*$ as
\[
 \Phi_a(\phi) = \int_{\ext(\ol{b_1(\M_{d*})})} \widehat{\phi} \,d\nu_a\qforal \phi\in\A_d .
\]
We claim that $\Phi_a\in \M_{d*}$. 
Indeed, suppose that $\{\phi_n\}_n\subset \A_d$ is a sequence converging to $0$ in the weak-$*$ topology of $\M_d$. 
Then $\{\widehat{\phi_n}\}_n$ is necessarily bounded on $\ol{b_1(\M_{d*})}$ and satisfies
\[ 
 \lim_{n\to\infty} \widehat{\phi_n}(\Psi) = \lim_{n\to\infty} \Psi(\phi_n) = 0 \qforal \Psi\in \M_{d*}.
\]
We conclude that the sequence $\{\widehat{\phi_n}\}_n$ is bounded and converges pointwise to $0$ on $\ol{b_1(\M_{d*})}$. 
By the Lebesgue dominated convergence theorem,
\[
  \lim_{n\to\infty} \Phi_a(\phi_n) =  \lim_{n\to\infty} \int_{\ext(\ol{b_1(\M_{d*})})} \widehat{\phi_n} \,d\nu_a = 0 .
\]
The claim that $\Phi_a\in\M_{d*}$ now follows from Theorem \ref{t-genvalskii}. 

On the other hand, by assumption on $\nu$ we have 
\[
  0 = \int_{\ext(\ol{b_1(\M_{d*})})} \widehat{\phi}  \,d\nu_a + \int_{\fS} \widehat{\phi}(\tau_\zeta) \,d\nu_s( \tau_\zeta) 
  \qforal \phi \in\A_d .
\] 
It follows that
\[
\int_{\fS} \widehat{\phi}(\tau_\zeta) \,d\nu_s(\tau_\zeta)
 = - \Phi_a(\phi)\qforal \phi\in\A_d.
\]
Since $\bS_d$ is homemorphic to $\fS$ via the correspondence $\zeta\mapsto \tau_{\zeta}$, 
we can pull back the measure $\nu_s$ to a measure $\nu'_s$ on $\bS_d$ such that
\[
\int_{\bS_d} \phi(\zeta) \,d\nu'_s(\zeta)=\int_{\fS} \widehat{\phi}(\tau_\zeta) \,d\nu_s(\tau_\zeta)
 = - \Phi_a(\phi)\qforal \phi\in\A_d.
\]
Therefore, $\nu'_s$ is $\A_d$--Henkin. By Corollary \ref{C:AdTN}, we conclude that $|\nu_s|(\fK)=|\nu'_s|(K)=0.$
Since $\fK\subset \fS$ we find that $|\nu|(\fK)=|\nu_s|(\fK)=0.$
\end{proof}

The reader familiar with the usual version of Bishop's theorem may recognize the previous statement. 
Let us note that the fact that we must require the measures to be concentrated on $Y$ for the lemma 
to hold prevents us from applying the usual theorem directly.
Using Theorem \ref{T:hustad} however, the modified version of Bishop's theorem that we gave in Theorem \ref{T:abstractBishop} 
can be applied and allows us to prove the main result of this section, which is also one of the central results of the paper.

%%%%%%%%%%%%%%%%%%%%%%%%%%%%%%%%%%%%%%%
\begin{theorem}\label{t-approxAd}
Let $K\subset \bS_d$ be a closed $\A_d$--totally null subset and let $\ep>0$.
Then for every $f\in \rC(K)$, there exists $\phi\in \A_d$ such that 
\begin{enumerate}[label=\normalfont{(\roman*)}]
\item  $\phi|_K=f$,

\item  $|\phi(\zeta)|<\|f\|_K$ for every $\zeta\in \bS_d \setminus K$, and

\item  $\|\phi\|_{\A_d}\leq (1+\ep)\|f\|_K $.
\end{enumerate}
\end{theorem}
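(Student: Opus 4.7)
The plan is to apply Theorem~\ref{T:abstractBishop}, which reduces the task to verifying its density hypothesis. Namely, for every strictly positive concave function $t\in \rC(X)$ that is constant on $\fL=\{\lambda\tau_\zeta:\zeta\in K,\,\lambda\in\bT\}$, I must show that the restriction to $\fK$ of $\ol{b_1(A_t)}$ is dense in $\ol{b_1(\rC(\fK))}$, where $A_t=\{\widehat\phi/t:\phi\in\A_d\}\subset\rC(X)$. Rescaling $t$ (which does not alter $A_t$) allows us to assume $t\equiv 1$ on $\fL$. By Hahn--Banach duality, this density is equivalent to the assertion that for every $\mu\in M(\fK)$, the functional $\Lambda_\mu\in A_t^*$ given by
\[
\Lambda_\mu(\widehat\phi/t)=\int_\fK \widehat\phi/t\,d\mu=\int_K \phi\,d\mu
\]
satisfies $\|\Lambda_\mu\|_{A_t^*}=\|\mu\|_{M(\fK)}$. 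The bound $\|\Lambda_\mu\|_{A_t^*}\le \|\mu\|_{M(\fK)}$ is immediate, since $\|\widehat\phi/t\|_X\le 1$ forces $|\phi|\le 1$ on $K$.

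For the reverse inequality, I would first extend $\Lambda_\mu$ by Hahn--Banach to $\tilde\Lambda\in B_t^*$ with $\|\tilde\Lambda\|_{B_t^*}=\|\Lambda_\mu\|_{A_t^*}$, and then invoke the Choquet-type representation of Theorem~\ref{T:hustad} to obtain a regular Borel measure $\nu$ on $\ext(X)$ with $\|\nu\|_{M(X)}=\|\tilde\Lambda\|_{B_t^*}$ representing $\tilde\Lambda$. Testing against $b=\widehat\phi/t\in A_t$ yields
\[
\int_{\ext(X)} \widehat\phi/t\,d\nu=\int_K \phi\,d\mu\qquad\text{for every } \phi\in\A_d.
\]
By Propositions~\ref{P:extremepoints} and~\ref{P:extW}, $\ext(X)=\ext(\ol{b_1(\M_{d*})})\,\dot\cup\,\{\lambda\tau_\zeta:\lambda\in\bT,\,\zeta\in\bS_d\}$. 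On the second piece I would pull back $\nu/t$ along the homeomorphism $(\lambda,\tau_\zeta)\mapsto\lambda\tau_\zeta$ and integrate out the $\bT$-variable against the character $\lambda$, producing a complex measure $\omega^s\in M(\fS)$. Together with $(\nu/t)|_{\ext(\ol{b_1(\M_{d*})})}$, this assembles into a regular Borel measure $\omega$ on $Y=\ext(\ol{b_1(\M_{d*})})\cup\fS$ satisfying $\int_Y\widehat\phi\,d\omega=\int_K\phi\,d\mu$ for every $\phi\in\A_d$.

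Viewing $\mu$ as a measure on $Y$ supported in $\fK\subset\fS$, the difference $\omega-\mu$ then annihilates $\widehat{\A_d}$, so Lemma~\ref{l:annihilating} yields $|\omega-\mu|(\fK)=0$. Since point evaluations are singular (Proposition~\ref{p-ptsing}), $\fK$ is disjoint from $\ext(\ol{b_1(\M_{d*})})$, and this forces $\omega^s|_\fK=\mu$ in $M(\fK)$. The decisive observation is that $\omega^s|_\fK$ depends only on $\nu/t$ restricted to the $\bT$-fibers above $\fK$, namely $\{\lambda\tau_\zeta:\lambda\in\bT,\,\zeta\in K\}\subset\fL$, where $t\equiv 1$; hence
\[
\|\mu\|_{M(\fK)}=\|\omega^s|_\fK\|_{M(\fK)}\le |\nu|\bigl(\{\lambda\tau_\zeta:\lambda\in\bT,\,\zeta\in K\}\bigr)\le \|\nu\|_{M(X)}=\|\Lambda_\mu\|_{A_t^*}.
\]

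The main obstacle is the orchestration of the auxiliary machinery rather than any single computation: Theorem~\ref{T:hustad} supplies a representing measure on $\ext(X)$, while Lemma~\ref{l:annihilating} lives on the smaller set $Y$, requiring the unimodular multiples $\{\lambda\tau_\zeta\}$ to be collapsed onto $\fS$ via averaging against the character $\lambda$. A priori the factor $1/t$ introduced when passing from $\nu$ to $\nu/t$ could blow up where $t$ is small (that is, away from $\fL$), destroying any hope of controlling $\|\omega^s|_\fK\|$ by $\|\nu\|$; the structural fact $\fK\subset\fL$ together with $t|_\fL\equiv 1$ ensures that the weight is precisely $1$ on the only portion of $\ext(X)$ whose averaged image contributes to $\omega^s$ on $\fK$, so the obstruction vanishes exactly where it needs to.
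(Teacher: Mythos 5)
Your argument is correct and is essentially the paper's own proof: you verify the density hypothesis of Theorem~\ref{T:abstractBishop} by Hahn--Banach duality, represent the extended functional by a measure $\nu$ on $\ext(X)$ via Theorem~\ref{T:hustad}, push the $\ext(\ol{b_1(\fW_*)})$ part forward to $\fS$ by averaging against $\lambda/t$, and use Lemma~\ref{l:annihilating} to see that only the mass of $\nu$ over $\fL$, where $t$ is constant, contributes to the restriction to $\fK$ --- exactly the paper's chain of reasoning. The only cosmetic difference is your normalization $t\equiv 1$ on $\fL$ (legitimate, since rescaling $t$ by a positive constant changes neither $A_t$ nor the hypothesis), whereas the paper keeps the constant value of $t$ and absorbs it into the weighted norm $\|g/t\|_{\fK}$.
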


\begin{proof}
Let 
\[
 \fK=\{\tau_{\zeta}: \zeta\in K\}\subset \fS
\]
and 
\[
 \fL=\{\lambda \tau_{\zeta}: \lambda\in \bT,\zeta\in K\}.
\]
We proceed to verify that the assumption of Theorem \ref{T:abstractBishop} holds.

Let $t\in \rC(X)$ be a strictly positive concave function which is constant on $\fL$ and $\mu_0$ be a measure on $\fK$. Put
\[
 r=\sup\left\{\left|\int_{\fK} \frac{\widehat{\phi}}{t}  \,d\mu_0 \right|: \phi\in \A_d,\|\widehat{\phi}/t\|_X\leq 1\right\}.
\]
Let 
\[
B_t=\{(\widehat{\phi}+c)/t:\phi\in \A_d,c\in \bC\}\subset \rC(X).
\]
By the Hahn-Banach theorem, there exists $\Phi\in B_t^*$ such that $\|\Phi\|_{B_t^*}=r$ and
\[
\Phi(\widehat{\phi}/t)=\int_{\fK} \frac{\widehat{\phi}}{t}  \,d\mu_0 \qforal \phi\in\A_d.
\]
We may apply Theorem \ref{T:hustad} to $\Phi$ to find a measure $\nu$ on $\ext(X)$ such that
\[
 \int_{\ext(X)} \frac{\widehat{\phi}}{t} \,d\nu=\int_{\fK} \frac{\widehat{\phi}}{t}  \,d\mu_0 \qforal \phi\in \A_d
\]
and $\|\nu\|_{M(X)}=\|\Phi\|_{B_t^*}=r$.
By virtue of Proposition \ref{P:extremepoints}, we see that
\[
 \ext(X)=\ext(\ol{b_1(\M_{d*})}) \dot \cup \ext(\ol{b_1(\fW_*)}).
\]
Decompose $\nu$ as $\nu=\nu_a+\nu_s$ where $\nu_a$ is the restriction of $\nu$ to $\ext(\ol{b_1(\M_{d*})})$ and $\nu_s$ is the restriction of $\nu$ to $\ext(\ol{b_1(\fW_*)})$.
We find
\[
\int_{ \ext(\ol{b_1(\M_{d*})}) } \frac{\widehat{\phi}}{t} \,d\nu_a+\int_{ \ext(\ol{b_1(\fW_*)})} \frac{\widehat{\phi}}{t} \,d\nu_s =
\int_{\fK} \frac{\widehat{\phi}}{t}  \,d\mu_0 \qforal \phi\in \A_d.
\]
Now, using Proposition \ref{P:extW}, we have
\[
 \ext(\ol{b_1(\fW_*)})=\{\lambda\tau_{\zeta}:\lambda\in \bT, \zeta\in\bS_d\}.
\] 
We may define a measure $\nu'_s$ on $\fS$ by setting
\[
 \int_{\fS} h(\tau_{\zeta}) \,d\nu'_s(\tau_{\zeta})=
 \int_{\ext(\ol{b_1(\fW_*)})}  \frac{\lambda h(\tau_\zeta)}{t(\lambda\tau_{\zeta})} \,d\nu_s(\lambda \tau_{\zeta})
\]
for every continuous function $h$ on $\fS$.
Then, 
\[
\int_{\fS}\widehat{\phi} \,d\nu'_s=
\int_{\ext(\ol{b_1(\fW_*)})}\frac{\widehat{\phi}}{t} \,d\nu_s \qforal \phi\in \A_d,
\]
whence
\[
\int_{ \ext(\ol{b_1(\M_{d*})}) } \frac{\widehat{\phi}}{t} \,d\nu_a+\int_{\fS}\widehat{\phi} \,d\nu'_s=\int_{\fK} \frac{\widehat{\phi}}{t}  \,d\mu_0 \qforal \phi\in \A_d.
\]
Consequently, the measure $\mu=(\nu_a-\mu_0)/t+\nu'_s$ is concentrated on $Y=\ext(\ol{b_1(\M_{d*})})\cup \fS$ and satisfies
\[
\int_{Y} \widehat{\phi} \,d\mu=0 \qforal\phi\in \A_d.
\]
By Lemma \ref{l:annihilating}, we see that $|\mu|(\fK)=0$.
Therefore $|\nu'_s-\mu_0/t|(\fK)=0$ since $\fK$ is contained in $\ext(\ol{b_1(\fW_*)})$.
Hence
\begin{align*}
\int_{\fK} \frac{g}{t}  \,d\mu_0&= \int_{\fK}g  \,d\nu'_s \qforal g\in \rC(\fK) .
\end{align*}
A standard approximation argument along with the definition of $\nu'_s$ implies that
\[
 \int_{\fK} \frac{g}{t}  \,d\mu_0=\int_{\fL} \frac{\lambda g(\tau_\zeta)}{t(\lambda\tau_{\zeta})} \,d\nu_s\qforal g\in \rC(\fK).
\]
For $\lambda\tau_\zeta\in\fL$ we have
\[
 \left|\frac{\lambda g(\tau_\zeta)}{t(\lambda\tau_\zeta)} \right|\leq \|g/t\|_{\fK}
\]
since $t$ is assumed to be constant on $\fL$.
Hence
\[
 \left|\int_{\fK}\frac{g}{t} \,d\mu_0 \right|
 =\left|\int_{\fL} \frac{\lambda g(\delta_\zeta)}{t(\lambda\delta_{\zeta})} \,d\nu_s \right|
 \leq \|\nu_s\|_{M(X)}\leq \|\nu\|_{M(X)}= r
\]
whenever $\|g/t\|_{\fK}=1$. By the Hahn-Banach theorem, we see that the restriction to $\fK$ of the closed unit ball of
\[
 \{\widehat{\phi}/t:\phi\in \A_d\}
\]
is dense in the closed unit ball of $\rC(\fK)$.
The proof is now completed by invoking Theorem \ref{T:abstractBishop}.
\end{proof}

Finally, assuming the validity of Conjecture \ref{Conjecture}, we see that the condition on the closed set $K$ in 
Theorem \ref{t-approxAd} is equivalent to it being $\AB$--totally null. 
In addition to clarifying the structure of $\A_d^*$ and its relation to $\AB^*$, a positive solution to the conjecture 
would thus immediately imply a significant strengthening of the classical peak interpolation theorem 
of Bishop-Carleson-Rudin for $\AB$ (Theorem \ref{T:bishopAB}).

\bibliographystyle{amsplain}

\end{document}